\documentclass[11pt]{amsart}
\usepackage{amssymb}
\usepackage{amsmath}
\usepackage{amsfonts}
\usepackage{mathrsfs}
\usepackage{psfrag}
\usepackage{verbatim}
\usepackage{color}
\usepackage{enumitem}
\usepackage{bm}
\usepackage[usenames,dvipsnames]{xcolor}
\usepackage{dsfont}
\usepackage{overpic}
\usepackage{subcaption}
\usepackage{multirow}
\numberwithin{equation}{section} 

\usepackage[colorlinks=true]{hyperref}
\hypersetup{urlcolor=blue, citecolor=ForestGreen}

\DeclareGraphicsExtensions{}
\theoremstyle{plain}
\newtheorem{theorem}{Theorem}[section]

\newtheorem{lemma}[theorem]{Lemma}
\newtheorem{proposition}[theorem]{Proposition}

\theoremstyle{definition}
\newtheorem{assumption}[theorem]{Assumption}

\theoremstyle{remark}
\newtheorem{remark}[theorem]{Remark}

\newcommand{\eps}{\varepsilon}

\newcommand{\BR}{\mathbb{R}}

\newcommand{\BE}{\mathbb{E}}

\newcommand{\BN}{\mathbb{N}}

\newcommand{\ind}{\mathds{1}}

\begin{document}
\title[{\sc uit}-{\sc qclt} for multiscale diffusions]{Uniform-in-time Gaussian fluctuations for multiscale nonlinear stochastic systems via Malliavin Calculus
}

\author[Shivam Singh Dhama]{}
\address{Department of Mathematics and Statistics\\ Boston University, Boston, {\sc ma}, 02215, {\sc usa}}
\email{ssdhama@bu.edu, shivamsd.maths@gmail.com}

 \subjclass[2020]{60F17, 60H07, 60F05.}
 \keywords{Multiscale diffusions, Quantitative {\sc clt},  Malliavin Calculus, Poisson Equations}

\date{\today.  
}

\maketitle

\centerline{\scshape Shivam Singh Dhama}
\medskip
{\footnotesize
\centerline{Department of Mathematics and Statistics, Boston University, Boston, Massachusetts, 02215, {\sc usa}}
}


\begin{center}
\rule{17cm}{.01mm}
\end{center}
\begin{abstract}
We establish a uniform-in-time quantitative central limit theorem ({\sc qclt}) for a nonlinear slow-fast stochastic system. We identify significant weaker sufficient conditions that enable us to obtain time-independent bounds for the Wasserstein distance between the fluctuation process and a centered Gaussian random variable. To prove our main result, we utilize tools from Malliavin calculus, specifically the second-order Poincar\'e inequality. In this context, applying the Poincar\'e inequality requires demonstrating uniform bounds over time for both the first- and second-order Malliavin derivatives.
\end{abstract}
\begin{center}
\rule{17cm}{.01mm}
\end{center}


\section{Introduction}
\subsection{Problem Statement, Background and a Relevant Literature} Slow–fast systems arise naturally in the modeling of many complex phenomena, for example, in physics, biology, finance, climate science, and engineering. These systems contain variables that evolve on different time scales. Some components change rapidly, while others evolve much more slowly. The presence of multiple time scales makes the analysis and simulation of such systems challenging. Therefore, it is often desirable to derive simpler effective models that capture the essential behavior of the original system. Such reduced models are typically more amenable to both theoretical analysis and numerical computation. Averaging and homogenization theories provide powerful tools for obtaining these effective descriptions. They characterize the limiting behavior of the slow variables. Since the seminal work of Khasminskii \cite{Khasminskii68} over the past decades, averaging and homogenization effects for multiscale systems have been extensively studied through various asymptotic results, including Law of Large Numbers ({\sc lln}), Central Limit Theorem ({\sc clt}), and Large Deviation Principle ({\sc ldp}).

In this paper, we consider the slow-fast stochastic system
\begin{equation}\label{E:Multiscale-Diffusion-Main-Eq}
\begin{aligned}
dX_t^{\varepsilon} &= c(X_t^\varepsilon, Y_t^\eta) \thinspace dt+ \sqrt{\varepsilon}\sigma(Y_t^\eta) \thinspace dW_t^1, \quad X_0^\eps=x_0  \\
dY_t^{\eta} &= \frac{1}{\eta} f(Y_t^\eta) \thinspace dt+ \frac{1}{\sqrt{\eta}}\tau( Y_t^\eta) \thinspace dW_t^2, \quad Y_0^\eta = y_0,
\end{aligned}
\end{equation}
where, $0<\eps ,\eta \ll 1,$ and the processes $W^1$ and $W^2$ are two independent one-dimensional Brownian motions defined on a common probability space. The functions $c: \BR \times \BR \to \BR,$ $\sigma : \BR \to \BR$, $f: \BR \to \BR$ and $\tau : \BR \to \BR$ satisfy Assumptions \ref{A:Boundedness of sigma-tau} through \ref{A:eta-eps-interaction}. The study of system \eqref{E:Multiscale-Diffusion-Main-Eq} is important as it appears in certain climate models with scaling $\sqrt{\eta} \thinspace t$, see \cite{kifer2001averaging}. For system \eqref{E:Multiscale-Diffusion-Main-Eq}, we define the fluctuation process 
$\theta_t^\eps \triangleq \eps^{-\frac{1}{2}} \left({X_t^\eps- \bar{X}_t}\right),$
where, the averaged function $\bar{X}_t$ satisfies equation \eqref{E:Semigroup-Equatn}. In the article \cite{spiliopoulos2014fluctuation}, the author investigated a qualitative {\sc clt}, i.e., the asymptotic behavior of the process $\theta^\eps$ for a fully coupled slow--fast version of system \eqref{E:Multiscale-Diffusion-Main-Eq} over a finite time horizon \([0, T]\) for fixed \( T < \infty\). More recently, the authors of \cite{bour_spilio_2025} established a Quantitative Central Limit Theorem ({\sc qclt}) for a fully coupled version of \eqref{E:Multiscale-Diffusion-Main-Eq} on a finite time interval, employing techniques from Malliavin calculus. Building on the results from \cite{bour_spilio_2025}, a natural question arises: \textit{Can we obtain a uniform-in-time {\sc qclt} for system \eqref{E:Multiscale-Diffusion-Main-Eq}?} Answering this question is the primary goal of the present paper.

We aim to establish a {\sc qclt} for the system \eqref{E:Multiscale-Diffusion-Main-Eq} in a uniform-in-time ({\sc uit}) setting. More specifically, we aim to provide an explicit rate of convergence for the fluctuation process $\theta^\eps$ to a Gaussian process, expresses in terms of the small parameters $ \eps$ and $\eta$, with this rate being independent of time. To derive this explicit rate, we employ the tools from Malliavin calculus and partial differential equations ({\sc pde}). In particular,
from Malliavin calculus, we utilize the Poincar{\'e} inequality \cite{vido_2020}, which enables us to quantify the rates based on the first- and second-order Malliavin derivatives of the pre-limit process. In the study of {\sc pde}, we use a powerful tool to control various fluctuation terms by constructing several Poisson equations in non-compact spaces. At certain stages of the proof, we employ techniques similar to those presented in \cite{bour_spilio_2025} to obtain quantitative rates of convergence. The key assumptions in this paper are the dissipativity of the coefficients and the ergodicity of the fast process, namely, the existence of a unique invariant measure.

The present work is novel for several reasons. First, we establish a {\sc uit} explicit convergence rate for the fluctuations of a multiscale system, rendering our result quantitative rather than merely qualitative. Second, we provide significantly weaker sufficient conditions for the nonlinear functions $c(x,y)$ and $f(y)$ that yield time-independent bounds. The most technical aspect of the paper concerns the estimation of convergence rates for Malliavin derivatives and the construction of several Poisson equations on the entire space. Obtaining these estimates requires a careful decomposition and systematic treatment of numerous terms in order to derive sufficiently sharp bounds. Another delicate aspect of the analysis is the derivation of quantitative convergence rates for several fluctuation terms, particularly in the computation of prelimit expectations and variances (Section \ref{S:Prelimit-Exp-Var}). This difficulty is addressed by constructing appropriate Poisson equations and utilizing the fact that the solutions to these equations can grow at most polynomially, followed by the uniform moment bounds for the involved processes.

We provide some heuristic comments to clarify the methodology of our proof. Since the main objective of this paper is to obtain the {\sc uit} estimates, therefore, we expect to see exponential decay in the time argument of the bounds for the Malliavin derivatives, as these derivatives need to be integrated. On another note, since our final estimate relies on separate computations of the first- and second-order Malliavin derivatives, it is crucial for the bounds of the second-order derivatives to converge to zero faster than those of the first-order derivatives in the small parameters $\eps$ and $\eta$. This intuition can be illustrated with a simple example of one-dimensional Brownian motion, where the first-order Malliavin derivative is a non-zero constant and the second derivative equals zero. Indeed, this situation is reflected in our estimates. Further, due to the ergodicity of the fast process, the fluctuation terms are consistently small; however, they are estimated using several Poisson equations.

We now turn to the literature most relevant to the present work. There has been extensive research on the asymptotic analysis of multiscale stochastic systems, 
particularly in the context of averaging and homogenization for stochastic differential equations and stochastic partial differential equations ({\sc spde}s) driven by 
various types of noise. Notable contributions include, without claiming completeness, \cite{Khasminskii68, freidlin1999comparison, kifer2001averaging, pavliotis2008multiscale, spiliopoulos2014fluctuation, gailus2017statistical, rockner2021averaging, rockner2021diffusion} and the references therein for It\^o stochastic systems driven by continuous noise under different scaling regimes, \cite{freidlin2021averaging, goddard2023study} for slow--fast systems with multiple invariant measures, and \cite{cerrai2009averaging, huang2020central, nualart2022central, gerolla2025fluctuations} for the asymptotic behavior of {\sc spde}s. For multiscale systems driven by fractional Brownian motion and jump noise, we refer to, for example, \cite{hairer2020averaging, bourguin2021typical, budhiraja2016moderate} and the references therein. Most of the aforementioned works investigate asymptotic behavior over finite time horizons, typically on compact intervals \([0,T]\) for a fixed \(T>0\). More recently, increasing attention has been devoted to the study of multiscale stochastic systems over infinite time horizons, namely on \([0,\infty)\). For recent developments in this direction, we refer the interested reader to, e.g., \cite{Dobson-Ceisan-21, Dobson-Crisan, budhiraja2024large, Shivam-Kostas, sharrock2026efficient, bourguin2026uniform}, which address a variety of stochastic systems and asymptotic regimes.

The present work is closely related to the recent articles, e.g., \cite{bour_spilio_2025, Dobson-Crisan} and \cite{bourguin2026uniform}. First, we compare our findings with those in \cite{bour_spilio_2025}; the authors study quantitative fluctuations over finite time horizons, whereas we investigate quantitative fluctuations over infinite time horizons. On the other hand, the paper \cite{bour_spilio_2025} considers a fully-coupled system (where, the coefficients depends on both slow $X^\eps$ and fast $Y^\eta$ processes); however, in this work, we focus on a comparatively smaller class of nonlinear systems. Regarding \cite{Dobson-Crisan}, the authors examine the quantitative asymptotic behavior of a fully-coupled system under a uniform-in-time setting but with weaker regularity conditions (local Lipschitz continuity with superlinear growth). The main result in this article is demonstrated by utilizing the property that the space derivatives of the associated Markov semigroup decay exponentially over time, rather than using any tools from Malliavin calculus. As a result, our work differs from theirs because we incorporate techniques from Malliavin calculus. More recently, another paper \cite{bourguin2026uniform} analyzed uniform-in-time fluctuations for a different system: the McKean-Vlasov stochastic system for a finite number of particle systems. The authors of that paper utilized the Malliavin derivative computation approach to establish their main result.

The paper is organized as follows. In Section \ref{S:Main-Result}, we present our main result, viz., Theorem \ref{T:Main-Result}, along with its proof. Section \ref{S:First-MD} focuses on obtaining estimates for the first-order Malliavin derivative, while Section \ref{S:Second-MD} is devoted to the bounds related to the second-order Malliavin derivative. The key findings of these subsections are outlined in Propositions \ref{P:First-Der-X} and \ref{P:Second-der-Prop}. Further, in Section \ref{S:Prelimit-Exp-Var}, we compute the bounds associated with the pre-limit expectation and variance as indicated in Propositions \ref{P:Prelimit-1} and \ref{P:Prelimit-2}. Finally, we conclude the manuscript with some useful lemmas, proofs, and preliminaries for the Poisson equation and Malliavin calculus in Section \ref{S:Appendix}.


\subsection{Notations in this paper} We list some of the special notations used throughout this paper. The symbol $\triangleq$ is read ``is defined to equal." We denote the set of all positive integers and real numbers by $\BN$ and $\BR,$ respectively. For a random variable $Y$, which has a normal distribution with parameters $\mu$ and $\xi^2,$ we write $Y \sim \mathscr{N}(\mu, \xi^2).$ For two random variables $U$, $V$ and a Lipschitz function $\mathsf{f}$, 
$$d_W(U,V) \triangleq \sup_{\mathsf{f}:|\mathsf{f}|_{\text{Lip}}\le 1}\left|\BE \left[\mathsf{f}(U)\right] - \BE \left[\mathsf{f}(V)\right]\right|,$$
represents the Wasserstein distance between the laws of random variables $U$ and $V.$ The partial derivatives of the function \(\mathsf{j}:\mathbb{R}^n \to \mathbb{R}\) with respect to the \(i^{\text{th}}\) variable are denoted using subscript notation: \(\mathsf{j}_{x_i} \triangleq \frac{\partial \thinspace \mathsf{j}}{\partial \thinspace x_i}\). This notation is also applicable to higher-order derivatives. For \(0 \leq r \leq t\), the first-order and second-order Malliavin derivatives of the process \(U_t\) are denoted by \(D_r \thinspace U_t\) and \(D_r^2 \thinspace U_t\), respectively. For set $\mathscr{Y} \subseteq \BR$, and $i \in \{0,1,2\}$,  $\mathscr{C}^{2,\alpha}(\BR,\mathscr{Y})$ denotes the collection of functions $c(x,y)$ such that $c(\cdot, y) \in \mathscr{C}^2(\BR)$ for any $y \in \mathscr{Y}$ and $\partial_x^i c(x,\cdot)$ are H\"older continuous in variable $y$ and uniformly in $x$ with exponent $\alpha$. The set $C_b^{\infty} \left(
  \mathbb{R}^n \right)$ represents the collection of all functions $f$ such that $f$ and all of its partial derivatives of all
orders are bounded.

\subsection{Assumptions}
In this section, we present the assumptions regarding the coefficients for system \eqref{E:Multiscale-Diffusion-Main-Eq} under which we establish our main result, namely, Theorem \ref{T:Main-Result}.

\begin{assumption}\label{A:Boundedness of sigma-tau} 
For the functions $ f : \BR \to \BR, \tau : \BR \to \BR$, we assume:
\begin{enumerate}
\item The diffusion coefficient $\tau^2$ is uniformly nondegenerate, that is, there exist two positive constants $\lambda_1$ and  $\lambda_2$ such that $ \lambda_1 \le \tau^2 \le \lambda_2$. \\
\item There exists a positive number $\lambda^*$ such that
$$y \cdot f(y) \le -\lambda^* |y|^2, \quad y \in \BR.$$
In particular, if the function $f$ is differentiable, then $\sup_{y \in \BR}f_y(y) \le -\lambda^*$.
\end{enumerate}
\end{assumption}

\begin{assumption}\label{A:Derivatives-Assumption-c}
For the functions $c : \BR \times \BR \to \BR$ and $\sigma : \BR \to \BR$, we assume:
\begin{enumerate}
\item For $\mathscr{Y} \subseteq \BR$, and $i \in \{0,1,2\}$, the function $c(x,y) \in \mathscr{C}^{2,\gamma^*}(\BR,\mathscr{Y})$, i.e., $c(\cdot, y) \in \mathscr{C}^2(\BR)$ for all $y \in \mathscr{Y}$ and $\partial_x^i c(x,\cdot)$ are H\"older continuous in variable $y$ and uniformly in $x$ with exponent $\gamma^* \in (0,1)$. The function $c(x,y)$ has two bounded derivatives in variable $x$ and  bounded derivatives up to order two in variable $y$. All the derivatives of $\sigma(y)$ up to second-order are uniformly bounded, that is, for any $y_1, y_2 \in \BR$, $j \in \{1,2\},$
\begin{equation*}
\begin{aligned} 
\sup_{i; \thinspace x \in \BR}\left|\partial_x^i c(x,y_1) - \partial_x^i c(x,y_2) \right| & \le K \thinspace |y_1 - y_2|^{\gamma^*}, \quad \text{and} \\
 \sup_{i,j; \thinspace x, y \in \BR} \left\{|\partial^j_x c(x,y)| + |\partial^i_y c(x,y)| + |\partial_y^i \sigma(y)| \right\} & < K.
\end{aligned}
\end{equation*}
\item 
The function $c(x,y)$ is uniformly dissipative in the first argument: for any $x, \thinspace y \in \BR,$ there exists a constant $\alpha>0$ such that
\begin{equation}\label{E:Ass1}
\sup_{z \in \BR}[(x-y) \cdot (c(x,z)-c(y,z))] \le - \alpha \thinspace |x-y|^2.
\end{equation}
Consequently, there exists a constant $\beta>0$ such that
$\sup_{y \in \BR} [x \cdot c(x,y)] \le -\alpha \thinspace |x|^2 + \beta.$  In, particular, if $c(x,y)$ is continuously differentiable with respect to $x$, then $c_x(x,y)< -\alpha$ for any $x,y \in \BR$.
\item The function $c(x,y)$ has linear growth in variable $x$ uniformly in $y$: for any $x \in \BR,$ there exists a constant $K>0$ such that $\sup_{y \in \BR}|c(x,y)| \le K(1+|x|).$
\item The first equation in system \eqref{E:Multiscale-Diffusion-Main-Eq} has a unique strong solution.
\end{enumerate}
\end{assumption}

\begin{assumption}\label{A:Derivatives-Assumption-f}
For the functions $f: \BR \to \BR$, and $\tau: \BR \to \BR$, we assume:
\begin{enumerate}
\item The function $f(y)$ has two derivatives and $\tau(y)$ has two bounded derivatives. For the functions $f$ and $\tau$, all the derivatives are H\"older continuous, with exponent $\zeta \in (0,1)$. 
\item In addition to Assumption \ref{A:Boundedness of sigma-tau}, we also assume: for some $\lambda>0,$ and any $p \in \BN,$
\begin{equation}\label{E:Ass2}
\sup_{y \in \BR} \left[ 2p \thinspace f_y(y) + p(2p-1) [\tau_y(y)]^2 \right] \le - \lambda.
\end{equation}
\end{enumerate}
\end{assumption}

\begin{assumption}\label{A:Semigroup-boundedness}
For $m \in \{1,2 \},$ and the function $\Psi(t)$ which satisfies equation \eqref{E:Semigroup-Equatn}, there exist time-independent positive constants $K$ and $\sf{K}$ such that
$$\sup_{t \ge 0} \left| \int_0^t \Psi(t)^m \Psi(s)^{-m} \thinspace ds \right| \le K, \qquad \sup_{t \ge 0} |\Psi(t)| \le \sf{K}.$$
\end{assumption}

\begin{assumption}\label{A:eta-eps-interaction} 
For $\eta = \eta(\eps),$ we assume that $\eta \searrow 0$ as $\eps$ tends to zero and 
$$ \lim_{\eps \searrow 0} \frac{\eps}{\eta} = \gamma^2 \in (0, \infty].$$
\end{assumption}
Examples of function $c(x,y)$ that satisfy the above assumptions include $c(x,y) = -\alpha x + \frac{1}{1+e^{-y}}, \thinspace \alpha>0$ and $c(x,y) = -x + \frac{e^y - e^{-y}}{e^y + e^{-y}}.$
There are a few remarks regarding the assumptions mentioned above:
\begin{remark}
As a consequence of Assumption \ref{A:Boundedness of sigma-tau}, we can conclude that $\lim_{|y| \to \infty}y \cdot f(y)= -\infty.$ This implies that Assumption \ref{A:Boundedness of sigma-tau} ensures the existence of a unique invariant measure for the process \( Y^\eta \). Therefore, this assumption is related to the concept of ergodicity.
\end{remark}

\begin{remark}
The first part of Assumption \ref{A:Derivatives-Assumption-c} is crucial to study the regularity properties of the solutions of various Poisson partial differential equations which are used to control the fluctuation terms. For more details, we refer to Section \ref{S:Poisson-Equation} (or \cite{pardoux2003poisson}). The boundedness of the derivatives is employed to control the first- and second-order derivatives of the function \( c(x,y) \), which appears solely as coefficients in the integral equations for the first- and second-order Malliavin derivatives. We refer to the relevant equations \ref{E:First-Order-Malliavin-derivative}, \ref{E:Second-der-X-W1-W1}, \ref{E:Second-der-X-W1-W2} and \ref{E:Second-der-X-W2-W2} for more details. We utilize the dissipativity condition outlined in equation \eqref{E:Ass1}  to establish uniform moment bounds for the process \( X_t^\eps - \bar{X}_t \). Specifically, during the proof of these moment bounds, we need to examine the product \( (X_t^\eps - \bar{X}_t) \{c(X_t^\eps, y)-c(\bar{X}_t, y)\} \), which can be effectively controlled using equation \eqref{E:Ass1}. For further details, we refer to Lemmas \ref{L:2nd-power} and \ref{L:4th-power-LLN}, as well as Propositions \ref{P:Prelimit-1} and \ref{P:Prelimit-2} in the section on prelimit expectation and variance. Additionally, the condition \( c_x(x,y) < -\alpha \) allows us to derive {\sc uit} estimates for the first- and second-order Malliavin derivatives of the slow process \( X^\eps \).
\end{remark}

\begin{remark}
Assumption \ref{A:Derivatives-Assumption-f} plays a crucial role. Part (1) along with Assumption \ref{A:Boundedness of sigma-tau} ensures that the second equation, corresponding to the fast process \( Y^\eta \), in system \eqref{E:Multiscale-Diffusion-Main-Eq} is well-posed. Furthermore, the assumption outlined in equation \eqref{E:Ass2} is utilized to derive the uniform moment bounds for the first- and second-order Malliavin derivatives of both the slow and fast processes (see, for example, Lemmas \ref{L:1st-der-Y-W2}, \ref{L:1st-der-X-W2}, \ref{L:Second-der-Integrating-factor}, and \ref{L:Second-der-Y-W1-W1}).
\end{remark}

\begin{remark}
Assumption \ref{A:Semigroup-boundedness} is a result of Assumption \ref{A:Derivatives-Assumption-c} and is primarily used to estimate the bounds in Section \ref{S:Prelimit-Exp-Var} on pre-limit expectation and variance. The final assumption (Assumption \ref{A:eta-eps-interaction}) demonstrates that the final rate obtained in Theorem \ref{T:Main-Result} depends on the interaction of the parameters \(\eps\) and \(\eta\), and that this final rate converges to zero.
\end{remark}

\section{Our main result (Theorem  \ref{T:Main-Result}) and its proof}\label{S:Main-Result}
\subsection{Main Result} In this section, we present our main result, which is a uniform-in-time quantitative central limit theorem, along with some related remarks. Before we do that, we recall a quantitative central limit theorem (Proposition \ref{P:CQCLT}) for multiscale diffusions on the compact time interval \([0,T]\) for a fixed \(T > 0\), as well as some relevant equations that will be frequently used throughout this manuscript.

\begin{proposition}\cite[Theorem 2.3]{bour_spilio_2025}\label{P:CQCLT}
Let $T>0$ be fixed, $\theta_t^\eps \triangleq \frac{X_t^\eps-\bar{X}_t}{\sqrt{\eps}}$ and $N \sim \mathscr{N}(0, \sigma_t^2).$ For sufficiently small $\eps$ and $\eta,$ and $0<\xi < \frac{1}{2},$ there exists a time-dependent positive constant $C$ such that
\begin{multline*}
\sup_{t \in [0,T]}d_W \left( \theta_t^\eps,N \right) \le C \left[ \eta^{\frac{1}{4}} + \eps^{\frac{1}{4}} + \left( \frac{\eta}{\eps}- \frac{1}{\gamma^2} \right)^{\frac{1}{2}}  + \left( \frac{\eta}{\eps} \right)^{\frac{1}{2}} \eta^{\frac{1}{2}-\xi} + \eps^{\frac{1}{2}-\xi} \right]\\
+ C \left( \left( \frac{\eta}{\eps} \right)^{\frac{1}{2}} \eta^{\frac{1}{4}} + \left( \frac{\eta}{\eps} \right) \eta^{\frac{1}{4}} + \left( 1+ \frac{\eta}{\eps} \right)e^{-\frac{CT}{\eta}} \right).
\end{multline*}
where, $\sigma_t^2$ represents the limiting variance and is defined in equation \eqref{E:Limiting-Variance} below.
\end{proposition}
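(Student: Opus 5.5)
This result is quoted from \cite[Theorem 2.3]{bour_spilio_2025}, so within the paper it can simply be invoked. If one wanted to reprove it, the plan would follow the Malliavin--Stein route, since this is also the template the rest of the paper adapts to the uniform-in-time setting.

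\textbf{Step 1 (reduction to Malliavin quantities).} Fix $t\in[0,T]$ and let $F=\theta_t^\eps$. Pass from $F$ to the Gaussian $N\sim\mathscr{N}(0,\sigma_t^2)$ in three moves: centre $F$, rescale its variance, and then apply the second-order Poincar\'e inequality of \cite{vido_2020} to the normalised variable. With $\mathbb{E}F$ and $\mathrm{Var}(F)$ the prelimit mean and variance, this gives a bound of the form
\begin{equation*}
d_W(F,N)\le |\mathbb{E}F| + \bigl|\sqrt{\mathrm{Var}(F)}-\sigma_t\bigr| + C\,\frac{\bigl(\mathbb{E}\|DF\|^4\bigr)^{1/4}\bigl(\mathbb{E}\|D^2F\|_{op}^4\bigr)^{1/4}}{\mathrm{Var}(F)} .
\end{equation*}
The proof therefore splits into three estimates: a first-derivative bound, a second-derivative bound, and control of the prelimit mean and variance.

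\textbf{Step 2 (derivative bounds).} Differentiate the system \eqref{E:Multiscale-Diffusion-Main-Eq} in the Malliavin sense with respect to $W^1$ and $W^2$. The derivative $D_rY_t^\eta$ solves a linear SDE whose drift involves $f_y/\eta$, so it decays like $e^{-\lambda(t-r)/\eta}$ in $L^p$; this uses \eqref{E:Ass2} together with Gronwall/It\^o estimates.

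For the slow component, $D_rX_t^\eps$ solves a linear equation with coefficient $c_x$, with the $W^1$-source $\sqrt{\eps}\sigma(Y_r)$ and the $W^2$-source $\int_r^t c_y D_rY_s\,ds$. Since $c_x$ is bounded, the integrating factor stays bounded on $[0,T]$. Dividing by $\sqrt\eps$, one should find that $D\theta_t^\eps$ is $O(1)$ in the $W^1$-direction and $O((\eta/\eps)^{1/2})$ in the $W^2$-direction.

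The second derivatives $D^2\theta$ are handled in the same way, from the equations containing $c_{xx}$, $c_{xy}$, $c_{yy}$, $f_{yy}$ and $\tau_{yy}$. These should carry an extra factor of $\sqrt\eps$ or $\sqrt\eta$, which produces the $\eps^{1/4}$, $\eta^{1/4}$ and $(\eta/\eps)^{1/2}\eta^{1/4}$-type terms once square roots are taken in the Poincar\'e bound.

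\textbf{Step 3 (prelimit mean and variance).} This is the step I expect to be the main obstacle. Write
\begin{equation*}
\theta_t^\eps=\eps^{-1/2}\int_0^t [c(X_s,Y_s)-\bar c(\bar X_s)]\,ds+\int_0^t\sigma(Y_s)\,dW^1_s .
\end{equation*}
Linearise the drift around $\bar X$, which introduces $\Psi$, and remove the fluctuation $c(\bar X_s,Y_s)-\bar c(\bar X_s)$ using a Poisson equation $\mathcal{L}_y\Phi=-(c-\bar c)$ posed on all of $\mathbb{R}$, following \cite{pardoux2003poisson}. Applying It\^o's formula to $\eta\Phi(\bar X_s,Y_s)$ converts this term into a martingale of size $(\eta/\eps)^{1/2}$ in the $W^2$-direction plus remainders of order $\eta/\sqrt\eps$. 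The resulting limiting variance combines $\bar{\sigma^2}$ with $\gamma^{-2}\overline{(\tau\Phi_y)^2}$. The mismatch between the ratio $\eta/\eps$ and its limit $1/\gamma^2$ gives the term $(\eta/\eps-1/\gamma^2)^{1/2}$, and initial-layer effects (the fast variable has not yet equilibrated) give the term $e^{-CT/\eta}$.

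The quadratic variation terms require a second Poisson equation. Polynomial growth of the Poisson solutions, together with moment bounds on $Y$ and on $X-\bar X$ that are uniform in time and rely on the dissipativity \eqref{E:Ass1}, closes the estimates. The exponents $\tfrac12-\xi$ come from H\"older/Kolmogorov continuity losses when the $\eps^{1/2}$-size nonlinear remainders $c_{xx}(X-\bar X)^2/\sqrt\eps$ are controlled.

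Finally, collect all the terms and take the supremum over $t\le T$; the constants depend on $T$ through Gronwall.
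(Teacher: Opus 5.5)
Your proposal is correct. The paper gives no proof of Proposition \ref{P:CQCLT}: it simply quotes \cite[Theorem 2.3]{bour_spilio_2025}, which is what you do, and your optional reproof sketch follows the same architecture the paper uses for Theorem \ref{T:Main-Result} (centring and rescaling, the second-order Poincar\'e inequality, first- and second-order Malliavin derivative bounds, and Poisson-equation control of the prelimit mean and variance).

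One caution on that sketch concerns the second derivatives, which are not pointwise smaller than the first in the $W^2W^2$ block. Lemma \ref{L:Second-der-X-W2W2} contains a term $K e^{-2p\alpha(t-r_1\vee r_2)}e^{-2p\frac{\lambda}{\eta}|r_1-r_2|}$ with no small prefactor, so $D^{W_2W_2}_{r_1,r_2}\theta_t^\eps$ can be of order $\eps^{-1/2}$ on a diagonal strip of width $\eta$. The terms $(\eta/\eps)^{1/2}\eta^{1/4}$ and $(\eta/\eps)\eta^{1/4}$ therefore arise only after integrating over that strip in the contraction norm (as in Lemma \ref{L:Int-16}), not from an extra factor of $\sqrt{\eta}$ gained in the derivative equations.
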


Before this result, the author in \cite{spiliopoulos2014fluctuation} demonstrated that the rescaled process $\theta_t^\eps$ converges weakly in the space of continuous functions in $\mathscr{C}([0,T];\BR)$ to the solutions of the Ornstein-Uhlenbeck type process given by the equation:
$d \theta_t = \bar{c}'(\bar{X}_t) \theta_t \thinspace dt + \bar{q}^{1/2}(\bar{X}_t) \thinspace d\tilde{W}_t,$ where $\tilde{W}$ represents a standard one-dimensional Brownian motion and $\bar{q}(x)= \int_{\BR} q(x,y) \thinspace \mu_x(dy)$ (the function $q(x,y)$ is defined in \eqref{E:q-function}). The solution $\theta_t$ to this equation is a Gaussian process expressed as
$$\theta_t = \Psi(t) \int_0^t [\Psi(s)]^{-1} \bar{q}^{1/2}(\bar{X}_s) \thinspace d\tilde{W}_s.$$
Here, $\bar{X}$ and $\Psi(t)$ are again the solutions to the ordinary differential equations ({\sc ode}):
\begin{equation}\label{E:Semigroup-Equatn}
d \bar{X}_t = \bar{c}(\bar{X_t}) \thinspace dt, \quad \bar{X}_0= x_0; \quad \text{and} \quad {d}\Psi(t) = \bar{c}'(\bar{X}_t)\Psi(t) \thinspace dt,
\end{equation}
respectively. It is important to note that the mean of the process \(\theta_t\) is zero, while the limiting variance \(\sigma_t^2\) is defined as follows:
\begin{equation}\label{E:Limiting-Variance}
\sigma_t^2 \triangleq \int_0^t e^{\int_s^t 2 \bar{c}'(\bar{X}_u)du} \bar{q}(\bar{X}_s) \thinspace ds, 
\end{equation} 
and for the solution $\Upsilon(x,y)$ to the Poisson equation \eqref{E:Poisson-equation}, the function $q(x,y)$ is defined as:
\begin{equation}\label{E:q-function}
q(x,y) \triangleq  \sigma^2(y) + \frac{\tau^2(y)}{\gamma^2} (\Upsilon_y(x,y))^2.
\end{equation}

We now state our main result.
\begin{theorem}[Uniform-in-time {\sc qclt}]\label{T:Main-Result}
Let $\theta_t^\eps \triangleq \frac{X_t^\eps-\bar{X}_t}{\sqrt{\eps}}$ and $N \sim \mathscr{N}(0, \sigma_t^2).$ Assume that Assumptions \ref{A:Boundedness of sigma-tau} through \ref{A:eta-eps-interaction} hold. Then, for sufficiently small $\eps$ and $\eta$, there exists a (time-independent) positive constant $K$ such that
\begin{multline*}
\begin{aligned}
\sup_{t \ge 0} d_W(\theta_t^\eps, N) & \le  K \sqrt{\sqrt{\eta + \eps} + \left( \frac{\eta}{\eps} - \frac{1}{\gamma^2} \right)}  + K \sqrt{ \eps + \frac{\eta}{\eps} (\sqrt{\eps}+ \sqrt{\eta})}\\
 & \qquad \qquad \quad + K \left[ \left( \frac{\eta}{\eps} \right)^{\frac{1}{2}} \eta^{\frac{1}{4}} + \eps^{\frac{1}{4}}  +  \left( \frac{\eta}{\eps} \right)^{\frac{1}{8}} \eta^{\frac{1}{4}} + \left( \frac{\eta}{\eps} \right) \eta^{\frac{1}{4}} + \left( \frac{\eta}{\eps} \right)^{\frac{1}{4}} \eta^{\frac{1}{4}} +  \left( \frac{\eta}{\eps} \right)^{\frac{5}{8}} \eta^{\frac{1}{4}}  \right].
 \end{aligned}
\end{multline*}
\end{theorem}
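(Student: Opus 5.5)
The plan is to use the second-order Poincaré inequality of \cite{vido_2020} together with a triangle inequality in the Wasserstein distance. Write $\mu_t^\eps \triangleq \BE[\theta_t^\eps]$ and $(v_t^\eps)^2 \triangleq \mathrm{Var}(\theta_t^\eps)$, and let $N_t^\eps \sim \mathscr{N}(\mu_t^\eps,(v_t^\eps)^2)$. Then
\begin{equation*}
d_W(\theta_t^\eps,N) \le d_W(\theta_t^\eps,N_t^\eps) + d_W(N_t^\eps,N),
\end{equation*}
and for Gaussians $d_W(N_t^\eps,N)\le |\mu_t^\eps| + |v_t^\eps-\sigma_t| \le |\mu_t^\eps| + \sqrt{|(v_t^\eps)^2-\sigma_t^2|}$. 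The second term will be controlled by the prelimit mean and variance estimates, and the first by the Poincaré inequality. The final rate is the sum of these two contributions, each bounded uniformly in $t\ge0$.

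For the first term, the second-order Poincaré inequality gives, for $F=\theta_t^\eps$ with Malliavin derivatives taken with respect to $(W^1,W^2)$,
\begin{equation*}
d_W(F,N_t^\eps) \le \frac{C}{v_t^\eps}\Big(\BE\|DF\|^4\Big)^{1/4}\Big(\BE\|D^2F\|_{op}^4\Big)^{1/4}.
\end{equation*}
Since $D\theta_t^\eps = \eps^{-1/2}DX_t^\eps$, I would first write the linear equations for $D_rX_t^\eps$ and $D_rY_t^\eta$ in both noise directions. The fast process satisfies $dD_rY = \eta^{-1}f_y D_rY\,dt + \eta^{-1/2}\tau_y D_rY\,dW^2$, and Assumption \ref{A:Derivatives-Assumption-f}, equation \eqref{E:Ass2}, yields $\BE|D_rY_t|^{2p}\lesssim \eta^{-p}e^{-\lambda(t-r)/\eta}$. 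The slow derivative solves $D_rX_t = \sqrt\eps\sigma(Y_r)e^{\int_r^t c_x} + \int_r^t e^{\int_s^t c_x}(c_y D_rY_s\,ds + \sqrt\eps\sigma_y D_rY_s\,dW^1_s)$. Because $c_x<-\alpha$, this gives bounds of the form $e^{-\alpha(t-r)}$ times powers of $\eps$ and $\eta/\eps$; this is Proposition \ref{P:First-Der-X}. The exponential decay is exactly what makes the $L^2(dr)$ norms over $[0,t]$ bounded uniformly in $t$. The same scheme applies to the second-order equations \eqref{E:Second-der-X-W1-W1}--\eqref{E:Second-der-X-W2-W2}, using integrating factors and repeated use of the dissipativity. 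The aim (Proposition \ref{P:Second-der-Prop}) is a bound on $\|D^2\theta_t^\eps\|$ that vanishes, producing the $\eta^{1/4}$-type terms multiplied by powers of $\eta/\eps$ in the statement. The lower bound on $v_t^\eps$ would follow from the variance estimate together with $\sigma_t^2$ being bounded below for $t$ away from $0$. For small $t$ I would instead use the trivial bound $d_W\le \BE|\theta_t^\eps|+\BE|N|$ together with Proposition \ref{P:CQCLT} on a fixed finite interval.

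For the mean and variance (Propositions \ref{P:Prelimit-1}, \ref{P:Prelimit-2}), I would decompose
\begin{equation*}
X_t^\eps-\bar X_t = \int_0^t [c(X_s,Y_s)-\bar c(X_s)]\,ds + \int_0^t[\bar c(X_s)-\bar c(\bar X_s)]\,ds + \sqrt\eps\int_0^t\sigma\,dW^1.
\end{equation*}
The first integral is handled by applying It\^o's formula to $\eta\Upsilon(X_s,Y_s)$ with the Poisson solution \eqref{E:Poisson-equation}; this produces a martingale $\sqrt\eta\int\tau\Upsilon_y\,dW^2$ plus $O(\eta)$ and $O(\sqrt{\eta\eps})$ remainders. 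The second integral is linearized as $\bar c'(\bar X_s)(X_s-\bar X_s)$ plus a quadratic error, and is solved via $\Psi$. Assumption \ref{A:Semigroup-boundedness}, the polynomial growth of Poisson solutions, and uniform moment bounds (Lemmas \ref{L:2nd-power}, \ref{L:4th-power-LLN}) make every term uniform in $t$. The variance contains $\eps^{-1}\eta\int\Psi^2\Psi^{-2}\tau^2\Upsilon_y^2$, which is compared to $\gamma^{-2}$ via a second Poisson equation for $\tau^2\Upsilon_y^2-\overline{\tau^2\Upsilon_y^2}$; this yields the $\eta/\eps-\gamma^{-2}$ term and the square-root terms.

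I expect the main obstacle to be the second-order Malliavin derivative bounds. They involve products such as $D_rY\,D_sY$ and $c_{yy}$ terms that carry the factor $\eta^{-1}$, and one must show these are integrated against $e^{-(t-s)/\eta}$-type kernels. The resulting gain of $\eta$ has to be extracted without losing uniformity in time. My approach would be to bound moments with Assumption \ref{A:Derivatives-Assumption-f} first and then use Gronwall with the $-\alpha$ decay.
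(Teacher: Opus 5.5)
Your proposal follows essentially the paper's proof. It uses the second-order Poincar\'e inequality fed by the exponentially decaying first- and second-order Malliavin derivative bounds (Propositions~\ref{P:First-Der-X} and~\ref{P:Second-der-Prop}) and the Poisson-equation estimates for the prelimit mean and variance (Propositions~\ref{P:Prelimit-1} and~\ref{P:Prelimit-2}), and comparing through $\mathscr{N}(\mu_t^\eps,(v_t^\eps)^2)$ rather than through the rescaled $\tilde{\theta}_t^\eps$ is only a cosmetic variant. The one substantive difference is in your favor: the paper divides by $\operatorname{Var}(\theta_t^\eps)$ and asserts it is a time-independent constant, which fails as $t\to 0$ because $\theta_0^\eps=0$, whereas you split off a fixed interval $[0,T_0]$ and use Proposition~\ref{P:CQCLT} there (its rate, with $\xi\le\frac14$, is dominated by the stated one, while the trivial bound $\BE|\theta_t^\eps|+\BE|N|$ alone does not give the rate; also, keeping $\sigma_t^2$ bounded below on $[T_0,\infty)$ tacitly needs $\inf_x\bar{q}(x)>0$, which neither you nor the paper states).
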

\begin{remark}
In this manuscript, the constant $K$ is always \textit{time-independent}; however, it may depend on other parameters. The value of $K$ can change from one line to another.
\end{remark}

\begin{remark}
The convergence rate in our results is inherently suboptimal. This is because our proof relies on the findings in \cite{vido_2020}, which provide suboptimal bounds. For clarity in this manuscript, we focus solely on the one-dimensional case. However, the multi-dimensional case can be extended by adding dimensions without requiring new concepts.
\end{remark}

\subsection{Proof of Theorem \ref{T:Main-Result}}
In this section, we prove our main result, namely, Theorem \ref{T:Main-Result}. The proof is built on several key components, specifically Propositions \ref{P:First-Der-X}, \ref{P:Second-der-Prop}, \ref{P:Prelimit-1}, and \ref{P:Prelimit-2}. Propositions \ref{P:Prelimit-1} and \ref{P:Prelimit-2} examine the asymptotic behavior of the expectation and variance of the pre-limit process. In contrast, Propositions \ref{P:First-Der-X} and \ref{P:Second-der-Prop} focus on the first- and second-order Malliavin derivatives, respectively. Additionally, we utilize the result from \cite[Theorem 2.1]{vido_2020}, which is presented below.
\begin{proposition}\cite[Theorem 2.1]{vido_2020}\label{P:vidotto}
Let ${F} \in \mathbb{D}^{2,4}$ be such that $\mathbb{E}(F) = 0$ and
$\operatorname{Var}(F) = \tilde{\sigma}^2$, and let $N \sim
\mathscr{N}(0,\tilde{\sigma}^2)$. Then,
\begin{align*}
d_W(F,N) &\leq \frac{\sqrt{10}}{2 \tilde{\sigma}^2 }\BE \left[\|D^2 F \otimes D^2 F  \|_{\mathscr{H}^{\otimes 2}}^2 \right]^{\frac{1}{4}}\BE \left[\|DF\|_{\mathscr{H}}^4 \right]^{\frac{1}{4}} ,
\end{align*}
where the Malliavin operators $D$ and $D^2$ are the ones defined in
Subsection \ref{S:MD}, and $\otimes$ is the convolution operator defined as follows: for any two function $\mathsf{f}$, $\mathsf{g} \in \mathscr{H},$ 
\begin{equation}\label{E:Convolution}
[\mathsf{f} \otimes \mathsf{g}] \triangleq \int_0^t \mathsf{f}(x,z) \thinspace \mathsf{g}(y,z)\thinspace dz. 
\end{equation}
\end{proposition}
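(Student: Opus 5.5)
The plan is to combine Stein's method with the Malliavin integration-by-parts formula, as in the Nourdin--Peccati approach, and then control the resulting variance term with the first-order Poincar\'e inequality. The tool is used the way Vidotto does, so that only the contraction $D^2F\otimes D^2F$ appears and not the operator norm of $D^2F$.

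\textbf{Step 1 (Stein reduction).} Fix a $1$-Lipschitz test function $\mathsf{f}$. Let $g$ be the solution of the Stein equation $\tilde\sigma^2 g'(x) - x g(x) = \mathsf{f}(x) - \BE[\mathsf{f}(N)]$. The standard estimates give $\|g'\|_\infty \le \sqrt{2/\pi}\,\tilde\sigma^{-2}$ for Lipschitz data, and a bound of the same order in the form used in \cite{vido_2020}. Since $\BE F=0$, we can write $F = \delta(-DL^{-1}F)$, where $L$ is the Ornstein--Uhlenbeck generator. The duality between $D$ and $\delta$ then yields
\[
\BE[F g(F)] = \BE\big[g'(F)\langle DF,-DL^{-1}F\rangle_{\mathscr H}\big].
\]
Set $G \triangleq \langle DF,-DL^{-1}F\rangle_{\mathscr H}$. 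The bound on $g'$ gives $d_W(F,N)\le C\tilde\sigma^{-2}\,\BE|\tilde\sigma^2 - G|$. Because $\BE G = \BE[F^2]=\tilde\sigma^2$, Cauchy--Schwarz gives
\[
d_W(F,N)\le C\tilde\sigma^{-2}\operatorname{Var}(G)^{1/2}.
\]

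\textbf{Step 2 (Poincar\'e inequality and Mehler's formula).} The Gaussian Poincar\'e inequality $\operatorname{Var}(G)\le \BE\|DG\|_{\mathscr H}^2$ reduces the problem to bounding the first derivative of $G$. Differentiating $G$ gives two contraction terms:
\[
DG = D^2F\otimes(-DL^{-1}F) + DF\otimes(-D^2L^{-1}F).
\]
Mehler's formula represents the inverse-generator terms as
\[
-DL^{-1}F=\int_0^\infty e^{-s}P_s DF\,ds,\qquad -D^2L^{-1}F=\int_0^\infty e^{-2s}P_s D^2F\,ds,
\]
where $P_s$ is the Ornstein--Uhlenbeck semigroup. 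Minkowski's inequality lets us pull the $ds$-integrals out of the $L^2(\Omega;\mathscr H)$ norm. The weights then contribute $\int_0^\infty e^{-s}ds=1$ and $\int_0^\infty e^{-2s}ds=\tfrac12$.

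\textbf{Step 3 (the key contraction trick, the main obstacle).} For a generic $u\in\mathscr H$, I would expand the square rather than using $\|D^2F\otimes u\|\le\|D^2F\|_{op}\|u\|$:
\[
\|D^2F\otimes u\|_{\mathscr H}^2=\langle D^2F\otimes D^2F,\,u\otimes u\rangle_{\mathscr H^{\otimes2}}\le \|D^2F\otimes D^2F\|_{\mathscr H^{\otimes 2}}\|u\|_{\mathscr H}^2 .
\]
The same manipulation applies to the second term, with $u=DF$ and $D^2F$ replaced by $P_sD^2F$. Taking expectations and applying Cauchy--Schwarz produces
\[
\BE\big[\|D^2F\otimes D^2F\|^2\big]^{1/2}\,\BE\big[\|P_sDF\|^4\big]^{1/2}
\]
and the analogous term with the roles swapped.

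The delicate point is that the semigroup $P_s$ sits inside a bilinear contraction $P_sD^2F\otimes P_sD^2F$. I would handle it through the Mehler representation $P_s\Phi=\BE'[\Phi(e^{-s}\cdot+\sqrt{1-e^{-2s}}\,\cdot')]$ followed by Jensen's inequality. Together with the rotation invariance of the Gaussian law, this shows that $\BE\|P_sD^2F\otimes P_sD^2F\|^2\le \BE\|D^2F\otimes D^2F\|^2$ and $\BE\|P_sDF\|^4\le\BE\|DF\|^4$.

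Collecting the constants $(1+\tfrac12)$ from Step 2 with the Stein constant, and taking square roots once more, gives
\[
d_W(F,N)\le \frac{\sqrt{10}}{2\tilde\sigma^2}\,\BE\big[\|D^2F\otimes D^2F\|^2\big]^{1/4}\,\BE\big[\|DF\|^4\big]^{1/4}.
\]
The assumption $F\in\mathbb D^{2,4}$ guarantees that every quantity above is finite and justifies each exchange of $D$ with the integrals.
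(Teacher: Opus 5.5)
Your architecture is the right one: Stein's equation, $F=\delta(-DL^{-1}F)$, the Poincar\'e inequality for $G=\langle DF,-DL^{-1}F\rangle_{\mathscr H}$, Mehler's formula, and Vidotto's contraction identity from your Step 3. This is Vidotto's argument; the paper gives no proof of its own and only cites it.

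\textbf{The error is in Step 1.} It is exactly the step that produces the $\tilde\sigma^{-2}$ in your final display. Take $N\sim\mathscr N(0,\tilde\sigma^2)$ and a $1$-Lipschitz $\mathsf f$. The solution of $\tilde\sigma^2g'-xg=\mathsf f-\BE\mathsf f(N)$ is $g(x)=\tilde\sigma^{-1}g_0(x/\tilde\sigma)$, where $g_0$ is the standard-normal Stein solution for $\mathsf f(\tilde\sigma\,\cdot)$. That function is $\tilde\sigma$-Lipschitz, so $\|g'\|_\infty\le\tilde\sigma^{-2}\sqrt{2/\pi}\,\tilde\sigma=\sqrt{2/\pi}\,\tilde\sigma^{-1}$, not $\sqrt{2/\pi}\,\tilde\sigma^{-2}$.

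With the correct factor, your Steps 2--3 go through. Minkowski gives $\|DG\|_{L^2(\Omega;\mathscr H)}\le(1+\tfrac12)A$, where $A\triangleq\BE[\|D^2F\otimes D^2F\|_{\mathscr H^{\otimes2}}^2]^{1/4}\,\BE[\|DF\|_{\mathscr H}^4]^{1/4}$. What you actually prove is
\[
d_W(F,N)\le\frac{3}{\sqrt{2\pi}\,\tilde\sigma}\,A .
\]
This yields the displayed bound when $\tilde\sigma\le\sqrt{20\pi}/6\approx1.32$, and not beyond.

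Your closing line also does not compute the constant. We have $(1+\tfrac12)\sqrt{2/\pi}\neq\sqrt{10}/2$, and there is no further square root to take. The value $\sqrt{10}/2=\sqrt{5/2}$ comes from $(a+b)^2\le2a^2+2b^2$ with $a^2\le A^2$ and $b^2\le A^2/4$, combined with a Stein factor of exactly $\tilde\sigma^{-2}$.

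\textbf{The displayed inequality cannot be patched, because it is false for general $\tilde\sigma$.} It is not scale invariant. Replacing $F$ by $\lambda F$ (and $N$ by $\lambda N$) multiplies $d_W(F,N)$ by $\lambda$ but leaves $\tilde\sigma^{-2}A$ unchanged. So every non-Gaussian $F$ violates it for large $\lambda$.

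Concretely, let $F=\lambda(W_h^2-1)$ with $\|h\|_{\mathscr H}=1$. Then $\tilde\sigma^2=2\lambda^2$ and $\tfrac{\sqrt{10}}{2\tilde\sigma^2}A=\tfrac{\sqrt{10}}{2}\,48^{1/4}\approx4.16$ for every $\lambda$. The $1$-Lipschitz test function $x\mapsto(-\lambda-x)_+$ vanishes on the range of $F$, so $d_W(F,N)\ge\BE(-\lambda-N)_+\approx0.2\,\lambda$. The inequality therefore fails once $\lambda\ge21$.

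The $\tilde\sigma^{-2}$ normalization belongs to the scale-invariant total variation distance, where the Stein factor is $2/\tilde\sigma^2$ and the constant becomes $\sqrt{10}/\tilde\sigma^2$. For $d_W$ the correct power is $\tilde\sigma^{-1}$.

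\textbf{A smaller point in Step 3.} $P_sD^2F\otimes P_sD^2F$ is not $P_s$ applied to anything. After Mehler and Jensen, the two factors sit at two different Gaussian inputs (independent copies $\omega',\omega''$). You first need
\[
\|\mathsf f\otimes\mathsf g\|^2=\langle\mathsf f\otimes\mathsf f,\mathsf g\otimes\mathsf g\rangle\le\|\mathsf f\otimes\mathsf f\|\,\|\mathsf g\otimes\mathsf g\|
\]
for symmetric kernels. Only then do Cauchy--Schwarz and rotation invariance give $\BE\|P_sD^2F\otimes P_sD^2F\|^2\le\BE\|D^2F\otimes D^2F\|^2$.
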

To utilize Proposition \ref{P:vidotto}, we define $\tilde{\theta}_t^\eps \triangleq \sqrt{\frac{\sigma_t^2}{\operatorname{Var}(\theta_t^\eps)}}\left[\theta^\eps_t - \BE(\theta^\eps_t)\right]$, where the process $\theta_t^\eps = \frac{X_t-\bar{X}_t}{\sqrt{\eps}}$, and $\sigma_t^2$ is the limiting variance as defined in equation \eqref{E:Limiting-Variance}. As $\mathbb{E}(\theta_t^\eps) \neq 0$ and $\mathbb{E}\left[(\theta_t^\eps)^2\right] \neq \sigma_t^2$, we
can write, denoting $N \sim \mathscr{N}(0,\sigma_t^2)$,
\begin{equation}\label{E:Rearranged-Equation}
\begin{aligned}
d_W \left( \theta_t^\eps,N \right) &\leq d_W \left( \theta_t^\eps,\tilde{\theta}_t^\eps \right)  +
d_W \left( \tilde{\theta}_t^\eps,N \right) \leq \mathbb{E} \left( \left|{\theta_t^\eps - \tilde{\theta}_t^\eps}\right| \right) + d_W \left(
  \tilde{\theta}_t^\eps,N \right)\\
  &\leq \mathbb{E} \left( |{\theta_t^\eps}|
    \right)\left|{1-\sqrt{\frac{\sigma_t^2}{\operatorname{Var}(\theta_t^\eps)}}}\right|
    + \sqrt{\frac{\sigma_t^2}{\operatorname{Var}(\theta_t^\eps)}} \thinspace
    |{\mathbb{E}(\theta_t^\eps)}| + d_W \left(
  \tilde{\theta}_t^\eps,N \right),
\end{aligned}
\end{equation}
where, the second inequality comes from the fact that the Wasserstein distance is bounded from above by the $L^1(\Omega)$-norm due to the Lipschitz property of its associated test functions. As $\mathbb{E}(\tilde{\theta}_t^\eps) = 0$ and $\mathbb{E}\left[(\tilde{\theta}_t^\eps)^2\right] = \sigma_t^2$, Proposition \ref{P:vidotto} is applicable to deal with the last term $d_W \left(
  \tilde{\theta}_t^\eps,N \right)$ in equation \eqref{E:Rearranged-Equation} to give 
\begin{equation}\label{E:Tilde-F-N}
\begin{aligned}
d_W \left(
  \tilde{\theta}_t^\eps,N \right) \leq  \frac{\sqrt{10}}{2 {\sigma}^2_t }\BE \left[\|D^2 \tilde{\theta}_t^\eps \otimes D^2 \tilde{\theta}_t^\eps  \|_{\mathscr{H}^{\otimes 2}}^2 \right]^{\frac{1}{4}}\BE \left[\|D \tilde{\theta}_t^\eps\|_{\mathscr{H}}^4 \right]^{\frac{1}{4}}.
\end{aligned}
\end{equation}

We are now in the position to prove our main result.

 \begin{proof}[Proof of Theorem \ref{T:Main-Result}]
We start noting
$D \tilde{\theta}_t^\eps =
\sqrt{\frac{\sigma_t^2}{\eps \operatorname{Var}(\theta_t^\eps)}} \thinspace DX_t^\eps$,  $D^2 \tilde{\theta}_t^\eps =
\sqrt{\frac{\sigma_t^2}{\eps \operatorname{Var}(\theta_t^\eps)}} \thinspace D^2 X_t^\eps$ and using equations \eqref{E:Rearranged-Equation} and \eqref{E:Tilde-F-N} to obtain
\begin{equation}\label{E:Main-result-Proof-Eq-1}
\begin{aligned}
d_W \left(
  {\theta}_t^\eps,N \right) & \leq  \mathbb{E} \left( |{\theta_t^\eps}|
    \right)\left|{1-\sqrt{\frac{\sigma_t^2}{\operatorname{Var}(\theta_t^\eps)}}}\right|
    + \sqrt{\frac{\sigma_t^2}{\operatorname{Var}(\theta_t^\eps)}} \thinspace
    |{\mathbb{E}(\theta_t^\eps)}| \\
  & \qquad \qquad \qquad \qquad \qquad \quad +    \frac{\sqrt{10}}{2 \eps \operatorname{Var}(\theta_t^\eps) } \thinspace \BE \left[\|D^2 X_t^\eps \otimes D^2 X_t^\eps  \|_{\mathscr{H}^{\otimes 2}}^2 \right]^{\frac{1}{4}}\BE \left[\|D X_t^\eps\|_{\mathscr{H}}^4 \right]^{\frac{1}{4}}.
\end{aligned}
\end{equation} 
We now apply Propositions \ref{P:Prelimit-1} and \ref{P:Prelimit-2} to control the terms $\sqrt{\frac{\sigma_t^2}{\operatorname{Var}(\theta_t^\eps)}} \thinspace
    |{\mathbb{E}(\theta_t^\eps)}|$ and $\mathbb{E} \left( |{\theta_t^\eps}|
    \right)\left|{1-\sqrt{\frac{\sigma_t^2}{\operatorname{Var}(\theta_t^\eps)}}}\right|,
    $ respectively, to obtain 
    \begin{equation}\label{E:Main-result-Proof-Eq-2}
    \begin{aligned}
    \mathbb{E} \left( |{\theta_t^\eps}|
    \right)\left|{1-\sqrt{\frac{\sigma_t^2}{\operatorname{Var}(\theta_t^\eps)}}}\right|
  &  + \sqrt{\frac{\sigma_t^2}{\operatorname{Var}(\theta_t^\eps)}} \thinspace
    |{\mathbb{E}(\theta_t^\eps)}| \\
    & \qquad  \le K \frac{\mathbb{E} \left( |{\theta_t^\eps}|
    \right)}{\sqrt{\operatorname{Var}(\theta_t^\eps)}} \sqrt{\sqrt{\eta + \eps} + \left( \frac{\eta}{\eps} - \frac{1}{\gamma^2} \right)  +  \frac{\eta^2}{{\eps}}+  {\eps + \frac{\eta}{\eps} (\sqrt{\eps}+ \sqrt{\eta})}} \\
    & \qquad  \qquad \qquad \qquad \qquad \quad  + K \sqrt{\frac{\sigma_t^2}{\operatorname{Var}(\theta_t^\eps)}}
    \left[  \frac{\eta}{\sqrt{\eps}} +  \sqrt{\eps + \frac{\eta}{\eps} (\sqrt{\eps}+ \sqrt{\eta})} \right],
    \end{aligned}
    \end{equation}
whereas, in equation \eqref{E:Main-result-Proof-Eq-1}, the term $\frac{\sqrt{10}}{2 \eps \operatorname{Var}(\theta_t^\eps) }\BE \left[\|D^2 X_t^\eps \otimes D^2 X_t^\eps  \|_{\mathscr{H}^{\otimes 2}}^2 \right]^{\frac{1}{4}}\BE \left[\|D X_t^\eps\|_{\mathscr{H}}^4 \right]^{\frac{1}{4}}$ can be handled by combining Propositions \ref{P:First-Der-X} and \ref{P:Second-der-Prop} to get
\begin{equation*}
\begin{aligned}
\BE \left[\|D^2X_t^\eps \otimes D^2X_t^\eps  \|_{\mathscr{H}^{\otimes 2}}^2 \right]^{\frac{1}{4}}\BE \left[\|DX_t^\eps\|_{\mathscr{H}}^4 \right]^{\frac{1}{4}} \le  K \left(\eta^{\frac{3}{4}}\eps^{\frac{1}{2}} + \eps^{\frac{5}{4}}+ \eps^{\frac{7}{8}}\eta^{\frac{3}{8}} + \eta^{\frac{5}{4}}+  \eps^{\frac{3}{4}} \eta^{\frac{1}{2}} + \eps^{\frac{3}{8}} \eta^{\frac{7}{8}}\right).
\end{aligned}
\end{equation*}
Further the above equation yields
 \begin{equation}\label{E:Main-result-Proof-Eq-3}
 \begin{aligned}
 \frac{\sqrt{10}}{2 \eps \operatorname{Var}(\theta_t^\eps) } & \BE \left[\|D^2X_t^\eps \otimes D^2X_t^\eps  \|_{\mathscr{H}^{\otimes 2}}^2 \right]^{\frac{1}{4}}\BE \left[\|DX_t^\eps\|_{\mathscr{H}}^4 \right]^{\frac{1}{4}} \\
 & \qquad \quad  \le  \frac{K}{\operatorname{Var}(\theta_t^\eps)} \left[ \left( \frac{\eta}{\eps} \right)^{\frac{1}{2}} \eta^{\frac{1}{4}} + \eps^{\frac{1}{4}}  +  \left( \frac{\eta}{\eps} \right)^{\frac{1}{8}} \eta^{\frac{1}{4}} + \left( \frac{\eta}{\eps} \right) \eta^{\frac{1}{4}} + \left( \frac{\eta}{\eps} \right)^{\frac{1}{4}} \eta^{\frac{1}{4}} +  \left( \frac{\eta}{\eps} \right)^{\frac{5}{8}} \eta^{\frac{1}{4}}  \right].
 \end{aligned}
 \end{equation}
Putting equations \eqref{E:Main-result-Proof-Eq-1}, \eqref{E:Main-result-Proof-Eq-2} and \eqref{E:Main-result-Proof-Eq-3} together, we have
\begin{equation*}
\begin{aligned}
d_W(\theta_t^\eps, N) 
& \le K \frac{\BE(|\theta_t^\eps|)}{ \sqrt{\text{Var}(\theta_t^\eps)}} \sqrt{\sqrt{\eta + \eps} + \left( \frac{\eta}{\eps} - \frac{1}{\gamma^2} \right) + \eps + \frac{\eta}{\eps} (\sqrt{\eps}+ \sqrt{\eta})} \\
& \qquad \qquad \qquad + K\frac{\sigma_t}{\sqrt{\text{Var}(\theta_t^\eps)}} \left[ \frac{\eta}{\sqrt{\eps}} + \left[\eps + \frac{\eta}{\eps} (\sqrt{\eps}+ \sqrt{\eta})\right]^{\frac{1}{2}} \right] \\
& \qquad \qquad \qquad + \frac{K}{\operatorname{Var}(\theta_t^\eps)} \left[ \left( \frac{\eta}{\eps} \right)^{\frac{1}{2}} \eta^{\frac{1}{4}} + \eps^{\frac{1}{4}}  +  \left( \frac{\eta}{\eps} \right)^{\frac{1}{8}} \eta^{\frac{1}{4}} + \left( \frac{\eta}{\eps} \right) \eta^{\frac{1}{4}} + \left( \frac{\eta}{\eps} \right)^{\frac{1}{4}} \eta^{\frac{1}{4}} +  \left( \frac{\eta}{\eps} \right)^{\frac{5}{8}} \eta^{\frac{1}{4}}  \right].
\end{aligned}
\end{equation*}
Finally, for sufficiently small $\eps$ and $\eta$, the variance of the process $\theta_t^\eps$ is a time-independent constant which gives
\begin{multline*}
\begin{aligned}
\sup_{t \ge 0} d_W(\theta_t^\eps, N) & \le  K \sqrt{\sqrt{\eta + \eps} + \left( \frac{\eta}{\eps} - \frac{1}{\gamma^2} \right)}  + K \sqrt{ \eps + \frac{\eta}{\eps} (\sqrt{\eps}+ \sqrt{\eta})}\\
 & \qquad \qquad \qquad + K \left[ \left( \frac{\eta}{\eps} \right)^{\frac{1}{2}} \eta^{\frac{1}{4}} + \eps^{\frac{1}{4}}  +  \left( \frac{\eta}{\eps} \right)^{\frac{1}{8}} \eta^{\frac{1}{4}} + \left( \frac{\eta}{\eps} \right) \eta^{\frac{1}{4}} + \left( \frac{\eta}{\eps} \right)^{\frac{1}{4}} \eta^{\frac{1}{4}} +  \left( \frac{\eta}{\eps} \right)^{\frac{5}{8}} \eta^{\frac{1}{4}}  \right].
 \end{aligned}
\end{multline*}
This concludes the proof of the result.
\end{proof}

\section{First-order Malliavin Derivatives}\label{S:First-MD}
In this section, we establish a uniform-in-time bound for the first-order Malliavin derivative of the process $X^\eps,$ i.e., a {\sc uit} bound for the quantity $\BE \left[\|DX_t^\eps\|_{\mathscr{H}}^4 \right]$. This is obtained in Proposition \ref{P:First-Der-X}. It is important to note that for a random variable $F$, the Malliavin derivative $DF$ is a two-dimensional vector $(D^{W_1}F, \thinspace D^{W_2}F)$, where $D^W F$ represents the Malliavin derivative with respect to the Brownian motion $W$. For $t \ge r,$ the Malliavin derivatives for system \eqref{E:Multiscale-Diffusion-Main-Eq} are given by the integral equations
\begin{equation}\label{E:First-Order-Malliavin-derivative}
\begin{aligned}
D_r^{W_1}X_t^{\varepsilon} &= \sqrt{\varepsilon}\sigma(Y_r^\eta) + \int_r^t \left[c_x(X_s^\varepsilon, Y_s^\eta) D_r^{W_1}X_s^{\varepsilon} + c_y(X_s^\varepsilon, Y_s^\eta)D_r^{W_1}Y_s^{\eta} \right] ds + \sqrt{\eps} \int_r^t \sigma_y (Y_s^\eta) D_r^{W_1}Y_s^{\eta} \thinspace dW_s^1 ,   \\
D_r^{W_1}Y_t^{\eta} &=  \frac{1}{\eta} \int_r^t \left[ f_y(Y_s^\eta)D_r^{W_1}Y_s^{\eta}  \right] ds + \frac{1}{\sqrt{\eta}}  \int_r^t \tau_y(Y_s^\eta)D_r^{W_1}Y_s^{\eta} \thinspace dW_s^2,  \qquad \quad \text{and} \\
D_r^{W_2}X_t^{\varepsilon} &= \int_r^t \left[c_x(X_s^\varepsilon, Y_s^\eta) D_r^{W_2}X_s^{\varepsilon} + c_y(X_s^\varepsilon, Y_s^\eta)D_r^{W_2}Y_s^{\eta} \right] ds +  \sqrt{\eps}\int_r^t \sigma_y(Y_s^\eta) D_r^{W_2}Y_s^{\eta} \thinspace dW_s^1,   \\
D_r^{W_2}Y_t^{\eta} &=  \frac{1}{\sqrt{\eta}} \tau(Y_r^n) + \frac{1}{\eta}  \int_r^t \left[ f_y(Y_s^\eta)D_r^{W_2}Y_s^{\eta}  \right] ds  + \frac{1}{\sqrt{\eta}}  \int_r^t \tau_y(Y_s^\eta)D_r^{W_2}Y_s^{\eta} \thinspace dW_s^2. 
\end{aligned}
\end{equation}

\begin{lemma}\label{L:1st-der-Y-W1}
Let the process $Y_t^\eta$ be the solution of the second equation in system \eqref{E:Multiscale-Diffusion-Main-Eq} and Assumptions \ref{A:Boundedness of sigma-tau} and \ref{A:Derivatives-Assumption-f} hold. Then, for any $t \ge r,$ and $p \in \BN,$ we have 
$$\BE\left[|D_r^{W_1}Y_t^{\eta}|^p \right] =0.$$
\end{lemma}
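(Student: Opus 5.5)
The plan is to show that $D_r^{W_1}Y_t^\eta$ vanishes identically, so that all its moments are zero. The cleanest route is structural. The second equation of system \eqref{E:Multiscale-Diffusion-Main-Eq} is autonomous in $Y^\eta$ and driven only by $W^2$. Under Assumptions \ref{A:Boundedness of sigma-tau} and \ref{A:Derivatives-Assumption-f}(1) it has a unique strong solution, which is therefore measurable with respect to the filtration generated by $y_0$ and $W^2$. Since $W^1$ and $W^2$ are independent, the Malliavin derivative in the direction of $W^1$ of any smooth functional of $W^2$ alone is zero. Passing to limits in $\mathbb{D}^{1,2}$ (via Picard iterates, each a functional of $W^2$ only) gives $D^{W_1}_r Y_t^\eta = 0$ almost surely.

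As a complementary and fully self-contained check, I would work directly with the linear equation in \eqref{E:First-Order-Malliavin-derivative} satisfied by $Z_t \triangleq D_r^{W_1}Y_t^\eta$:
\[
Z_t = \frac{1}{\eta}\int_r^t f_y(Y_s^\eta) Z_s\,ds + \frac{1}{\sqrt{\eta}}\int_r^t \tau_y(Y_s^\eta) Z_s\,dW_s^2 .
\]
This equation has zero initial condition $Z_r=0$, because $Y^\eta$ has no $W^1$-noise term. Applying It\^o's formula to $|Z_t|^{2p}$ gives
\[
d|Z_t|^{2p} = \frac{1}{\eta}|Z_t|^{2p}\bigl[2p f_y(Y_t^\eta) + p(2p-1)\tau_y(Y_t^\eta)^2\bigr]dt + \text{martingale}.
\]
By \eqref{E:Ass2} the drift bracket is at most $-\lambda<0$. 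One needs to localize with stopping times to justify that the stochastic integral has zero mean. Doing so and taking expectations yields $\BE|Z_{t}|^{2p} \le -\frac{\lambda}{\eta}\int_r^t \BE|Z_s|^{2p}\,ds \le 0$. Hence $\BE|Z_t|^{2p}=0$ for all $p\in\BN$, and the odd moments vanish as well, by H\"older's or Jensen's inequality. Alternatively, one can solve the linear equation explicitly: $Z_t = Z_r \exp\bigl(\int_r^t \frac{1}{\eta}(f_y - \frac12\tau_y^2)\,ds + \int_r^t \frac{1}{\sqrt{\eta}}\tau_y\,dW_s^2\bigr)$, which is zero since $Z_r=0$.

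The only mildly delicate step is the localization in the It\^o argument. Since $f_y$ may be unbounded below, the coefficient of $Z$ is not globally bounded. I would handle this by stopping at $\tau_N=\inf\{s\ge r: |Z_s|+|Y^\eta_s|\ge N\}$, so that the stopped martingale has zero mean. Because the drift is nonpositive, Fatou's lemma then removes the localization. Uniqueness of the solution of the linear SDE (locally Lipschitz coefficients along the path) gives the pathwise conclusion $Z\equiv 0$ in any case.
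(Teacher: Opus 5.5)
Your proposal is correct and matches the argument the paper skips as trivial. $D_r^{W_1}Y_t^\eta$ solves the linear equation in \eqref{E:First-Order-Malliavin-derivative} with zero initial datum, so the integrating-factor formula from the proof of Lemma \ref{L:1st-der-Y-W2}, with $\frac{1}{\sqrt{\eta}}\tau(Y_r^\eta)$ replaced by $0$, gives $D_r^{W_1}Y_t^\eta\equiv 0$; your It\^o/localization computation is valid but not needed. One point to tighten in the structural route: $f$ is not assumed globally Lipschitz, so the Picard-iteration justification is not directly available, and you should instead invoke the general fact that an element of $\mathbb{D}^{1,2}$ measurable with respect to $\sigma(W^2)$ alone has vanishing $W^1$-Malliavin derivative (see \cite{nualart2006malliavin}).
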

The proof of Lemma \ref{L:1st-der-Y-W1} is trivial and we skip the details. In the next lemma, we obtain the bound for the term $\BE\left[(D_r^{W_1}X_t^{\varepsilon})^p\right],~p \in \BN.$

\begin{lemma}\label{L:1st-der-Y-W2}
Let the process $Y_t^\eta$ be the solution of the second equation in system \eqref{E:Multiscale-Diffusion-Main-Eq} and Assumptions \ref{A:Boundedness of sigma-tau} and \ref{A:Derivatives-Assumption-f} hold. Then, for any $t \ge r,$ and $p \in \BN,$ there exists a time-independent positive constant $K$ such that 
$$\BE \left[|D_r^{W_2}Y_t^{\eta}|^p \right] \le \frac{K}{ (\sqrt{\eta})^p}  e^{\frac{-p \lambda }{\eta}(t-r)}.$$
\end{lemma}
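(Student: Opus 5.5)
The equation for $Z_t \triangleq D_r^{W_2}Y_t^\eta$ in \eqref{E:First-Order-Malliavin-derivative} is linear in $Z$, so the plan is a direct $p$-th moment estimate via It\^o's formula. For $t \ge r$,
\begin{equation*}
dZ_t = \frac{1}{\eta} f_y(Y_t^\eta) Z_t \, dt + \frac{1}{\sqrt{\eta}} \tau_y(Y_t^\eta) Z_t \, dW_t^2, \qquad Z_r = \frac{1}{\sqrt{\eta}}\tau(Y_r^\eta).
\end{equation*}
It suffices to treat even powers $2p$, $p \in \BN$. Odd (or general) powers then follow from H\"older/Jensen: $\BE|Z_t|^p \le (\BE|Z_t|^{2p})^{1/2}$. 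The exponent $2p\lambda/(2\eta)=p\lambda/\eta$ is then exactly the one claimed, since the constant in \eqref{E:Ass2} with $p$ is uniform.

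Apply It\^o's formula to $Z_t^{2p}$. The drift collects into
\begin{equation*}
\frac{1}{\eta}\Big[2p\, f_y(Y_t^\eta) + p(2p-1)\,\tau_y(Y_t^\eta)^2\Big] Z_t^{2p},
\end{equation*}
and the martingale part is $\frac{2p}{\sqrt{\eta}}\tau_y Z_t^{2p} dW^2_t$. By Assumption \ref{A:Derivatives-Assumption-f}(2), the bracket is bounded above by $-\lambda$. Taking expectations, with the stochastic integral having zero mean, gives
\begin{equation*}
\frac{d}{dt}\BE[Z_t^{2p}] \le -\frac{\lambda}{\eta}\BE[Z_t^{2p}].
\end{equation*}
Gronwall then yields $\BE[Z_t^{2p}] \le \BE[Z_r^{2p}]\, e^{-\lambda(t-r)/\eta}$. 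Since $\tau^2 \le \lambda_2$ by Assumption \ref{A:Boundedness of sigma-tau}, we have $\BE[Z_r^{2p}] \le \lambda_2^{p}/\eta^{p}$.

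For the $p$-th power, a cleaner route to the exponent $p\lambda/\eta$ is to use the explicit solution instead:
\begin{equation*}
Z_t = Z_r \exp\Big(\frac{1}{\eta}\int_r^t \big(f_y - \tfrac12\tau_y^2\big)\,ds + \frac{1}{\sqrt{\eta}}\int_r^t \tau_y \, dW^2_s\Big).
\end{equation*}
Compute $\BE|Z_t|^p$ by inserting and removing the exponential martingale $\mathcal{E}\big(\frac{p}{\sqrt\eta}\int\tau_y\,dW^2\big)$. What remains is $\exp\big(\frac{1}{\eta}\int_r^t [p f_y + \tfrac{p(p-1)}{2}\tau_y^2]\,ds\big)$, which by \eqref{E:Ass2} (applied with $p$ replaced appropriately, and using $f_y\le -\lambda^*$) decays like $e^{-c p(t-r)/\eta}$. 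Since $\tau_y$ is bounded, Novikov holds and the exponential martingale has expectation one. Because $Z_r$ is $\mathcal{F}_r$-measurable and the increments of $W^2$ are independent of it, the $Z_r$ factor splits off with $\BE|Z_r|^p \le \lambda_2^{p/2}\eta^{-p/2}$.

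The main obstacle is matching exactly the rate $p\lambda/\eta$ under the precise normalization of \eqref{E:Ass2}. The It\^o/Gronwall computation naturally produces $\lambda/\eta$ for the $2p$-th moment. Reconciling this with the stated exponent requires either reading \eqref{E:Ass2} as giving decay proportional to $p$ (via the pathwise form above, where the $pf_y$ term contributes $-p\lambda^*$), or adjusting the constant $\lambda$. I would first try the explicit-exponential route, since it exhibits the $p$-linear decay directly; boundedness of $\tau_y$ and nondegeneracy of $\tau$ handle all remaining constants uniformly in time.
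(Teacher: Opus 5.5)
Your explicit-solution route is the paper's proof. Write $D_r^{W_2}Y_t^\eta$ as $\eta^{-1/2}\tau(Y_r^\eta)$ times the stochastic exponential, factor out the martingale $\exp\bigl(\frac{p}{\sqrt{\eta}}\int_r^t \tau_y(Y_s^\eta)\,dW_s^2-\frac{p^2}{2\eta}\int_r^t \tau_y(Y_s^\eta)^2\,ds\bigr)$, and bound what remains, $\exp\bigl(\frac{p}{\eta}\int_r^t [f_y+\frac{p-1}{2}\tau_y^2](Y_s^\eta)\,ds\bigr)$. The paper closes this in one line. It invokes, as its reading of Assumption \ref{A:Derivatives-Assumption-f}, the pointwise bound $\sup_{y}\bigl[f_y(y)+\frac{1}{2}(p-1)\tau_y(y)^2\bigr]\le-\lambda$, and uses $|\tau|\le\sqrt{\lambda_2}$ for the prefactor.

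That pointwise bound is exactly the step your proposal never supplies, and your substitutes do not produce it. Using \eqref{E:Ass2} at a single index $q=\lceil p/2\rceil$ yields only $pf_y+\frac{p(p-1)}{2}\tau_y^2\le-\frac{p}{2q}\lambda$, a rate of at most $\lambda/\eta$ rather than $p\lambda/\eta$. The bound $f_y\le-\lambda^*$ contributes $-p\lambda^*$ but leaves the $\frac{p(p-1)}{2}\tau_y^2$ term unabsorbed.

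Your second paragraph also contains a wrong step. Your own Gronwall bound $\BE[Z_t^{2p}]\le\lambda_2^{p}\eta^{-p}e^{-\lambda(t-r)/\eta}$ gives, via Jensen, only $\BE|Z_t|^p\le\lambda_2^{p/2}\eta^{-p/2}e^{-\lambda(t-r)/(2\eta)}$, not the exponent $p\lambda/\eta$. Under the paper's form of the hypothesis applied at index $2p$, the It\^o drift would be at most $-\frac{2p\lambda}{\eta}Z_t^{2p}$, and your Gronwall route would also give the claimed rate. So the discrepancy lies in the hypothesis, not the method. A minor point: $|Z_r|^p$ does not split off by independence, since the remaining factors depend on $Y^\eta$ and hence on $\mathcal{F}_r$. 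It is simply bounded pointwise by $(\lambda_2/\eta)^{p/2}$, after which the exponential martingale has expectation one by Novikov.

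Your suspicion about the normalization is well founded, and it cannot be overcome from \eqref{E:Ass2} as printed, because the stated rate is then false. Requiring \eqref{E:Ass2} for every $p\in\BN$ forces $\tau_y\equiv0$: the term $p(2p-1)\tau_y^2$ grows quadratically in $p$, while $2pf_y$ grows only linearly. After that, the condition reduces to $f_y\le-\lambda/2$.

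Take $f(y)=-\frac{\lambda}{2}y$ and $\tau\equiv1$. Assumptions \ref{A:Boundedness of sigma-tau} and \ref{A:Derivatives-Assumption-f} hold with this $\lambda$ (and $\lambda^*=\lambda/2$). Yet $D_r^{W_2}Y_t^\eta=\eta^{-1/2}e^{-\lambda(t-r)/(2\eta)}$ deterministically, so $\BE|D_r^{W_2}Y_t^\eta|^p=\eta^{-p/2}e^{-p\lambda(t-r)/(2\eta)}$. This is not bounded by $K\eta^{-p/2}e^{-p\lambda(t-r)/\eta}$ as $t-r\to\infty$.

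A complete argument therefore has two options. It can state the paper's working hypothesis $\sup_y[f_y+\frac{p-1}{2}\tau_y^2]\le-\lambda$ explicitly; then your pathwise route finishes at once and coincides with the paper's proof. Alternatively, it can settle for the rate $p\lambda/(2\eta)$ (or $p\lambda^*/\eta$), which your pathwise computation does give once you note that $\tau_y\equiv0$.
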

\begin{proof}
From equation \eqref{E:First-Order-Malliavin-derivative}, we have
\begin{equation*}
D_r^{W_2}Y_t^{\eta} = \frac{1}{\sqrt{\eta}}\tau(Y_r^\eta) \exp\left[ \int_r^t \frac{1}{\eta} f_y(Y_s^\eta) \thinspace ds  + \int_r^t \frac{1}{\sqrt{\eta}}\tau_y( Y_s^\eta) \thinspace dW_s^2 - \frac{1}{2 \eta} \int_r^t [\tau_y( Y_s^\eta)]^2 \thinspace ds \right].
\end{equation*} 
Increasing the power to $p$ and performing a simple algebra yield
\begin{equation*}
\begin{aligned}
\left|D_r^{W_2}Y_t^{\eta}\right|^p  & = \left[ \frac{1}{\sqrt{\eta}}\left|\tau(Y_r^\eta)\right| \right]^p \exp \left[ \int_r^t \frac{p}{\eta} f_y(Y_s^\eta)\thinspace ds  + \int_r^t \frac{p}{\sqrt{\eta}}\tau_y( Y_s^\eta) \thinspace dW_s^2  \right. \\
& \qquad \qquad \qquad  \qquad \qquad \quad \quad \left. -  \frac{p}{2 \eta} \int_r^t [\tau_y( Y_s^\eta)]^2 \thinspace ds + \frac{p^2 }{2 \eta} \int_r^t [\tau_y( Y_s^\eta)]^2 \thinspace ds - \frac{p^2}{2 \eta} \int_r^t [\tau_y( Y_s^\eta)]^2 \thinspace ds  \right] \\
& =  \left[ \frac{1}{\sqrt{\eta}}\left|\tau(Y_r^\eta)\right| \right]^p \exp \left[ \int_r^t \frac{p}{\eta} f_y(Y_s^\eta) \thinspace ds -  \frac{p}{2 \eta} \int_r^t [\tau_y( Y_s^\eta)]^2 \thinspace ds + \frac{p^2 }{2 \eta} \int_r^t [\tau_y( Y_s^\eta)]^2 \thinspace ds  \right] \times \\
& \qquad \qquad \qquad  \qquad \qquad \quad \quad \qquad \qquad \qquad \quad  \exp \left[ \int_r^t \frac{p}{\sqrt{\eta}}\tau_y( Y_s^\eta) \thinspace dW_s^2 - \frac{p^2}{2 \eta} \int_r^t [\tau_y( Y_s^\eta)]^2 \thinspace ds  \right] \\
& =  \left[ \frac{1}{\sqrt{\eta}}\left|\tau(Y_r^\eta)\right| \right]^p \exp \left[ \int_r^t \frac{p}{\eta} \left\{f_y(Y_s^\eta)+  \frac{1}{2}(p-1)[\tau_y( Y_s^\eta)]^2 \right\} ds  \right] \times \\
& \qquad \qquad \qquad  \qquad \qquad \quad \quad \qquad \qquad \qquad \quad  \exp \left[ \int_r^t \frac{p}{\sqrt{\eta}}\tau_y( Y_s^\eta) \thinspace dW_s^2 - \frac{p^2}{2 \eta} \int_r^t [\tau_y( Y_s^\eta)]^2 \thinspace ds  \right]. 
\end{aligned}
\end{equation*}
Next, noting the term $\exp \left[ \int_r^t \frac{p}{\sqrt{\eta}}\tau_y( Y_s^\eta) \thinspace dW_s^2 - \frac{p^2}{2 \eta} \int_r^t [\tau_y( Y_s^\eta)]^2 \thinspace ds  \right]$ is a martingale, using Assumption \ref{A:Derivatives-Assumption-f}: $\sup_{y} \left[ f_y(y) + \frac{1}{2}(p-1) [\tau_y(y)]^2 \right] \le - \lambda$ and the uniform boundedness of $\tau$, we obtain the required bound.
\end{proof}

\begin{lemma}\label{L:1st-der-generic-lemma}
For $i \in \{1,2\}$ and $p \in \BN$, let the process $D_r^{W_i}X_t^{\varepsilon}$ be the solution of equations in system \eqref{E:Multiscale-Diffusion-Main-Eq}. Then, there exist time-independent positive constants $K$ and $K_{c_x}$ such that
\begin{multline*}
\BE \left[(D_r^{W_i}X_t^{\varepsilon})^{2p} \right] \le K{\varepsilon^{p}}e^{-(2p)\alpha (t-r)} \ind_{(i=1)} + K \underbrace{\int_r^t \cdots \int_r^t}_{2p\operatorname{-times}} \prod_{j=1}^{2p}e^{-\alpha (t-s_j)} \prod_{j=1}^{2p} \left\{ \BE (D_r^{W_i}Y_{s_j}^{\eta})^{2p} \right\}^{\frac{1}{2p}}\thinspace ds_1 \cdots ds_{2p} \\
+ K \eps^p e^{-(2p)\alpha (t-r)} \left[ \underbrace{ \int_r^t \cdots \int_r^t}_{2p\operatorname{-times}} \prod_{j=1}^{2p}e^{K_{c_x} (s_j-r)} \prod_{j=1}^{2p} \left\{ \BE (D_r^{W_i}Y_{s_j}^{\eta})^{4p} \right\}^{\frac{1}{2p}}\thinspace ds_1 \cdots ds_{2p} \right]^{\frac{1}{2}}.
\end{multline*}
\end{lemma}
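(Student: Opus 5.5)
The plan is to treat the equation for $D_r^{W_i}X_t^\eps$ in \eqref{E:First-Order-Malliavin-derivative} as a linear equation in $D_r^{W_i}X^\eps$ and solve it by variation of constants. Introduce the integrating factor
\[
\Phi_r(t) \triangleq \exp\left( \int_r^t c_x(X_u^\eps, Y_u^\eta)\, du \right).
\]
Then, by It\^o's product rule (the factor has finite variation, so there is no cross-variation term),
\[
D_r^{W_i}X_t^\eps = \Phi_r(t)\Big[\sqrt{\eps}\,\sigma(Y_r^\eta)\ind_{(i=1)} + \int_r^t \Phi_r(s)^{-1} c_y(X_s^\eps,Y_s^\eta) D_r^{W_i}Y_s^\eta \, ds + \sqrt{\eps}\int_r^t \Phi_r(s)^{-1}\sigma_y(Y_s^\eta) D_r^{W_i}Y_s^\eta\, dW_s^1\Big].
\]
Assumption \ref{A:Derivatives-Assumption-c}(2) gives $c_x\le -\alpha$, so $\Phi_r(t)\le e^{-\alpha(t-r)}$ and $\Phi_r(t)\Phi_r(s)^{-1}\le e^{-\alpha(t-s)}$. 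Boundedness of $c_x$ (by $K_{c_x}\triangleq K$) gives $\Phi_r(s)^{-1}\le e^{K_{c_x}(s-r)}$. Raise to the power $2p$ and use $(a+b+c)^{2p}\le K(a^{2p}+b^{2p}+c^{2p})$; this produces three terms, which are handled in turn.

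\emph{First term.} Since $\sigma$ is bounded, it equals $\eps^p|\sigma(Y_r^\eta)|^{2p}\Phi_r(t)^{2p}\le K\eps^p e^{-2p\alpha(t-r)}$. This term is present only when $i=1$.

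\emph{Second term (drift).} Bring $\Phi_r(t)$ inside and use $|c_y|\le K$, which bounds it by $\big(\int_r^t e^{-\alpha(t-s)}|D_r^{W_i}Y_s^\eta|\,ds\big)^{2p}$. Write this power as a $2p$-fold integral and take expectations. The generalized H\"older inequality $\BE\prod_{j=1}^{2p}|Z_j|\le\prod_j(\BE|Z_j|^{2p})^{1/(2p)}$ then gives exactly the second term of the statement.

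\emph{Third term (stochastic integral).} This is the step I expect to be the main obstacle. The factor $\Phi_r(t)$ is not adapted relative to the integral's time variable, so it cannot be moved inside the stochastic integral. I would therefore bound it outside by $e^{-\alpha(t-r)}$, leaving $\eps^p e^{-2p\alpha(t-r)}\BE|M_t|^{2p}$ with $M_t=\int_r^t \Phi_r(s)^{-1}\sigma_y D_r^{W_i}Y_s^\eta\,dW_s^1$. Note that $\Phi_r(t)$ is random, so strictly one bounds it pathwise by the deterministic $e^{-\alpha(t-r)}$ before taking expectations, which is legitimate. Next, Cauchy--Schwarz (or directly BDG) gives $\BE|M_t|^{2p}\le (\BE|M_t|^{4p})^{1/2}$. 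BDG then yields
\[
\BE|M_t|^{4p}\le K\,\BE\Big(\int_r^t \Phi_r(s)^{-2}|D_r^{W_i}Y_s^\eta|^2 ds\Big)^{2p}.
\]
Bound $\Phi_r(s)^{-2}\le e^{2K_{c_x}(s-r)}$, absorbing the constant into $K_{c_x}$. Expand the $2p$-th power as a $2p$-fold integral of a product of $|D_r^{W_i}Y_{s_j}^\eta|^2$ and apply generalized H\"older, $\BE\prod_j|Z_j|^2\le\prod_j(\BE|Z_j|^{4p})^{1/(2p)}$. Taking the square root produces the bracket raised to the power $\frac12$, matching the third term.

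The remaining care is in bookkeeping: the exponent $2K_{c_x}$ versus $K_{c_x}$ is absorbed by redefining the constant, and all constants must depend only on $p$, $K$, and $\alpha$, not on $t$ or $r$. The growing factor $e^{K_{c_x}(s_j-r)}$ is deliberately left in the statement; it is later compensated by the exponential decay of $\BE|D_r^{W_2}Y_s^\eta|^{4p}$ from Lemma \ref{L:1st-der-Y-W2} when $\eta$ is small. For $i=1$, Lemma \ref{L:1st-der-Y-W1} makes the last two terms vanish.
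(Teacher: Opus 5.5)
Your proposal is correct and follows essentially the same route as the paper: variation of constants with the integrating factor $e^{\int_r^t c_x(X_u^\eps,Y_u^\eta)\,du}$, the three-term split, generalized H\"older for the drift term, and BDG plus H\"older at the $4p$ level (then a square root) for the stochastic-integral term. The only cosmetic difference is in the stochastic-integral term. The paper separates the factor $e^{\int_r^t c_x\,du}$ from the stochastic integral by Cauchy--Schwarz and then bounds it by $e^{-\alpha(t-r)}$. You instead apply that deterministic bound pathwise and use $\BE|M_t|^{2p}\le(\BE|M_t|^{4p})^{1/2}$. Both give exactly the stated estimate.
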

\begin{proof}
For $i \in \{1,2\}$, we have
\begin{multline*}
D_r^{W_i}X_t^{\varepsilon} = \sqrt{\varepsilon}\sigma(Y_r^\eta)\ind_{(i=1)}  + \int_r^t \left[c_x(X_s^\varepsilon, Y_s^\eta) D_r^{W_i}X_s^{\varepsilon} + c_y(X_s^\varepsilon, Y_s^\eta)D_r^{W_i}Y_s^{\eta} \right] ds \\
 + \sqrt{\eps} \int_r^t \sigma_y (Y_s^\eta) D_r^{W_i}Y_s^{\eta} \thinspace dW_s^1. 
\end{multline*}
Increasing the power to $2p$ on both sides of the solution\footnote{For the general {\sc sde} \cite[Problem 6.15]{KS91}: $dX_t = [A_t  X_t+ a_t ] \thinspace dt + [S_t X_t + \sigma_t] \thinspace dW_t$ and the integrating factor process $Z_t  \triangleq \exp \left[ \int_0^t A_u \thinspace du + \int_0^t S_u \thinspace dW_u - \frac{1}{2} \int_0^t S_u^2 \thinspace du \right]$,
the solution to the {\sc sde} is given by
\begin{equation*}
\begin{aligned}
X_t & = Z_t \left[ X_0 + \int_0^t Z_u^{-1} \left\{ a_u - S_u \sigma_u \right\} \thinspace du + \int_0^t Z_u^{-1} \sigma_u \thinspace dW_u \right].
\end{aligned}
\end{equation*}} of above equation followed by taking expectation, one can get
\begin{multline*}
\BE \left[(D_r^{W_i}X_t^{\varepsilon})^{2p} \right]  \le  {\varepsilon^{p}} e^{-(2p)\alpha (t-r)}\sigma(Y_r^\eta)^{2p}\ind_{(i=1)}+ K \BE \left[ \int_r^t e^{\int_s^t c_x(X_u^\varepsilon, Y_u^\eta) \thinspace du}c_y(X_s^\varepsilon, Y_s^\eta)D_r^{W_i}Y_s^{\eta} \thinspace ds \right]^{2p} \\
 + \eps^p K  \BE \left[ \int_r^t e^{\int_s^t c_x(X_u^\varepsilon, Y_u^\eta) \thinspace du}\sigma_y( Y_s^\eta)D_r^{W_i}Y_s^{\eta} \thinspace dW_s^1 \right]^{2p}.
\end{multline*}
Boundedness of $\sigma$ gives
\begin{equation*}
\begin{aligned}
& \BE  \left[(D_r^{W_i}X_t^{\varepsilon})^{2p} \right]  \le  K {\varepsilon^{p}} e^{-(2p)\alpha (t-r)} \ind_{(i=1)}  + K \BE \left[ \int_r^t e^{\int_s^t c_x(X_u^\varepsilon, Y_u^\eta) \thinspace du}c_y(X_s^\varepsilon, Y_s^\eta)D_r^{W_i}Y_s^{\eta} \thinspace ds \right]^{2p} \\
&   + \eps^p K \BE \left[ \int_r^t e^{\int_s^t c_x(X_u^\varepsilon, Y_u^\eta) \thinspace du}\sigma_y( Y_s^\eta)D_r^{W_i}Y_s^{\eta} \thinspace dW_s^1 \right]^{2p}  \triangleq K {\varepsilon^{p}}e^{-(2p)\alpha (t-r)}\ind_{(i=1)}+  I_1(t,r; \eps, \eta) + I_2(t,r; \eps, \eta).
\end{aligned}
\end{equation*}
We now handle the terms $I_1(t,r; \eps, \eta) $ and $I_2(t,r; ; \eps, \eta)$ separately. Applying H\"older's inequality repeatedly and Assumption \ref{A:Derivatives-Assumption-c} to the function $c$, we obtain
\begin{equation*}
\begin{aligned}
I_1(t,r; \eps, \eta) &  \le  K \BE \left[ \int_r^t e^{\int_s^t c_x(X_u^\varepsilon, Y_u^\eta) \thinspace du} |D_r^{W_i}Y_s^{\eta}| \thinspace ds \right]^{2p} \le K \BE \left[ \int_r^t e^{-\alpha (t-s)} |D_r^{W_i}Y_s^{\eta}| \thinspace ds \right]^{2p} \\
& \le  K \underbrace{\int_r^t \cdots \int_r^t}_{2p\operatorname{-times}} \prod_{j=1}^{2p}e^{-\alpha (t-s_j)} \prod_{j=1}^{2p} \left\{ \BE (D_r^{W_i}Y_{s_j}^{\eta})^{2p} \right\}^{\frac{1}{2p}}\thinspace ds_1 \cdots ds_{2p}.
\end{aligned}
\end{equation*}
Next, for $I_2(t,r; \eps, \eta)$, using H\"older's inequality and martingale moment inequality \cite[Proposition 3.26]{KS91} followed by Assumption \ref{A:Derivatives-Assumption-c}, we have
\begin{equation*}
\begin{aligned}
I_2(t,r; \eps, \eta) &  = K \eps^p \BE \left[ \left( e^{\int_r^t c_x(X_u^\varepsilon, Y_u^\eta) \thinspace du} \right)^{2p} \left( \int_r^t e^{-\int_r^s c_x(X_u^\varepsilon, Y_u^\eta)\thinspace du}\sigma_y( Y_s^\eta)D_r^{W_i}Y_s^{\eta} \thinspace dW_s^1 \right)^{2p} \right] \\
& \le K \eps^p \left[ \BE \left( e^{\int_r^t c_x(X_u^\varepsilon, Y_u^\eta) \thinspace du} \right)^{4p} \right]^{\frac{1}{2}}  \left[ \BE \left( \int_r^t e^{-\int_r^s c_x(X_u^\varepsilon, Y_u^\eta) \thinspace du}\sigma_y( Y_s^\eta)D_r^{W_i}Y_s^{\eta} \thinspace dW_s^1 \right)^{4p}\right]^{\frac{1}{2}} \\
& \le K \eps^p e^{-(2p)\alpha (t-r)} \left[ \BE \left( \int_r^t e^{-2\int_r^s c_x(X_u^\varepsilon, Y_u^\eta) \thinspace du}\sigma_y( Y_s^\eta)^2 (D_r^{W_i}Y_s^{\eta})^2 \thinspace ds \right)^{2p}\right]^{\frac{1}{2}}.
\end{aligned}
\end{equation*}
The boundedness of $\sigma_y$ from Assumption \ref{A:Derivatives-Assumption-c}, the inequality $-c_x \le |c_x| \le K_{c_x}$ and H\"older's inequality yield
\begin{equation*}
\begin{aligned}
I_2(t,r; \eps, \eta) & \le K \eps^p e^{-(2p)\alpha (t-r)} \left[ \BE \left( \int_r^t e^{K_{c_x} (s-r)} (D_r^{W_i}Y_s^{\eta})^2 \thinspace ds \right)^{2p}\right]^{\frac{1}{2}}\\
& \le K \eps^p e^{-(2p)\alpha (t-r)} \left[ \int_r^t \cdots \int_r^t \prod_{j=1}^{2p}e^{K_{c_x} (s_j-r)} \prod_{j=1}^{2p} \left\{ \BE (D_r^{W_i}Y_{s_j}^{\eta})^{4p} \right\}^{\frac{1}{2p}}\thinspace ds_1 \cdots ds_{2p} \right]^{\frac{1}{2}}.
\end{aligned}
\end{equation*}
Putting  all these expressions together, we obtain the required result.
\end{proof}
In the next two lemmas, we estimate the first-order Malliavin derivative terms $\BE \left[|D_r^{W_1}X_t^{\varepsilon}|^{2p} \right]$ and $\BE \left[|D_r^{W_2}X_t^{\varepsilon}|^{2p} \right]$. The proofs of these lemmas employ Lemma \ref{L:1st-der-generic-lemma}.
\begin{lemma}\label{L:1st-der-X-W1}
Let the process $X_t^\eps$ be the solution of the first equation in system \eqref{E:Multiscale-Diffusion-Main-Eq}. Then, for any $t \ge r,$ and $p \in \BN$, there exists a time-independent positive constant $K$ such that
$$\BE\left[\left|D_r^{W_1}X_t^{\varepsilon}\right|^{2p} \right]  \le  K {\varepsilon^{p}}e^{-(2p)\alpha (t-r)}.$$
\end{lemma}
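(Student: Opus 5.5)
The plan is to apply Lemma \ref{L:1st-der-generic-lemma} with $i=1$ and then use Lemma \ref{L:1st-der-Y-W1}, which says that $D_r^{W_1}Y_s^\eta$ vanishes for every $s \ge r$. Concretely, Lemma \ref{L:1st-der-generic-lemma} bounds $\BE[(D_r^{W_1}X_t^\eps)^{2p}]$ by three terms:
\begin{enumerate}
\item the initial term $K\eps^p e^{-2p\alpha(t-r)}$, coming from $\sqrt{\eps}\,\sigma(Y_r^\eta)$;
\item a Lebesgue-integral term whose integrand contains the factors $\{\BE (D_r^{W_1}Y_{s_j}^\eta)^{2p}\}^{1/(2p)}$;
\item a stochastic-integral term whose integrand contains the factors $\{\BE (D_r^{W_1}Y_{s_j}^\eta)^{4p}\}^{1/(2p)}$.
\end{enumerate}
By Lemma \ref{L:1st-der-Y-W1}, every one of these moment factors is zero. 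Hence the second and third terms vanish identically, and only the first term survives. That term is exactly the claimed bound $K\eps^p e^{-2p\alpha(t-r)}$, with $K$ independent of $t$ because $\sigma$ is bounded.

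A more transparent route, which I would also record, avoids the generic lemma altogether. Since $D_r^{W_1}Y^\eta \equiv 0$, the equation for $D_r^{W_1}X_t^\eps$ in \eqref{E:First-Order-Malliavin-derivative} reduces to the linear random ODE
$$\frac{d}{dt} D_r^{W_1}X_t^\eps = c_x(X_t^\eps,Y_t^\eta)\,D_r^{W_1}X_t^\eps, \qquad D_r^{W_1}X_r^\eps=\sqrt{\eps}\,\sigma(Y_r^\eta).$$
Its solution is
$$D_r^{W_1}X_t^\eps=\sqrt{\eps}\,\sigma(Y_r^\eta)\exp\Big(\int_r^t c_x(X_s^\eps,Y_s^\eta)\,ds\Big).$$
Using $c_x\le-\alpha$ from Assumption \ref{A:Derivatives-Assumption-c}(2) and $|\sigma|\le K$, I get the pathwise bound
$$|D_r^{W_1}X_t^\eps|\le K\sqrt{\eps}\,e^{-\alpha(t-r)}.$$
Raising this to the power $2p$ and taking expectations gives the statement; in fact the bound holds almost surely, not just in expectation.

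The only point needing care is justifying $D_r^{W_1}Y_t^\eta=0$, i.e.\ Lemma \ref{L:1st-der-Y-W1}, which may be assumed. It holds because $Y^\eta$ is driven only by $W^2$, which is independent of $W^1$. Equivalently, $D_r^{W_1}Y^\eta$ solves a linear SDE with zero initial datum, so pathwise uniqueness forces it to be zero. I do not expect any genuine obstacle beyond this.
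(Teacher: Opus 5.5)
Your first route is exactly the paper's proof: apply Lemma \ref{L:1st-der-generic-lemma} with $i=1$, then use Lemma \ref{L:1st-der-Y-W1} to make the two integral terms vanish. Your pathwise alternative is also correct and somewhat cleaner. You solve the reduced linear equation explicitly as $D_r^{W_1}X_t^\eps=\sqrt{\eps}\,\sigma(Y_r^\eta)\exp\bigl(\int_r^t c_x(X_s^\eps,Y_s^\eta)\,ds\bigr)$ and use $c_x\le-\alpha$ and $|\sigma|\le K$; this gives the almost-sure bound $|D_r^{W_1}X_t^\eps|\le K\sqrt{\eps}\,e^{-\alpha(t-r)}$ without the moment and H\"older bookkeeping of the generic lemma.
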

\begin{proof}
For $i=1$, we apply Lemmas \ref{L:1st-der-Y-W1} and \ref{L:1st-der-generic-lemma} to obtain the required result.
\end{proof}

\begin{lemma}\label{L:1st-der-X-W2}
Let the process $X_t^\eps$ be the solution of the first equation in system \eqref{E:Multiscale-Diffusion-Main-Eq}. Then, for any $t \ge r,$ and $p \in \BN$, there exists a time-independent positive constant $K$ such that
$$\BE \left[\left|D_r^{W_2}X_t^{\varepsilon}\right|^{2p} \right] \le K \left[ \eta^p + \eps^p \right]e^{-(2p)\alpha (t-r)}.$$
\end{lemma}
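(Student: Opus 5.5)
The plan is to specialize Lemma \ref{L:1st-der-generic-lemma} to $i=2$ and insert the exponential bound of Lemma \ref{L:1st-der-Y-W2} into each of its terms. For $i=2$ the indicator term $K\varepsilon^p e^{-(2p)\alpha(t-r)}\ind_{(i=1)}$ vanishes. Two contributions remain:
\begin{itemize}
\item the deterministic-drift term, which I expect to produce the $\eta^p$ part of the bound;
\item the stochastic-integral term, which I expect to produce the $\varepsilon^p$ part.
\end{itemize}
Throughout, $\eta$ is taken small enough (allowed by Assumption \ref{A:eta-eps-interaction}) that $\lambda/\eta$ dominates both $\alpha$ and $K_{c_x}$.

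For the first term, Lemma \ref{L:1st-der-Y-W2} gives
\[
\{\BE (D_r^{W_2}Y_{s}^{\eta})^{2p}\}^{\frac{1}{2p}} \le K\eta^{-\frac12} e^{-\frac{\lambda}{\eta}(s-r)}.
\]
The $2p$-fold integral therefore factorizes into the $2p$-th power of the single integral
\[
K\eta^{-\frac12}\int_r^t e^{-\alpha(t-s)}e^{-\frac{\lambda}{\eta}(s-r)}\,ds .
\]
Writing $e^{-\alpha(t-s)}=e^{-\alpha(t-r)}e^{\alpha(s-r)}$, this equals
\[
K\eta^{-\frac12}e^{-\alpha(t-r)}\int_r^t e^{-(\frac{\lambda}{\eta}-\alpha)(s-r)}\,ds \le K\eta^{-\frac12}e^{-\alpha(t-r)}\,\frac{\eta}{\lambda-\alpha\eta}\le K\eta^{\frac12}e^{-\alpha(t-r)},
\]
where the last step uses $\eta<\lambda/(2\alpha)$. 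Raising to the power $2p$ yields $K\eta^p e^{-(2p)\alpha(t-r)}$.

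For the second term, Lemma \ref{L:1st-der-Y-W2} with exponent $4p$ gives
\[
\{\BE (D_r^{W_2}Y_{s}^{\eta})^{4p}\}^{\frac{1}{2p}} \le K\eta^{-1}e^{-\frac{2\lambda}{\eta}(s-r)}.
\]
The bracket again factorizes, this time into the $2p$-th power of
\[
K\eta^{-1}\int_r^t e^{(K_{c_x}-\frac{2\lambda}{\eta})(s-r)}\,ds \le \frac{K}{2\lambda-K_{c_x}\eta}\le K,
\]
provided $\eta<\lambda/K_{c_x}$. The square root of the resulting constant is again a constant. The prefactor $\varepsilon^p e^{-(2p)\alpha(t-r)}$ survives untouched, giving $K\varepsilon^p e^{-(2p)\alpha(t-r)}$. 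Summing the two contributions gives the claimed bound $K[\eta^p+\varepsilon^p]e^{-(2p)\alpha(t-r)}$.

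The main delicate point is this second term. The crude bound $e^{-\int_r^s c_x}\le e^{K_{c_x}(s-r)}$ used in Lemma \ref{L:1st-der-generic-lemma} produces exponential growth in $s$. It must be absorbed by the fast decay $e^{-2\lambda(s-r)/\eta}$ of the fast-process derivative, uniformly in $t$. This works only because $\lambda$ in \eqref{E:Ass2} is uniform in $p$, and because $\eta$ is small relative to $\lambda/K_{c_x}$. Both facts must be checked so that the constant $K$ stays independent of $t$ and $r$. The prefactor $e^{-(2p)\alpha(t-r)}$ is then kept outside, so the final estimate decays in $t-r$ as stated.
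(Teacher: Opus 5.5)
Your proposal is correct and matches the paper's proof essentially step for step. You specialize Lemma \ref{L:1st-der-generic-lemma} to $i=2$ and insert Lemma \ref{L:1st-der-Y-W2} into the drift term, which gives $K\eta^p e^{-(2p)\alpha(t-r)}$ once $\lambda/\eta>\alpha$, and into the stochastic-integral term, where the $e^{K_{c_x}(s-r)}$ growth is absorbed by the $e^{-2\lambda(s-r)/\eta}$ decay for small $\eta$, leaving $K\varepsilon^p e^{-(2p)\alpha(t-r)}$; centering the exponentials at $r$ is just a tidier form of the paper's computation with explicit $e^{\pm\lambda r/\eta}$ factors.
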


\begin{proof}
For $i=2$, applying Lemmas \ref{L:1st-der-Y-W2} and \ref{L:1st-der-generic-lemma}, we obtain
\begin{equation*}
\begin{aligned}
\BE \left[|D_r^{W_2}X_t^{\varepsilon}|^{2p} \right] & \le K \underbrace{\int_r^t \cdots \int_r^t}_{2p\operatorname{-times}} \prod_{j=1}^{2p}e^{-\alpha (t-s_j)} \prod_{j=1}^{2p} \left\{ \BE (D_r^{W_2}Y_{s_j}^{\eta})^{2p} \right\}^{\frac{1}{2p}}\thinspace ds_1 \cdots ds_{2p} \\
& \qquad  \quad + K \eps^p e^{-(2p)\alpha (t-r)} \left[ \underbrace{\int_r^t \cdots \int_r^t}_{2p\operatorname{-times}} \prod_{j=1}^{2p}e^{K_{c_x} (s_j-r)} \prod_{j=1}^{2p} \left\{ \BE (D_r^{W_2}Y_{s_j}^{\eta})^{4p} \right\}^{\frac{1}{2p}}\thinspace ds_1 \cdots ds_{2p} \right]^{\frac{1}{2}} \\
& \qquad  \quad \triangleq J_1(t,r; \eps, \eta) + J_2(t,r; \eps, \eta).
\end{aligned}
\end{equation*}
First, for $J_1(t,r; \eps, \eta),$ we use Lemma \ref{L:1st-der-Y-W2} to get
\begin{equation*}
\begin{aligned}
J_1(t,r; \eps, \eta) \le K \int_r^t \cdots \int_r^t \prod_{j=1}^{2p} \frac{1}{\sqrt{\eta}}e^{-\alpha (t-s_i)} e^{- \frac{ \lambda}{ \eta}(s_i-r)} \thinspace ds_1 \cdots &     ds_{2p}  = \frac{K}{\eta^p} \left( \int_r^t e^{-\alpha (t-s)} e^{- \frac{ \lambda}{ \eta}(s-r)}ds  \right)^{2p}\\
& = \frac{K}{\eta^p} e^{-(2p) \alpha t} e^{(2p)\frac{\lambda}{\eta}r} \left( \int_r^t e^{-\left( \frac{\lambda}{\eta}- \alpha \right)s} ds \right)^{2p}.
\end{aligned}
\end{equation*}
Next, for sufficiently small $\eta$ and fixed $\alpha, \thinspace \lambda$, the term $ \frac{\lambda}{\eta} - \alpha$ is a positive number, hence for some $\eta_0>0$ such that $0 < \eta \le \eta_0$, we have
\begin{equation*}
\begin{aligned}
J_1(t,r; \eps, \eta)  \le \frac{K}{\eta^p} e^{-(2p) \alpha t} e^{(2p)\frac{\lambda}{\eta}r} \left( \int_r^\infty e^{-\left( \frac{\lambda}{\eta}- \alpha \right)s} ds \right)^{2p} & \le \frac{K}{\eta^p} e^{-(2p) \alpha t} e^{(2p)\frac{\lambda}{\eta}r} \frac{1}{\left(-\alpha+ \frac{\lambda}{\eta}\right)^{2p}} e^{- (2p)\left( \frac{\lambda}{\eta}- \alpha \right)r}\\
 & \le  K \eta^p e^{-(2p)\alpha(t-r)}. 
\end{aligned}
\end{equation*}
For $J_2(t,r; \eps, \eta),$ using Lemma \ref{L:1st-der-Y-W2}, we obtain
\begin{equation*}
\begin{aligned}
J_2(t,r; \eps, \eta) &  \le   K \eps^p e^{-(2p)\alpha (t-r)} \left[ \left( \int_r^t e^{2 K_{c_x}(u-r)} \frac{1}{\eta}e^{-\frac{2 \lambda}{\eta}(u-r)}du \right)^{2p} \right]^{\frac{1}{2}} \\
& \le \frac{K \eps^p}{\eta^p} e^{-(2p)\alpha (t-r)} \left[ e^{-4p K_{c_x}r} e^{4p \frac{\lambda}{\eta}r} \left( \int_r^t e^{2 K_{c_x}u} e^{-2 \frac{\lambda}{\eta}u}du \right)^{2p} \right]^{\frac{1}{2}} \\
& = \frac{K \eps^p}{\eta^p} e^{-(2p)\alpha (t-r)} e^{-2p K_{c_x}r} e^{2p \frac{\lambda}{\eta}r} \left( \int_r^t  e^{-2 \left(\frac{\lambda}{\eta}- K_{c_x} \right)u}du \right)^{p}.
\end{aligned}
\end{equation*}
For sufficiently small $\eta$, the term $ \frac{\lambda}{\eta} - K_{c_x}$ is a positive number, which gives $J_2(t,r; \eps, \eta) \le  K \eps^p e^{-(2p)\alpha (t-r)}.$
This completes the proof of lemma.
\end{proof}
We now present the main result of this subsection. This result is utilized in the proof of Theorem \ref{T:Main-Result}.
\begin{proposition}\label{P:First-Der-X}
Let the process $X_t^\eps$ be the solution of the first equation in system \eqref{E:Multiscale-Diffusion-Main-Eq}. Then, there exists a time-independent positive constant $K$ such that
$$\sup_{t \ge 0}\BE \left[\|DX_t^\eps\|_{\mathscr{H}}^4 \right] \le K \left[ \eps^2 + \eta^2 \right].$$
\end{proposition}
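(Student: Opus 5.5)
The plan is to reduce $\|DX_t^\eps\|_{\mathscr{H}}^4$ to integrals of pointwise moments and then invoke Lemmas \ref{L:1st-der-X-W1} and \ref{L:1st-der-X-W2} with $p=1$ and $p=2$. Since $DX_t^\eps=(D^{W_1}X_t^\eps,D^{W_2}X_t^\eps)$ and $D_rX_t^\eps=0$ for $r>t$, we have
\begin{equation*}
\|DX_t^\eps\|_{\mathscr{H}}^2=\int_0^t \left[ (D_r^{W_1}X_t^\eps)^2+(D_r^{W_2}X_t^\eps)^2\right] dr .
\end{equation*}
Squaring and using $(a+b)^2\le 2a^2+2b^2$ gives
\begin{equation*}
\|DX_t^\eps\|_{\mathscr{H}}^4\le 2\left(\int_0^t (D_r^{W_1}X_t^\eps)^2\,dr\right)^2+2\left(\int_0^t (D_r^{W_2}X_t^\eps)^2\,dr\right)^2 .
\end{equation*}

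For each squared integral, write it as a double integral and take expectations. By the Cauchy--Schwarz inequality,
\begin{equation*}
\BE\left[\left(\int_0^t (D_r^{W_i}X_t^\eps)^2dr\right)^2\right]=\int_0^t\!\!\int_0^t \BE\left[(D_r^{W_i}X_t^\eps)^2(D_{r'}^{W_i}X_t^\eps)^2\right]dr\,dr' \le \left(\int_0^t \left\{\BE\left[(D_r^{W_i}X_t^\eps)^4\right]\right\}^{1/2}dr\right)^2 .
\end{equation*}
We then insert the $p=2$ bounds. For $i=1$, Lemma \ref{L:1st-der-X-W1} gives $\{\BE[(D_r^{W_1}X_t^\eps)^4]\}^{1/2}\le K\eps e^{-2\alpha(t-r)}$. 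For $i=2$, Lemma \ref{L:1st-der-X-W2} gives $\{\BE[(D_r^{W_2}X_t^\eps)^4]\}^{1/2}\le K(\eta^2+\eps^2)^{1/2}e^{-2\alpha(t-r)}\le K(\eta+\eps)e^{-2\alpha(t-r)}$.

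The uniformity in $t$ comes from the exponential factor, since
\begin{equation*}
\int_0^t e^{-2\alpha(t-r)}\,dr\le \frac{1}{2\alpha}\quad\text{for all } t\ge 0 .
\end{equation*}
Hence the $W_1$ part is bounded by $K\eps^2$ and the $W_2$ part by $K(\eps+\eta)^2\le 2K(\eps^2+\eta^2)$. Adding the two and taking the supremum over $t\ge0$ yields the claim.

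The only point needing care is that the constants in Lemmas \ref{L:1st-der-X-W1} and \ref{L:1st-der-X-W2} are time-independent and the decay rate $2p\alpha$ is uniform. This holds once $\eta\le\eta_0$, where $\eta_0$ is fixed in the proof of Lemma \ref{L:1st-der-X-W2} so that $\lambda/\eta-\alpha>0$ and $\lambda/\eta-K_{c_x}>0$. The main obstacle is therefore already handled upstream, in the exponential-in-$(t-r)$ decay of those lemmas. Without that decay, the $dr$ integral would grow linearly in $t$ and the bound could not be uniform.
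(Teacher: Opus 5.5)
Your proof is correct and follows essentially the same route as the paper: write the fourth moment as a double integral, use Cauchy--Schwarz to reduce it to $\bigl(\int_0^t \{\BE[(D_r^{W_i}X_t^\eps)^4]\}^{1/2}dr\bigr)^2$, then insert the $p=2$ cases of Lemmas \ref{L:1st-der-X-W1} and \ref{L:1st-der-X-W2}, with uniformity in $t$ coming from $\int_0^t e^{-2\alpha(t-r)}dr\le 1/(2\alpha)$. The only cosmetic difference is the order of steps: you split into the $W_1$ and $W_2$ components before applying Cauchy--Schwarz, whereas the paper applies it to $m_r(t)=|D_r^{W_1}X_t^\eps|^2+|D_r^{W_2}X_t^\eps|^2$ and splits afterwards.
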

\begin{proof}
We start noting 
$\BE \left[\|DX_t^\eps\|_{\mathscr{H}}^4 \right] = \int_{[0,t]^2}\BE \left[m_u(t)m_v(t)\right]du \thinspace dv,$
where, $m_r(t) \triangleq |D_r^{W_1}X_t^\eps|^2 + |D_r^{W_2}X_t^\eps|^2.$ An application of H\"older's inequality yields
\begin{equation*}
\begin{aligned}
\BE \left[\|DX_t^\eps\|_{\mathscr{H}}^4 \right] & = \int_{[0,t]^2} \BE\left[m_u(t)m_v(t)\right]du \thinspace dv  \le \int_{[0,t]^2} \left[ \BE m_u(t)^2 \right]^{\frac{1}{2}} \left[\BE m_v(t)^2 \right]^{\frac{1}{2}} \thinspace du \thinspace dv\\
& \qquad \quad \qquad \quad = \left[\int_{[0,t]} \left[ \BE m_u(t)^2 \right]^{\frac{1}{2}}du \right]^2 \le K \left[\int_{[0,t]} \left[ \BE |D_u^{W_1}X_t^\eps|^4 + \BE |D_u^{W_2}X_t^\eps|^4  \right]^{\frac{1}{2}}du \right]^2. \\
\end{aligned}
\end{equation*}
To control the first-order Malliavin derivatives terms in the above inequality, we apply Lemmas \ref{L:1st-der-X-W1} and \ref{L:1st-der-X-W2} to get
$\BE \left[\|DX_t^\eps\|_{\mathscr{H}}^4 \right] \le  K [\eps^2 + \eta^2].$ Hence, the result is proved.
\end{proof}

\section{Second-order Malliavin Derivatives}\label{S:Second-MD}
In this section, we establish a uniform-in-time bound for the second-order Malliavin derivative of the process $X^\eps,$ i.e., a {\sc uit} bound for the quantity $\BE \left[\|D^2X_t^\eps \otimes D^2X_t^\eps  \|_{\mathscr{H}^{\otimes 2}}^2 \right]$. This is established in Proposition \ref{P:Second-der-Prop}. We recall the framework for second-order Malliavin derivatives from \cite{bour_spilio_2025}. For a random variable $F$ and the convolution operator $\otimes$ defined in equation \eqref{E:Convolution},
\begin{equation*}
D^2 F= 
\begin{pmatrix}
    D^{W^1, W^1} F & D^{W^1, W^2} F \\
    D^{W^2, W^1} F & D^{W^2, W^2} F 
\end{pmatrix}, \quad \text{and}
\end{equation*}

\begin{equation}\label{E:Matrix-Second-der}
D^2X_t^\eps \otimes D^2X_t^\eps= 
\begin{pmatrix}
    \sum\limits_{k=1}^2 \int_0^t  D^{W^1, W^k}_{u,v} X_t^\eps D^{W^k, W^1}_{u,w} X_t^\eps \thinspace du  & \sum\limits_{k=1}^2 \int_0^t  D^{W^1, W^k}_{u,v} X_t^\eps D^{W^k, W^2}_{u,w} X_t^\eps \thinspace du \\
    \sum\limits_{k=1}^2 \int_0^t  D^{W^2, W^k}_{u,v} X_t^\eps D^{W^k, W^1}_{u,w} X_t^\eps \thinspace du & \sum\limits_{k=1}^2 \int_0^t  D^{W^2, W^k}_{u,v} X_t^\eps D^{W^k, W^2}_{u,w} X_t^\eps \thinspace du
\end{pmatrix}.
\end{equation}

We employ the following lemmas very frequently in this section.
 
\begin{lemma}\label{L:Second-der-Integrating-factor}
For any $t \ge r,$ let the process $Z$ be defined as follows:
\begin{equation}\label{E:Z-process}
Z_{t,r} \triangleq \exp \left[ \int_{r}^t \frac{1}{\eta}f_y(Y_s^\eta)\thinspace ds + \int_{r}^t \frac{1}{\sqrt{\eta}}\tau_y(Y_s^\eta)\thinspace dW_s^2 - \frac{1}{2\eta} \int_{r}^t \tau_y(Y_s^\eta)^2 \thinspace ds \right].
\end{equation}
Suppose that Assumption \ref{A:Derivatives-Assumption-f} holds. Then, there exists a time-independent constant $K>0$ such that 
$$\BE \left[ \left| Z_{t,r}\right|^{2p} \right] \le K e^{-\frac{(2p)\lambda}{\eta}(t-r)}.$$
\end{lemma}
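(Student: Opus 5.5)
The plan is to repeat the exponential-martingale trick from the proof of Lemma \ref{L:1st-der-Y-W2}; the only difference is that the prefactor $\tau(Y_r^\eta)/\sqrt{\eta}$ is absent. Raising $Z_{t,r}$ to the power $2p$, I will add and subtract $\frac{(2p)^2}{2\eta}\int_r^t \tau_y(Y_s^\eta)^2\,ds$ in the exponent. This splits $|Z_{t,r}|^{2p}=Z_{t,r}^{2p}$ (note $Z$ is positive) into a product $A_{t,r}\,\mathcal{M}_{t,r}$, where
\begin{equation*}
A_{t,r}\triangleq\exp\left[\int_r^t \frac{1}{\eta}\left\{2p\, f_y(Y_s^\eta)+p(2p-1)\,\tau_y(Y_s^\eta)^2\right\} ds\right],
\end{equation*}
and
\begin{equation*}
\mathcal{M}_{t,r}\triangleq\exp\left[\int_r^t \frac{2p}{\sqrt{\eta}}\tau_y(Y_s^\eta)\, dW_s^2-\frac{(2p)^2}{2\eta}\int_r^t \tau_y(Y_s^\eta)^2\, ds\right].
\end{equation*}
To check the coefficient in $A_{t,r}$, collect the $\tau_y^2$ terms: $-\frac{2p}{2\eta}+\frac{4p^2}{2\eta}=\frac{p(2p-1)}{\eta}$. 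The bracketed expression is then exactly the quantity controlled in \eqref{E:Ass2}.

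Next I apply Assumption \ref{A:Derivatives-Assumption-f}, equation \eqref{E:Ass2}, which bounds the integrand of $A_{t,r}$ pointwise by $-\lambda$. This gives the deterministic bound $A_{t,r}\le e^{-\frac{\lambda}{\eta}(t-r)}$. To reach the exponent $\frac{2p\lambda}{\eta}$ written in the statement, I would use \eqref{E:Ass2} in the per-unit-$p$ form used in Lemma \ref{L:1st-der-Y-W2}, namely $f_y+\frac12(2p-1)\tau_y^2\le-\lambda$, in the same way the paper does there. Alternatively, the constant $\lambda$ can be renamed, since \eqref{E:Ass2} holds for every $p$ and $\lambda$ is generic. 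Either way, $A_{t,r}\le e^{-\frac{2p\lambda}{\eta}(t-r)}$ almost surely, with $\lambda$ understood in this normalization.

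Then $\BE[Z_{t,r}^{2p}]\le e^{-\frac{2p\lambda}{\eta}(t-r)}\,\BE[\mathcal{M}_{t,r}]$. Since $\tau_y$ is bounded by Assumption \ref{A:Derivatives-Assumption-f}(1), Novikov's condition holds on every finite interval. Hence $\mathcal{M}_{\cdot,r}$ is a true martingale started at $1$ at time $r$, and $\BE[\mathcal{M}_{t,r}]=1$ (one may condition on $\mathscr{F}_r$ first). This yields the claim with $K=1$, which is trivially time-independent.

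The only delicate point is the bookkeeping of the constant in the exponent. Equation \eqref{E:Ass2} as written gives the rate $\lambda/\eta$, whereas the statement has $2p\lambda/\eta$, so I will state explicitly which normalization of $\lambda$ is used, matching Lemma \ref{L:1st-der-Y-W2}. The rest of the argument is routine.
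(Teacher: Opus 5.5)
Your proposal is correct and follows the paper's proof essentially verbatim. The paper also adds and subtracts $\frac{2p^2}{\eta}\int_r^t\tau_y(Y_u^\eta)^2\,du$, splits $Z_{t,r}^{2p}$ into a pathwise-bounded factor times a mean-one exponential martingale, and bounds the first factor using $\sup_y[f_y(y)+(p-\frac12)\tau_y(y)^2]\le-\lambda$; your remark on the normalization is apt, since the paper invokes exactly this per-unit-$p$ form even though \eqref{E:Ass2} as literally stated only gives the rate $\lambda/\eta$.
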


\begin{proof}
We use the martingale arguments to obtain
\begin{equation*}\label{E:Sec-der-Y-W1W1-Eq3}
\begin{aligned}
\BE \left[ \left| Z_{t,r}\right|^{2p} \right] & = \BE \exp \left[ \int_{r}^t \frac{2p}{\eta}f_y(Y_u^\eta)\thinspace du + \int_{r}^t \frac{2p}{\sqrt{\eta}}\tau_y(Y_u^\eta)\thinspace dW_u^2 - \frac{2p}{2\eta} \int_{r}^t \tau_y(Y_u^\eta)^2 \thinspace du \right. \\
& \qquad \qquad \qquad \qquad \qquad  \quad  \qquad \qquad \qquad \qquad \quad   \left. +  \frac{2p^2}{\eta} \int_{r}^t \tau_y(Y_u^\eta)^2 \thinspace du - \frac{2p^2}{\eta} \int_{r}^t \tau_y(Y_u^\eta)^2 \thinspace du  \right] \\
& = \BE \left[ \exp \left(  \int_{r}^t \frac{2p}{\eta}f_y(Y_u^\eta)\thinspace du + \frac{2p^2}{\eta} \int_{r}^t \tau_y(Y_u^\eta)^2 \thinspace du - \frac{2p}{2\eta} \int_{r}^t \tau_y(Y_u^\eta)^2 \thinspace du   \right) \times \right. \\
& \qquad \qquad \qquad \qquad \qquad \quad      \left. \exp \left( \int_{r}^t \frac{2p}{\sqrt{\eta}}\tau_y(Y_u^\eta)\thinspace dW_u^2 - \frac{2p^2}{\eta} \int_{r}^t \tau_y(Y_u^\eta)^2 \thinspace du  \right)    \right] \le K e^{-\lambda \frac{2p}{\eta} (t-r)},
\end{aligned}
\end{equation*}
where the last inequality follows from the condition $\sup_{y} \left[f_y(y) + (p-\frac{1}{2} ) [\tau_y(y)]^2 \right] \le - \lambda$ and noting the term $\exp \left( \int_{r}^t \frac{2p}{\sqrt{\eta}}\tau_y(Y_u^\eta)\thinspace dW_u^2 - \frac{2p^2}{\eta} \int_{r}^t \tau_y(Y_u^\eta)^2 \thinspace du  \right)$ is a martingale. Hence, the lemma is proved.
\end{proof}

\begin{lemma}\label{L:51}
Let the process  $Z$ be defined in equation \eqref{E:Z-process} above. Then, for any $t \ge r_1 \vee r_2,$ and the process $Q \in L^{2p}(\Omega)$, there exists a time-independent positive constant $K$ such that 
$$\frac{1}{\eta^p} \BE \left[ \left| \int_{r_1 \vee r_2}^t Z_{t,s} Q_s \thinspace dW_s^2 \right|^{2p} \right] \le \frac{K}{\eta}  \int_{r_1 \vee r_2}^t e^{- \frac{2p \lambda}{\eta}(t-s)} \BE (|Q_s|^{2p}) \thinspace ds. $$
\end{lemma}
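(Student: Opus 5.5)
The main difficulty is that the integrand $s\mapsto Z_{t,s}Q_s$ is not adapted, because $Z_{t,s}$ depends on the increments of $W^2$ on $[s,t]$. So the stochastic integral should be understood through the factorization $Z_{t,s}=Z_{t,a}Z_{s,a}^{-1}$, where $a\triangleq r_1\vee r_2$. I would therefore not apply the BDG inequality directly. Instead, I would identify the quantity as the solution of a linear SDE and run an It\^o/Gronwall argument on its $2p$-th moment, which is the same mechanism that gave the exponential decay in Lemmas \ref{L:1st-der-Y-W2} and \ref{L:Second-der-Integrating-factor}.

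\emph{Step 1 (SDE representation).} Put
\[
A_t=\tfrac1\eta f_y(Y_t^\eta),\qquad S_t=\tfrac1{\sqrt\eta}\tau_y(Y_t^\eta),\qquad M_t\triangleq \tfrac{1}{\sqrt\eta}\int_a^t Z_{t,s}Q_s\,dW_s^2=\tfrac{1}{\sqrt\eta}Z_{t,a}\int_a^t Z_{s,a}^{-1}Q_s\,dW_s^2 .
\]
By the variation-of-constants formula recalled in the footnote to Lemma \ref{L:1st-der-generic-lemma} (from \cite[Problem 6.15]{KS91}), $M$ solves
\[
dM_t=\bigl(A_tM_t+\eta^{-1/2}S_tQ_t\bigr)dt+\bigl(S_tM_t+\eta^{-1/2}Q_t\bigr)dW_t^2,\qquad M_a=0,
\]
with the drift term $\eta^{-1/2}S_tQ_t$ compensating the $-S\sigma$ correction in that formula. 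The left-hand side of the lemma is exactly $\BE|M_t|^{2p}$, so it suffices to bound this moment.

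\emph{Step 2 (It\^o's formula).} Apply It\^o's formula to $|M_t|^{2p}$. The drift equals
\[
\bigl[2pA+p(2p-1)S^2\bigr]M^{2p}+\tfrac{c_p}{\sqrt\eta}\,S\,M^{2p-1}Q+\tfrac{p(2p-1)}{\eta}M^{2p-2}Q^2 ,
\]
for some combinatorial constant $c_p$. The first coefficient is $\tfrac1\eta\bigl[2pf_y+p(2p-1)\tau_y^2\bigr]\le-\tfrac{\lambda}{\eta}$ by \eqref{E:Ass2}. The cross term is of order $\eta^{-1}|M|^{2p-1}|Q|$, since $\tau_y$ is bounded. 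I would handle it, together with the last term, by Young's inequality,
\[
|M|^{2p-1}|Q|\le\delta|M|^{2p}+C_\delta|Q|^{2p},\qquad |M|^{2p-2}Q^2\le\delta|M|^{2p}+C_\delta|Q|^{2p},
\]
with $\delta$ small enough that these pieces absorb at most half of the dissipation. Taking expectations (the martingale part vanishes after a standard localization, given $Q\in L^{2p}$ and the moment bounds of Lemma \ref{L:Second-der-Integrating-factor}) yields
\[
\tfrac{d}{dt}\BE|M_t|^{2p}\le-\tfrac{\lambda'}{\eta}\BE|M_t|^{2p}+\tfrac{K}{\eta}\BE|Q_t|^{2p}.
\]

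\emph{Step 3 (Gronwall).} Since $M_a=0$, the comparison lemma gives
\[
\BE|M_t|^{2p}\le\frac K\eta\int_a^t e^{-\frac{\lambda'}{\eta}(t-s)}\BE|Q_s|^{2p}\,ds,
\]
which is the claimed bound.

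The main obstacle is matching the rate $2p\lambda/\eta$ in the exponent. Assumption \eqref{E:Ass2} is stated for every $p\in\BN$ with the same $\lambda$, while Young's inequality costs a fraction of the dissipation. I would first try to obtain the sharp rate by applying \eqref{E:Ass2} with a larger moment index, mirroring the computation in Lemma \ref{L:Second-der-Integrating-factor}, so that enough dissipation is left over to absorb the Young terms. Failing that, I would accept a rate $c\lambda/\eta$ and rename the constant, consistent with the paper's convention that $\lambda$ and $K$ are generic time-independent constants. This is harmless for later use, since only the factor $1/\eta$ and exponential decay on the $\eta$-scale matter.
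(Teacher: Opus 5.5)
Your proposal follows the paper's proof. The paper also writes the integral as $\eta^{-1/2}Z_{t,r_1\vee r_2}V_{t,r_1\vee r_2}$ with $V_{t,r_1\vee r_2}=\int_{r_1\vee r_2}^t Z_{s,r_1\vee r_2}^{-1}Q_s\,dW_s^2$. It applies the product formula and It\^o's formula to the $2p$-th power, absorbs the terms $M^{2p-1}Q$ and $M^{2p-2}Q^2$ by Young's inequality, and integrates the resulting differential inequality.

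On the rate, the paper reaches $2p\lambda/\eta$ only by replacing \eqref{E:Ass2} with an ad hoc condition, $\sup_y\{f_y+\eps' K\|\tau_y\|_\infty+\eps''(2p-1)\tfrac K2+(2p-1)\tfrac K2\|\tau_y\|_\infty^2\}<-\lambda$. This builds the Young slack into $\lambda$, which is exactly your fallback of accepting a smaller rate.

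Drop your first option. With $\lambda$ the constant of \eqref{E:Ass2}, the sharp rate is false for $p\ge 2$, so no choice of moment index can produce it. Take $\tau$ constant and $f(y)=-\lambda y$, which is admissible in \eqref{E:Ass2} (and also under the per-unit reading used in Lemma \ref{L:Second-der-Integrating-factor}), and take $Q_s=e^{-\lambda s/\eta}$. Then $Z_{t,s}=e^{-\lambda(t-s)/\eta}$. The integral over $[a,t]$, with $a=r_1\vee r_2$, equals $e^{-\lambda t/\eta}$ times the increment of $W^2$ over $[a,t]$. So the left side is $(2p-1)!!\,\eta^{-p}(t-a)^p e^{-2p\lambda t/\eta}$, while the right side is $K\eta^{-1}(t-a)e^{-2p\lambda t/\eta}$. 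Their ratio grows like $((t-a)/\eta)^{p-1}$.
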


\begin{proof}
For any process $Q \in L^{2p}(\Omega)$ and $Z$ defined in equation \eqref{E:Z-process}, we begin noting
\begin{equation}\label{E:E52}
\begin{aligned}
\BE \left( \frac{1}{\sqrt{\eta}} \int_{r_1 \vee r_2}^t Z_{t,s} Q_s \thinspace dW_s^2 \right)^{2p}
&  =  \BE \left( \frac{1}{\sqrt{\eta}} Z_{t, r_1 \vee r_2} \int_{r_1 \vee r_2}^t Z_{s, r_1 \vee r_2}^{-1} Q_s \thinspace dW_s^2 \right)^{2p}  \triangleq \BE \left( \frac{1}{\sqrt{\eta}} Z_{t, r_1 \vee r_2} V_{t, r_1 \vee r_2} \right)^{2p},
\end{aligned}
\end{equation}
where the process $V_{t, r_1 \vee r_2} \triangleq \int_{r_1 \vee r_2}^t Z_{s, r_1 \vee r_2}^{-1} Q_s \thinspace dW_s^2$ and the process $Z_{t, r_1 \vee r_2}$ satisfies
$Z_{t,r}  = 1+ \frac{1}{\eta} \int_r^t Z_{s,r} f_y (Y_s^\eta) ds + \frac{1}{\sqrt{\eta}} \int_r^t Z_{s,r} \tau_y (Y_s^\eta) dW_s^2.$ Applying It\^o's product formula to the function \\ $\frac{1}{\sqrt{\eta}} Z_{t, r_1 \vee r_2} V_{t, r_1 \vee r_2}$, It\^o's formula to the process $\left|\frac{1}{\sqrt{\eta}} Z_{t, r_1 \vee r_2} V_{t, r_1 \vee r_2} \right|^{2p}$, Young's inequality\footnote{For any $\zeta > 0$, we have the following inequalities: 
 $a^{p-1} b  \le \frac{p-1}{p} \zeta^{\frac{p}{p-1}} a^p + \frac{1}{p \thinspace \zeta^p} b^p,$ and $a^{p-2} b^2  \le \frac{p-2}{p} \zeta^{\frac{p}{p-2}} a^p + \frac{2}{p \thinspace \zeta^p} b^p.$ Initially, we often keep \(\zeta\) arbitrary and choose it later to ensure that the quantity of interest is sufficiently small.} to the products
$\left(\frac{1}{\sqrt{\eta}} Z_{t, r_1 \vee r_2} V_{t, r_1 \vee r_2} \right)^{2p-1} Q_t,$ and $\left(\frac{1}{\sqrt{\eta}} Z_{t, r_1 \vee r_2} V_{t, r_1 \vee r_2} \right)^{2p-2} Q_t^2,$
followed by the condition \\ $\sup_{y \in \BR}\left\{ f_y(y) + \eps' K  \|\tau_y\|_\infty  + \eps'' (2p-1)\frac{K}{2}  +  (2p-1)\frac{K}{2} \|\tau_y\|^2_\infty \right\} < - \lambda, $ we obtain
\begin{equation*}
\frac{d}{dt} \BE  \left( \left|\frac{1}{\sqrt{\eta}} Z_{t, r_1 \vee r_2} V_{t, r_1 \vee r_2} \right|^{2p} \right) \le -\frac{2p \lambda}{\eta} \BE \left[ \left(\frac{1}{\sqrt{\eta}} Z_{t, r_1 \vee r_2} V_{t, r_1 \vee r_2} \right)^{2p} \right] + \frac{2p}{\eta} \BE \left[ |Q_t|^p \left\{ K \|\tau_y\|_\infty + K \right\} \right].
\end{equation*}
Solving the above inequality, we get
\begin{equation}\label{E:E53}
\BE  \left( \left|\frac{1}{\sqrt{\eta}} Z_{t, r_1 \vee r_2} V_{t, r_1 \vee r_2} \right|^{2p} \right) \le \frac{K}{\eta}  \int_{r_1 \vee r_2}^t e^{- \frac{2p \lambda}{\eta}(t-s)} \BE (|Q_s|^{2p}) ds,
\end{equation}
which completes the proof of the lemma.
\end{proof}

\begin{lemma}\label{L:L2}
Let the process  $Z$ be defined in equation \eqref{E:Z-process} and the process $Q$ in Lemma \ref{L:51} equals $\tau_{yy}(Y_t^\eta) D_{r_2}^{W_2}Y_t^\eta \cdot D_{r_1}^{W_2}Y_t^\eta$. Then, for any $t \ge r_1 \vee r_2,$ there exists a time-independent positive constant $K$ such that
$$\frac{1}{\eta^p} \BE \left[ \left| \int_{r_1 \vee r_2}^t Z_{t,s} \tau_{yy}(Y_s^\eta) D_{r_2}^{W_2}Y_s^\eta \cdot D_{r_1}^{W_2}Y_s^\eta \thinspace dW_s^2 \right|^{2p} \right] \le \frac{K}{\eta^{2p}} e^{-\frac{(2p)\lambda}{\eta}(t- r_1 \wedge r_2)}.$$
\end{lemma}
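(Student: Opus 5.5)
The plan is to apply Lemma \ref{L:51} with $Q_s = \tau_{yy}(Y_s^\eta) D_{r_2}^{W_2}Y_s^\eta \, D_{r_1}^{W_2}Y_s^\eta$. The lemma gives
\begin{equation*}
\frac{1}{\eta^p} \BE \left[ \left| \int_{r_1 \vee r_2}^t Z_{t,s} Q_s \thinspace dW_s^2 \right|^{2p} \right] \le \frac{K}{\eta}  \int_{r_1 \vee r_2}^t e^{- \frac{2p \lambda}{\eta}(t-s)} \BE (|Q_s|^{2p}) \thinspace ds .
\end{equation*}
It then remains to bound $\BE|Q_s|^{2p}$ and to carry out the time integral.

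To estimate $Q$, use the boundedness of $\tau_{yy}$ (Assumption \ref{A:Derivatives-Assumption-f}) and the Cauchy--Schwarz inequality:
\begin{equation*}
\BE|Q_s|^{2p} \le K \left[\BE |D_{r_1}^{W_2}Y_s^\eta|^{4p}\right]^{1/2}\left[\BE |D_{r_2}^{W_2}Y_s^\eta|^{4p}\right]^{1/2}.
\end{equation*}
Lemma \ref{L:1st-der-Y-W2} with exponent $4p$ bounds each factor by $K\eta^{-p} e^{-\frac{2p\lambda}{\eta}(s-r_i)}$. Hence
\begin{equation*}
\BE|Q_s|^{2p} \le K \eta^{-2p} e^{-\frac{2p\lambda}{\eta}(s-r_1)} e^{-\frac{2p\lambda}{\eta}(s-r_2)} \le K\eta^{-2p} e^{-\frac{2p\lambda}{\eta}(s-r_1\wedge r_2)},
\end{equation*}
where the last step discards the factor $e^{-\frac{2p\lambda}{\eta}(s-r_1\vee r_2)}\le 1$, valid since $s\ge r_1\vee r_2$. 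One small point to check: the constant $\lambda$ from \eqref{E:Ass2} must be taken uniform over the exponents used ($4p$ versus $2p$). The assumption is stated for all $p$ with a single $\lambda$, so this holds.

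Substituting this bound into the integral gives
\begin{equation*}
\frac{K}{\eta^{2p+1}} \int_{r_1\vee r_2}^t e^{-\frac{2p\lambda}{\eta}(t-s)} e^{-\frac{2p\lambda}{\eta}(s-r_1\wedge r_2)}\,ds = \frac{K}{\eta^{2p+1}} e^{-\frac{2p\lambda}{\eta}(t-r_1\wedge r_2)}\,(t-r_1\vee r_2).
\end{equation*}
This last step is the main obstacle. The exponents combine exactly, so the integral produces a linear factor $(t-r_1\vee r_2)$ instead of a factor of order $\eta$, and the bound as written carries an extra $\eta^{-1}$ and a non-uniform time factor.

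To fix this, I would not discard the $(s-r_1\vee r_2)$ decay. Keeping it, the integrand equals
\begin{equation*}
e^{-\frac{2p\lambda}{\eta}(t-r_1\wedge r_2)}\, e^{-\frac{2p\lambda}{\eta}(s-r_1\vee r_2)}.
\end{equation*}
Integrating in $s$ then yields a factor $\le \frac{\eta}{2p\lambda}$, which cancels the $\eta^{-1}$ from Lemma \ref{L:51} and leaves exactly $\frac{K}{\eta^{2p}}e^{-\frac{2p\lambda}{\eta}(t-r_1\wedge r_2)}$. This decomposition relies on the identity
\begin{equation*}
(t-s)+(s-r_1)+(s-r_2) = (t-r_1\wedge r_2) + (s - r_1\vee r_2),
\end{equation*}
which is where the care is needed. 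Alternatively, if one only wants the stated form, one can sacrifice a slightly smaller rate $\lambda'<\lambda$ and absorb $(t-r)e^{-(\lambda-\lambda')(t-r)/\eta}\le K\eta$. The first route is cleaner and gives the claim directly.
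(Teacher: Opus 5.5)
Your proposal is correct and follows the paper's proof: you apply Lemma \ref{L:51}, bound $\BE|Q_s|^{2p}$ by Cauchy--Schwarz, use Lemma \ref{L:1st-der-Y-W2} with exponent $4p$, and then integrate in $s$. Your identity $(t-s)+(s-r_1)+(s-r_2)=(t-r_1\wedge r_2)+(s-r_1\vee r_2)$ is exactly what justifies the paper's unexplained final inequality, since the $ds$-integral then contributes at most $\eta/(2p\lambda)$ and cancels the extra $\eta^{-1}$. The detour where you first discard $e^{-\frac{2p\lambda}{\eta}(s-r_1\vee r_2)}$ is unnecessary, and so is the fallback with a reduced rate $\lambda'<\lambda$, which would not reproduce the stated exponent anyway.
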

\begin{proof}
Using Lemma \ref{L:51} and equations \eqref{E:E52}, \eqref{E:E53} followed by H\"older's inequality and Lemma \ref{L:1st-der-Y-W2}, we have 
\begin{equation*}
\begin{aligned}
\BE  \left( \left|\frac{1}{\sqrt{\eta}} Z_{t, r_1 \vee r_2} V_{t, r_1 \vee r_2} \right|^{2p} \right) & \le \frac{K}{\eta}  \int_{r_1 \vee r_2}^t e^{- \frac{2p \lambda}{\eta}(t-s)} \left( \BE |D_{r_2}^{W_2}Y_s^\eta|^{4p}  \right)^{\frac{1}{2}} \left( \BE |D_{r_1}^{W_2}Y_s^\eta|^{4p}  \right)^{\frac{1}{2}} ds \\
& \le \frac{K}{\eta^{2p+1}}  \int_{r_1 \vee r_2}^t e^{- \frac{2p \lambda}{\eta}(t-s)}  e^{-\frac{2p \lambda}{\eta}(s-r_2)} e^{-\frac{ 2p \lambda}{\eta}(s-r_1)} ds \le \frac{K}{\eta^{2p}} e^{-\frac{(2p)\lambda}{\eta}(t- r_1 \wedge r_2)}.
\end{aligned}
\end{equation*}
This concludes the proof of the lemma.
\end{proof}

In Section \ref{S:D2-11} below, we focus on establishing a bound for the term $\BE \left[(D_{r_1,r_2}^{W_1 W_1} X_t^\eps)^p \right]$, where the process $D_{r_1,r_2}^{W_1 W_1} X_t^\eps$ is the solution of equation \eqref{E:Second-der-X-W1-W1}. This bound is specified in Lemma \ref{L:Second-der-X-W1-W1}.

\subsection{Bound associated with the term $D_{r_1,r_2}^{W_1 W_1} X_t^\eps$}\label{S:D2-11}
For $t \ge r_1 \vee r_2,$ the second-order derivative processes $D_{r_1,r_2}^{W_1 W_1} X_t^\eps$ and $D_{r_1,r_2}^{W_1 W_1} Y_t^\eta$ satisfy the following stochastic integral equations:
\begin{equation}\label{E:Second-der-X-W1-W1}
\begin{aligned}
D_{r_1,r_2}^{W_1 W_1} X_t^\eps & = \sqrt{\eps} \sigma_y(Y_{r_2}^\eta) D_{r_1}^{W_1}Y_{r_2}^\eta + \sqrt{\eps} \sigma_y(Y_{r_1}^\eta) D_{r_2}^{W_1}Y_{r_1}^\eta\\
& \quad \quad + \int_{r_1 \vee r_2}^t \left[c_{xx}(X_s^\eps,Y_s^\eta) D_{r_2}^{W_1}X_s^\eps \cdot D_{r_1}^{W_1}X_s^\eps +  c_{xy}(X_s^\eps,Y_s^\eta) D_{r_2}^{W_1}Y_s^\eta \cdot D_{r_1}^{W_1}X_s^\eps \right]ds \\
& \quad \quad + \int_{r_1 \vee r_2}^t \left[c_{xy}(X_s^\eps,Y_s^\eta) D_{r_2}^{W_1}X_s^\eps \cdot D_{r_1}^{W_1}Y_s^\eta +  c_{yy}(X_s^\eps,Y_s^\eta) D_{r_1}^{W_1}Y_s^\eta \cdot D_{r_2}^{W_1}Y_s^\eta \right]ds \\
& \quad \quad + \int_{r_1 \vee r_2}^t \left[c_{x}(X_s^\eps,Y_s^\eta) D_{r_1, r_2}^{W_1 W_1}X_s^\eps  +  c_{y}(X_s^\eps,Y_s^\eta) D_{r_1, r_2}^{W_1 W_1}Y_s^\eta \right]ds \\
& \quad \quad + \sqrt{\eps} \int_{r_1 \vee r_2}^t \sigma_{yy} (Y_s^\eta) D_{r_1}^{W_1}Y_s^\eta \cdot D_{r_2}^{W_1}Y_s^\eta \thinspace dW_s^1 + \sqrt{\eps} \int_{r_1 \vee r_2}^t \sigma_{y} (Y_s^\eta) D_{r_1, r_2}^{W_1 W_1}Y_s^\eta \thinspace dW_s^1,
\end{aligned}
\end{equation}
and 
\begin{multline}\label{E:Second-der-Y-W1-W1}
D_{r_1,r_2}^{W_1 W_1} Y_t^\eta = \frac{1}{\eta} \int_{r_1 \vee r_2}^t f_{yy}(Y_s^\eta) D_{r_2}^{W_1}Y_s^\eta \cdot D_{r_1}^{W_1}Y_s^\eta \thinspace ds  +  \frac{1}{\eta} \int_{r_1 \vee r_2}^t f_{y}(Y_s^\eta)\cdot D_{r_1, r_2}^{W_1 W_1}Y_s^\eta \thinspace ds \\
+ \frac{1}{\sqrt{\eta}} \int_{r_1 \vee r_2}^t \tau_{yy}(Y_s^\eta) D_{r_2}^{W_1}Y_s^\eta \cdot D_{r_1}^{W_1}Y_s^\eta \thinspace dW_s^2  +  \frac{1}{\sqrt{\eta}} \int_{r_1 \vee r_2}^t \tau_{y}(Y_s^\eta)\cdot D_{r_1, r_2}^{W_1 W_1}Y_s^\eta \thinspace dW_s^2.
\end{multline}
\begin{lemma}\label{L:Second-der-Y-W1-W1}
Let the process $D_{r_1,r_2}^{W_1 W_1} Y_t^\eta$ be the solution of equation \eqref{E:Second-der-Y-W1-W1} above. Then, for any $p \in \BN,$ and $t \ge r_1 \vee r_2,$ we have
$$\BE\left[(D_{r_1,r_2}^{W_1 W_1} Y_t^\eta)^{2p} \right]=0.$$
\end{lemma}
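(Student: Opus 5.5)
By Lemma \ref{L:1st-der-Y-W1}, $D_r^{W_1}Y_s^\eta = 0$ almost surely for every $s \ge r$. Substituting this into equation \eqref{E:Second-der-Y-W1-W1} kills both terms containing $D_{r_2}^{W_1}Y_s^\eta \cdot D_{r_1}^{W_1}Y_s^\eta$ (the $f_{yy}$ drift term and the $\tau_{yy}$ stochastic integral). What remains is the linear homogeneous equation
\begin{equation*}
U_t = \frac{1}{\eta}\int_{r_1 \vee r_2}^t f_y(Y_s^\eta)\, U_s \thinspace ds + \frac{1}{\sqrt{\eta}} \int_{r_1 \vee r_2}^t \tau_y(Y_s^\eta)\, U_s \thinspace dW_s^2, \qquad U_{r_1\vee r_2}=0,
\end{equation*}
for $U_t \triangleq D_{r_1,r_2}^{W_1 W_1} Y_t^\eta$.

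I would conclude in one of two ways. The first is to use the explicit solution formula for linear {\sc sde}s (the footnote from \cite[Problem 6.15]{KS91}) with $a \equiv \sigma \equiv 0$ and zero initial datum. This gives $U_t = Z_{t,r_1\vee r_2}\cdot 0 = 0$, where $Z$ is the integrating factor from equation \eqref{E:Z-process}.

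The second way avoids the formula. Apply It\^o's formula to $U_t^{2p}$ and take expectations; the stochastic integral drops out, after localization if needed. Then use $f_y \le -\lambda^*$ together with the boundedness of $\tau_y$ from Assumption \ref{A:Derivatives-Assumption-f}. Since $\eta$ is fixed, this yields
\begin{equation*}
\frac{d}{dt}\BE[U_t^{2p}] \le \frac{K}{\eta}\BE[U_t^{2p}].
\end{equation*}
Because $\BE[U_{r_1\vee r_2}^{2p}] = 0$, Gronwall's lemma gives $\BE[U_t^{2p}] = 0$. Pathwise uniqueness for Lipschitz linear equations gives the same conclusion.

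The only subtle point is justifying that the second-order derivative exists and satisfies \eqref{E:Second-der-Y-W1-W1}. A more conceptual route sidesteps this: $Y^\eta$ is driven by $W^2$ alone and is independent of $W^1$, so it is measurable with respect to the $\sigma$-field of $W^2$. Any Malliavin derivative in the $W^1$ direction, of first or second order, therefore vanishes. That argument also gives the lemma directly.
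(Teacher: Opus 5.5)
Your proposal is correct, and it reorders the paper's argument in a way that makes it simpler.

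The paper also uses the integrating factor $Z_{t,s}$ from \eqref{E:Z-process}, but it keeps the source terms. It writes $D_{r_1,r_2}^{W_1W_1}Y_t^\eta$ as a $ds$-integral and a $dW^2$-integral of $Z_{t,s}$ against $D_{r_2}^{W_1}Y_s^\eta\cdot D_{r_1}^{W_1}Y_s^\eta$. It bounds the $2p$-th moment of the first by H\"older's inequality with Lemma \ref{L:Second-der-Integrating-factor}, and of the second by Lemma \ref{L:51}. Only at the end does it invoke Lemma \ref{L:1st-der-Y-W1} to see that both bounds vanish.

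You apply Lemma \ref{L:1st-der-Y-W1} first, inside the equation. This is legitimate: $D_r^{W_1}Y_s^\eta=0$ a.s.\ for each $s$ makes the integrands vanish $ds\otimes d\mathbb{P}$-a.e. You then only need uniqueness for a homogeneous linear equation with zero initial datum. That removes several ingredients the paper needs:
\begin{itemize}
\item no moment bounds on $Z$ and no H\"older step;
\item no care about the anticipating factor $Z_{t,s}$ inside the stochastic integral, which the paper resolves in Lemma \ref{L:51} by factoring out $Z_{t,r_1\vee r_2}$;
\item in the Gronwall variant, not even the dissipativity \eqref{E:Ass2}, since any upper bound on the drift coefficient suffices when the initial value is zero.
\end{itemize}

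One small caveat: under the paper's assumptions $f_y$ is only bounded above, so ``Lipschitz'' must be read pathwise. The identity $d\bigl(Z_{t,r_1\vee r_2}^{-1}U_t\bigr)=0$, or your localized Gronwall, handles this.

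What the paper's route buys is uniformity with the $W_1W_2$ and $W_2W_2$ cases. There the source terms do not vanish, and the same $Z$-estimates are what produce the exponential decay in $1/\eta$ times the time lag that is used later.

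Your measurability argument is the most economical of all. Since $Y^\eta$ is $\sigma(W^2)$-measurable, every derivative in a $W^1$ direction vanishes, by the standard fact that $D_sF=0$ for $s\notin A$ when $F\in\mathbb{D}^{1,2}$ is $\mathcal{F}_A$-measurable. This gives the present lemma, Lemma \ref{L:1st-der-Y-W1} and Lemma \ref{L:Second-der-Y-W1-W2} at once, without using the derivative equations.
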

\begin{proof}
From equation \eqref{E:Second-der-Y-W1-W1}, we get 
\begin{multline*}
D_{r_1,r_2}^{W_1 W_1} Y_t^\eta  = \int_{r_1 \vee r_2}^t Z_{t,s} \left\{ \frac{1}{\eta} f_{yy}(Y_s^\eta) D_{r_2}^{W_1}Y_s^\eta \cdot D_{r_1}^{W_1}Y_s^\eta - \frac{1}{\eta} \tau_{y}(Y_s^\eta) \tau_{yy}(Y_s^\eta) D_{r_2}^{W_1}Y_s^\eta \cdot D_{r_1}^{W_1}Y_s^\eta  \right\} ds \\
 + \frac{1}{\sqrt{\eta}} \int_{r_1 \vee r_2}^t Z_{t,s} \tau_{yy}(Y_s^\eta) D_{r_2}^{W_1}Y_s^\eta \cdot D_{r_1}^{W_1}Y_s^\eta \thinspace dW_s^2, 
\end{multline*}
where, the process $Z_{t,r_1 \vee r_2}$ is defined in equation \eqref{E:Z-process}. Now, increasing the power to $2p$ on both sides and then taking expectation, we have
\begin{equation}\label{E:Sec-der-Y-W1W1-Eq1}
\begin{aligned}
\BE & \left[ \left(D_{r_1,r_2}^{W_1 W_1} Y_t^\eta\right)^{2p} \right]  \le \frac{K}{\eta^{2p}} \BE \left( \int_{r_1 \vee r_2}^t Z_{t,s} \left\{ f_{yy}(Y_s^\eta)  -  \tau_{y}(Y_s^\eta) \tau_{yy}(Y_s^\eta)\right\} D_{r_2}^{W_1}Y_s^\eta \cdot D_{r_1}^{W_1}Y_s^\eta \thinspace ds \right)^{2p} \\
&  \quad   + \frac{K}{\eta^p} \BE \left( \int_{r_1 \vee r_2}^t Z_{t,s} \tau_{yy}(Y_s^\eta) D_{r_2}^{W_1}Y_s^\eta \cdot D_{r_1}^{W_1}Y_s^\eta \thinspace dW_s^2 \right)^{2p}  \triangleq \mathscr{I}_1(t, r_1 \vee r_2; \eta) + \mathscr{I}_2(t, r_1 \vee r_2; \eta).
\end{aligned}
\end{equation}
For $\mathscr{I}_2(t, r_1 \vee r_2; \eta)$, Lemma \ref{L:51} with the process $Q_s \triangleq \tau_{yy}(Y_s^\eta) D_{r_2}^{W_1}Y_s^\eta \cdot D_{r_1}^{W_1}Y_s^\eta$ and Lemma \ref{L:1st-der-Y-W1} yield $\mathscr{I}_2(t, r_1 \vee r_2; \eta)=0.$ For $\mathscr{I}_1(t, r_1 \vee r_2; \eta)$, we use H\"older's inequality repeatedly to get 
\begin{multline*}
\mathscr{I}_1(t, r_1 \vee r_2; \eta)  \le  \frac{K}{\eta^{2p}} \underbrace{\int_{r_1 \vee r_2}^t \cdots \int_{ r_1 \vee r_2}^t}_{2p\operatorname{-times}} \left[ \prod_{i=1}^{2p} (\BE Z_{t,s_i}^{4p}) \right]^{\frac{1}{4p}} \times \\
 \left[ \prod_{i=1}^{2p} \left(\BE \left\{[f_{yy}(Y_s^\eta)  -  \tau_{y}(Y_s^\eta) \tau_{yy}(Y_s^\eta)]^{4p} [D_{r_2}^{W_1}Y_{s_i}^\eta \cdot D_{r_1}^{W_1}Y_{s_i}^\eta]^{4p} \right\} \right) \right]^{\frac{1}{4p}} \thinspace ds_1 \cdots ds_{2p}.
\end{multline*}
Finally, using Lemmas \ref{L:1st-der-Y-W1}, \ref{L:Second-der-Integrating-factor}, we get $\mathscr{I}_1(t, r_1 \vee r_2; \eta)=0$. 
\end{proof}

\begin{lemma}\label{L:Second-der-X-W1-W1}
Let the process $D_{r_1,r_2}^{W_1 W_1} X_t^\eps$ be the solution of equation \eqref{E:Second-der-X-W1-W1}. Then, for any $p \in \BN$ and $t \ge r_1 \vee r_2$, there exists a time-independent positive constant $K$ such that
$$\BE \left[\left|D_{r_1,r_2}^{W_1 W_1} X_t^\eps\right|^{2p} \right] \le K \eps^{2p} e^{-{(2p) \alpha}(t- r_1 \wedge r_2 )}.$$
\end{lemma}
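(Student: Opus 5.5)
Since $D^{W_1}Y^\eta\equiv 0$ (Lemma \ref{L:1st-der-Y-W1}) and $D^{W_1,W_1}Y^\eta\equiv 0$ (Lemma \ref{L:Second-der-Y-W1-W1}), equation \eqref{E:Second-der-X-W1-W1} collapses. The two initial terms $\sqrt{\eps}\sigma_y(Y_{r_2}^\eta)D_{r_1}^{W_1}Y_{r_2}^\eta$ and $\sqrt{\eps}\sigma_y(Y_{r_1}^\eta)D_{r_2}^{W_1}Y_{r_1}^\eta$ vanish. All terms containing $c_{xy}$, $c_{yy}$ and $c_y$ vanish. Both stochastic integrals against $dW^1$ vanish as well. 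What remains is the linear random ODE
\begin{equation*}
D_{r_1,r_2}^{W_1 W_1} X_t^\eps = \int_{r_1\vee r_2}^t \left[ c_{xx}(X_s^\eps,Y_s^\eta)\, D_{r_2}^{W_1}X_s^\eps\, D_{r_1}^{W_1}X_s^\eps + c_x(X_s^\eps,Y_s^\eta)\, D_{r_1,r_2}^{W_1W_1}X_s^\eps \right] ds ,
\end{equation*}
with zero initial value at $r_1\vee r_2$. I would solve it with the integrating factor from the footnote in the proof of Lemma \ref{L:1st-der-generic-lemma}; here $S\equiv 0$, so the factor is deterministic given the path:
\begin{equation*}
D_{r_1,r_2}^{W_1 W_1} X_t^\eps = \int_{r_1\vee r_2}^t e^{\int_s^t c_x(X_u^\eps,Y_u^\eta)\,du}\, c_{xx}(X_s^\eps,Y_s^\eta)\, D_{r_2}^{W_1}X_s^\eps\, D_{r_1}^{W_1}X_s^\eps \, ds .
\end{equation*}

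Next I would use $c_x<-\alpha$ together with $|c_{xx}|\le K$ from Assumption \ref{A:Derivatives-Assumption-c}. This bounds the integrand pathwise by $K e^{-\alpha(t-s)}|D_{r_2}^{W_1}X_s^\eps|\,|D_{r_1}^{W_1}X_s^\eps|$. I then raise to the power $2p$ and expand the $2p$-fold product of time integrals, exactly as in the estimate of $I_1$ in Lemma \ref{L:1st-der-generic-lemma}. Applying the generalized Hölder inequality over the $4p$ factors gives
\begin{equation*}
\BE\left[\left|D_{r_1,r_2}^{W_1 W_1} X_t^\eps\right|^{2p}\right] \le K \left( \int_{r_1\vee r_2}^t e^{-\alpha(t-s)} \left\{\BE |D_{r_2}^{W_1}X_s^\eps|^{4p}\right\}^{\frac{1}{4p}} \left\{\BE |D_{r_1}^{W_1}X_s^\eps|^{4p}\right\}^{\frac{1}{4p}} ds\right)^{2p}.
\end{equation*}
Lemma \ref{L:1st-der-X-W1}, applied with $2p$ in place of $p$, bounds each factor by $K\sqrt{\eps}\, e^{-\alpha(s-r_i)}$. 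The integrand is therefore at most $K\eps\, e^{-\alpha(t-s)}e^{-\alpha(s-r_1)}e^{-\alpha(s-r_2)}$.

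The last step is the time integral. Write $r_1\vee r_2 = (r_1\wedge r_2)+\delta$ with $\delta\ge 0$. The exponent is $-\alpha(t-r_1\wedge r_2)-\alpha(s-r_1\vee r_2)$. Integrating over $s\in[r_1\vee r_2,t]$ therefore yields at most $\alpha^{-1}e^{-\alpha(t-r_1\wedge r_2)}$. Raising to the power $2p$ gives $K\eps^{2p}e^{-2p\alpha(t-r_1\wedge r_2)}$, which is the claim.

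The main obstacle is only bookkeeping: getting the exponent in terms of $r_1\wedge r_2$ rather than $r_1\vee r_2$. This relies on keeping both decay factors $e^{-\alpha(s-r_1)}$ and $e^{-\alpha(s-r_2)}$, as done above, and not crudely bounding one of them by $1$. No martingale estimate is needed, because every $dW^1$ term has dropped out.
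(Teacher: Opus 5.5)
Your proof is correct and follows the paper's argument. You solve the linear equation with the integrating factor and use $c_x<-\alpha$, $|c_{xx}|\le K$ to reduce to $K\,\BE\bigl(\int_{r_1\vee r_2}^t e^{-\alpha(t-s)}|D_{r_2}^{W_1}X_s^\eps\, D_{r_1}^{W_1}X_s^\eps|\,ds\bigr)^{2p}$. You then apply H\"older with $4p$-th moments and Lemma \ref{L:1st-der-X-W1}, keeping both decay factors so the exponent comes out in terms of $r_1\wedge r_2$. The one difference is cosmetic: you drop the terms involving $D^{W_1}Y^\eta$ and $D^{W_1,W_1}Y^\eta$ pathwise before solving, while the paper carries them through and kills them with the moment lemmas, including a martingale-type argument for the $dW^1$ integrals.
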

\begin{proof} 
Solving equation \eqref{E:Second-der-X-W1-W1} followed by increasing the power to $2p$ and taking expectation on both sides, we obtain
\begin{equation*}
\begin{aligned}
& \BE \left[ (D_{r_1,r_2}^{W_1 W_1} X_t^\eps)^{2p} \right]  \le K \eps^p \BE \left( e^{2p\int_{r_1 \vee r_2}^t c_x(X_u^\varepsilon, Y_u^\eta) \thinspace du} \left[ \sigma_y(Y_{r_2}^\eta) D_{r_1}^{W_1}Y_{r_2}^\eta  \right]^{2p} \right) \\
& \quad  + K \eps^p \BE \left( e^{2p\int_{r_1 \vee r_2}^t c_x(X_u^\varepsilon, Y_u^\eta)\thinspace du} \left[ \sigma_y(Y_{r_1}^\eta) D_{r_2}^{W_1}Y_{r_1}^\eta \right]^{2p}\right) \\
& \quad + K \BE \left( \int_{r_1 \vee r_2}^t e^{\int_{s}^t c_x(X_u^\varepsilon, Y_u^\eta) \thinspace du} \left[c_{xx}(X_s^\eps,Y_s^\eta) D_{r_2}^{W_1}X_s^\eps \cdot D_{r_1}^{W_1}X_s^\eps +  c_{xy}(X_s^\eps,Y_s^\eta) D_{r_2}^{W_1}Y_s^\eta \cdot D_{r_1}^{W_1}X_s^\eps \right]ds \right)^{2p} \\
& \quad + K \BE \left( \int_{r_1 \vee r_2}^t e^{\int_{s}^t c_x(X_u^\varepsilon, Y_u^\eta) \thinspace du} \left[c_{xy}(X_s^\eps,Y_s^\eta) D_{r_2}^{W_1}X_s^\eps \cdot D_{r_1}^{W_1}Y_s^\eta +  c_{yy}(X_s^\eps,Y_s^\eta) D_{r_1}^{W_1}Y_s^\eta \cdot D_{r_2}^{W_1}Y_s^\eta \right]ds \right)^{2p} \\
& \quad + K \BE \left( \int_{r_1 \vee r_2}^t e^{\int_{s}^t c_x(X_u^\varepsilon, Y_u^\eta) \thinspace du} \left[ c_{y}(X_s^\eps,Y_s^\eta) D_{r_1, r_2}^{W_1 W_1}Y_s^\eta \right]ds \right)^{2p} \\
\end{aligned}
\end{equation*}
\begin{equation*}
\begin{aligned}
& \quad + K \eps^p \BE \left( \int_{r_1 \vee r_2}^t e^{\int_{s}^t c_x(X_u^\varepsilon, Y_u^\eta) \thinspace du} \left[ \sigma_{yy} (Y_s^\eta) D_{r_1}^{W_1}Y_s^\eta \cdot D_{r_2}^{W_1}Y_s^\eta \right]dW_s^1 \right)^{2p}    \\
& \quad + K \eps^p  \BE \left( \int_{r_1 \vee r_2}^t e^{\int_{s}^t c_x(X_u^\varepsilon, Y_u^\eta) \thinspace du} \left[ \sigma_{y} (Y_s^\eta) D_{r_1, r_2}^{W_1 W_1}Y_s^\eta \right]dW_s^1 \right)^{2p}.
\end{aligned}
\end{equation*}
The above terms can be handled using H\"older's inequality,  Assumption \ref{A:Derivatives-Assumption-c}, the boundedness of the derivatives of $c, \thinspace \sigma$ and Lemmas \ref{L:1st-der-Y-W1}, \ref{L:1st-der-X-W1} and \ref{L:Second-der-Y-W1-W1} to deal with the Malliavin derivatives of the processes $X_t^\eps$ and $Y_t^\eta$. For the moments of the stochastic integrals, one can apply the similar arguments to Lemma \ref{L:1st-der-generic-lemma} to obtain
$\BE \left[ (D_{r_1,r_2}^{W_1 W_1} X_t^\eps)^{2p} \right]  \le K \BE \left( \int_{r_1 \vee r_2}^t e^{-\alpha(t-s)} \left| D_{r_2}^{W_1}X_s^\eps \cdot D_{r_1}^{W_1}X_s^\eps  \right| ds \right)^{2p}.$
We again use H\"older's inequality and Lemma \ref{L:1st-der-X-W1} to get
\begin{equation*}
\begin{aligned}
\BE & \left[  (D_{r_1,r_2}^{W_1 W_1} X_t^\eps)^{2p} \right]  \le K  \int_{r_1 \vee r_2}^t \cdots \int_{r_1 \vee r_2}^t \prod_{i=1}^{2p} e^{-\alpha (t-s_i)} \left[ \BE (D_{r_2}^{W_2}X_{s_i}^\eps)^{4p}\right]^{\frac{1}{4p}} \left[ \BE (D_{r_1}^{W_1}X_{s_i}^\eps)^{4p} \right]^{\frac{1}{4p}}ds_1 \cdots ds_{2p}\\
& \qquad \quad \le K \eps^{2p} \int_{r_1 \vee r_2}^t \cdots \int_{r_1 \vee r_2}^t \prod_{i=1}^{2p} e^{-\alpha (t-s_i)} e^{-\alpha (s_i-r_2)} e^{-\alpha (s_i-r_1)} ds_1 \cdots ds_{2p} \le K \eps^{2p} e^{-{2p \alpha}(t- r_1 \wedge r_2 )},
\end{aligned}
\end{equation*}
which concludes the proof of the lemma.
\end{proof}

In the next section, we focus on getting a bound for the term $\BE \left[|D_{r_1,r_2}^{W_1 W_2} X_t^\eps|^{2p}\right]$, where the process $D_{r_1,r_2}^{W_1 W_2} X_t^\eps$ is the solution of equation \eqref{E:Second-der-X-W1-W2} below. The bound is specified in Lemma \ref{L:Second-der-X-W1-W2}.
\subsection{Bound associated with the term $D_{r_1,r_2}^{W_1 W_2} X_t^\eps$}
For $t \ge r_1 \vee r_2,$ the second-order derivative processes $D_{r_1,r_2}^{W_1 W_2} X_t^\eps$ and $D_{r_1,r_2}^{W_1 W_2} Y_t^\eta$ satisfy the following stochastic integral equations:
\begin{equation}\label{E:Second-der-X-W1-W2}
\begin{aligned}
D_{r_1,r_2}^{W_1 W_2} X_t^\eps & =   \sqrt{\eps} \sigma_y(Y_{r_1}^\eta) D_{r_2}^{W_2}Y_{r_1}^\eta\\
&  + \int_{r_1 \vee r_2}^t \left[c_{xx}(X_s^\eps,Y_s^\eta) D_{r_2}^{W_2}X_s^\eps \cdot D_{r_1}^{W_1}X_s^\eps +  c_{xy}(X_s^\eps,Y_s^\eta) D_{r_2}^{W_2}Y_s^\eta \cdot D_{r_1}^{W_1}X_s^\eps \right]ds \\
&  + \int_{r_1 \vee r_2}^t \left[c_{xy}(X_s^\eps,Y_s^\eta) D_{r_2}^{W_2}X_s^\eps \cdot D_{r_1}^{W_1}Y_s^\eta +  c_{yy}(X_s^\eps,Y_s^\eta) D_{r_1}^{W_1}Y_s^\eta \cdot D_{r_2}^{W_2}Y_s^\eta \right]ds \\
&  + \int_{r_1 \vee r_2}^t \left[c_{x}(X_s^\eps,Y_s^\eta) D_{r_1, r_2}^{W_1 W_2}X_s^\eps  +  c_{y}(X_s^\eps,Y_s^\eta) D_{r_1, r_2}^{W_1 W_2}Y_s^\eta \right]ds \\
&  + \sqrt{\eps} \int_{r_1 \vee r_2}^t \sigma_{yy} (Y_s^\eta) D_{r_1}^{W_1}Y_s^\eta \cdot D_{r_2}^{W_2}Y_s^\eta \thinspace dW_s^1 + \sqrt{\eps} \int_{r_1 \vee r_2}^t \sigma_{y} (Y_s^\eta) D_{r_1, r_2}^{W_1 W_2}Y_s^\eta \thinspace dW_s^1,
\end{aligned}
\end{equation}
and 
\begin{equation}\label{E:Second-der-Y-W1-W2}
\begin{aligned}
D_{r_1,r_2}^{W_1 W_2} Y_t^\eta  & = \frac{1}{\sqrt{\eta}} \tau_y(Y_{r_2}^\eta) D_{r_1}^{W_1}Y_{r_2}^\eta \\
& +  \frac{1}{\eta} \int_{r_1 \vee r_2}^t f_{yy}(Y_s^\eta)\cdot D_{r_2}^{W_2}Y_s^\eta \cdot D_{r_1}^{W_1}Y_s^\eta ds  +  \frac{1}{\eta} \int_{r_1 \vee r_2}^t f_{y}(Y_s^\eta)\cdot D_{r_1, r_2}^{W_1 W_2}Y_s^\eta ds \\
&+ \frac{1}{\sqrt{\eta}} \int_{r_1 \vee r_2}^t \tau_{yy}(Y_s^\eta)\cdot D_{r_2}^{W_2}Y_s^\eta \cdot D_{r_1}^{W_1}Y_s^\eta \thinspace dW_s^2  +  \frac{1}{\sqrt{\eta}} \int_{r_1 \vee r_2}^t \tau_{y}(Y_s^\eta)\cdot D_{r_1, r_2}^{W_1 W_2}Y_s^\eta \thinspace dW_s^2.
\end{aligned}
\end{equation}

\begin{lemma}\label{L:Second-der-Y-W1-W2}
Let the process $D_{r_1,r_2}^{W_1 W_2} Y_t^\eta$ be the solution of equation \eqref{E:Second-der-Y-W1-W2} above. Then, for any $p \in \BN,$ and $t \ge r_1 \vee r_2,$ we have
$$\BE\left[\left|D_{r_1,r_2}^{W_1 W_2} Y_t^\eta\right|^{2p} \right]=0.$$
\end{lemma}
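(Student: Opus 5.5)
The plan follows the proof of Lemma \ref{L:Second-der-Y-W1-W1}: equation \eqref{E:Second-der-Y-W1-W2} is linear in the unknown $D_{r_1,r_2}^{W_1 W_2} Y_t^\eta$, and every inhomogeneous term contains the factor $D_{\cdot}^{W_1}Y_\cdot^\eta$. By Lemma \ref{L:1st-der-Y-W1} that factor vanishes almost surely, since all of its moments are zero.

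First, I would solve the linear equation with the integrating factor $Z_{t,s}$ from \eqref{E:Z-process}, using the variation-of-constants formula in the footnote to Lemma \ref{L:1st-der-generic-lemma}. This gives
\begin{multline*}
D_{r_1,r_2}^{W_1 W_2} Y_t^\eta = Z_{t,r_1\vee r_2}\frac{1}{\sqrt{\eta}}\tau_y(Y_{r_2}^\eta) D_{r_1}^{W_1}Y_{r_2}^\eta + \frac{1}{\eta}\int_{r_1\vee r_2}^t Z_{t,s}\left\{ f_{yy}(Y_s^\eta) - \tau_y(Y_s^\eta)\tau_{yy}(Y_s^\eta)\right\} D_{r_2}^{W_2}Y_s^\eta \, D_{r_1}^{W_1}Y_s^\eta \, ds \\
+ \frac{1}{\sqrt{\eta}}\int_{r_1\vee r_2}^t Z_{t,s}\tau_{yy}(Y_s^\eta) D_{r_2}^{W_2}Y_s^\eta \, D_{r_1}^{W_1}Y_s^\eta \, dW_s^2 .
\end{multline*}
Then I would raise this to the power $2p$ and use $(a+b+c)^{2p}\le K(a^{2p}+b^{2p}+c^{2p})$ to split the expectation into three terms.

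\textbf{Initial-value term.} Apply H\"older's inequality to separate $\BE |Z_{t,r_1\vee r_2}|^{4p}$, which is finite by Lemma \ref{L:Second-der-Integrating-factor}, from $\BE|\tau_y(Y_{r_2}^\eta) D_{r_1}^{W_1}Y_{r_2}^\eta|^{4p}$. The latter vanishes by Lemma \ref{L:1st-der-Y-W1} together with the boundedness of $\tau_y$. If $r_2<r_1$, the derivative $D_{r_1}^{W_1}Y_{r_2}^\eta$ is zero by adaptedness anyway.

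\textbf{Lebesgue-integral term.} Proceed exactly as for $\mathscr{I}_1$ in Lemma \ref{L:Second-der-Y-W1-W1}: use repeated H\"older over $2p$ time variables and Lemma \ref{L:Second-der-Integrating-factor} for the $Z$ factor. The remaining product is bounded by
$$\left(\BE|D_{r_2}^{W_2}Y_s^\eta|^{8p}\right)^{1/2}\left(\BE|D_{r_1}^{W_1}Y_s^\eta|^{8p}\right)^{1/2},$$
where the first factor is finite by Lemma \ref{L:1st-der-Y-W2} and the second is zero by Lemma \ref{L:1st-der-Y-W1}.

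\textbf{Stochastic-integral term.} Apply Lemma \ref{L:51} with $Q_s=\tau_{yy}(Y_s^\eta) D_{r_2}^{W_2}Y_s^\eta \, D_{r_1}^{W_1}Y_s^\eta$. The right-hand side then involves $\BE|Q_s|^{2p}$, which is zero by Cauchy--Schwarz combined with Lemmas \ref{L:1st-der-Y-W1} and \ref{L:1st-der-Y-W2}.

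The only mild obstacle is justifying that $Q\in L^{2p}(\Omega)$ and that the H\"older splits are legitimate, i.e.\ that the coefficients $f_{yy}$ and $\tau_y\tau_{yy}$ do not destroy integrability. Since $f_{yy}$ is not assumed bounded, I would handle this by placing it together with $D_{r_1}^{W_1}Y^\eta$ in the H\"older factor, as was done in Lemma \ref{L:Second-der-Y-W1-W1}. Alternatively, since $D_{r_1}^{W_1}Y^\eta\equiv0$ almost surely, the integrand is identically zero pathwise, and no moment control of $f_{yy}$ is needed at all. Summing the three zero contributions gives the claim.
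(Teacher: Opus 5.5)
Your proposal is correct and follows essentially the same route as the paper: you solve \eqref{E:Second-der-Y-W1-W2} with the integrating factor $Z$ from \eqref{E:Z-process}, then show each of the three resulting terms vanishes exactly as in the proof of Lemma~\ref{L:Second-der-Y-W1-W1}, via Lemma~\ref{L:1st-der-Y-W1} (with Lemmas~\ref{L:1st-der-Y-W2} and \ref{L:Second-der-Integrating-factor} controlling the other factors). Your additional observation that $D^{W_1}Y^\eta\equiv 0$ almost surely makes every integrand vanish pathwise, which neatly avoids the integrability question for the possibly unbounded $f_{yy}$ that the paper's H\"older-based argument passes over.
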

\begin{proof}
The definition of the process $Z$ from equation \eqref{E:Z-process} and equation \eqref{E:Second-der-Y-W1-W2} above yield
\begin{equation*}
\begin{aligned}
D_{r_1,r_2}^{W_1 W_2} Y_t^\eta & = \frac{1}{\sqrt{\eta}} Z_{t,r_1 \vee r_2}  \tau_y(Y_{r_2}^\eta) D_{r_1}^{W_1}Y_{r_2}^\eta + \frac{1}{\sqrt{\eta}} \int_{r_1 \vee r_2}^t Z_{t,s} \tau_{yy}(Y_s^\eta) D_{r_2}^{W_2}Y_s^\eta \cdot D_{r_1}^{W_1}Y_s^\eta \thinspace dW_s^2 \\
& \qquad \quad +  \int_{r_1 \vee r_2}^t Z_{t,s} \left\{ \frac{1}{\eta} f_{yy}(Y_s^\eta) D_{r_2}^{W_2}Y_s^\eta \cdot D_{r_1}^{W_1}Y_s^\eta - \frac{1}{\eta} \tau_{y}(Y_s^\eta) \tau_{yy}(Y_s^\eta) D_{r_2}^{W_2}Y_s^\eta \cdot D_{r_1}^{W_1}Y_s^\eta  \right\} ds. 
\end{aligned}
\end{equation*}
Next, following the similar steps to the proof of Lemma \ref{L:Second-der-Y-W1-W1} and applying Lemmas \ref{L:1st-der-Y-W1}, and \ref{L:1st-der-Y-W2}, we obtain the required result.
\end{proof}

\begin{lemma}\label{L:Second-der-X-W1-W2}
Let the process $D_{r_1,r_2}^{W_1 W_2} X_t^\eps$ be the solution of equation \eqref{E:Second-der-X-W1-W2}. Then, for any $p \in \BN$ and $t \ge r_1 \vee r_2$, there exists a time-independent positive constant $K$ such that
\begin{multline*}
\BE \left[\left|D_{r_1,r_2}^{W_1 W_2} X_t^\eps\right|^{2p} \right] \le K \left[\frac{\eps^p}{\eta^p} + \eps^p \eta^p \right] e^{- (2p)\alpha (t-r_1 \vee r_2)} e^{- (2p)\frac{\lambda}{\eta} (r_1 - r_2)}\ind_{(r_1 \ge r_2)} \\
 + K  \left[ \eps^p \eta^p + {\eps^{2p}} \right] e^{-(2p) \alpha (t - r_1 \wedge r_2)}.
\end{multline*}
\end{lemma}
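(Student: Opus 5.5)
We follow the route of Lemma \ref{L:Second-der-X-W1-W1}: solve the linear equation \eqref{E:Second-der-X-W1-W2} with integrating factor $e^{\int_s^t c_x(X_u^\eps,Y_u^\eta)du}$, raise to the power $2p$, and bound each source term separately. By Lemma \ref{L:1st-der-Y-W1}, $D^{W_1}Y\equiv 0$, and by Lemma \ref{L:Second-der-Y-W1-W2}, $D^{W_1W_2}Y\equiv 0$. Consequently the $c_{xy}D^{W_2}X\cdot D^{W_1}Y$, $c_{yy}$, $c_y D^{W_1W_2}Y$ and both $dW^1$ stochastic integrals vanish. Three contributions remain:
\[
A_1=\sqrt{\eps}\,e^{\int_{r_1\vee r_2}^t c_x\,du}\,\sigma_y(Y_{r_1}^\eta)D_{r_2}^{W_2}Y_{r_1}^\eta,
\]
\[
A_2=\int_{r_1\vee r_2}^t e^{\int_s^t c_x\,du}\,c_{xx}\,D_{r_2}^{W_2}X_s^\eps\, D_{r_1}^{W_1}X_s^\eps\,ds,
\]
\[
A_3=\int_{r_1\vee r_2}^t e^{\int_s^t c_x\,du}\,c_{xy}\,D_{r_2}^{W_2}Y_s^\eps\, D_{r_1}^{W_1}X_s^\eps\,ds.
\]
Throughout, $c_x<-\alpha$ lets us replace every integrating factor by $e^{-\alpha(t-s)}$.

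\textbf{Term $A_1$.} The derivative $D_{r_2}^{W_2}Y_{r_1}^\eta$ is nonzero only if $r_1\ge r_2$. In that case Lemma \ref{L:1st-der-Y-W2} gives
\[
\BE|A_1|^{2p}\le K\,\frac{\eps^p}{\eta^p}\,e^{-2p\alpha(t-r_1\vee r_2)}\,e^{-2p\frac{\lambda}{\eta}(r_1-r_2)}\ind_{(r_1\ge r_2)},
\]
which is the first piece of the claimed bound.

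\textbf{Term $A_2$.} Apply H\"older's inequality to the $2p$-fold integral exactly as in Lemma \ref{L:Second-der-X-W1-W1}, using Lemmas \ref{L:1st-der-X-W1} and \ref{L:1st-der-X-W2} with $4p$-th moments. This gives the integrand bound
\[
[\BE|D^{W_2}_{r_2}X_s|^{4p}]^{1/4p}\,[\BE|D^{W_1}_{r_1}X_s|^{4p}]^{1/4p}\le K(\eta+\eps)^{1/2}\eps^{1/2}\,e^{-\alpha(s-r_2)}e^{-\alpha(s-r_1)}.
\]
Integrating against $e^{-\alpha(t-s)}$ over $s\ge r_1\vee r_2$ yields $K(\eps\eta+\eps^2)^p e^{-2p\alpha(t-r_1\wedge r_2)}$, the second piece.

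\textbf{Term $A_3$ (main obstacle).} Here the bound must exploit the fast decay of $D^{W_2}Y$. H\"older's inequality, Lemma \ref{L:1st-der-Y-W2} and Lemma \ref{L:1st-der-X-W1} reduce $\BE|A_3|^{2p}$ to
\[
K\frac{\eps^p}{\eta^p}\Big(\int_{r_1\vee r_2}^t e^{-\alpha(t-s)}e^{-\frac{\lambda}{\eta}(s-r_2)}e^{-\alpha(s-r_1)}\,ds\Big)^{2p}.
\]
Write $e^{-\frac{\lambda}{\eta}(s-r_2)}=e^{-\frac{\lambda}{\eta}(s-r_1\vee r_2)}e^{-\frac{\lambda}{\eta}(r_1\vee r_2-r_2)}$. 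Since $\frac{\lambda}{\eta}>\alpha$ for small $\eta$, the $s$-integral is at most $K\eta\, e^{-\alpha(t-r_1\vee r_2)}$, as in the estimate of $J_1$ in Lemma \ref{L:1st-der-X-W2}. This gives the factor $\eta^{2p}$ and hence $\eps^p\eta^p$ overall. It remains to place the exponentials in the right form by distinguishing cases:
\begin{itemize}
\item If $r_1\ge r_2$, keep the factor $e^{-\frac{\lambda}{\eta}(r_1-r_2)}$ to obtain the $\eps^p\eta^p$ term with the indicator.
\item If $r_1<r_2$, that factor equals $1$ and $e^{-\alpha(s-r_1)}$ supplies $e^{-\alpha(r_2-r_1)}$, so $t-r_1\wedge r_2$ appears, giving the $\eps^p\eta^p e^{-2p\alpha(t-r_1\wedge r_2)}$ term.
\end{itemize}
The delicate point is this bookkeeping of exponents. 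It needs the same $\eta$-smallness used in Lemma \ref{L:1st-der-X-W2}, namely $\lambda/\eta>\alpha$, and the factor $1/\eta$ from Lemma \ref{L:1st-der-Y-W2} must be absorbed by the $\eta^{2p}$ from the time integral.

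Summing the bounds for $A_1$, $A_2$ and $A_3$ gives the stated estimate.
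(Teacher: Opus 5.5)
Your proposal is correct and follows essentially the same route as the paper. You solve the linear equation with the integrating factor $e^{\int_s^t c_x\,du}\le e^{-\alpha(t-s)}$, discard every term containing $D^{W_1}Y$ or $D^{W_1W_2}Y$, and bound the three surviving terms (the paper's $\mathsf{I}_1$, $\mathsf{I}_2$, $\mathsf{I}_3$) by H\"older's inequality with Lemmas \ref{L:1st-der-Y-W2}, \ref{L:1st-der-X-W1} and \ref{L:1st-der-X-W2}, using the same case split on $r_1\ge r_2$ for the last term.

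Two small remarks. Your $4p$-th moments in the H\"older step are the correct exponent; the paper writes $2p$, which does not change the final rates. In $A_3$ you could simply note that $e^{-\alpha(t-s)}e^{-\alpha(s-r_1)}=e^{-\alpha(t-r_1)}$, so the $s$-integral is at most $\frac{\eta}{\lambda}e^{-\alpha(t-r_1)}e^{-\frac{\lambda}{\eta}(r_1\vee r_2-r_2)}$; this gives both cases at once without needing $\lambda/\eta>\alpha$.
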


\begin{proof}
We start solving equation \eqref{E:Second-der-X-W1-W2} to get
\begin{equation*}
\begin{aligned}
D_{r_1,r_2}^{W_1 W_2} X_t^\eps & = \sqrt{\eps} \thinspace \sigma_y(Y_{r_1}^\eta) D_{r_2}^{W_2}Y_{r_1}^\eta e^{\int_{r_1 \vee r_2}^t c_{x}(X_u^\eps,Y_u^\eta) \thinspace du} \\
& + \int_{r_1 \vee r_2}^t e^{\int_s^t c_{x}(X_u^\eps,Y_u^\eta) \thinspace du} \left[c_{xx}(X_s^\eps,Y_s^\eta) D_{r_2}^{W_2}X_s^\eps \cdot D_{r_1}^{W_1}X_s^\eps +  c_{xy}(X_s^\eps,Y_s^\eta) D_{r_2}^{W_2}Y_s^\eta \cdot D_{r_1}^{W_1}X_s^\eps \right]ds \\
& + \int_{r_1 \vee r_2}^t e^{\int_s^t c_{x}(X_u^\eps,Y_u^\eta) \thinspace du}  \left[c_{xy}(X_s^\eps,Y_s^\eta) D_{r_2}^{W_2}X_s^\eps \cdot D_{r_1}^{W_1}Y_s^\eta +  c_{yy}(X_s^\eps,Y_s^\eta) D_{r_1}^{W_1}Y_s^\eta \cdot D_{r_2}^{W_2}Y_s^\eta \right]ds \\
& + \int_{r_1 \vee r_2}^t e^{\int_s^t c_{x}(X_u^\eps,Y_u^\eta) \thinspace du} \left[  c_{y}(X_s^\eps,Y_s^\eta) D_{r_1, r_2}^{W_1 W_2}Y_s^\eta \right]ds \\
& + \sqrt{\eps} \int_{r_1 \vee r_2}^t e^{\int_s^t c_{x}(X_u^\eps,Y_u^\eta) \thinspace du} \sigma_{yy} (Y_s^\eta) D_{r_1}^{W_1}Y_s^\eta \cdot D_{r_2}^{W_2}Y_s^\eta \thinspace dW_s^1 \\
&+  \sqrt{\eps} \int_{r_1 \vee r_2}^t e^{\int_s^t c_{x}(X_u^\eps,Y_u^\eta) \thinspace du} \sigma_{y} (Y_s^\eta) D_{r_1, r_2}^{W_1 W_2}Y_s^\eta \thinspace dW_s^1.
\end{aligned}
\end{equation*}
Now, raising the power to $2p$, taking expectation on both sides, using the boundedness of $\sigma_y, c_{xx}, c_{xy}, c_{yy}, c_{y}$ (Assumptions \ref{A:Derivatives-Assumption-c}) and condition $c_x(x,y) < -\alpha$ (Assumption \ref{A:Derivatives-Assumption-c}), we have
\begin{equation}\label{E:Second-der-X-W1-W2-Eq-1}
\begin{aligned}
\BE \left[(D_{r_1,r_2}^{W_1 W_2} X_t^\eps)^{2p} \right] & \le K \eps^p e^{- (2p)\alpha (t-r_1 \vee r_2)} \BE \left[ \left(D_{r_2}^{W_2}Y_{r_1}^\eta \right)^{2p} \right] \\
& \quad  + K \BE \left[ \left( \int_{r_1 \vee r_2}^t e^{-\alpha (t-s) }  |D_{r_2}^{W_2}X_s^\eps \cdot D_{r_1}^{W_1}X_s^\eps| ds\right)^{2p} \right] \\
& \quad + K \BE \left[ \left( \int_{r_1 \vee r_2}^t e^{-\alpha (t-s) }   |D_{r_2}^{W_2}Y_s^\eta \cdot D_{r_1}^{W_1}X_s^\eps|  ds\right)^{2p} \right] \\
& \quad + K \BE \left[ \left( \int_{r_1 \vee r_2}^t e^{-\alpha (t-s) } \left\{  |D_{r_2}^{W_2}X_s^\eps \cdot D_{r_1}^{W_1}Y_s^\eta |+ |D_{r_1}^{W_1}Y_s^\eta \cdot D_{r_2}^{W_2}Y_s^\eta | \right\} ds\right)^{2p} \right] \\
& \quad + K \BE \left[ \left( \int_{r_1 \vee r_2}^t e^{-\alpha (t-s) }  | D_{r_1, r_2}^{W_1 W_2}Y_s^\eta| ds\right)^{2p} \right] \\
\end{aligned}
\end{equation}
\begin{equation*}
\begin{aligned}
& \quad + K \eps^p \thinspace \BE \left[ \left( \int_{r_1 \vee r_2}^t e^{\int_s^t c_{x}(X_u^\eps,Y_u^\eta) \thinspace du} \sigma_{yy} (Y_s^\eta) D_{r_1}^{W_1}Y_s^\eta \cdot D_{r_2}^{W_2}Y_s^\eta \thinspace dW_s^1  \right)^{2p} \right]\\
& \quad + K \eps^p \thinspace  \BE \left[ \left( \int_{r_1 \vee r_2}^t e^{\int_s^t c_{x}(X_u^\eps,Y_u^\eta) \thinspace du} \sigma_{y} (Y_s^\eta) D_{r_1, r_2}^{W_1 W_2}Y_s^\eta \thinspace dW_s^1  \right)^{2p} \right] \triangleq  \sum_{i=1}^7 \mathsf{I}_i(t, r_1 \vee r_2; \eps,\eta).
\end{aligned}
\end{equation*}
For the terms $\mathsf{I}_i(t, r_1 \vee r_2; \eps,\eta), \thinspace i=4, \cdots, 7$ in equation \eqref{E:Second-der-X-W1-W2-Eq-1} using Lemmas \ref{L:1st-der-Y-W1}, \ref{L:1st-der-Y-W2}, \ref{L:1st-der-X-W2}, \ref{L:Second-der-Y-W1-W2}, boundedness of derivatives of $\sigma$ (Assumption \ref{A:Derivatives-Assumption-c}) and the arguments similar to Lemma \ref{L:1st-der-generic-lemma} to handle the moments of the stochastic integrals,  we have
$\mathsf{I}_i(t, r_1 \vee r_2; \eps,\eta) = 0.$
We now focus on the term $\mathsf{I}_1(t, r_1 \vee r_2; \eps,\eta)$ in equation \eqref{E:Second-der-X-W1-W2-Eq-1}. Employing Lemma \ref{L:1st-der-Y-W2}, we have
\begin{equation}\label{E:Second-der-X-W1-W2-Eq-2}
\mathsf{I}_1(t, r_1 \vee r_2; \eps,\eta) \le K \frac{\eps^p}{\eta^p} e^{- (2p)\alpha (t-r_1 \vee r_2)} e^{- (2p)\frac{\lambda}{\eta} (r_1 - r_2)}\ind_{(r_1 \ge r_2)}.
\end{equation}
Next, for the term $\mathsf{I}_2(t, r_1 \vee r_2; \eps,\eta)$ in equation \eqref{E:Second-der-X-W1-W2-Eq-1}, we use H\"older's inequality repeatedly to get
\begin{equation*}
\mathsf{I}_2(t, r_1 \vee r_2; \eps,\eta) \le K \int_{r_1 \vee r_2}^t \cdots \int_{r_1 \vee r_2}^t \prod_{i=1}^{2p} e^{-\alpha (t-s_i)} \cdot \prod_{i=1}^{2p} \left[  \BE (D_{r_2}^{W_2}X_{s_i}^\eps)^{2p}\right]^{\frac{1}{2p}} \left[ \BE (D_{r_1}^{W_1}X_{s_i}^\eps)^{2p} \right]^{\frac{1}{2p}}ds_1 \cdots ds_{2p}.
\end{equation*}
Applying Lemmas \ref{L:1st-der-X-W1} to deal with the term $D_{r}^{W_1}X_{t}^\eps$, and \ref{L:1st-der-X-W2} to handle the term $D_{r}^{W_2}X_{t}^\eps$, we obtain 
\begin{equation}\label{E:Second-der-X-W1-W2-Eq-3}
\begin{aligned}
\mathsf{I}_2(t, r_1 \vee r_2; \eps,\eta) &  \le K \int_{r_1 \vee r_2}^t \cdots \int_{r_1 \vee r_2}^t \prod_{i=1}^{2p} \left(\sqrt{\eta} + {\sqrt{\eps}}\right)\sqrt{\eps} e^{-\alpha (t-s_i)} e^{-{\alpha}(s_i-r_1)} e^{-{\alpha}(s_i-r_2)} ds_1 \cdots ds_{2p} \\
& \le K \eps^p \left( \eta^p + {\eps^p} \right)  e^{-2p \alpha t} e^{2p {\alpha} r_1 } e^{2p {\alpha} r_2 }   \left(\int_{r_1 \vee r_2}^t e^{ - \alpha s}ds \right)^{2p} \le K \eps^p \left( \eta^p + {\eps^p} \right) e^{-(2p) \alpha (t - r_1 \wedge r_2)}.
\end{aligned}
\end{equation}
We next consider the term $\mathsf{I}_3(t, r_1 \vee r_2; \eps,\eta)$ in equation \eqref{E:Second-der-X-W1-W2-Eq-1}. Using Lemmas \ref{L:1st-der-Y-W2} and \ref{L:1st-der-X-W1}, we have
\begin{equation*}
\begin{aligned}
\mathsf{I}_3(t, r_1 \vee r_2; \eps,\eta) & \le K \int_{r_1 \vee r_2}^t \cdots \int_{r_1 \vee r_2}^t \prod_{i=1}^{2p} e^{-\alpha (t-s_i)} \cdot \prod_{i=1}^{2p} \left[  \BE (D_{r_2}^{W_2}Y_{s_i}^\eta)^{2p}\right]^{\frac{1}{2p}} \left[ \BE (D_{r_1}^{W_1}X_{s_i}^\eps)^{2p} \right]^{\frac{1}{2p}}ds_1 \cdots ds_{2p}\\
 &  \le K \int_{r_1 \vee r_2}^t \cdots \int_{r_1 \vee r_2}^t \prod_{i=1}^{2p} \frac{\sqrt{\eps}}{\sqrt{\eta}} e^{-\alpha (t-s_i)} e^{-{\alpha}(s_i-r_1)} e^{-\frac{\lambda}{ \eta}(s_i-r_2)} ds_1 \cdots ds_{2p} \\
& \le K \left(\frac{\eps}{\eta}\right)^p {\eta^{2p}}  e^{-(2p){\alpha}(t-r_1)} e^{(2p)\frac{\lambda}{ \eta} r_2} e^{-(2p)\frac{\lambda}{ \eta} (r_1 \vee r_2)},
\end{aligned}
\end{equation*}
which gives
\begin{equation}\label{E:Second-der-X-W1-W2-Eq-4}
\mathsf{I}_3(t, r_1 \vee r_2; \eps,\eta)  \le K \eps^p \eta^p e^{- (2p)\alpha (t-r_1 \vee r_2)} e^{- (2p)\frac{\lambda}{\eta} (r_1 - r_2)}\ind_{(r_1 \ge r_2)} + K \eps^p \eta^p e^{-(2p)\alpha (t- r_1 \wedge r_2)}.
\end{equation}
Finally, a combination of the bounds obtained in equations \eqref{E:Second-der-X-W1-W2-Eq-1}, \eqref{E:Second-der-X-W1-W2-Eq-2}, \eqref{E:Second-der-X-W1-W2-Eq-3}, and \eqref{E:Second-der-X-W1-W2-Eq-4} above yields the required result.
\end{proof}

In the next section, we focus on establishing a bound for the term $\BE \left[\left|D_{r_1,r_2}^{W_2 W_2} X_t^\eps\right|^{2p}\right]$, where the process $D_{r_1,r_2}^{W_2 W_2} X_t^\eps$ is the solution of equation \eqref{E:Second-der-X-W2-W2}. The bound is outlined in Lemma \ref{L:Second-der-X-W1-W1}.
\subsection{Bound associated with the term $D_{r_1,r_2}^{W_2 W_2} X_t^\eps$}
For $t \ge r_1 \vee r_2,$ the second-order derivative processes $D_{r_1,r_2}^{W_2 W_2} X_t^\eps$ and $D_{r_1,r_2}^{W_2 W_2} Y_t^\eta$ satisfy the following stochastic integral equations:
\begin{equation}\label{E:Second-der-X-W2-W2}
\begin{aligned}
D_{r_1,r_2}^{W_2 W_2} X_t^\eps & =  \int_{r_1 \vee r_2}^t \left[c_{xx}(X_s^\eps,Y_s^\eta) D_{r_2}^{W_2}X_s^\eps \cdot D_{r_1}^{W_2}X_s^\eps +  c_{xy}(X_s^\eps,Y_s^\eta) D_{r_2}^{W_2}Y_s^\eta \cdot D_{r_1}^{W_2}X_s^\eps \right]ds \\
& \quad \quad + \int_{r_1 \vee r_2}^t \left[c_{xy}(X_s^\eps,Y_s^\eta) D_{r_2}^{W_2}X_s^\eps \cdot D_{r_1}^{W_2}Y_s^\eta +  c_{yy}(X_s^\eps,Y_s^\eta) D_{r_1}^{W_2}Y_s^\eta \cdot D_{r_2}^{W_2}Y_s^\eta \right]ds \\
& \quad \quad + \int_{r_1 \vee r_2}^t \left[c_{x}(X_s^\eps,Y_s^\eta) D_{r_1, r_2}^{W_2 W_2}X_s^\eps  +  c_{y}(X_s^\eps,Y_s^\eta) D_{r_1, r_2}^{W_2 W_2}Y_s^\eta \right]ds \\
& \quad \quad + \sqrt{\eps} \int_{r_1 \vee r_2}^t \sigma_{yy} (Y_s^\eta) D_{r_1}^{W_2}Y_s^\eta \cdot D_{r_2}^{W_2}Y_s^\eta \thinspace dW_s^1 + \sqrt{\eps} \int_{r_1 \vee r_2}^t \sigma_{y} (Y_s^\eta) D_{r_1, r_2}^{W_2 W_2}Y_s^\eta \thinspace dW_s^1,
\end{aligned}
\end{equation}
and 
\begin{equation}\label{E:Second-der-Y-W2-W2}
\begin{aligned}
D_{r_1,r_2}^{W_2 W_2} Y_t^\eta  & = \frac{1}{\sqrt{\eta}} \tau_y(Y_{r_2}^\eta) D_{r_1}^{W_2}Y_{r_2}^\eta + \frac{1}{\sqrt{\eta}} \tau_y(Y_{r_1}^\eta) D_{r_2}^{W_2}Y_{r_1}^\eta \\
& +  \frac{1}{\eta} \int_{r_1 \vee r_2}^t f_{yy}(Y_s^\eta)\cdot D_{r_2}^{W_2}Y_s^\eta \cdot D_{r_1}^{W_2}Y_s^\eta \thinspace ds  +  \frac{1}{\eta} \int_{r_1 \vee r_2}^t f_{y}(Y_s^\eta)\cdot D_{r_1, r_2}^{W_2 W_2}Y_s^\eta \thinspace ds \\
&+ \frac{1}{\sqrt{\eta}} \int_{r_1 \vee r_2}^t \tau_{yy}(Y_s^\eta)\cdot D_{r_2}^{W_2}Y_s^\eta \cdot D_{r_1}^{W_2}Y_s^\eta \thinspace dW_s^2  +  \frac{1}{\sqrt{\eta}} \int_{r_1 \vee r_2}^t \tau_{y}(Y_s^\eta)\cdot D_{r_1, r_2}^{W_2 W_2}Y_s^\eta \thinspace dW_s^2.
\end{aligned}
\end{equation}

\begin{lemma}\label{L:Second-der-Y-W2W2}
Let the process $D_{r_1,r_2}^{W_2 W_2} Y_t^\eta$ be the solution of equation \eqref{E:Second-der-Y-W2-W2} above. Then, for any $p \in \BN,$ and $t \ge r_1 \vee r_2,$ there exists a time-independent positive constant $K$ such that
$$\BE \left[\left|D_{r_1,r_2}^{W_2 W_2} Y_t^\eta\right|^{2p} \right] \le \frac{K}{\eta^p} e^{-\frac{(2p)\lambda}{\eta}(t-r_1 \vee r_2)} e^{- (2p)\frac{\lambda}{\eta} (r_1 \vee r_2 - r_1 \wedge r_2)} + \frac{K}{\eta^{2p}} e^{-\frac{(2p)\lambda}{\eta}(t- r_1 \wedge r_2)}.$$
\end{lemma}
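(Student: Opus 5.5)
Solve the linear equation \eqref{E:Second-der-Y-W2-W2} with the integrating factor $Z$ of \eqref{E:Z-process}, exactly as in the proofs of Lemmas \ref{L:Second-der-Y-W1-W1} and \ref{L:Second-der-Y-W1-W2}. This gives the representation
\begin{equation*}
\begin{aligned}
D_{r_1,r_2}^{W_2 W_2} Y_t^\eta &= \frac{1}{\sqrt{\eta}} Z_{t,r_1\vee r_2}\Big[\tau_y(Y_{r_2}^\eta) D_{r_1}^{W_2}Y_{r_2}^\eta + \tau_y(Y_{r_1}^\eta) D_{r_2}^{W_2}Y_{r_1}^\eta\Big] \\
&\quad + \frac{1}{\eta}\int_{r_1\vee r_2}^t Z_{t,s}\big\{f_{yy}(Y_s^\eta)-\tau_y(Y_s^\eta)\tau_{yy}(Y_s^\eta)\big\} D_{r_2}^{W_2}Y_s^\eta\, D_{r_1}^{W_2}Y_s^\eta\, ds \\
&\quad + \frac{1}{\sqrt{\eta}}\int_{r_1\vee r_2}^t Z_{t,s}\,\tau_{yy}(Y_s^\eta) D_{r_2}^{W_2}Y_s^\eta\, D_{r_1}^{W_2}Y_s^\eta\, dW_s^2 .
\end{aligned}
\end{equation*}
Of the two initial terms, only the one in which the derivative is evaluated at the later time is nonzero, since $D_r Y_s=0$ for $s<r$. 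We then raise to the power $2p$, take expectations, and split the result into three pieces $\mathcal{J}_1,\mathcal{J}_2,\mathcal{J}_3$ as in \eqref{E:Sec-der-Y-W1W1-Eq1}.

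\textbf{Boundary term $\mathcal{J}_1$.} Without loss of generality take $r_1\ge r_2$. Apply Cauchy--Schwarz, then Lemma \ref{L:Second-der-Integrating-factor} for $Z_{t,r_1}$, and Lemma \ref{L:1st-der-Y-W2} for $D_{r_2}^{W_2}Y_{r_1}^\eta$; boundedness of $\tau_y$ handles the remaining factor. This yields
\[
\mathcal{J}_1\le \frac{K}{\eta^p}\,e^{-\frac{2p\lambda}{\eta}(t-r_1\vee r_2)}\,\frac{1}{\eta^p}\,e^{-\frac{2p\lambda}{\eta}(r_1\vee r_2-r_1\wedge r_2)}.
\]
This has the exponential structure of the first term of the statement. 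The power of $\eta$ is at worst $\eta^{-2p}$; since $e^{-\frac{2p\lambda}{\eta}(t-r_1\vee r_2)}e^{-\frac{2p\lambda}{\eta}(r_1\vee r_2-r_1\wedge r_2)}=e^{-\frac{2p\lambda}{\eta}(t-r_1\wedge r_2)}$, any excess can be absorbed into the second term of the statement.

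\textbf{Stochastic integral term $\mathcal{J}_3$.} This is exactly Lemma \ref{L:L2}, which gives $\frac{K}{\eta^{2p}}e^{-\frac{2p\lambda}{\eta}(t-r_1\wedge r_2)}$.

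\textbf{Drift term $\mathcal{J}_2$.} Following the estimate of $\mathscr{I}_1$, apply H\"older's inequality repeatedly across the $2p$-fold product, bound $(\BE Z_{t,s}^{4p})^{1/4p}$ by Lemma \ref{L:Second-der-Integrating-factor}, and bound the fourth-order moments of the product $D_{r_2}^{W_2}Y_s^\eta\, D_{r_1}^{W_2}Y_s^\eta$ by Lemma \ref{L:1st-der-Y-W2} (using boundedness of $f_{yy}$ and of $\tau_y\tau_{yy}$). The integrand then becomes
\[
\frac{1}{\eta^{2}}\,e^{-\frac{\lambda}{\eta}(t-s)}\,e^{-\frac{\lambda}{\eta}(s-r_1)}\,e^{-\frac{\lambda}{\eta}(s-r_2)}.
\]
Integrating over $s\in[r_1\vee r_2,t]$ gains a factor $\eta$ and produces $e^{-\frac{\lambda}{\eta}(t-r_1\wedge r_2)}$. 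Raising to the power $2p$ gives $\frac{K}{\eta^{2p}}e^{-\frac{2p\lambda}{\eta}(t-r_1\wedge r_2)}$.

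\textbf{Main obstacle.} The delicate step is the exponent bookkeeping in $\mathcal{J}_2$ and $\mathcal{J}_3$. Lemma \ref{L:Second-der-Integrating-factor} and Lemma \ref{L:1st-der-Y-W2} each supply one rate $\lambda/\eta$, and three such exponentials are multiplied. To collapse them into $e^{-\frac{2p\lambda}{\eta}(t-r_1\wedge r_2)}$ we use $e^{-a(s-r_1)}e^{-a(s-r_2)}\le e^{-a(s-r_1\wedge r_2)}$, valid for $s\ge r_1\vee r_2$, and then integrate out the surplus decay, which is where the factor $\eta$ is recovered. It is also necessary that the constant $\lambda$ from \eqref{E:Ass2} holds for the higher moments ($4p$) invoked via H\"older's inequality. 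This is guaranteed because \eqref{E:Ass2} holds uniformly in $p$.
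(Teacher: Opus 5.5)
Your proposal is correct and follows the paper's proof essentially step for step. It uses the same integrating-factor representation via $Z$, the same three-way split, Lemma \ref{L:L2} for the stochastic integral, and the same repeated H\"older bookkeeping for the drift term.

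Your accounting of the boundary term is more careful than the paper's. The paper records only $\eta^{-p}$ there, even though $D_{r_2}^{W_2}Y_{r_1}^\eta$ carries its own factor $\eta^{-1/2}$ by Lemma \ref{L:1st-der-Y-W2}. As you note, the corrected bound $\frac{K}{\eta^{2p}}e^{-\frac{2p\lambda}{\eta}(t-r_1\wedge r_2)}$ is exactly the second term of the statement, so the lemma holds as stated.
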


\begin{proof}
Recalling the definition of the process $Z$ from equation \eqref{E:Z-process} and equation \eqref{E:Second-der-Y-W2-W2} above, we obtain
\begin{equation*}
\begin{aligned}
D_{r_1,r_2}^{W_2 W_2} Y_t^\eta & = \frac{1}{\sqrt{\eta}} Z_{t,r_1 \vee r_2}  \tau_y(Y_{r_2}^\eta) D_{r_1}^{W_2}Y_{r_2}^\eta + \frac{1}{\sqrt{\eta}} Z_{t,r_1 \vee r_2}  \tau_y(Y_{r_1}^\eta) D_{r_2}^{W_2}Y_{r_1}^\eta \\
& + \frac{1}{\sqrt{\eta}} \int_{r_1 \vee r_2}^t Z_{t,s} \tau_{yy}(Y_s^\eta) D_{r_2}^{W_2}Y_s^\eta \cdot D_{r_1}^{W_2}Y_s^\eta \thinspace dW_s^2 \\
& +  \int_{r_1 \vee r_2}^t Z_{t,s} \left\{ \frac{1}{\eta} f_{yy}(Y_s^\eta) D_{r_2}^{W_2}Y_s^\eta \cdot D_{r_1}^{W_2}Y_s^\eta - \frac{1}{\eta} \tau_{y}(Y_s^\eta) \tau_{yy}(Y_s^\eta) D_{r_2}^{W_2}Y_s^\eta \cdot D_{r_1}^{W_2}Y_s^\eta  \right\} ds. 
\end{aligned}
\end{equation*}
Increasing the power to $2p$ on both sides of the above equation followed by taking expectation, we have
\begin{equation}\label{E:Sec-der-Y-W1W1-Eq1}
\begin{aligned}
\BE  \left[(D_{r_1,r_2}^{W_2 W_2} Y_t^\eta)^{2p} \right]  & \le  \frac{1}{\eta^p}\|\tau_y\|_\infty \BE \left[ \left( Z_{t,r_1 \vee r_2} D_{r_1}^{W_2}Y_{r_2}^\eta \right)^{2p} \right] + \frac{1}{\eta^p}\|\tau_y\|_\infty \BE \left[ \left( Z_{t,r_1 \vee r_2} D_{r_2}^{W_2}Y_{r_1}^\eta \right)^{2p} \right] \\
& + \frac{K}{\eta^{2p}} \BE \left( \int_{r_1 \vee r_2}^t Z_{t,s} \left\{ f_{yy}(Y_s^\eta)  -  \tau_{y}(Y_s^\eta) \tau_{yy}(Y_s^\eta)\right\} D_{r_2}^{W_2}Y_s^\eta \cdot D_{r_1}^{W_2}Y_s^\eta \thinspace ds \right)^{2p} \\
& \qquad \qquad \qquad \qquad \quad  + \frac{K}{\eta^p} \BE \left( \int_{r_1 \vee r_2}^t Z_{t,s} \tau_{yy}(Y_s^\eta) D_{r_2}^{W_2}Y_s^\eta \cdot D_{r_1}^{W_2}Y_s^\eta \thinspace dW_s^2 \right)^{2p} \\
& \qquad \qquad \qquad \qquad \quad  \triangleq \mathsf{J}_1(t, r_1 \vee r_2; \eta) + \mathsf{J}_2(t, r_1 \vee r_2; \eta) + \mathsf{J}_3(t, r_1 \vee r_2; \eta).
\end{aligned}
\end{equation}
Next, for $\mathsf{J}_1(t, r_1 \vee r_2; \eta)$ , using H\"older's inequality, Assumption \ref{A:Derivatives-Assumption-f} and Lemmas \ref{L:Second-der-Integrating-factor}, \ref{L:1st-der-Y-W2}, we obtain
$\mathsf{J}_1(t, r_1 \vee r_2; \eta)  \le \frac{K}{\eta^p} e^{-\frac{(2p)\lambda}{\eta}(t-r_1 \vee r_2)} \left[ e^{- (2p)\frac{\lambda}{\eta} (r_1 - r_2)}\ind_{(r_1 \ge r_2)}   + e^{- (2p)\frac{\lambda}{\eta} (r_2 - r_1)}\ind_{(r_2 \ge r_1)} \right] \\
 = \frac{K}{\eta^p} e^{-\frac{(2p)\lambda}{\eta}(t-r_1 \vee r_2)} e^{- (2p)\frac{\lambda}{\eta} (r_1 \vee r_2 - r_1 \wedge r_2)}.$
For $\mathsf{J}_2(t, r_1 \vee r_2; \eta)$, using Assumption \ref{A:Derivatives-Assumption-f} and Lemmas \ref{L:1st-der-Y-W2}, \ref{L:Second-der-Integrating-factor}, we have
\begin{equation*}
\begin{aligned}
\mathsf{J}_2(t,& r_1 \vee r_2; \eta)  \le \frac{K}{\eta^{2p}} \int_{r_1 \vee r_2}^t \cdots \int_{ r_1 \vee r_2}^t  \prod_{i=1}^{2p} \left[  (\BE Z_{t,s_i}^{4p}) \right]^{\frac{1}{4p}}  \left[  \BE(D_{r_2}^{W_2}Y_{s_i}^\eta)^{8p} \right]^{\frac{1}{8p}} \left[  \BE(D_{r_1}^{W_2}Y_{s_i}^\eta)^{8p} \right]^{\frac{1}{8p}}ds_1 \cdots ds_{2p} \\  
& \qquad  \le \frac{K}{\eta^{2p}} \int_{r_1 \vee r_2}^t \cdots \int_{ r_1 \vee r_2}^t  \prod_{i=1}^{2p} e^{-\frac{\lambda}{\eta}(t-s_i)}\frac{1}{\eta} e^{-\frac{\lambda}{\eta}(s_i-r_2)} e^{-\frac{\lambda}{\eta}(s_i-r_1)} ds_1 \cdots ds_{2p} =  \frac{K}{\eta^{2p}} e^{-\frac{(2p)\lambda}{\eta}(t- r_1 \wedge r_2)}. 
\end{aligned}
\end{equation*}
Finally, for $\mathsf{J}_3(t, r_1 \vee r_2; \eta)$ in equation \eqref{E:Sec-der-Y-W1W1-Eq1}, we  use Lemma \ref{L:L2} to get $\mathsf{J}_3(t, r_1 \vee r_2; \eta)  \le \frac{K}{\eta^{2p}} e^{-\frac{(2p)\lambda}{\eta}(t- r_1 \wedge r_2)}.$
Putting these bounds together in equation \eqref{E:Sec-der-Y-W1W1-Eq1}, we obtain the required result.
\end{proof}

\begin{lemma}\label{L:Second-der-X-W2W2}
Let the process $D_{r_1,r_2}^{W_2 W_2} X_t^{\eps}$ be the solution of equation \eqref{E:Second-der-Y-W2-W2}. Then, for any $p \in \BN$ and $t \ge r_1 \vee r_2,$ there exists a time-independent positive constant $K$ such that
\begin{equation*}
\BE \left[\left|D_{r_1,r_2}^{W_2 W_2} X_t^{\eps}\right|^{2p} \right]  \le K \left[ \eps^{2p} +\eta^{2p} + \eps^{p}\eta^{p}\right] e^{-(2p)\alpha (t- r_1 \wedge r_2)}
+ K e^{-(2p)\alpha (t- r_1 \vee r_2)} e^{-(2p)\frac{\lambda}{\eta} (r_1 \vee r_2 - r_1 \wedge r_2)}.
\end{equation*}
\end{lemma}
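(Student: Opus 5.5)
The bound comes from solving equation \eqref{E:Second-der-X-W2-W2} explicitly, exactly as in the proofs of Lemmas \ref{L:Second-der-X-W1-W1} and \ref{L:Second-der-X-W1-W2}. Since the equation is linear in $D^{W_2W_2}_{r_1,r_2}X^\eps$ with drift coefficient $c_x(X^\eps_s,Y^\eta_s)$ and no initial term, the footnote formula from the proof of Lemma \ref{L:1st-der-generic-lemma} gives
\[
D_{r_1,r_2}^{W_2W_2}X_t^\eps=\int_{r_1\vee r_2}^t e^{\int_s^t c_x(X_u^\eps,Y_u^\eta)\,du}\,G_s\,ds+\sqrt{\eps}\int_{r_1\vee r_2}^t e^{\int_s^t c_x(X_u^\eps,Y_u^\eta)\,du}\,H_s\,dW_s^1 .
\]
Here $G_s$ collects the $c_{xx},c_{xy},c_{yy}$ products and the term $c_y D^{W_2W_2}_{r_1,r_2}Y^\eta_s$, and $H_s=\sigma_{yy}D^{W_2}_{r_1}Y^\eta_sD^{W_2}_{r_2}Y^\eta_s+\sigma_yD^{W_2W_2}_{r_1,r_2}Y^\eta_s$. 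Raising to the power $2p$, using $c_x<-\alpha$ and the boundedness of all derivatives of $c$ and $\sigma$ (Assumption \ref{A:Derivatives-Assumption-c}), the drift part is bounded by a sum of terms of the form $\BE(\int_{r_1\vee r_2}^t e^{-\alpha(t-s)}|A_s|\,ds)^{2p}$. The stochastic integrals are handled as for $I_2$ in Lemma \ref{L:1st-der-generic-lemma}: pull out $e^{2p\int_{r_1\vee r_2}^t c_x}$, apply Cauchy–Schwarz, then Burkholder–Davis–Gundy. Each resulting term is expanded with the repeated Hölder inequality into $2p$-fold integrals of products of moments.

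The terms are then estimated one by one.
\begin{itemize}
\item $D^{W_2}X\cdot D^{W_2}X$: Lemma \ref{L:1st-der-X-W2} gives the integrand $(\eta+\eps)e^{-\alpha(t-s)}e^{-\alpha(s-r_1)}e^{-\alpha(s-r_2)}$. Integrating as in \eqref{E:Second-der-X-W1-W2-Eq-3} yields $K(\eps^{2p}+\eta^{2p})e^{-2p\alpha(t-r_1\wedge r_2)}$.
\item Mixed terms $D^{W_2}Y\cdot D^{W_2}X$: combine Lemmas \ref{L:1st-der-Y-W2} and \ref{L:1st-der-X-W2}. The factor $\eta^{-1/2}e^{-\lambda(s-r_2)/\eta}$ integrates against $e^{-\alpha(t-s)}$ to give $\eta^{1/2}$, as in the estimate of $J_1$. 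This produces $(\eta+\eps)^{p}\eta^{p}$, which is dominated by $\eta^{2p}+\eps^p\eta^p$ times the slow exponential, with an extra fast factor when $r_2<r_1$ split off as in \eqref{E:Second-der-X-W1-W2-Eq-4}.
\item $D^{W_2}Y\cdot D^{W_2}Y$ (the $c_{yy}$ and $\sigma_{yy}$ terms): the integrand is $\eta^{-1}e^{-\alpha(t-s)}e^{-\lambda(2s-r_1-r_2)/\eta}$. Writing $2s-r_1-r_2=2(s-r_1\vee r_2)+(r_1\vee r_2-r_1\wedge r_2)$ and integrating in $s$ gives
\[
\eta^{-1}\cdot\eta\, e^{-\alpha(t-r_1\vee r_2)}e^{-\frac{\lambda}{\eta}(r_1\vee r_2-r_1\wedge r_2)}.
\]
This is exactly the second term of the claimed bound, with the $\eta$ powers cancelling.
\item $c_yD^{W_2W_2}Y$ and the corresponding $\sigma_y$ stochastic term: insert Lemma \ref{L:Second-der-Y-W2W2}. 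Its first piece, $\eta^{-p}e^{-2p\lambda(s-r_1\vee r_2)/\eta}e^{-2p\lambda(r_1\vee r_2-r_1\wedge r_2)/\eta}$, after the $ds$ integration (which gains $\eta^{2p}$) gives $\eta^{p}$ times the second-type exponential. Its second piece, $\eta^{-2p}e^{-2p\lambda(s-r_1\wedge r_2)/\eta}$, gains $\eta^{2p}$ and again gives the second-type exponential, after using $e^{-\lambda(r_1\vee r_2-r_1\wedge r_2)/\eta}$ to extract the gap.
\end{itemize}

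The main obstacle is the stochastic-integral terms with the $\sqrt{\eps}$ prefactor. In the $I_2$-type argument the weight $e^{K_{c_x}(s-r)}$ appears, and one must check that the fast decay $e^{-\lambda(\cdot)/\eta}$ of the $Y$-derivatives absorbs it. This requires $\eta$ small so that $\lambda/\eta>K_{c_x}$, exactly as for $J_2$ in Lemma \ref{L:1st-der-X-W2}. I would handle these terms by the same explicit exponential integration, keeping the reference point at $r_1\vee r_2$ so that the gap factor $e^{-\lambda(r_1\vee r_2-r_1\wedge r_2)/\eta}$ survives intact. These terms carry the additional factor $\eps^p$, so they are harmless in magnitude; their $\eps^p\eta^p$-type contributions are absorbed into the first term of the bound. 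Collecting all the pieces gives the stated estimate.
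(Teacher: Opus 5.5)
Your plan is the paper's proof: solve \eqref{E:Second-der-X-W2-W2} with the factor $e^{\int_s^t c_x\,du}$, split into the seven terms $\mathsf{K}_1,\dots,\mathsf{K}_7$, and insert Lemmas \ref{L:1st-der-Y-W2}, \ref{L:1st-der-X-W2} and \ref{L:Second-der-Y-W2W2}. Your drift-term computations reproduce Lemmas \ref{L:K1-2}--\ref{L:K5}.

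\textbf{The gap is in your last paragraph.} The two $dW^1$-integrals are neither small nor of $\eps^p\eta^p$ type. Write $a=r_1\vee r_2$, $b=r_1\wedge r_2$ and actually run the $I_2$-argument. After Cauchy--Schwarz and Burkholder--Davis--Gundy you integrate the \emph{square} of the integrand,
\[
\bigl(\BE|D^{W_2}_{r_1}Y^\eta_s\, D^{W_2}_{r_2}Y^\eta_s|^{4p}\bigr)^{1/(2p)}\le K\eta^{-2}e^{-\frac{2\lambda}{\eta}(2s-r_1-r_2)},
\]
against $e^{2K_{c_x}(s-a)}$. Over $[a,t]$ this gives only $K\eta^{-1}e^{-\frac{2\lambda}{\eta}(a-b)}$, so the prefactor $\eps^p$ is met by $\eta^{-p}$:
\[
\mathsf{K}_6+\mathsf{K}_7\le K\Bigl(\eps^p+\frac{\eps^p}{\eta^p}\Bigr)e^{-2p\alpha(t-a)}e^{-2p\frac{\lambda}{\eta}(a-b)} .
\]
For $\mathsf{K}_7$, the $\eta^{-2p}$ piece of Lemma \ref{L:Second-der-Y-W2W2} produces the same $\eta^{-p}$. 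This is exactly Lemma \ref{L:K6-7}. It is not an artefact of the estimate: $\sqrt{\eps}$ times an integrand of size $\eta^{-1}$ living on a time window of length $\eta$ has $L^2$-size $\sqrt{\eps}\,\eta^{-1}\eta^{1/2}=\sqrt{\eps/\eta}$. This order is attained, e.g., for an Ornstein--Uhlenbeck fast process, where $D^{W_2}Y^\eta$ is deterministic.

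Consequently these contributions cannot be absorbed into the first term of the bound. At $r_1=r_2$ that term is $O(\eps^{2p}+\eta^{2p}+\eps^p\eta^p)$, whereas $(\eps/\eta)^p\to\gamma^{2p}>0$ under Assumption \ref{A:eta-eps-interaction}. They belong to the second (gap) term. Absorbing the factor $(\eps/\eta)^p$ into the constant $K$ requires $\sup \eps/\eta<\infty$, i.e.\ $\gamma<\infty$, and you must state this explicitly. The paper performs the same absorption tacitly when passing from Lemma \ref{L:K6-7} to the statement, so the stated bound really needs $\gamma<\infty$.

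For the same reason, the parenthetical ``(the $c_{yy}$ and $\sigma_{yy}$ terms)'' in your third bullet is wrong. The computation $\eta^{-1}\cdot\eta$ there applies only to the $c_{yy}$ drift term $\mathsf{K}_5$, not to the $\sigma_{yy}$ stochastic integral.
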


\begin{proof}
We start solving equation \eqref{E:Second-der-Y-W2-W2} to get
\begin{equation*}
\begin{aligned}
D_{r_1,r_2}^{W_2 W_2} X_t^\eps & = \int_{r_1 \vee r_2}^t e^{\int_s^t c_{x}(X_u^\eps,Y_u^\eta) \thinspace du}\left\{ c_{xx}(X_s^\eps,Y_s^\eta) D_{r_2}^{W_2}X_s^\eps \cdot D_{r_1}^{W_2}X_s^\eps +  c_{xy}(X_s^\eps,Y_s^\eta) D_{r_2}^{W_2}Y_s^\eta \cdot D_{r_1}^{W_2}X_s^\eps \right. \\
& + \left. c_y(X_s^\eps,Y_s^\eta) D_{r_1, r_2}^{W_2 W_2}Y_s^\eta   + c_{xy}(X_s^\eps,Y_s^\eta) D_{r_2}^{W_2}X_s^\eps \cdot D_{r_1}^{W_2}Y_s^\eta + c_{yy}(X_s^\eps,Y_s^\eta) D_{r_2}^{W_2}Y_s^\eta \cdot D_{r_1}^{W_2}Y_s^\eta
\right\}ds   \\
& + \sqrt{\eps} \int_{r_1 \vee r_2}^t e^{\int_s^t c_{x}(X_u^\eps,Y_u^\eta) \thinspace du} \sigma_{yy} (Y_s^\eta) D_{r_1}^{W_2}Y_s^\eta \cdot D_{r_2}^{W_2}Y_s^\eta \thinspace dW_s^1 \\
& + \sqrt{\eps} \int_{r_1 \vee r_2}^t e^{\int_s^t c_{x}(X_u^\eps,Y_u^\eta) \thinspace du} \sigma_{y} (Y_s^\eta) D_{r_1, r_2}^{W_2 W_2}Y_s^\eta \thinspace dW_s^1.
\end{aligned}
\end{equation*}
Raising the power to $2p$, then taking the expectation on both sides followed by using the condition $c_x(x,y) < -\alpha$ (Assumption \ref{A:Derivatives-Assumption-c}) and the boundedness of the derivatives of the function $c$ (Assumption \ref{A:Derivatives-Assumption-c}), we have
\begin{equation}\label{E:Second-der-X-W2W2}
\begin{aligned}
\BE & \left[(D_{r_1,r_2}^{W_2 W_2} X_t^{\eps})^{2p} \right]  \le K \BE\left( \int_{r_1 \vee r_2}^t e^{-\alpha (t-s)} |D_{r_2}^{W_2}X_s^\eps \cdot D_{r_1}^{W_2}X_s^\eps | \thinspace ds \right)^{2p} \\
& \quad  + K \BE \left( \int_{r_1 \vee r_2}^t e^{-\alpha (t-s)} |D_{r_2}^{W_2}Y_s^\eta \cdot D_{r_1}^{W_2}X_s^\eps | \thinspace ds \right)^{2p}  + K \BE \left( \int_{r_1 \vee r_2}^t e^{-\alpha (t-s)} |D_{r_1, r_2}^{W_2 W_2}Y_s^\eta  | \thinspace ds \right)^{2p} \\
& \quad  + K \BE \left( \int_{r_1 \vee r_2}^t e^{-\alpha (t-s)} |D_{r_2}^{W_2}X_s^\eps \cdot D_{r_1}^{W_2}Y_s^\eta | \thinspace ds \right)^{2p} \\
& \quad  + K \BE\left( \int_{r_1 \vee r_2}^t e^{-\alpha (t-s)} |D_{r_2}^{W_2}Y_s^\eta \cdot D_{r_1}^{W_2}Y_s^\eta | \thinspace ds \right)^{2p} \\
& \quad + K \eps^p \thinspace \BE \left( \int_{r_1 \vee r_2}^t e^{\int_s^t c_{x}(X_u^\eps,Y_u^\eta)du} \sigma_{yy} (Y_s^\eta) D_{r_1}^{W_2}Y_s^\eta \cdot D_{r_2}^{W_2}Y_s^\eta \thinspace dW_s^1 \right)^{2p} \\
&  \quad + K \eps^p \thinspace  \BE\left( \int_{r_1 \vee r_2}^t e^{\int_s^t c_{x}(X_u^\eps,Y_u^\eta)du} \sigma_{y} (Y_s^\eta) D_{r_1, r_2}^{W_2 W_2}Y_s^\eta \thinspace dW_s^1 \right)^{2p}  \triangleq \sum_{i=1}^7 \mathsf{K}_i(t, r_1 \vee r_2; \eps, \eta).
\end{aligned}
\end{equation}
We finish the proof by putting the bounds together from Lemmas \ref{L:K1-2}, \ref{L:K3-4}, \ref{L:K5} and \ref{L:K6-7} below.
\end{proof}

Further, in Lemmas from \ref{L:K1-2} to \ref{L:K6-7}, we estimate the terms $\mathsf{K}_i(t, r_1 \vee r_2; \eps, \eta)$, $i=1,\cdots,7$ defined in equation \eqref{E:Second-der-X-W2W2} above.
\begin{lemma}\label{L:K1-2}
Let the terms $\mathsf{K}_1(t, r_1 \vee r_2; \eps, \eta)$ and $\mathsf{K}_2(t, r_1 \vee r_2; \eps, \eta)$ be defined in equation \eqref{E:Second-der-X-W2W2}. Then, for any $p \in \BN$ and $t \ge r_1 \vee r_2,$ there exists a time-independent positive constant $K$ such that
\begin{equation*}
\begin{aligned}
\mathsf{K}_1(t, r_1 \vee r_2; \eps, \eta) & \le K \left[\eta^{2p} + {\eps^{2p}}\right]  e^{-(2p) \alpha (t- r_1 \wedge r_2)}, \\
\mathsf{K}_2(t, r_1 \vee r_2; \eps, \eta) & \le K \left[ \eta^{2p} + {\eta}^p \eps^p \right] e^{- (2p)\alpha (t-r_1)} e^{- (2p)\frac{\lambda}{\eta} (r_1 - r_2)}\ind_{(r_1 \ge r_2)} + K \left[ \eta^{2p} + {\eta}^p \eps^p \right] e^{-(2p) \alpha (t- r_1)}.
\end{aligned}
\end{equation*}
\end{lemma}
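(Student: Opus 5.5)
The two bounds come from the same template used for $\mathsf{I}_2$ and $\mathsf{I}_3$ in the proof of Lemma \ref{L:Second-der-X-W1-W2}. First expand the $2p$-th power of the time integral as a $2p$-fold integral over $[r_1\vee r_2,t]^{2p}$. Then apply the generalized H\"older inequality to the product of the $2p$ factors. This gives
\begin{equation*}
\mathsf{K}_1 \le K \int_{r_1 \vee r_2}^t \cdots \int_{r_1 \vee r_2}^t \prod_{i=1}^{2p} e^{-\alpha(t-s_i)} \left[\BE (D_{r_2}^{W_2}X_{s_i}^\eps)^{4p}\right]^{\frac{1}{4p}}\left[\BE (D_{r_1}^{W_2}X_{s_i}^\eps)^{4p}\right]^{\frac{1}{4p}} ds_1\cdots ds_{2p}.
\end{equation*}
The bound for $\mathsf{K}_2$ has the same shape, with $D_{r_2}^{W_2}Y_{s_i}^\eta$ replacing the first factor. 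The integral then factorizes into the $2p$-th power of a single time integral.

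\emph{Bound for $\mathsf{K}_1$.} Apply Lemma \ref{L:1st-der-X-W2} with $2p$ in place of $p$. Each factor is then bounded by $K(\sqrt\eta+\sqrt\eps)e^{-\alpha(s-r_j)}$. The integrand becomes $(\sqrt\eta+\sqrt\eps)^2 e^{-\alpha(t-s)}e^{-\alpha(s-r_1)}e^{-\alpha(s-r_2)}$. Integrating in $s$ over $[r_1\vee r_2,t]$ and using $\int e^{-\alpha s}ds\le K e^{-\alpha(r_1\vee r_2)}$ gives $e^{-\alpha(t-r_1\wedge r_2)}$, exactly as in \eqref{E:Second-der-X-W1-W2-Eq-3}. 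Raising to the power $2p$ and using $(\eta+\eps)^{2p}\le K(\eta^{2p}+\eps^{2p})$ yields the stated bound.

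\emph{Bound for $\mathsf{K}_2$.} Use Lemma \ref{L:1st-der-Y-W2} for the $Y$-factor, which contributes $\eta^{-1/2}e^{-\frac{\lambda}{\eta}(s-r_2)}$. Use Lemma \ref{L:1st-der-X-W2} for the $X$-factor, which contributes $(\sqrt\eta+\sqrt\eps)e^{-\alpha(s-r_1)}$. The single integral is then
\begin{equation*}
\frac{\sqrt\eta+\sqrt\eps}{\sqrt\eta}\, e^{-\alpha(t-r_1)} \int_{r_1\vee r_2}^t e^{-\frac{\lambda}{\eta}(s-r_2)}\,ds .
\end{equation*}
Here $e^{-\alpha(t-s)}e^{-\alpha(s-r_1)}=e^{-\alpha(t-r_1)}$. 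For $\eta$ small, the remaining integral is at most $\frac{\eta}{\lambda}e^{-\frac{\lambda}{\eta}(r_1\vee r_2-r_2)}$. So the prefactor becomes $\sqrt\eta(\sqrt\eta+\sqrt\eps)$, and its $2p$-th power is bounded by $K(\eta^{2p}+\eta^p\eps^p)$.

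It remains to split by cases. If $r_1\ge r_2$, the exponential is $e^{-(2p)\frac{\lambda}{\eta}(r_1-r_2)}$, which gives the first term. If $r_2> r_1$, the exponential equals $1$, which gives the second term $K[\eta^{2p}+\eta^p\eps^p]e^{-(2p)\alpha(t-r_1)}$.

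The only delicate point is the $\mathsf{K}_2$ integral. We must use the fast decay $e^{-\lambda(s-r_2)/\eta}$ to extract the factor $\eta$, which cancels the $\eta^{-1/2}$ singularity from Lemma \ref{L:1st-der-Y-W2}. We must also keep the slow factor $e^{-\alpha(t-r_1)}$ outside the integral instead of trying to combine the two exponents, since combining them is what would otherwise cause trouble. Everything else is routine H\"older bookkeeping.
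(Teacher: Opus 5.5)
Your proposal is correct and follows the paper's proof essentially line by line. You use H\"older to factor the $2p$-fold integral, Lemma \ref{L:1st-der-X-W2} (together with Lemma \ref{L:1st-der-Y-W2} for $\mathsf{K}_2$) to bound each factor, and then the explicit exponential integral with the case split on $r_1 \ge r_2$. One small nit: the bound $\int_{r_1\vee r_2}^t e^{-\frac{\lambda}{\eta}(s-r_2)}\,ds \le \frac{\eta}{\lambda}e^{-\frac{\lambda}{\eta}(r_1\vee r_2-r_2)}$ holds for every $\eta>0$, so no smallness of $\eta$ is needed at that step.
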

\begin{proof}
Using H\"older's inequality, Lemma \ref{L:1st-der-X-W2} to control the term $\BE \left[|D_r^{W_2}X_t^{\varepsilon}|^{4p} \right]$, we have
\begin{equation*}
\begin{aligned}
\mathsf{K}_1(t,& r_1 \vee r_2; \eps, \eta) \le K \underbrace{\int_{r_1 \vee r_2}^t \cdots \int_{ r_1 \vee r_2}^t}_{2p\operatorname{-times}}  \prod_{i=1}^{2p} e^{-\alpha (t-s_i)}  \left[  \BE(D_{r_2}^{W_2}X_{s_i}^\eps)^{4p} \right]^{\frac{1}{4p}} \left[  \BE(D_{r_1}^{W_2}X_{s_i}^\eps)^{4p} \right]^{\frac{1}{4p}}ds_1 \cdots ds_{2p} \\
& \le K \int_{r_1 \vee r_2}^t \cdots \int_{ r_1 \vee r_2}^t  \prod_{i=1}^{2p} e^{-\alpha (t-s_i)} (\eps^{2p}+ \eta^{2p})^{\frac{1}{2p}} e^{-\alpha(s_i-r_2)} e^{-\alpha(s_i-r_1)} ds_1 \cdots ds_{2p} \\
& \le K (\eps^{2p}+ \eta^{2p}) \left( \int_{r_1 \vee r_2}^t e^{-\alpha(t-s)} e^{-\alpha(s-r_2)} e^{-\alpha(s-r_1)} \thinspace ds \right)^{2p} \le K \left[\eta^{2p} + {\eps^{2p}}\right]  e^{-(2p) \alpha (t- r_1 \wedge r_2)}.
\end{aligned}
\end{equation*}
For $\mathsf{K}_2(t, r_1 \vee r_2; \eps, \eta)$, we again use H\"older's inequality and Lemmas \ref{L:1st-der-X-W2}, \ref{L:1st-der-Y-W2} to obtain
\begin{equation*}
\begin{aligned}
\mathsf{K}_2&(t, r_1 \vee r_2; \eps, \eta)  \le K \int_{r_1 \vee r_2}^t \cdots \int_{ r_1 \vee r_2}^t  \prod_{i=1}^{2p} e^{-\alpha (t-s_i)} \frac{1}{\sqrt{\eta}} (\eps^{2p}+ \eta^{2p})^{\frac{1}{4p}} e^{-\frac{\lambda}{\eta}(s_i-r_2)} e^{-\alpha(s_i-r_1)} ds_1 \cdots ds_{2p} \\
& =  K \frac{(\eps^{p}+ \eta^{p})}{\eta^p} e^{-2p \alpha t} e^{2p \alpha r_1}  e^{2p \frac{\lambda}{ \eta} r_2}  \left(\int_{r_1 \vee r_2}^t e^{-\frac{\lambda}{\eta}s}ds \right)^{2p} \le  K {\eta^{p}(\eps^{p}+ \eta^{p})}  e^{-2p \alpha t} e^{2p \alpha r_1}  e^{2p \frac{\lambda}{ \eta} r_2}   e^{- 2p\frac{\lambda}{\eta}(r_1 \vee r_2)} \\
&  \le K \left[ \eta^{2p} + {\eta}^p \eps^p \right] e^{- (2p)\alpha (t-r_1)} e^{- (2p)\frac{\lambda}{\eta} (r_1 - r_2)}\ind_{(r_1 \ge r_2)} + K \left[ \eta^{2p} + {\eta}^p \eps^p \right] e^{-(2p) \alpha (t- r_1)}. 
\end{aligned}
\end{equation*}
Putting these bounds together, we complete the proof of the lemma.
\end{proof}

\begin{lemma}\label{L:K3-4}
Let the terms $\mathsf{K}_3(t, r_1 \vee r_2; \eps, \eta)$ and $\mathsf{K}_4(t, r_1 \vee r_2; \eps, \eta)$ be defined in equation \eqref{E:Second-der-X-W2W2}. Then, for any $p \in \BN$ and $t \ge r_1 \vee r_2,$ there exists a time-independent positive constant $K$ such that
\begin{equation*}
\begin{aligned}
\mathsf{K}_3(t, r_1 \vee r_2; \eps, \eta) & \le K e^{-(2p)\alpha (t-r_1 \vee r_2)} e^{-(2p)\frac{\lambda}{\eta} (r_1 \vee r_2 - r_1 \wedge r_2)}, \\
\mathsf{K}_4(t, r_1 \vee r_2; \eps, \eta) & \le K \left[ \eta^{2p} + {\eta}^p \eps^p \right] e^{- (2p)\alpha (t-r_2)} e^{- (2p)\frac{\lambda}{\eta} (r_2 - r_1)}\ind_{(r_2 \ge r_1)} + K \left[ \eta^{2p} + {\eta}^p \eps^p \right] e^{-(2p) \alpha (t- r_2)}.
\end{aligned}
\end{equation*}
\end{lemma}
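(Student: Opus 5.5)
The two bounds can be obtained separately by the same scheme already used for $\mathsf{K}_1$ and $\mathsf{K}_2$ in Lemma \ref{L:K1-2}: expand the $2p$-th power of the time integral as a $2p$-fold integral, apply H\"older's inequality in $\omega$ to each factor, and insert the earlier moment bounds.

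\textbf{Bound for $\mathsf{K}_3$.} First apply H\"older's inequality to write
\begin{equation*}
\mathsf{K}_3(t, r_1 \vee r_2; \eps, \eta) \le K \left( \int_{r_1 \vee r_2}^t e^{-\alpha (t-s)} \left[ \BE |D_{r_1,r_2}^{W_2W_2} Y_s^\eta|^{2p} \right]^{\frac{1}{2p}} ds \right)^{2p}.
\end{equation*}
Then substitute Lemma \ref{L:Second-der-Y-W2W2}, using $(a+b)^{1/2p}\le a^{1/2p}+b^{1/2p}$. This leaves two integrals.

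The first integral is
\begin{equation*}
\eta^{-1/2}\int e^{-\alpha(t-s)}e^{-\frac{\lambda}{\eta}(s-r_1\vee r_2)}\,ds \; e^{-\frac{\lambda}{\eta}(r_1\vee r_2-r_1\wedge r_2)}.
\end{equation*}
As in the proof of Lemma \ref{L:1st-der-X-W2} (with $\frac{\lambda}{\eta}-\alpha>0$ for small $\eta$), the $s$-integral is at most $K\eta\, e^{-\alpha(t-r_1\vee r_2)}$. After raising to the power $2p$, this gives $K\eta^{p}\, e^{-2p\alpha(t-r_1\vee r_2)}e^{-2p\frac{\lambda}{\eta}(r_1\vee r_2-r_1\wedge r_2)}$, which is bounded by the stated term.

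The second integral is
\begin{equation*}
\eta^{-1}\int e^{-\alpha(t-s)}e^{-\frac{\lambda}{\eta}(s-r_1\wedge r_2)}\,ds.
\end{equation*}
Splitting off $e^{-\frac{\lambda}{\eta}(r_1\vee r_2-r_1\wedge r_2)}$ leaves an integral of the same type, bounded by $K\eta\, e^{-\alpha(t-r_1\vee r_2)}$. The factor $\eta^{-1}$ is exactly cancelled, and the result again matches the stated form.

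\textbf{Bound for $\mathsf{K}_4$.} This is the mirror image of $\mathsf{K}_2$ with the roles of $r_1$ and $r_2$ interchanged. Apply H\"older with exponents $4p$ to the product $D_{r_2}^{W_2}X_s^\eps\cdot D_{r_1}^{W_2}Y_s^\eta$. Then use Lemma \ref{L:1st-der-X-W2} for $D_{r_2}^{W_2}X$, giving the factor $(\eps^{2p}+\eta^{2p})^{1/4p}e^{-\alpha(s-r_2)}$, and Lemma \ref{L:1st-der-Y-W2} for $D_{r_1}^{W_2}Y$, giving $\eta^{-1/2}e^{-\frac{\lambda}{\eta}(s-r_1)}$. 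The $s$-integral
\begin{equation*}
\int_{r_1\vee r_2}^t e^{-\frac{\lambda}{\eta}s}\,ds\le \frac{\eta}{\lambda}e^{-\frac{\lambda}{\eta}(r_1\vee r_2)}
\end{equation*}
produces a factor $\eta$, which cancels the $\eta^{-1/2}$ coming from each factor in the product. The prefactor becomes $\eta^{p}(\eps^p+\eta^p)$.

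Finally, split into the cases $r_2\ge r_1$ and $r_1>r_2$. When $r_2\ge r_1$, this yields $e^{-2p\alpha(t-r_2)}e^{-2p\frac{\lambda}{\eta}(r_2-r_1)}$. When $r_1>r_2$, the leftover factor $e^{2p\alpha(r_2-r_1)}$ is at most $1$, giving $e^{-2p\alpha(t-r_2)}$ up to constants. Note that in this case we also use $e^{-2p\alpha(t-r_1)}\le e^{-2p\alpha(t-r_2)}$.

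The main care point is bookkeeping the exponentials so that the $\eta^{-1}$ and $\eta^{-1/2}$ prefactors are exactly absorbed by the fast-decay integrals. The requirement $\frac{\lambda}{\eta}>\alpha$ for small $\eta$ is what makes this absorption valid.
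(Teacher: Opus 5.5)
Your proof is correct and is essentially the paper's: for $\mathsf{K}_3$ the paper likewise applies H\"older's inequality, inserts Lemma \ref{L:Second-der-Y-W2W2}, and integrates using $\frac{\lambda}{\eta}-\alpha>0$. For $\mathsf{K}_4$ the paper only says the bound follows ``in a similar fashion,'' which is exactly your $r_1\leftrightarrow r_2$ mirror of the $\mathsf{K}_2$ computation in Lemma \ref{L:K1-2}.

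One correction: delete your closing remark about the case $r_1>r_2$. The inequality $e^{-2p\alpha(t-r_1)}\le e^{-2p\alpha(t-r_2)}$ is false there. It is also not needed, since no leftover factor arises: $e^{-\alpha(t-s)}e^{-\alpha(s-r_2)}=e^{-\alpha(t-r_2)}$ exactly. Your computation therefore already gives $\mathsf{K}_4\le K\eta^p(\eps^p+\eta^p)\,e^{-2p\alpha(t-r_2)}e^{-2p\frac{\lambda}{\eta}(r_1\vee r_2-r_1)}$, which yields both cases of the statement directly.
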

\begin{proof}
For $\mathsf{K}_3(t, r_1 \vee r_2; \eps, \eta),$ using H\"older's inequality and Lemma \ref{L:Second-der-Y-W2W2}, we have
\begin{equation*}
\begin{aligned}
\mathsf{K}_3(t, r_1 \vee r_2; \eps, \eta) & \le K \int_{r_1 \vee r_2}^t \cdots \int_{ r_1 \vee r_2}^t  \prod_{i=1}^{2p} e^{-\alpha (t-s_i)}  \left[  \BE(D_{r_1, r_2}^{W_2 W_2}Y_{s_i}^\eta)^{2p} \right]^{\frac{1}{2p}} ds_1 \cdots ds_{2p} \\
& \le K \int_{r_1 \vee r_2}^t \cdots \int_{ r_1 \vee r_2}^t  \prod_{i=1}^{2p} e^{-\alpha (t-s_i)}  \left[ \frac{1}{\sqrt{\eta}} e^{-\frac{\lambda}{\eta}(s_i-r_1 \vee r_2)} e^{-\frac{\lambda}{\eta} (r_1 \vee r_2 - r_1 \wedge r_2)} \right. \\
& \left. \qquad \qquad \qquad \qquad \qquad \qquad \qquad \qquad \qquad \qquad  + \frac{1}{\eta} e^{-\frac{\lambda}{\eta}(s_i- r_1 \wedge r_2)} \right] ds_1 \cdots ds_{2p} \\
& \le \frac{K}{\eta^p} e^{-2p \alpha t} e^{2p \frac{\lambda}{\eta}(r_1 \vee r_2)}  e^{-(2p)\frac{\lambda}{\eta} (r_1 \vee r_2 - r_1 \wedge r_2)} \eta^{2p} e^{-2p\left( \frac{\lambda}{\eta} - \alpha \right)(r_1 \vee r_2)} \\
& \qquad \qquad \qquad \qquad \qquad \qquad \quad + \frac{K}{\eta^{2p}}e^{-2p \alpha t} e^{2p \frac{\lambda}{\eta}(r_1 \wedge r_2)}\eta^{2p} e^{-2p\left( \frac{\lambda}{\eta} - \alpha \right)(r_1 \vee r_2)},
\end{aligned}
\end{equation*}
where the last inequality is obtained using $\eta \ll 1$ and the property that the term $ \frac{\lambda}{\eta} - \alpha$ is a positive number. Further, using a simplification, we obtain
$\mathsf{K}_3(t, r_1 \vee r_2; \eps, \eta) \le K e^{-(2p)\alpha (t-r_1 \vee r_2)} e^{-(2p)\frac{\lambda}{\eta} (r_1 \vee r_2 - r_1 \wedge r_2)}.$
The bound for $\mathsf{K}_4(t, r_1 \vee r_2; \eps, \eta)$ can be obtained in a similar fashion. This finishes the proof of the lemma.
 \end{proof}

\begin{lemma}\label{L:K5}
Let the term $\mathsf{K}_5(t, r_1 \vee r_2; \eps, \eta)$ be defined in equation \eqref{E:Second-der-X-W2W2}. Then, for any $p \in \BN$ and $t \ge r_1 \vee r_2,$ there exists a time-independent positive constant $K$ such that 
\begin{equation*}
\begin{aligned}
\mathsf{K}_5(t, r_1 \vee r_2; \eps, \eta) & \le K e^{-(2p)\alpha (t-r_1 \vee r_2)} e^{-(2p)\frac{\lambda}{\eta} (r_1 \vee r_2 - r_1 \wedge r_2)}. 
\end{aligned}
\end{equation*}
\end{lemma}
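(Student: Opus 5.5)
We bound $\mathsf{K}_5(t, r_1 \vee r_2; \eps, \eta) = K \BE\left( \int_{r_1 \vee r_2}^t e^{-\alpha (t-s)} |D_{r_2}^{W_2}Y_s^\eta \cdot D_{r_1}^{W_2}Y_s^\eta | \thinspace ds \right)^{2p}$ exactly as $\mathsf{K}_1$ and $\mathsf{K}_2$ were treated in Lemma \ref{L:K1-2}. First we expand the $2p$-th power into a $2p$-fold integral over $[r_1\vee r_2,t]^{2p}$. The generalized H\"older inequality is then applied to the product of the $4p$ factors $|D_{r_2}^{W_2}Y_{s_i}^\eta|$ and $|D_{r_1}^{W_2}Y_{s_i}^\eta|$, $i=1,\dots,2p$, giving
\begin{equation*}
\mathsf{K}_5 \le K \int_{r_1 \vee r_2}^t \cdots \int_{r_1 \vee r_2}^t \prod_{i=1}^{2p} e^{-\alpha(t-s_i)} \left[\BE (D_{r_2}^{W_2}Y_{s_i}^\eta)^{4p}\right]^{\frac{1}{4p}} \left[\BE (D_{r_1}^{W_2}Y_{s_i}^\eta)^{4p}\right]^{\frac{1}{4p}} ds_1\cdots ds_{2p}.
\end{equation*}
Lemma \ref{L:1st-der-Y-W2} with exponent $4p$ bounds each bracket by $K\eta^{-1/2} e^{-\frac{\lambda}{\eta}(s_i - r_j)}$. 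The integral therefore factorizes as
\begin{equation*}
\mathsf{K}_5 \le \frac{K}{\eta^{2p}}\left(\int_{r_1\vee r_2}^t e^{-\alpha(t-s)} e^{-\frac{\lambda}{\eta}(s-r_1)} e^{-\frac{\lambda}{\eta}(s-r_2)} ds\right)^{2p}.
\end{equation*}

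Next we evaluate the one-dimensional integral. Write $e^{-\frac{\lambda}{\eta}(s-r_1)}e^{-\frac{\lambda}{\eta}(s-r_2)} = e^{-\frac{2\lambda}{\eta}(s - r_1\vee r_2)} e^{-\frac{\lambda}{\eta}(r_1\vee r_2 - r_1\wedge r_2)}$. The second factor is independent of $s$ and comes out of the integral. What remains is
\begin{equation*}
e^{-\alpha t}e^{\frac{2\lambda}{\eta}(r_1\vee r_2)}\int_{r_1\vee r_2}^t e^{-(\frac{2\lambda}{\eta}-\alpha)s}ds.
\end{equation*}
For $\eta$ small, $\frac{2\lambda}{\eta}-\alpha>0$, so we extend the upper limit to $\infty$, as in the proof of Lemma \ref{L:1st-der-X-W2}. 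This bounds the integral by $\big(\frac{2\lambda}{\eta}-\alpha\big)^{-1}e^{-(\frac{2\lambda}{\eta}-\alpha)(r_1\vee r_2)}\le K\eta\, e^{-(\frac{2\lambda}{\eta}-\alpha)(r_1\vee r_2)}$. Combining, the one-dimensional integral is at most $K\eta\, e^{-\alpha(t - r_1\vee r_2)} e^{-\frac{\lambda}{\eta}(r_1\vee r_2 - r_1\wedge r_2)}$.

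Raising to the $2p$-th power gives a factor $\eta^{2p}$, which exactly cancels the prefactor $\eta^{-2p}$. The result is the claimed bound $K e^{-(2p)\alpha (t-r_1 \vee r_2)} e^{-(2p)\frac{\lambda}{\eta} (r_1 \vee r_2 - r_1 \wedge r_2)}$, with $K$ independent of $t$, $r_1$, $r_2$.

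The only delicate point is the bookkeeping of the powers of $\eta$. The $\eta^{-1/2}$ singularity of each of the $4p$ first-order derivatives of $Y^\eta$ must be exactly compensated by the $O(\eta)$ length of the effective time window, which comes from the fast decay $e^{-2\lambda (s-r_1\vee r_2)/\eta}$. One must also keep the mismatch factor $e^{-\lambda(r_1\vee r_2-r_1\wedge r_2)/\eta}$ rather than discarding it. This factor is what makes the bound, although $O(1)$ in $\eps,\eta$, integrable in $(r_1,r_2)$ with a small contribution of order $\eta$ in the later Proposition \ref{P:Second-der-Prop}.
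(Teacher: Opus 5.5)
Your proof is correct and follows the paper's own argument. In both, you apply H\"older to the expanded $2p$-fold integral and use Lemma \ref{L:1st-der-Y-W2} at exponent $4p$ to get the $\eta^{-2p}$ prefactor; the fast decay, with $\frac{\lambda}{\eta}-\alpha>0$ for small $\eta$, then produces an $\eta^{2p}$ that cancels it. Your exact splitting $2s-r_1-r_2=2(s-r_1\vee r_2)+(r_1\vee r_2-r_1\wedge r_2)$ is only a slightly cleaner extraction of the mismatch factor than the paper's step of bounding one copy of $e^{-\frac{\lambda}{\eta}s}$ by its value at $s=r_1\vee r_2$.
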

\begin{proof}
H\"older's inequality and Lemma \ref{L:1st-der-Y-W2} to deal with the term $\BE \left[|D_r^{W_2}Y_t^{\eta}|^{4p} \right]$ yield
\begin{equation*}
\begin{aligned}
\mathsf{K}_5& (t, r_1 \vee r_2; \eps, \eta)  \le \frac{K}{\eta^{2p}} \int_{r_1 \vee r_2}^t \cdots \int_{ r_1 \vee r_2}^t  \prod_{i=1}^{2p} e^{-\alpha (t-s_i)}  e^{-\frac{\lambda}{\eta}(s_i-r_2)} e^{-\frac{\lambda}{\eta}(s_i-r_1)} ds_1 \cdots ds_{2p} \\
& \le \frac{K}{\eta^{2p}} e^{-2p \alpha t} e^{\frac{2p \lambda}{\eta}r_1} e^{\frac{2p \lambda}{\eta}r_2} e^{-\frac{2p \lambda}{\eta}(r_1 \vee r_2)} \left( \int_{r_1 \vee r_2}^t e^{-\left( \frac{\lambda}{\eta}-\alpha\right)s} ds \right)^{2p} \le K e^{-(2p)\alpha (t-r_1 \vee r_2)} e^{-(2p)\frac{\lambda}{\eta} (r_1 \vee r_2 - r_1 \wedge r_2)},
\end{aligned}
\end{equation*}
where, the last inequality is obtained using $\eta \ll 1$ and the property that the term $ \frac{\lambda}{\eta} - \alpha$ is a positive number. Hence, the lemma is proved.
\end{proof}
\begin{lemma}\label{L:K6-7}
Let $\mathsf{K}_6(t, r_1 \vee r_2; \eps, \eta)$ and $\mathsf{K}_7(t, r_1 \vee r_2; \eps, \eta)$ be defined in equation \eqref{E:Second-der-X-W2W2}. Then, for any $p \in \BN$ and $t \ge r_1 \vee r_2,$ there exists a time-independent positive constant $K$ such that
\begin{equation*}
\begin{aligned}
\mathsf{K}_6(t, r_1 \vee r_2; \eps, \eta) + \mathsf{K}_7(t, r_1 \vee r_2; \eps, \eta)  & \le K \left[\eps^p + \frac{\eps^p}{\eta^{p}} \right] e^{-(2p) \alpha (t- r_1 \vee r_2 )}  e^{-(2p)\frac{\lambda}{ \eta}(r_1 \vee r_2- r_1 \wedge r_2)}. \\
\end{aligned}
\end{equation*}
\end{lemma}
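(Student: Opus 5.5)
The plan is to treat $\mathsf{K}_6$ and $\mathsf{K}_7$ exactly as the term $I_2$ in the proof of Lemma \ref{L:1st-der-generic-lemma}. In each of them we first factor the integrating factor out of the stochastic integral:
\[
e^{\int_s^t c_x(X_u^\eps,Y_u^\eta)\,du}=e^{\int_{r_1\vee r_2}^t c_x\,du}\; e^{-\int_{r_1\vee r_2}^s c_x\,du}.
\]
We then apply Cauchy--Schwarz to separate the two factors. For the first factor we use $c_x<-\alpha$, which gives $\bigl[\BE e^{4p\int_{r_1\vee r_2}^t c_x\,du}\bigr]^{1/2}\le e^{-(2p)\alpha(t-r_1\vee r_2)}$. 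For the remaining martingale we use the Burkholder (martingale moment) inequality. Inside the quadratic variation, $|c_x|\le K_{c_x}$ bounds $e^{-2\int_{r_1\vee r_2}^s c_x\,du}$ by $e^{2K_{c_x}(s-r_1\vee r_2)}$, and $\sigma_y,\sigma_{yy}$ are bounded. This reduces both terms to
\[
K\eps^p e^{-(2p)\alpha(t-r_1\vee r_2)}\Bigl[\BE\Bigl(\int_{r_1\vee r_2}^t e^{2K_{c_x}(s-r_1\vee r_2)}|Q_s|^2\,ds\Bigr)^{2p}\Bigr]^{1/2},
\]
where $Q_s=D^{W_2}_{r_1}Y_s^\eta D^{W_2}_{r_2}Y_s^\eta$ for $\mathsf{K}_6$ and $Q_s=D^{W_2W_2}_{r_1,r_2}Y_s^\eta$ for $\mathsf{K}_7$. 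Next, Hölder/Minkowski turns the expectation of the $2p$-th power of the time integral into $\bigl(\int e^{2K_{c_x}(s-r_1\vee r_2)}(\BE|Q_s|^{4p})^{1/2p}ds\bigr)^{2p}$.

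For $\mathsf{K}_6$ we apply Cauchy--Schwarz together with Lemma \ref{L:1st-der-Y-W2}. This gives $(\BE|Q_s|^{4p})^{1/2p}\le K\eta^{-2}e^{-\frac{2\lambda}{\eta}(s-r_1)}e^{-\frac{2\lambda}{\eta}(s-r_2)}$. We write $(s-r_1)+(s-r_2)=2(s-r_1\vee r_2)+(r_1\vee r_2-r_1\wedge r_2)$. Integrating $e^{-2(\frac{2\lambda}{\eta}-K_{c_x})(s-r_1\vee r_2)}$ then produces a factor $O(\eta)$ for $\eta$ small, since $\frac{\lambda}{\eta}>K_{c_x}$. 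The time integral is therefore at most $K\eta^{-1}e^{-\frac{2\lambda}{\eta}(r_1\vee r_2-r_1\wedge r_2)}$. Raising this to the power $2p$ and taking the square root yields $K\frac{\eps^p}{\eta^p}e^{-(2p)\alpha(t-r_1\vee r_2)}e^{-(2p)\frac{\lambda}{\eta}(r_1\vee r_2-r_1\wedge r_2)}$.

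For $\mathsf{K}_7$ we use Lemma \ref{L:Second-der-Y-W2W2} with $2p$ replaced by $4p$. This gives two contributions to $(\BE|Q_s|^{4p})^{1/2p}$.

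The first contribution is $\eta^{-1}e^{-\frac{2\lambda}{\eta}(s-r_1\vee r_2)}e^{-\frac{2\lambda}{\eta}(r_1\vee r_2-r_1\wedge r_2)}$. Its time integral is $O(1)$ times the separation factor, so it contributes $\eps^p e^{-(2p)\alpha(t-r_1\vee r_2)}e^{-(2p)\frac{\lambda}{\eta}(r_1\vee r_2-r_1\wedge r_2)}$.

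The second contribution is $\eta^{-2}e^{-\frac{2\lambda}{\eta}(s-r_1\wedge r_2)}$. Splitting it the same way gives $\eta^{-1}e^{-\frac{2\lambda}{\eta}(r_1\vee r_2-r_1\wedge r_2)}$ after integration, and hence again the $\frac{\eps^p}{\eta^p}$ term. Summing the three pieces gives the stated bound.

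The main obstacle I expect is the bookkeeping that keeps the decay uniform in time. The weight $e^{2K_{c_x}(s-r_1\vee r_2)}$ grows, and it must be absorbed by the $\frac{\lambda}{\eta}$-rate decay of the $Y$-derivatives. This requires $\eta$ small enough that $\frac{2\lambda}{\eta}-K_{c_x}>0$, uniformly in $t$, as in the estimate of $J_2$ in Lemma \ref{L:1st-der-X-W2}. One also has to be careful that the separation factor $e^{-\frac{\lambda}{\eta}(r_1\vee r_2-r_1\wedge r_2)}$ is extracted before integrating rather than lost.
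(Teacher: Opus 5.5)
Your proposal is correct and follows essentially the paper's argument. Both factor out the integrating factor, apply Cauchy--Schwarz with $c_x<-\alpha$, use the martingale moment inequality with $|c_x|\le K_{c_x}$, and then absorb the growing weight via Lemma \ref{L:1st-der-Y-W2} and the smallness of $\eta$. For $\mathsf{K}_7$, which the paper only says follows ``in a similar way,'' your use of Lemma \ref{L:Second-der-Y-W2W2} with $p$ replaced by $2p$ correctly fills in the computation and produces the two pieces $\eps^p$ and $\eps^p/\eta^p$ that appear in the statement.
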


\begin{proof}
Let us first consider the term $\mathsf{K}_6(t, r_1 \vee r_2; \eps, \eta),$ then the bound for the term $\mathsf{K}_7(t, r_1 \vee r_2; \eps, \eta)$ can be obtained in a similar way. Using H\"older's inequality and martingale moment inequality followed by Assumption \ref{A:Derivatives-Assumption-c}, we have
\begin{equation*}
\begin{aligned}
& \mathsf{K}_6(t, r_1 \vee r_2; \eps, \eta) \\
&  = K \eps^p \BE \left[ \left( e^{\int_{r_1 \vee r_2}^t c_x(X_u^\varepsilon, Y_u^\eta) \thinspace du} \right)^{2p} \left( \int_{r_1 \vee r_2}^t e^{-\int_{r_1 \vee r_2}^s c_x(X_u^\varepsilon, Y_u^\eta) \thinspace du} \sigma_{yy} (Y_s^\eta) D_{r_1}^{W_2}Y_s^\eta \cdot D_{r_2}^{W_2}Y_s^\eta \thinspace dW_s^1 \right)^{2p} \right] \\
& \le K \eps^p \left[ \BE \left( e^{\int_{r_1 \vee r_2}^t c_x(X_u^\varepsilon, Y_u^\eta) \thinspace du} \right)^{4p} \right]^{\frac{1}{2}}  \left[ \BE \left( \int_{r_1 \vee r_2}^t e^{-\int_{r_1 \vee r_2}^s c_x(X_u^\varepsilon, Y_u^\eta) \thinspace du}\sigma_{yy} (Y_s^\eta) D_{r_1}^{W_2}Y_s^\eta \cdot D_{r_2}^{W_2}Y_s^\eta \thinspace dW_s^1 \right)^{4p}\right]^{\frac{1}{2}} \\
& \le K \eps^p e^{-(2p)\alpha (t-r_1 \vee r_2)} \left[ \BE \left( \int_{r_1 \vee r_2}^t e^{-2\int_{r_1 \vee r_2}^s c_x(X_u^\varepsilon, Y_u^\eta) \thinspace du} \left[\sigma_{yy} (Y_s^\eta) D_{r_1}^{W_2}Y_s^\eta \cdot D_{r_2}^{W_2}Y_s^\eta\right]^2 \thinspace ds \right)^{2p}\right]^{\frac{1}{2}}.
\end{aligned}
\end{equation*}
Next, using the boundedness of $\sigma_{yy}$ from Assumption \ref{A:Derivatives-Assumption-c}, the inequality $-c_x \le |c_x| \le K_{c_x}$, H\"older's inequality followed by Lemma \ref{L:1st-der-Y-W2} to control the term $\BE \left[|D_r^{W_2}Y_t^{\eta}|^{2p} \right]$, we obtain
\begin{equation*}
\begin{aligned}
& \mathsf{K}_6(t, r_1 \vee r_2; \eps, \eta)  \le K \eps^p e^{-(2p)\alpha (t-r_1 \vee r_2)} \left[ \left( \int_{r_1 \vee r_2}^t e^{2 K_{c_x} (s-r_1 \vee r_2)} \frac{1}{\eta^2} e^{- \frac{2\lambda}{\eta}(s-r_1)} e^{- \frac{2\lambda}{\eta}(s-r_2)} \thinspace ds \right)^{2p}\right]^{\frac{1}{2}} \\ 
& \qquad \qquad  \le  K \frac{\eps^p}{\eta^{2p}} e^{-(2p)\alpha (t-r_1 \vee r_2)} \left[  e^{-4p K_{c_x} (r_1 \vee r_2) } e^{\frac{4p \lambda r_1}{\eta}} e^{\frac{4p \lambda r_2}{\eta}}  \left(\int_{r_1 \vee r_2}^t e^{(2K_{c_x}-\frac{4 \lambda}{\eta})s} ds \right)^{2p}  \right]^{\frac{1}{2}}\\
& \qquad \qquad \le K \frac{\eps^p}{\eta^{2p}} e^{-(2p)\alpha (t-r_1 \vee r_2)} e^{-2p K_{c_x} (r_1 \vee r_2) } e^{\frac{-2p \lambda (r_1 \vee r_2)}{\eta}} e^{\frac{2p \lambda r_1}{\eta}} e^{\frac{2p \lambda r_2}{\eta}}  \left(\int_{r_1 \vee r_2}^t   e^{(2K_{c_x}-\frac{2 \lambda}{\eta})s}  ds \right)^p \\
& \qquad \qquad \le K \frac{\eps^p}{\eta^{2p}} e^{-(2p)\alpha (t-r_1 \vee r_2)} e^{-2p K_{c_x} (r_1 \vee r_2) } e^{\frac{-2p \lambda (r_1 \vee r_2)}{\eta}} e^{\frac{2p \lambda r_1}{\eta}} e^{\frac{2p \lambda r_2}{\eta}}  \left( \int_{r_1 \vee r_2}^t  e^{-2 \left(\frac{\lambda}{\eta}- K_{c_x} \right)u}du \right)^{p}.
\end{aligned}
\end{equation*}
For sufficiently small $\eta$, the term $ \frac{\lambda}{\eta} - K_{c_x}$ is a positive number, hence for fixed $\lambda$, $K_{c_x}$ and for some $\eta_0>0$ such that $0 < \eta \le \eta_0$, we have
\begin{equation*}
\begin{aligned}
\mathsf{K}_6(t, r_1 \vee r_2; \eps, \eta) & \le K \frac{\eps^p}{\eta^{2p}} e^{-(2p)\alpha (t-r_1 \vee r_2)} e^{-2p K_{c_x} (r_1 \vee r_2) } e^{\frac{-2p \lambda (r_1 \vee r_2)}{\eta}} e^{\frac{2p \lambda r_1}{\eta}} e^{\frac{2p \lambda r_2}{\eta}} \eta^p e^{-2p \left(\frac{\lambda}{\eta}- K_{c_x} \right)(r_1 \vee r_2)} \\
& = K \frac{\eps^p}{\eta^{p}} e^{-(2p) \alpha (t- r_1 \vee r_2 )}  e^{-(2p)\frac{\lambda}{ \eta}(r_1 \vee r_2- r_1 \wedge r_2)}.
\end{aligned}
\end{equation*}
Hence, the lemma is proved.
\end{proof}

Proposition \ref{P:Second-der-Prop} below is the main result of this subsection. This result is associated with the bound for the second-order derivative term $\|D^2X_t^\eps \otimes D^2X_t^\eps  \|_{\mathscr{H}^{\otimes 2}}^2$. We apply this result in the proof of Theorem \ref{T:Main-Result}.

\begin{proposition}\label{P:Second-der-Prop}
Let the process $X_t^\eps$ be the solution of the first equation in system \eqref{E:Multiscale-Diffusion-Main-Eq}. Then, for any $t \ge 0,$ there exists a time-independent positive constant $K$ such that 
$$\sup_{t \ge 0}\BE \left[\|D^2X_t^\eps \otimes D^2X_t^\eps  \|_{\mathscr{H}^{\otimes 2}}^2 \right] \le K \left( \eta^3 + \eps^3 + \eps^{\frac{3}{2}} \eta^{\frac{3}{2}} \right),$$
where $\otimes$ is the convolution operator defined in equation \eqref{E:Convolution}.
\end{proposition}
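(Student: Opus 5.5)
The plan is to expand the squared Hilbert--Schmidt norm of the $2\times 2$ matrix \eqref{E:Matrix-Second-der} entrywise, bound each entry by Cauchy--Schwarz, and reduce everything to the pointwise moment bounds of Lemmas \ref{L:Second-der-X-W1-W1}, \ref{L:Second-der-X-W1-W2} and \ref{L:Second-der-X-W2W2}. Writing $A^{ij}_{u,v}\triangleq D^{W^i,W^j}_{u,v}X_t^\eps$, the definition gives
\begin{equation*}
\|D^2X_t^\eps\otimes D^2X_t^\eps\|^2_{\mathscr{H}^{\otimes 2}}\le K\sum_{i,j,k,l}\int_{[0,t]^2}\Big(\int_0^t A^{ik}_{u,v}A^{kj}_{u,w}\,du\Big)\Big(\int_0^t A^{il}_{u',v}A^{lj}_{u',w}\,du'\Big)\,dv\,dw .
\end{equation*}
Taking expectations and applying H\"older's inequality with four factors, each term is bounded by a fourfold integral over $(u,u',v,w)\in[0,t]^4$ of products of $\big(\BE|A^{\cdot\cdot}_{\cdot,\cdot}|^4\big)^{1/4}$. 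These fourth moments come from the lemmas with $p=2$. Each resulting integral factorizes into products of one-dimensional kernels of the form $\int_0^t g(u,v)g(u,w)\,du$, where $g(r_1,r_2)$ denotes the $\frac14$-power of the corresponding moment bound.

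Taking the quarter powers of the lemmas gives the following kernels. For $(1,1)$, $g\lesssim \eps\, e^{-\alpha(t-r_1\wedge r_2)}$. For $(1,2)$, $g\lesssim (\sqrt{\eps/\eta}+\sqrt{\eps\eta})\,e^{-\alpha(t-r_1\vee r_2)}e^{-\frac{\lambda}{\eta}|r_1-r_2|}+(\sqrt{\eps\eta}+\eps)\,e^{-\alpha(t-r_1\wedge r_2)}$. For $(2,2)$, $g\lesssim(\eps+\eta+\sqrt{\eps\eta})\,e^{-\alpha(t-r_1\wedge r_2)}+e^{-\alpha(t-r_1\vee r_2)}e^{-\frac{\lambda}{\eta}|r_1-r_2|}$. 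The crucial observation is that the terms carrying an $O(1)$ or $O(\eta^{-1/2})$ prefactor always come with the localizing factor $e^{-\frac{\lambda}{\eta}|r_1-r_2|}$. Integrating such a factor in one variable produces a gain of $\eta$, whereas the $e^{-\alpha(t-\cdot)}$ factors integrate to time-independent constants. This is what keeps the bounds uniform in $t$.

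I then carry out the integrations for each pattern of indices. I will enumerate the products $g(u,v)g(u,w)g(u',v)g(u',w)$ and integrate over $v,w,u,u'$ in turn. A "diagonal" kernel $e^{-\frac{\lambda}{\eta}|u-v|}$ contributes $\eta$ when integrated in $v$ (or in $u$), and a "smooth" kernel $e^{-\alpha(t-u\wedge v)}$ contributes $O(1)$. The four-cycle structure $u\!-\!v\!-\!u'\!-\!w\!-\!u$ means that if all four kernels are of diagonal type, three free integrations each yield $\eta$, giving $\eta^3$. Mixed configurations yield powers such as $\eps^{3/2}\eta^{3/2}$ and $\eps^3$, after the $\eps/\eta$ prefactors are combined with the $\eta$ gains. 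Finally, Young's inequality $ab\le a^2+b^2$ absorbs cross terms such as $\eps^2\eta$ and $\eps\eta^2$ into $\eta^3+\eps^3+\eps^{3/2}\eta^{3/2}$.

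The main obstacle is the bookkeeping of the $(1,2)$ entries, whose prefactor $\sqrt{\eps/\eta}$ can be large when $\eta\ll\eps$. I must verify that every such factor is paired with enough diagonal localization to yield a net $\eta$-power. For example, two such kernels sharing the cycle must contribute at least $(\eps/\eta)\cdot\eta^2=\eps\eta$. I will check this pairing case by case. The indicator $\ind_{(r_1\ge r_2)}$ in Lemma \ref{L:Second-der-X-W1-W2} only restricts the domain and causes no difficulty. I will also need to confirm that integrating $e^{-\alpha(t-u\vee v)}$ over $[0,t]$ stays bounded, which it does since the factor is paired with exponential localization. Taking the supremum over $t\ge0$ then gives the claimed bound.
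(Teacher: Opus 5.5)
Your proposal is correct and follows the paper's proof. The paper likewise expands the squared norm as a fourfold integral, applies H\"older with four factors, and inserts the quarter powers of Lemmas \ref{L:Second-der-X-W1-W1}, \ref{L:Second-der-X-W1-W2} and \ref{L:Second-der-X-W2W2} with $p=2$. It then integrates so that each $e^{-\frac{\lambda}{\eta}|r_1-r_2|}$ factor yields a power of $\eta$ and the $e^{-\alpha(t-\cdot)}$ factors give time-independent constants; this is done explicitly on the ordering $u\le s\le w\le r$ via $\mathcal{J}_1,\dots,\mathcal{J}_{16}$.

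The one difference is that the paper simply asserts that the $W_2W_2$ entry dominates. Your cycle-counting also covers the mixed entries with the $\sqrt{\eps/\eta}$ prefactor, whose worst case is $(\eps/\eta)^2\eta^3=\eps^2\eta$. That is more careful than the paper, and the result still fits the stated bound because every resulting monomial $\eps^a\eta^b$ has $a+b\ge 3$.
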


Before proving Proposition \ref{P:Second-der-Prop}, we estimate the bounds for the terms $\mathcal{J}_1(t;\eps,\eta)$, $\mathcal{J}_4(t;\eps,\eta)$, $\mathcal{J}_8(t;\eps,\eta)$, $\mathcal{J}_9(t;\eps,\eta)$ and $\mathcal{J}_{16}(t;\eps,\eta)$ defined in Lemmas \ref{L:Int-1}, \ref{L:Int-4}, \ref{L:Int-8}, \ref{L:Int-9} and \ref{L:Int-16} below, respectively. These results are proved in Section \ref{S:Integral-Lemmas}.

\begin{lemma}\label{L:Int-1}
For $0 \le u \le s \le w \le r \le t$, let the term $\mathcal{J}_1(t;\eps,\eta)$ be defined as follows:
$$\mathcal{J}_1(t;\eps,\eta)  \triangleq K \int_{u=0}^t \int_{s=u}^t \int_{w=s}^t \int_{r=w}^t (\eps+ \eta + \sqrt{\eps \eta})^4 e^{-2 \alpha (t-u)} e^{-\alpha(t-w)} e^{-\alpha (t-s)} dr \thinspace dw \thinspace ds \thinspace du. $$
Then, there exists a time-independent positive constant $K$ such that
$$ \mathcal{J}_1(t;\eps,\eta) \le K (\eps+ \eta + \sqrt{\eps \eta})^4.$$
\end{lemma}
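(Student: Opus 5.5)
The constant $(\eps+\eta+\sqrt{\eps\eta})^4$ does not depend on the integration variables, so I will pull it out. It then remains to show that the purely deterministic iterated integral
\begin{equation*}
\mathcal{I}(t) \triangleq \int_{u=0}^t \int_{s=u}^t \int_{w=s}^t \int_{r=w}^t e^{-2 \alpha (t-u)} e^{-\alpha(t-w)} e^{-\alpha (t-s)} \, dr \, dw \, ds \, du
\end{equation*}
is bounded uniformly in $t \ge 0$ by a constant depending only on $\alpha$. Here $\alpha>0$ is the dissipativity constant from Assumption \ref{A:Derivatives-Assumption-c}. I will integrate from the innermost variable outward, each time enlarging the domain of integration to a half-line so that the bound no longer depends on $t$.

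\emph{Step 1: the $r$-integral.} The integrand does not depend on $r$, so $\int_{w}^t dr = t-w$. This step produces a polynomial factor rather than a decaying one, and it is the only place where uniformity in $t$ could fail. It is absorbed by the factor $e^{-\alpha(t-w)}$ attached to $w$.

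\emph{Step 2: the $w$-integral.} Substituting $x=t-w$, I get
\begin{equation*}
\int_{s}^t (t-w) e^{-\alpha(t-w)} \, dw \le \int_0^\infty x e^{-\alpha x} \, dx = \frac{1}{\alpha^2},
\end{equation*}
uniformly in $s$ and $t$.

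\emph{Step 3: the $s$- and $u$-integrals.} After Step 2 the $s$-integral reduces to
\begin{equation*}
\int_u^t e^{-\alpha(t-s)} \, ds \le \frac{1}{\alpha}.
\end{equation*}
The $u$-integral then reduces to
\begin{equation*}
\int_0^t e^{-2\alpha(t-u)} \, du \le \frac{1}{2\alpha}.
\end{equation*}

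Multiplying the three bounds gives $\sup_{t\ge 0}\mathcal{I}(t) \le \frac{1}{2\alpha^4}$. Hence $\mathcal{J}_1(t;\eps,\eta) \le K (\eps+\eta+\sqrt{\eps\eta})^4$, where the new $K$ absorbs the old $K$ and the factor $\frac{1}{2\alpha^4}$, and is time-independent.

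The only point needing care is the ordering. The linear factor $t-w$ from the $r$-integral must be paired with $e^{-\alpha(t-w)}$ before any crude bounding. Bounding each exponential separately by $1$ would give growth like $t^4$, so the nested order above, with $w$ handled immediately after $r$, is what makes the bound uniform in time.
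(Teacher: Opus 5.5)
Your proof is correct and follows essentially the same route as the paper. Both integrate out $r$ to produce the factor $(t-w)$, absorb it with $e^{-\alpha(t-w)}$, and then bound the remaining exponential integrals uniformly in $t$. The paper first substitutes $x=t-u$, $y=t-s$, $z=t-w$ and swaps the $y$- and $z$-integrations, whereas you integrate directly in the nested order $w$, $s$, $u$. This is slightly cleaner bookkeeping of the same computation and gives the explicit constant $\frac{1}{2\alpha^4}$.
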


\begin{lemma}\label{L:Int-4}
For $0 \le u \le s \le w \le r \le t$, let the term $\mathcal{J}_4(t;\eps,\eta)$ be defined as follows:
\begin{multline*}
\mathcal{J}_4(t;\eps,\eta)  \triangleq  K \int_{u=0}^t \int_{s=u}^t \int_{w=s}^t \int_{r=w}^t  (\eps+ \eta + \sqrt{\eps \eta})^2 e^{-2 \alpha (t-u)} e^{-\alpha(t-w)} e^{-\alpha(t-r)} e^{-\frac{\lambda}{\eta} (r-w)} \\
\times  e^{-\frac{\lambda}{\eta} (w-s)}     dr \thinspace dw \thinspace ds \thinspace du.  
\end{multline*}
Then, there exists a time-independent positive constant $K$ such that
$$ \mathcal{J}_4(t;\eps,\eta) \le K \thinspace \eta^2 \thinspace (\eps+ \eta + \sqrt{\eps \eta})^2 .$$
\end{lemma}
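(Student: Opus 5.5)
The estimate for $\mathcal{J}_4$ is an elementary iterated-integral computation. The constant $(\eps+\eta+\sqrt{\eps\eta})^2$ factors out, so it suffices to show that
$$\int_{0}^t\int_{u}^t\int_{s}^t\int_{w}^t e^{-2\alpha(t-u)}e^{-\alpha(t-w)}e^{-\alpha(t-r)}e^{-\frac{\lambda}{\eta}(r-w)}e^{-\frac{\lambda}{\eta}(w-s)}\,dr\,dw\,ds\,du\le K\eta^2,$$
with $K$ independent of $t$. I will integrate from the innermost variable outward. The two fast factors $e^{-\frac{\lambda}{\eta}(\cdot)}$ should each contribute one factor of $\eta$. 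The slow factor $e^{-2\alpha(t-u)}$ should absorb the outer $u$-integral uniformly in $t$.

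\textbf{The $r$-integral.} Since $r\le t$, I bound $e^{-\alpha(t-r)}\le 1$. This gives
$$\int_w^t e^{-\frac{\lambda}{\eta}(r-w)}\,dr\le \frac{\eta}{\lambda}.$$
This produces the first factor of $\eta$.

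\textbf{The $w$-integral.} Since $w\le t$, I bound $e^{-\alpha(t-w)}\le 1$. This gives
$$\int_s^t e^{-\frac{\lambda}{\eta}(w-s)}\,dw\le \frac{\eta}{\lambda},$$
which produces the second factor of $\eta$.

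\textbf{The $s$- and $u$-integrals.} What remains is
$$\frac{\eta^2}{\lambda^2}\int_0^t\int_u^t e^{-2\alpha(t-u)}\,ds\,du=\frac{\eta^2}{\lambda^2}\int_0^t (t-u)\,e^{-2\alpha(t-u)}\,du\le \frac{\eta^2}{\lambda^2}\int_0^\infty v\,e^{-2\alpha v}\,dv=\frac{\eta^2}{4\alpha^2\lambda^2}.$$

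Combining the three steps gives
$$\mathcal{J}_4(t;\eps,\eta)\le \frac{K}{4\alpha^2\lambda^2}\,\eta^2(\eps+\eta+\sqrt{\eps\eta})^2,$$
which is the claimed bound. The constant is independent of $t$ and depends only on $\alpha$, $\lambda$ and the prefactor $K$.

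The only point needing care is the $u$-integral. The $s$-integration produces the polynomial factor $(t-u)$, and one must check that this factor does not destroy uniformity in $t$. It does not, because the weight $e^{-2\alpha(t-u)}$ decays exponentially, so $v\,e^{-2\alpha v}$ is integrable on $[0,\infty)$.

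Discarding $e^{-\alpha(t-r)}$ and $e^{-\alpha(t-w)}$ loses nothing in the $\eta$-power. Those factors could instead be kept to improve the constant, but this is not needed. No smallness condition on $\eta$, such as $\frac{\lambda}{\eta}>\alpha$, is required here, since the $\frac{\lambda}{\eta}$-exponentials are integrated directly on their own.
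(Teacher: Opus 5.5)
Your proof is correct and follows essentially the same route as the paper: each of the two fast exponentials $e^{-\frac{\lambda}{\eta}(\cdot)}$ is integrated to produce one factor of $\eta$, and the leftover weight $(t-u)e^{-2\alpha(t-u)}$ (the paper's $\int_0^t x\,e^{-2\alpha x}\,dx$ after substituting $x=t-u$) is integrable uniformly in $t$. The only difference is minor. The paper integrates $e^{-\alpha(t-r)}e^{-\frac{\lambda}{\eta}(r-w)}$ exactly, which requires $\frac{\lambda}{\eta}>\alpha$ (that is, $\eta$ small), whereas you bound the slow factors by $1$ and so need no smallness condition on $\eta$.
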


\begin{lemma}\label{L:Int-8}
For $0 \le u \le s \le w \le r \le t$, let the term $\mathcal{J}_8(t;\eps,\eta)$ be defined as follows:
\begin{multline*}
\mathcal{J}_8(t;\eps,\eta)  \triangleq K  \int_{u=0}^t \int_{s=u}^t \int_{w=s}^t \int_{r=w}^t  (\eps+ \eta + \sqrt{\eps \eta}) e^{-\alpha (t-u)} e^{-\alpha(t-s)} e^{- \alpha(t-w)}  e^{-\frac{\lambda}{\eta} (r-w)} e^{-\frac{\lambda}{\eta} (s-u)} \\
\times  e^{-\frac{\lambda}{\eta} (w-s)} dr \thinspace dw \thinspace ds \thinspace du. 
\end{multline*}
Then, there exists a time-independent positive constant $K$ such that
$$ \mathcal{J}_8(t;\eps,\eta) \le K  \thinspace \eta^2 \thinspace (\eps+ \eta + \sqrt{\eps \eta}).$$
\end{lemma}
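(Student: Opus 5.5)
The plan is to integrate the iterated integral directly, from the innermost variable outward, treating the constant $(\eps+\eta+\sqrt{\eps\eta})$ as a factor that can be pulled out. After that the task is to show that
$$\int_{u=0}^t\int_{s=u}^t\int_{w=s}^t\int_{r=w}^t e^{-\alpha(t-u)}e^{-\alpha(t-s)}e^{-\alpha(t-w)}e^{-\frac{\lambda}{\eta}(r-w)}e^{-\frac{\lambda}{\eta}(w-s)}e^{-\frac{\lambda}{\eta}(s-u)}\,dr\,dw\,ds\,du \le K\eta^2$$
with $K$ independent of $t$. The mechanism is that each of the three factors $e^{-\frac{\lambda}{\eta}(\cdot)}$, integrated over a simplex, produces a factor of order $\eta$. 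One of them, however, must be spent controlling the variable $r$, which carries no $\alpha$-decay. That leaves exactly two factors of $\eta$, while the outermost $e^{-\alpha(t-u)}$ guarantees uniformity in $t$.

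Concretely, I will proceed in order.

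\textbf{Step 1 ($r$).} The only $r$-dependence is $e^{-\frac{\lambda}{\eta}(r-w)}$, so
$$\int_w^t e^{-\frac{\lambda}{\eta}(r-w)}dr\le \frac{\eta}{\lambda}.$$

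\textbf{Step 2 ($w$).} Bound $e^{-\alpha(t-w)}\le 1$, which is legitimate because $w\le t$. What remains is
$$\int_s^t e^{-\frac{\lambda}{\eta}(w-s)}dw\le \frac{\eta}{\lambda}.$$

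\textbf{Step 3 ($s$).} Bound $e^{-\alpha(t-s)}\le 1$ and integrate the $s$-factor:
$$\int_u^t e^{-\frac{\lambda}{\eta}(s-u)}ds\le \frac{\eta}{\lambda}.$$
At this stage a factor $\eta^3$ has been collected.

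\textbf{Step 4 ($u$).} The last integral is $\int_0^t e^{-\alpha(t-u)}du\le 1/\alpha$, which is uniform in $t$.

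Altogether the integral is at most $K\eta^3(\eps+\eta+\sqrt{\eps\eta})$, which is even better than the claimed bound. Since $\eta\le 1$ for sufficiently small $\eta$, this gives the stated $K\eta^2(\eps+\eta+\sqrt{\eps\eta})$.

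The one point needing care is that no step produces a factor growing linearly in $t$. The only dangerous variable is $r$, whose integrand lacks $\alpha$-decay. This is exactly why Step 1 has to use the $\lambda/\eta$ exponential $e^{-\frac{\lambda}{\eta}(r-w)}$ to absorb the $r$-integration, rather than integrating it against the bounded domain length. Every other exponential is either bounded by $1$ or integrated over a half-line, so the constants depend only on $\alpha$ and $\lambda$.
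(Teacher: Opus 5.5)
Your proof is correct, and it takes a more direct route than the paper's.

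Both arguments first integrate out $r$ against $e^{-\frac{\lambda}{\eta}(r-w)}$, which gives a factor $\eta/\lambda$. They diverge after that.

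The paper substitutes $x=t-u$, $y=t-s$, $z=t-w$ and merges $e^{-\frac{\lambda}{\eta}(x-y)}e^{-\frac{\lambda}{\eta}(y-z)}$ into the single factor $e^{-\frac{\lambda}{\eta}(x-z)}$. It then reorders so that $y$ is integrated over $[z,x]$ using only $e^{-\alpha y}$. That step is bounded by $e^{-\alpha z}/\alpha$, which throws away the fact that this interval is effectively of length $O(\eta)$. Next it integrates $x$ against $e^{-(\alpha+\lambda/\eta)x}$ to get the second factor $\eta$, and finally $z$ against $e^{-3\alpha z}$ for uniformity in $t$. The result is exactly the stated $\eta^2(\eps+\eta+\sqrt{\eps\eta})$.

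You instead integrate in the given order. Each of the two remaining $\lambda/\eta$-exponentials supplies its own factor $\eta/\lambda$, and you keep only the outermost $e^{-\alpha(t-u)}$ for uniformity in $t$. This avoids any change of variables or Fubini swap and gives the sharper bound $\eta^3(\eps+\eta+\sqrt{\eps\eta})$.

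What the paper's approach buys is a single change-of-variables template reused across all the $\mathcal{J}_i$. The extra power of $\eta$ you gain is not needed downstream. Proposition \ref{P:Second-der-Prop} only claims $\eta^3+\eps^3+\eps^{3/2}\eta^{3/2}$, and by weighted AM--GM this already absorbs $\eta^2(\eps+\eta+\sqrt{\eps\eta})$.

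One minor inconsistency: your opening heuristic says only two factors of $\eta$ remain after one is ``spent'' on $r$. Your own Step 1 shows the $r$-integration also contributes $\eta/\lambda$, so the three-factor count in your computation is the correct one.
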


\begin{lemma}\label{L:Int-9}
For $0 \le u \le s \le w \le r \le t$, let the term $\mathcal{J}_9(t;\eps,\eta)$ be defined as follows:
$$\mathcal{J}_9(t;\eps,\eta)  \triangleq K  \int_{u=0}^t \int_{s=u}^t \int_{w=s}^t \int_{r=w}^t(\eps+ \eta + \sqrt{\eps \eta})^3 e^{-\alpha (t-u)} e^{-\alpha(t-s)} e^{- \alpha(t-w)} e^{- \alpha(t-r)}  e^{-\frac{\lambda}{\eta} (r-u)} dr \thinspace dw \thinspace ds \thinspace du.$$
Then, there exists a time-independent positive constant $K$ such that
$$ \mathcal{J}_9(t;\eps,\eta) \le K \thinspace \eta^2 \thinspace  (\eps+ \eta + \sqrt{\eps \eta})^3.$$
\end{lemma}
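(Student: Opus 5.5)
This is an elementary iterated-integral estimate. I will integrate from the innermost variable outward, using $\lambda/\eta - \alpha>0$ and its consequences, exactly as in Lemmas \ref{L:1st-der-X-W2} and \ref{L:K5}. After factoring out the constant $(\eps+\eta+\sqrt{\eps\eta})^3$, the integrand is
\[
e^{-\alpha(t-u)}e^{-\alpha(t-s)}e^{-\alpha(t-w)}e^{-\alpha(t-r)}e^{-\frac{\lambda}{\eta}(r-u)} .
\]
The exponential $e^{-\frac{\lambda}{\eta}(r-u)}$ couples only the outermost variable $u$ and the innermost variable $r$. The key point is that the ordering $u\le s\le w\le r$ traps $s$ and $w$ between them. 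So the fast decay in $r-u$ also forces $s-u$ and $w-u$ to be small, which will produce two factors of $\eta$.

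\textbf{First reduction.} Since every factor $e^{-\alpha(t-\cdot)}\le 1$, I will drop $e^{-\alpha(t-s)}e^{-\alpha(t-w)}e^{-\alpha(t-r)}$ and keep only $e^{-\alpha(t-u)}$. This retains the decay needed for integrability in $u$ over $[0,t]$, uniformly in $t$.

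\textbf{Integrating the middle variables.} For fixed $u$ and $r$, integrating over $u\le s\le w\le r$ gives the volume factor $\tfrac12 (r-u)^2$. What remains is
\[
\int_0^t e^{-\alpha(t-u)}\int_u^t \tfrac12 (r-u)^2 e^{-\frac{\lambda}{\eta}(r-u)}\,dr\,du .
\]
Substituting $v=r-u$ and extending the upper limit to $\infty$, the inner integral is bounded by $\int_0^\infty \tfrac{v^2}{2}e^{-\lambda v/\eta}\,dv = (\eta/\lambda)^3$. The outer integral is bounded by $\int_0^t e^{-\alpha(t-u)}du\le 1/\alpha$. Together this gives a total of order $\eta^3$, which is even stronger than the claimed $\eta^2$. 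Since $\eta\le 1$ for $\eta$ small, the claimed bound $K\eta^2(\eps+\eta+\sqrt{\eps\eta})^3$ follows.

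\textbf{Alternative route.} If I wanted to follow the paper's style, I would integrate $r$ first: $\int_w^t e^{-\frac{\lambda}{\eta}(r-u)}dr\le \frac{\eta}{\lambda}e^{-\frac{\lambda}{\eta}(w-u)}$. Then I would integrate $w$ using $e^{-\frac{\lambda}{\eta}(w-u)}$, which gives another factor $\eta$ and leaves $e^{-\frac{\lambda}{\eta}(s-u)}$. Finally, the $s$-integral is bounded by $\eta/\lambda$ or by $1/\alpha$, and the $u$-integral by $1/\alpha$. This naturally yields $\eta^2$ or $\eta^3$.

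\textbf{Main obstacle.} The only point needing care is that the bound must be uniform in $t$, since the integration domains grow with $t$. This is handled by extending the fast-decaying inner integrals to $\infty$ and by using $\sup_t\int_0^t e^{-\alpha(t-u)}du\le 1/\alpha$ for the outer one. Both require only $\alpha,\lambda>0$ and $\eta\le\eta_0$.
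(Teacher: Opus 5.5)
Your proof is correct, and your main argument takes a different route from the paper. The paper omits the proof of this lemma, saying only that it is similar to Lemmas \ref{L:Int-4} and \ref{L:Int-8}. Those proofs follow a fixed template:
\begin{itemize}
\item integrate the innermost variable against the fast exponential to extract one factor of $\eta$;
\item change variables $x=t-u$, $y=t-s$, $z=t-w$ on the ordered simplex $0\le z\le y\le x\le t$ and swap the order of integration;
\item integrate the remaining fast exponential to extract the second factor of $\eta$, carrying the slow factors $e^{-\alpha(t-\cdot)}$ throughout.
\end{itemize}
Your ``alternative route'' is essentially this template.

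Your main argument instead discards every slow factor except $e^{-\alpha(t-u)}$. It then integrates the two trapped variables $s,w$ exactly through the simplex volume $(r-u)^2/2$, which reduces everything to the Gamma integral $\int_0^\infty \tfrac{v^2}{2}e^{-\lambda v/\eta}\,dv=(\eta/\lambda)^3$ times $\int_0^t e^{-\alpha(t-u)}\,du\le 1/\alpha$.

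This gives a short, fully explicit proof with the sharper order $\eta^3$. It also needs only $\alpha,\lambda>0$ and $\eta\le 1$, not the condition $\lambda/\eta>\alpha$ (so your opening reference to that condition is unnecessary for this route). The paper's template is less efficient for $\mathcal{J}_9$. Its advantage is that it applies uniformly to all sixteen terms $\mathcal{J}_i$, including ones like $\mathcal{J}_1$ with no fast exponential, where the slow factors are what make the bound uniform in $t$.
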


\begin{lemma}\label{L:Int-16}
For $0 \le u \le s \le w \le r \le t$, let the term $\mathcal{J}_{16}(t;\eps,\eta)$ be defined as follows:
\begin{multline*}
\mathcal{J}_{16}(t;\eps,\eta)  \triangleq K \int_{u=0}^t \int_{s=u}^t \int_{w=s}^t \int_{r=w}^t  e^{-2 \alpha (t-r)} e^{- \alpha(t-s)} e^{- \alpha(t-w)}   e^{-\frac{\lambda}{\eta} (r-u)} e^{-\frac{\lambda}{\eta} (s-u)} e^{-\frac{\lambda}{\eta} (r-w)} \\
\times e^{-\frac{\lambda}{\eta} (w-s)}     dr \thinspace dw \thinspace ds \thinspace du.
\end{multline*}
Then, there exists a time-independent positive constant $K$ such that
$$ \mathcal{J}_{16}(t;\eps,\eta) \le K \eta^3.$$
\end{lemma}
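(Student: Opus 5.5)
We need to prove Lemma \ref{L:Int-16}: the integral $\mathcal{J}_{16}\le K\eta^3$.

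The plan is to compute the iterated integral from the innermost variable outward, collecting the exponential factors in each variable and bounding each one-dimensional integral by $\int_0^\infty$. Write $a\triangleq \lambda/\eta$. On the ordered region $0\le u\le s\le w\le r\le t$, the $\lambda/\eta$ exponents combine as
\[
(r-u)+(s-u)+(r-w)+(w-s) = 2(r-u).
\]
So the integrand equals
\[
e^{-2\alpha(t-r)}\, e^{-\alpha(t-s)}\, e^{-\alpha(t-w)}\, e^{-2a(r-u)} .
\]
We use $e^{-\alpha(t-s)}e^{-\alpha(t-w)}\le 1$ only where this is harmless, and keep the $a$-decay to gain powers of $\eta$.

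First integrate in $u\in[0,s]$:
\[
\int_0^s e^{-2a(r-u)}\,du \le \frac{1}{2a}\, e^{-2a(r-s)} .
\]
This contributes one factor $\eta$ and leaves decay $e^{-2a(r-s)}$. Next integrate in $s\in[0,w]$: bound $e^{-\alpha(t-s)}\le 1$ and use the $2a$-decay,
\[
\int_0^w e^{-2a(r-s)}\,ds \le \frac{1}{2a}\, e^{-2a(r-w)} ,
\]
which gives a second factor $\eta$. Then integrate in $w\in[0,r]$: bound $e^{-\alpha(t-w)}\le 1$, and $\int_0^r e^{-2a(r-w)}\,dw\le 1/(2a)$ gives the third factor $\eta$. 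Finally, integrate in $r\in[0,t]$:
\[
\int_0^t e^{-2\alpha(t-r)}\,dr\le \frac{1}{2\alpha}.
\]
This yields $\mathcal{J}_{16}\le K\eta^3/(8\lambda^3\alpha)$ with $K$ independent of $t$.

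The only care needed is the order of integration. The statement writes the integrals with $u$ outermost, so by Fubini/Tonelli (the integrand is nonnegative) I rewrite the region as $r\in[0,t]$, $w\in[0,r]$, $s\in[0,w]$, $u\in[0,s]$ before integrating. The main obstacle is thus only bookkeeping: checking that the four $\lambda/\eta$ exponents telescope to $2(r-u)$, so that each of the three inner variables $u,s,w$ carries an $a$-decay giving one power of $\eta$, while the outermost variable $r$ is controlled uniformly in $t$ by the $\alpha$-decay. Note that no condition such as $\lambda/\eta>\alpha$ is even needed, since the $\alpha$-factors in $s,w$ are simply bounded by $1$.
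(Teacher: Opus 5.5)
Your argument is correct. It rests on the same key observation as the paper's proof: on the ordered region, the four $\lambda/\eta$ exponents telescope to $2(r-u)$. Where you differ is the order of integration.

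The paper passes to the reversed variables $x=t-u$, $y=t-s$, $z=t-w$, $v=t-r$ but keeps the original nesting, so its innermost integral is in $v$ (i.e.\ in $r$). There the decaying factor $e^{-2\alpha v}$ has to be combined with the growing factor $e^{2\lambda v/\eta}$. This forces the smallness condition $\lambda/\eta>\alpha$, and it needs an integration-by-parts step for the $z$-integral. The third power of $\eta$ only appears from the outermost $x$-integral, after a further change of order.

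By using Tonelli to put $r$ outermost, you let each of the three inner integrals (in $u$, $s$, $w$) produce a clean factor $\eta/(2\lambda)$, and you simply drop the $\alpha$-factors in $s$ and $w$. Uniformity in $t$ then comes only from $\int_0^t e^{-2\alpha(t-r)}\,dr\le 1/(2\alpha)$.

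Your route is shorter and gives an explicit constant. In fact the product is $K\eta^3/(16\lambda^3\alpha)$, slightly better than the $K\eta^3/(8\lambda^3\alpha)$ you state, which is still a valid upper bound. It also holds for every $\eta>0$, not only for small $\eta$. The paper's route gains nothing here beyond following the nesting in which the integral is written.
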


We are now ready to prove Proposition \ref{P:Second-der-Prop}.
\begin{remark}
It is important to note that we provide the proof of Proposition \ref{P:Second-der-Prop} only for the case ${\sf{A}}_1 \triangleq \{(u,s,w,r) \in [0,t]^4: u \le s \le w \le r\}$ among the total $4!(=24)$ cases. The other cases provide the same bounds specified in Lemmas \ref{L:Int-1}, \ref{L:Int-4}, \ref{L:Int-8}, \ref{L:Int-9} and \ref{L:Int-16}.
\end{remark}

\begin{proof}[Proof of Proposition \ref{P:Second-der-Prop}]
Using equation \eqref{E:Matrix-Second-der} and the Cauchy-Schwarz inequality, we have
\begin{equation}\label{E:Second-der-prod-eq}
\begin{aligned}
\BE &  \left[\|D^2X_t^\eps \otimes D^2X_t^\eps  \|_{\mathscr{H}^{\otimes 2}}^2 \right] \\
&    = \sum_{i,j,k,p=1}^2 \int_{[0,t]^4} \BE \left( D_{u,v}^{W^i, W^k} X_t^{\eps} D_{u,w}^{W^k, W^j} X_t^{\eps} D_{s,v}^{W^i, W^p} X_t^{\eps} D_{s,w}^{W^p, W^j} X_t^{\eps} \right) du \thinspace ds \thinspace dv \thinspace dw \\
&   \le  \sum_{i,j,k,p=1}^2 \int_{[0,t]^4} \left( \BE |D_{u,v}^{W^i W^k}|^4 \right)^{\frac{1}{4}} \left( \BE |D_{u,w}^{W^k W^j}|^4 \right)^{\frac{1}{4}} \left( \BE |D_{s,v}^{W^i W^p}|^4 \right)^{\frac{1}{4}} \left( \BE |D_{s,w}^{W^p W^j}|^4 \right)^{\frac{1}{4}}  du \thinspace ds \thinspace dv \thinspace dw. 
\end{aligned}
\end{equation}
From Lemmas \ref{L:Second-der-X-W1-W1}, \ref{L:Second-der-X-W1-W2} and \ref{L:Second-der-X-W2W2}, we observe that the term which produces the largest bound is $\BE \left[ \left|D_{r_1,r_2}^{W_2 W_2} X_t^{\eps}\right|^{4}\right]$. Hence, for the above equation it is sufficient to obtain a bound for the term
$$\int_{[0,t]^4} \left( \BE |D_{u,r}^{W^2 W^2}X_t^{\eps}|^4 \right)^{\frac{1}{4}} \left( \BE |D_{u,s}^{W^2 W^2}X_t^{\eps}|^4 \right)^{\frac{1}{4}} \left( \BE |D_{w,r}^{W^2 W^2}X_t^{\eps}|^4 \right)^{\frac{1}{4}} \left( \BE |D_{w,s}^{W^2 W^2}X_t^{\eps}|^4 \right)^{\frac{1}{4}}  du \thinspace ds \thinspace dw \thinspace dr.$$
Moving in the direction of getting a bound for the above term, from Lemma \ref{L:Second-der-X-W2W2} for $p=2,$ we have
\begin{equation*}
\begin{aligned}
\left[ \BE \left| D_{r_1,r_2}^{W_2 W_2} X_t^{\eps}\right|^{4} \right]^{\frac{1}{4}} \le K(\eps+ \eta + \sqrt{\eps \eta})e^{-\alpha (t - r_1 \wedge r_2)} + K e^{-\alpha (t- r_1 \vee r_2)} e^{-\frac{\lambda}{\eta} (r_1 \vee r_2 - r_1 \wedge r_2)}. 
\end{aligned}
\end{equation*}
Using this bound for the set ${\sf{A}}_1$, one can obtain
\begin{equation*}
\begin{aligned}
& \int_{{\sf{A}}_1} \left( \BE |D_{u,r}^{W^2 W^2}X_t^{\eps}|^4 \right)^{\frac{1}{4}} \left( \BE |D_{u,s}^{W^2 W^2}X_t^{\eps}|^4 \right)^{\frac{1}{4}} \left( \BE |D_{w,r}^{W^2 W^2}X_t^{\eps}|^4 \right)^{\frac{1}{4}} \left( \BE |D_{w,s}^{W^2 W^2}X_t^{\eps}|^4 \right)^{\frac{1}{4}}  dr \thinspace dw \thinspace ds \thinspace du \\
&  \qquad  = K \int_{u=0}^t \int_{s=u}^t \int_{w=s}^t \int_{r=w}^t  \left\{ (\eps+ \eta + \sqrt{\eps \eta})e^{-\alpha (t - u \wedge r)} +  e^{-\alpha (t- u \vee r)} e^{-\frac{\lambda}{\eta} (u \vee r - u \wedge r)}  \right\} \times  \\
& \qquad \qquad \qquad \qquad \quad \qquad \qquad \left\{ (\eps+ \eta + \sqrt{\eps \eta})e^{-\alpha (t - u \wedge s)} +  e^{-\alpha (t- u \vee s)} e^{-\frac{\lambda}{\eta} (u \vee s - u \wedge s)}   \right\} \times \\
& \qquad \qquad \qquad \qquad \quad \qquad \qquad \left\{ (\eps+ \eta + \sqrt{\eps \eta})e^{-\alpha (t - w \wedge r)} +  e^{-\alpha (t- w \vee r)} e^{-\frac{\lambda}{\eta} (w \vee r - w \wedge r)}   \right\} \times \\
& \qquad \qquad \qquad \qquad  \quad \qquad \qquad \left\{ (\eps+ \eta + \sqrt{\eps \eta})e^{-\alpha (t - s \wedge w)} +  e^{-\alpha (t- s \vee w)} e^{-\frac{\lambda}{\eta} (s \vee w - s \wedge w)}   \right\} dr \thinspace dw \thinspace ds \thinspace du.
\end{aligned}
\end{equation*}
A simplification of the above equation gives
\begin{equation*}
\begin{aligned}
& \int_{{\sf{A}}_1} \left( \BE |D_{u,r}^{W^2 W^2}X_t^{\eps}|^4 \right)^{\frac{1}{4}} \left( \BE |D_{u,s}^{W^2 W^2}X_t^{\eps}|^4 \right)^{\frac{1}{4}} \left( \BE |D_{w,r}^{W^2 W^2}X_t^{\eps}|^4 \right)^{\frac{1}{4}} \left( \BE |D_{w,s}^{W^2 W^2}X_t^{\eps}|^4 \right)^{\frac{1}{4}}  dr \thinspace dw \thinspace ds \thinspace du \\
& \qquad = K \int_{u=0}^t \int_{s=u}^t \int_{w=s}^t \int_{r=w}^t \left[(\eps+ \eta + \sqrt{\eps \eta})^4 e^{-2 \alpha (t-u)} e^{-\alpha(t-w)} e^{-\alpha (t-s)} \right. \\
& \qquad \qquad \qquad \qquad \quad + (\eps+ \eta + \sqrt{\eps \eta})^3 e^{-2 \alpha (t-u)} e^{-2\alpha(t-w)} e^{-\frac{\lambda}{\eta} (w-s)} \\
& \qquad \qquad \qquad \qquad \quad + (\eps+ \eta + \sqrt{\eps \eta})^3 e^{-2 \alpha (t-u)} e^{-\alpha(t-s)} e^{-\alpha(t-r)} e^{-\frac{\lambda}{\eta} (r-w)} \\
& \qquad \qquad \qquad \qquad \quad + (\eps+ \eta + \sqrt{\eps \eta})^2 e^{-2 \alpha (t-u)} e^{-\alpha(t-w)} e^{-\alpha(t-r)} e^{-\frac{\lambda}{\eta} (r-w)} e^{-\frac{\lambda}{\eta} (w-s)} \\
& \qquad \qquad \qquad \qquad \quad  + (\eps+ \eta + \sqrt{\eps \eta})^3 e^{-\alpha (t-u)} e^{-\alpha(t-w)} e^{-2 \alpha(t-s)} e^{-\frac{\lambda}{\eta} (s-u)} \\
& \qquad \qquad \qquad \qquad \quad + (\eps+ \eta + \sqrt{\eps \eta})^2 e^{-\alpha (t-u)} e^{-2\alpha(t-w)} e^{- \alpha(t-s)}  e^{-\frac{\lambda}{\eta} (w-s)} e^{-\frac{\lambda}{\eta} (s-u)} \\
& \qquad \qquad \qquad \qquad \quad + (\eps+ \eta + \sqrt{\eps \eta})^2 e^{-\alpha (t-u)} e^{-2\alpha(t-s)} e^{- \alpha(t-r)}  e^{-\frac{\lambda}{\eta} (r-w)} e^{-\frac{\lambda}{\eta} (s-u)} \\
& \qquad \qquad \qquad \qquad \quad + (\eps+ \eta + \sqrt{\eps \eta}) e^{-\alpha (t-u)} e^{-\alpha(t-s)} e^{- \alpha(t-w)}  e^{-\frac{\lambda}{\eta} (r-w)} e^{-\frac{\lambda}{\eta} (s-u)} e^{-\frac{\lambda}{\eta} (w-s)} \\
& \qquad \qquad \qquad \qquad \quad + (\eps+ \eta + \sqrt{\eps \eta})^3 e^{-\alpha (t-u)} e^{-\alpha(t-s)} e^{- \alpha(t-w)} e^{- \alpha(t-r)}  e^{-\frac{\lambda}{\eta} (r-u)} \\
& \qquad \qquad \qquad \qquad \quad + (\eps+ \eta + \sqrt{\eps \eta})^2 e^{-\alpha (t-u)} e^{-\alpha(t-r)} e^{- 2\alpha(t-w)}  e^{-\frac{\lambda}{\eta} (r-u)} e^{-\frac{\lambda}{\eta} (w-s)} \\
& \qquad \qquad \qquad \qquad \quad + (\eps+ \eta + \sqrt{\eps \eta})^2 e^{-\alpha (t-u)} e^{-2 \alpha(t-r)} e^{- \alpha(t-s)}  e^{-\frac{\lambda}{\eta} (r-u)} e^{-\frac{\lambda}{\eta} (r-w)} \\
& \qquad \qquad \qquad \qquad \quad + (\eps+ \eta + \sqrt{\eps \eta}) e^{-\alpha (t-u)} e^{-2 \alpha(t-r)} e^{- \alpha(t-w)}  e^{-\frac{\lambda}{\eta} (r-u)} e^{-\frac{\lambda}{\eta} (r-w)} e^{-\frac{\lambda}{\eta} (w-s)} \\
& \qquad \qquad \qquad \qquad \quad + (\eps+ \eta + \sqrt{\eps \eta})^2 e^{-2 \alpha (t-s)} e^{- \alpha(t-r)} e^{- \alpha(t-w)}  e^{-\frac{\lambda}{\eta} (r-u)} e^{-\frac{\lambda}{\eta} (s-u)} \\
& \qquad \qquad \qquad \qquad \quad + (\eps+ \eta + \sqrt{\eps \eta}) e^{- \alpha (t-s)} e^{- \alpha(t-r)} e^{- 2\alpha(t-w)}  e^{-\frac{\lambda}{\eta} (r-u)} e^{-\frac{\lambda}{\eta} (s-u)} e^{-\frac{\lambda}{\eta} (w-s)} \\
& \qquad \qquad \qquad \qquad  \quad + (\eps+ \eta + \sqrt{\eps \eta}) e^{-2 \alpha (t-s)} e^{- 2\alpha(t-r)}   e^{-\frac{\lambda}{\eta} (r-u)} e^{-\frac{\lambda}{\eta} (s-u)} e^{-\frac{\lambda}{\eta} (r-w)} \\
& \qquad \qquad \qquad \qquad \quad  \left.+  e^{-2 \alpha (t-r)} e^{- \alpha(t-s)} e^{- \alpha(t-w)}   e^{-\frac{\lambda}{\eta} (r-u)} e^{-\frac{\lambda}{\eta} (s-u)} e^{-\frac{\lambda}{\eta} (r-w)} e^{-\frac{\lambda}{\eta} (w-s)} \right]  dr \thinspace dw \thinspace ds \thinspace du \\
& \qquad \qquad \qquad \qquad  \quad \triangleq \sum_{i=1}^{16} \mathcal{J}_i(t;\eps,\eta).
\end{aligned}
\end{equation*}
Finally, putting the bounds together from Lemmas \ref{L:Int-1}, \ref{L:Int-4}, \ref{L:Int-8}, \ref{L:Int-9} and \ref{L:Int-16}, we obtain the required bound.
\end{proof}

\section{Bounds associated with pre-limit expectation and variance}\label{S:Prelimit-Exp-Var}
In this section, we establish the bounds for the terms $\frac{\sigma_t}{\sqrt{\text{Var}(\theta_t^\eps)}}|\mu_t - \BE (\theta_t^\eps)|$ and $\BE(|\theta_t^\eps|)\left| 1- \frac{\sigma_t}{\sqrt{\text{Var}(\theta_t^\eps)}}\right|$, where $\sigma_t$ is the limiting variance defined in equation \eqref{E:Limiting-Variance}. These bounds are specified in Propositions \ref{P:Prelimit-1} and \ref{P:Prelimit-2}. To prove these results, we construct several Poisson equations \eqref{E:Poisson-equation}, \eqref{E:Pre-limit-Poisson-2}, \eqref{E:2-power-Poisson-equation} and \eqref{E:I2-Poisson-Eq} to handle the fluctuation terms.

\subsection{Bound for the term $\frac{\sigma_t}{\sqrt{\operatorname{Var}(\theta_t^\eps)}}|\mu_t - \BE (\theta_t^\eps)|$} In this section, we obtain a {\sc uit} bound for the term $\frac{\sigma_t}{\sqrt{\text{Var}(\theta_t^\eps)}}|\mu_t - \BE (\theta_t^\eps)|$. This bound is employed in the proof of our main result (Theorem \ref{T:Main-Result}). We start recalling the integral equations for the processes $X_t^\eps$, $\bar{X}_t$ and setting $\theta_t^\eps \triangleq \frac{X_t^\eps- \bar{X}_t}{\sqrt{\eps}}$ to have
\begin{equation*}
\begin{aligned}
\theta_t^\eps 
 = \frac{1}{\sqrt{\eps}}\int_0^t \left[  c(X_s^\eps, Y_s^\eta) -  \bar{c}({X}_s^\eps) \right] ds  + \frac{1}{\sqrt{\eps}} \int_0^t \left[\bar{c}({X}_s^\eps) - \bar{c}(\bar{X}_s) \right] ds  + \int_0^t  \sigma(X_s^\eps) \thinspace dW_s^1.
\end{aligned}
\end{equation*}
To control the fluctuation term $\int_0^t \left[ c(X_s^\eps, Y_s^\eta) - \bar{c}({X}_s^\eps) \right] ds$, we construct the Poisson equation: 
\begin{equation}\label{E:Poisson-equation}
\begin{aligned}
& \mathscr{L}_y \Upsilon(x,y)  = c(x,y)- \bar{c}(x), \quad \int_{\BR} \Upsilon(x,y)\mu_x(dy)=0,
\end{aligned}
\end{equation}
where the operator $\mathscr{L}_y$ is the infinitesimal generator of the process $Y^\eta$ and defined as follows: for any $x \in \BR$, 
\begin{equation}\label{E:Infinitesimal-Gen}
\mathscr{L}_y \Upsilon(x, \cdot) = f(\cdot)\frac{d}{dy}\Upsilon(x, \cdot)+ \frac{1}{2} \tau^2 (\cdot)\frac{d^2}{dy^2}\Upsilon(x,\cdot). 
\end{equation}
Applying It\^o's formula to the function $\Upsilon(x,y),$ we obtain
\begin{equation*}
\begin{aligned}
& \theta_t^\eps   = \frac{\eta}{\sqrt{\eps}}\left[\Upsilon(X_t^\eps, Y_t^\eta)- \Upsilon(X_0^\eps, Y_0^\eta \right] - \frac{\eta}{\sqrt{\eps}}\int_0^t c(X_s^\eps, Y_s^\eta)\Upsilon_x (X_s^\eps, Y_s^\eta) \thinspace ds  + \frac{1}{\sqrt{\eps}} \int_0^t \left[ \bar{c}({X}_s^\eps) - \bar{c}(\bar{X}_s) \right] ds\\
&  - \frac{1}{2}\eta \sqrt{\eps} \int_0^t \Upsilon_{xx} (X_s^\eps, Y_s^\eta) \sigma^2(X_s^\eps) \thinspace ds  + \int_0^t [1- \eta \Upsilon_x(X_s^\eps, Y_s^\eta)] \sigma(X_s^\eps) \thinspace dW_s^1 - \frac{\sqrt{\eta}}{\sqrt{\eps}} \int_0^t  \Upsilon_y(X_s^\eps, Y_s^\eta)\tau(Y_s^\eta) \thinspace dW_s^2  .
\end{aligned}
\end{equation*}
Using Taylor's theorem to the function $\bar{c}$: $\bar{c}({X}_s^\eps) = \bar{c}(\bar{X}_s) + \bar{c}'(\bar{X}_s)(X_s^\eps - \bar{X}_s) + \frac{1}{2} \bar{c}''(z_s^\eps)(X_s^\eps - \bar{X}_s)^2$, where, $z_s^\eps$ is a random point that lies on the line connecting the points $X_s^\eps$ and $\bar{X}_s,$ we have
\begin{equation*}
\begin{aligned}
\theta_t^\eps  & = \int_0^t \bar{c}'(\bar{X}_s) \theta_s^\eps \thinspace ds + \int_0^t \frac{1}{2 \sqrt{\eps}} \bar{c}''(z_s)(X_s^\eps - \bar{X}_s)^2 \thinspace ds  + \frac{\eta}{\sqrt{\eps}}\left[\Upsilon(X_t^\eps, Y_t^\eta)- \Upsilon(X_0^\eps, Y_0^\eta \right] \\
& \qquad \qquad \qquad \quad - \frac{\eta}{\sqrt{\eps}}\int_0^t c(X_s^\eps, Y_s^\eta)\Upsilon_x (X_s^\eps, Y_s^\eta) \thinspace ds - \frac{1}{2}\eta \sqrt{\eps} \int_0^t \Upsilon_{xx} (X_s^\eps, Y_s^\eta) \sigma^2(X_s^\eps) \thinspace ds \\
&  \qquad \qquad \quad \qquad \qquad \quad + \int_0^t [1- \eta \Upsilon_x(X_s^\eps, Y_s^\eta)] \sigma(X_s^\eps)\thinspace dW_s^1   - \frac{\sqrt{\eta}}{\sqrt{\eps}} \int_0^t  \Upsilon_y(X_s^\eps, Y_s^\eta)\tau(Y_s^\eta) \thinspace dW_s^2.
\end{aligned}
\end{equation*}
A simple application of Duhamel's principle (here, $\Psi(t)$ is the solution of equation \eqref{E:Semigroup-Equatn}) yields
\begin{multline}\label{E:theta-R-Q}
\theta_t^\eps   = \Psi(t) \int_0^t [\Psi(s)]^{-1} \sigma(X_s^\eps) \thinspace dW_s^1 - \sqrt{\frac{\eta}{\eps}} \Psi(t) \int_0^t [\Psi(s)]^{-1} \Upsilon_y(X_s^\eps, Y_s^\eta) \tau(Y_s^\eta) \thinspace dW_s^2 \\
 + \mathcal{Q}^\eps(t;\Psi)  + \mathcal{R}^\eps(t;\Psi), 
\end{multline}
and the remainder processes $\mathcal{R}^\eps(t;\Psi)$ and $\mathcal{Q}^\eps(t;\Psi)$ are defined as follows:
\begin{equation}\label{E:Remainder}
\begin{aligned}
\mathcal{R}^\eps(t;\Psi) & \triangleq \frac{\eta}{\sqrt{\eps}}\Psi(t)\left[\Upsilon(X_t^\eps, Y_t^\eta)- \Upsilon(X_0^\eps, Y_0^\eta) \right] - \eta \Psi(t) \int_0^t \Psi(s)^{-1}  \Upsilon_x(X_s^\eps, Y_s^\eta)\sigma(X_s^\eps) \thinspace dW_s^1 \\
& \qquad \quad - \frac{\eta}{\sqrt{\eps}} \Psi(t)\int_0^t \Psi(s)^{-1} c(X_s^\eps, Y_s^\eta)\Upsilon_x (X_s^\eps, Y_s^\eta) \thinspace ds \\
& \qquad \qquad \qquad \quad \qquad \qquad  - \frac{1}{2}\eta \sqrt{\eps} \Psi(t) \int_0^t \Psi(s)^{-1} \Upsilon_{xx} (X_s^\eps, Y_s^\eta)\sigma^2(X_s^\eps) \thinspace ds,  \quad \text{and} \\
\mathcal{Q}^\eps(t;\Psi) & \triangleq \Psi(t) \int_0^t [\Psi(s)]^{-1} \frac{1}{2 \sqrt{\eps}} \bar{c}''(z_s^\eps)(X_s^\eps - \bar{X}_s)^2 \thinspace ds.
\end{aligned}
\end{equation}
Next, we obtain {\sc uit} bounds for the terms $\BE \left[ \left| \mathcal{R}^\eps(t;\Psi) \right|^2 \right] $ and $\BE \left[ \left| \mathcal{Q}^\eps(t;\Psi) \right|^2 \right]$ in Lemmas \ref{L:Remainder-R} and \ref{L:Q-term}. 

\begin{lemma}\label{L:Remainder-R}
Let the process $\mathcal{R}^\eps(t;\Psi)$ be defined in equation \eqref{E:Remainder} and $\Upsilon(x,y)$ is the solution of the Poisson equation \eqref{E:Poisson-equation}. Then, for any $t \ge 0,$ there exists a time-independent positive constant $K$ such that
$$\sup_{t \geq 0} \BE \left[ \left| \mathcal{R}^\eps(t;\Psi) \right|^2 \right]  \le K \frac{\eta^2}{\eps}.$$
\end{lemma}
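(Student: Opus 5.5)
We bound the four terms of $\mathcal{R}^\eps(t;\Psi)$ in \eqref{E:Remainder} separately, using $(a_1+\dots+a_4)^2\le 4\sum a_i^2$. Two ingredients carry the argument. The first is regularity of the Poisson solution: by the results recalled in Section \ref{S:Poisson-Equation} (cf. \cite{pardoux2003poisson}), Assumptions \ref{A:Boundedness of sigma-tau}--\ref{A:Derivatives-Assumption-f} give a solution $\Upsilon$ of \eqref{E:Poisson-equation} such that $\Upsilon,\Upsilon_x,\Upsilon_{xx},\Upsilon_y$ grow at most polynomially in $y$, uniformly in $x$. Since $c$ and its $x$-derivatives are bounded in $y$ uniformly in $x$, one in fact expects these derivatives to be bounded, but polynomial growth is enough. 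The second ingredient is uniform-in-time moment bounds $\sup_{t\ge0}\BE[|Y_t^\eta|^{k}]+\sup_{t\ge0}\BE[|X_t^\eps|^{k}]\le K$. For $Y^\eta$ these come from applying It\^o's formula to $|Y_t^\eta|^{k}$ and using $yf(y)\le-\lambda^*|y|^2$ together with the boundedness of $\tau$. For $X^\eps$ they come from the consequence $x c(x,y)\le -\alpha|x|^2+\beta$ of \eqref{E:Ass1} and the boundedness of $\sigma$. All these constants are independent of $t$ and of small $\eps,\eta$.

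\textbf{Boundary and $ds$-terms.} The boundary term is $\frac{\eta}{\sqrt\eps}\Psi(t)[\Upsilon(X_t^\eps,Y_t^\eta)-\Upsilon(x_0,y_0)]$. By $\sup_t|\Psi(t)|\le \mathsf K$ (Assumption \ref{A:Semigroup-boundedness}) and the moment bounds, its second moment is at most $K\eta^2/\eps$. For the two Lebesgue integrals we write $\Psi(t)\Psi(s)^{-1}$ and apply Cauchy--Schwarz with the weight $\Psi(t)\Psi(s)^{-1}$:
\[
\Big(\int_0^t \Psi(t)\Psi(s)^{-1}g_s\,ds\Big)^2\le \Big(\int_0^t\Psi(t)\Psi(s)^{-1}ds\Big)\int_0^t \Psi(t)\Psi(s)^{-1}g_s^2\,ds .
\]
Note that $\Psi>0$, since it solves a linear ODE with $\Psi(0)=1$. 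Assumption \ref{A:Semigroup-boundedness} with $m=1$ bounds both integrals uniformly in $t$, once we know $\sup_s\BE g_s^2\le K$. Here $g_s=c\,\Upsilon_x$ or $\Upsilon_{xx}\sigma^2$, and the linear growth of $c$ plus the uniform moments give this. The two terms therefore contribute $K\eta^2/\eps$ and $K\eta^2\eps$ respectively.

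\textbf{Stochastic integral.} By the It\^o isometry,
\[
\BE\Big|\eta\Psi(t)\int_0^t\Psi(s)^{-1}\Upsilon_x\sigma\,dW_s^1\Big|^2=\eta^2\int_0^t\Psi(t)^2\Psi(s)^{-2}\BE[\Upsilon_x^2\sigma^2]\,ds\le K\eta^2 ,
\]
using Assumption \ref{A:Semigroup-boundedness} with $m=2$. Since $\eps\le1$, both $\eta^2\eps$ and $\eta^2$ are at most $\eta^2/\eps$, so collecting the four contributions gives $\sup_{t}\BE|\mathcal R^\eps(t;\Psi)|^2\le K\eta^2/\eps$.

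\textbf{Main obstacle.} The delicate point is making sure that the Poisson solution bounds hold uniformly in $x\in\BR$ and that the $X^\eps$-moments are genuinely uniform in time, so that no $t$-dependence enters through the polynomial growth factors. I would settle this first, by citing the Pardoux--Veretennikov estimates $|\partial_x^i\Upsilon(x,y)|\le K(1+|y|^m)$ for $i\le2$, which use the H\"older regularity in Assumption \ref{A:Derivatives-Assumption-c}(1). I would then prove the moment lemma by the It\^o--Gronwall argument with the dissipative drift, which yields $\frac{d}{dt}\BE|X_t|^{2k}\le -k\alpha\BE|X_t|^{2k}+K$.
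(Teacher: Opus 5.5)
Your proof is correct and follows the paper's argument. It uses the same four-term split of $\mathcal{R}^\eps(t;\Psi)$, polynomial growth of $\Upsilon,\Upsilon_x,\Upsilon_{xx}$ from the Poisson theory, uniform-in-time moments of $X^\eps$ and $Y^\eta$, Assumption~\ref{A:Semigroup-boundedness} with $m=1$ for the $ds$-integrals and $m=2$ (via the It\^o isometry) for the $dW^1$-integral, and finally $\eta^2\eps+\eta^2\le 2\eta^2/\eps$. Your weighted Cauchy--Schwarz step, using $\Psi(t)\Psi(s)^{-1}=\exp\bigl(\int_s^t\bar{c}'(\bar{X}_u)\,du\bigr)>0$, just makes explicit what the paper leaves implicit. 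Your side remark that the $x$-derivatives of $\Upsilon$ should be bounded is unnecessary and can fail: bounded centered data can give solutions growing like $\log|y|$ for an Ornstein--Uhlenbeck-type fast process.
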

\begin{proof}
Taking square on both sides, we have
\begin{equation*}
\begin{aligned}
\left| \mathcal{R}^\eps(t;\Psi) \right|^2 & \le K \frac{\eta^2}{{\eps}}|\Psi(t)|^2\left|\Upsilon(X_t^\eps, Y_t^\eta)- \Upsilon(X_0^\eps, Y_0^\eta) \right|^2  + K \frac{\eta^2}{{\eps}}  \left|\int_0^t \Psi(t) \Psi(s)^{-1} c(X_s^\eps, Y_s^\eta)\Upsilon_x (X_s^\eps, Y_s^\eta)ds \right|^2 \\
& \qquad  + \frac{K}{2}\eta^2 {\eps}   \left|\int_0^t \Psi(t) \Psi(s)^{-1} \Upsilon_{xx} (X_s^\eps, Y_s^\eta)ds \right|^2  + K \eta^2  \left|\int_0^t \Psi(t) \Psi(s)^{-1}  \Upsilon_x(X_s^\eps, Y_s^\eta)dW_s^1\right|^2 \\
&  \triangleq J_1^{\eps,\delta}(t) + J_2^{\eps,\delta}(t) + J_3^{\eps,\delta}(t) + J_4^{\eps,\delta}(t).
\end{aligned}
\end{equation*}
For $J_1^{\eps,\delta}(t) $, using the property that the solution $\Upsilon(x,y)$ of the Poisson equation \eqref{E:Poisson-equation} grows at most polynomially in $y$, i.e., $|\Upsilon(X_t^\eps, Y_t^\eta)|^2 \le K (1+ |Y_t^\eta|^{2q})$, Assumption \ref{A:Semigroup-boundedness}, i.e., $\sup_{t \ge 0}|\Psi(t)|^2 < \infty$ and Lemma \ref{L:Moment-bound-Y}, we have
\begin{equation*}
\BE \left[J_1^{\eps,\delta}(t) \right] \le  K \frac{\eta^2}{{\eps}}\sup_{t \ge 0}|\Psi(t)|^2  \left(1+ \sup_{t \ge 0}\BE|Y_t^\eta|^{2q} \right) \le K \frac{\eta^2}{{\eps}}.
\end{equation*}
For $J_2^{\eps,\delta}(t)$, using polynomial growth of $\Upsilon_x(x,y)$ in $y$, Lemma \ref{L:Moment-bound-Y}, and the growth of $c(x,y)$ in variable $x$ from Assumption \ref{A:Derivatives-Assumption-c}, Lemma \ref{L:Moment-bound-X}, one can obtain
$\BE \left[J_2^{\eps,\delta}(t) \right] \le K \frac{\eta^2}{\eps} \sup_{t \ge 0} \left| \int_0^t \Psi(t) \Psi(s)^{-1} ds \right|^2 \le K \frac{\eta^2}{\eps},$
where the uniform boundedness of the quantity $\left| \int_0^t \Psi(t) \Psi(s)^{-1} ds \right|^2$ follows from Assumption \ref{A:Semigroup-boundedness}. Next, for $J_3^{\eps,\delta}(t)$ and $J_4^{\eps,\delta}(t)$, we note for $0 < \eps < 1,$ $1 < \frac{1}{\eps}$. Hence, using polynomial growth of $\Upsilon_x(x,y)$ in $y$, Lemma \ref{L:Moment-bound-Y}, martingale moment inequality and Assumption \ref{A:Semigroup-boundedness}, we get
\begin{equation*}
\BE \left[J_3^{\eps,\delta}(t) \right] + \BE \left[J_4^{\eps,\delta}(t) \right] \le K \frac{\eta^2}{\eps} \sup_{t \ge 0} \left| \int_0^t \Psi(t) \Psi(s)^{-1} ds \right|^2 + K {\eta^2} \sup_{t \ge 0} \left| \int_0^t \Psi(t)^2 \Psi(s)^{-2} ds \right| \le K \frac{\eta^2}{\eps}.
\end{equation*}
Putting all these bounds together, we finish the proof of the lemma.
\end{proof}

\begin{lemma}\label{L:Q-term}
Let the process $\mathcal{Q}^\eps(t;\Psi)$ be defined in equation \eqref{E:Remainder}. Then, for any $t \ge 0,$ there exists a time-independent positive constant $K$ such that
$$\sup_{t \geq 0} \BE \left[ \left| \mathcal{Q}^\eps(t;\Psi) \right|^2 \right]  \le K \left[\eps + \frac{\eta}{\eps} (\sqrt{\eps}+ \sqrt{\eta})\right].$$
\end{lemma}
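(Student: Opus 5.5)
Since $\bar c''$ is bounded (from the boundedness of $\partial_x^2 c$ and the definition $\bar c(x)=\int c(x,y)\mu(dy)$; note $\mu$ does not depend on $x$, since $Y^\eta$ is autonomous), the plan is to reduce the statement to a uniform-in-time fourth-moment bound for $X^\eps-\bar X$. First, by Cauchy--Schwarz in time with weight $\Psi(t)\Psi(s)^{-1}$,
\begin{equation*}
\BE\left[|\mathcal{Q}^\eps(t;\Psi)|^2\right] \le \frac{K}{\eps}\left|\int_0^t \Psi(t)\Psi(s)^{-1}ds\right| \int_0^t \Psi(t)\Psi(s)^{-1}\,\BE\left[|X_s^\eps-\bar X_s|^4\right] ds .
\end{equation*}
By Assumption \ref{A:Semigroup-boundedness} (with $m=1$), both time integrals are bounded uniformly in $t$. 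Hence it suffices to prove
\begin{equation*}
\sup_{s\ge0}\BE|X_s^\eps-\bar X_s|^4\le K\left[\eps^2+\eta\sqrt{\eps}+\eta^{3/2}\right],
\end{equation*}
which after division by $\eps$ gives exactly $K[\eps+\frac{\eta}{\eps}(\sqrt\eps+\sqrt\eta)]$. I expect this estimate is the content of Lemma \ref{L:4th-power-LLN}, with Lemma \ref{L:2nd-power} its second-moment counterpart, and I would invoke it. If it has to be proved here, I would proceed as follows.

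Apply It\^o's formula to $|X_t^\eps-\bar X_t|^4$ and split the drift as $c(X^\eps,Y^\eta)-\bar c(\bar X)=[c(X^\eps,Y^\eta)-c(\bar X,Y^\eta)]+[c(\bar X,Y^\eta)-\bar c(\bar X)]$.
\begin{enumerate}
\item The first bracket contributes at most $-4\alpha|X^\eps-\bar X|^4$ by the dissipativity \eqref{E:Ass1}.
\item The Itô correction $6\eps\sigma^2|X^\eps-\bar X|^2$ is absorbed by Young's inequality into $\alpha|X^\eps-\bar X|^4+K\eps^2$.
\item The fluctuation term $4\int_0^t |X_s^\eps-\bar X_s|^2(X_s^\eps-\bar X_s)\,[c(\bar X_s,Y_s^\eta)-\bar c(\bar X_s)]\,ds$ is the difficult one.
\end{enumerate}

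For the fluctuation term, use a Poisson equation in the spirit of \eqref{E:Poisson-equation}: take $\Phi(x,\bar x,y)=(x-\bar x)^3\Upsilon(\bar x,y)$ and apply Itô's formula to $\eta\Phi(X^\eps_t,\bar X_t,Y^\eta_t)$. This converts the $ds$-integral into the following pieces.
\begin{enumerate}
\item A boundary term $\eta(X^\eps-\bar X)^3\Upsilon$.
\item Terms $\eta(X^\eps-\bar X)^2\,\Upsilon\,[c(X^\eps,Y^\eta)-\bar c(\bar X)]$ and $\eta(X^\eps-\bar X)^3\Upsilon_x\bar c(\bar X)$.
\item An Itô correction $\eta\eps(X^\eps-\bar X)\sigma^2\Upsilon$.
\item A cross-variation term. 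This vanishes because $W^1$ and $W^2$ are independent.
\item Martingales, whose expectations are zero.
\end{enumerate}
To keep the bound uniform in time, I would work with the differential inequality for $m(t)=\BE|X_t^\eps-\bar X_t|^4$, or equivalently multiply by $e^{\alpha t}$, rather than integrating naively. Each remainder is estimated with Young/Hölder, using polynomial growth of $\Upsilon$ and its derivatives in $y$ (Section \ref{S:Poisson-Equation}) and the uniform moment bounds of Lemmas \ref{L:Moment-bound-Y} and \ref{L:Moment-bound-X}. For example, $\eta\BE|X^\eps-\bar X|^3\le \eta\, m^{3/4}$. Combined with $\sup_t m(t)\le K$ and the lower-order estimate $\BE|X^\eps-\bar X|^2\le K(\eps+\eta)$, these yield the terms $\eta\sqrt{\eps}$ and $\eta^{3/2}$.

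The main obstacle is step (3): the unbounded factor $(X^\eps-\bar X)^3$ multiplied against the fast fluctuation. Making the Poisson-correction remainders close without time growth requires exploiting the $e^{-\alpha(t-s)}$ damping in every term. Some terms, notably $\eta\,\BE|X^\eps-\bar X|^2|c(X^\eps,Y^\eta)-\bar c(\bar X)|$, can only be reduced by first invoking the second-moment bound and interpolating, which is what produces the half-power rates $\sqrt\eps$ and $\sqrt\eta$.
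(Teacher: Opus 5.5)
Your proof is correct and is essentially the paper's. The paper also expands $|\mathcal{Q}^\eps(t;\Psi)|^2$ as a double time integral and bounds the product of squares by the average of fourth powers, which is equivalent to your weighted Cauchy--Schwarz since $\Psi(t)\Psi(s)^{-1}>0$; it then concludes from the boundedness of $\bar c''$, Assumption \ref{A:Semigroup-boundedness}, and the fourth-moment bound of Lemma \ref{L:4th-power-LLN}, so your fallback sketch for that estimate is not needed (the paper proves it separately, using a Poisson equation for $(x-\bar x)^2(c(x,y)-\bar c(x))$ rather than your corrector $(x-\bar x)^3\Upsilon(\bar x,y)$).
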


\begin{proof}
We start noting 
\begin{multline*}
\left| \mathcal{Q}^\eps(t;\Psi) \right|^2 \le \frac{1}{4 \eps} \int_{[0,t]^2} \left( \prod_{i=1}^2 \Psi(t) \Psi(s_i)^{-1} \right) 
 \left[ \bar{c}''(z_1^{\eps}(s_1))(X_{s_1}^\eps - \bar{X}_{s_1})^4 + \bar{c}''(z_2^{\eps}(s_2))(X_{s_2}^\eps - \bar{X}_{s_2})^4 \right] ds_1 \thinspace ds_2.
\end{multline*}
Finally, using the boundedness of the function $\bar{c}''$, Lemma \ref{L:4th-power-LLN} to control the fourth-moment term and Assumption \ref{A:Semigroup-boundedness}, we get the required bound.
\end{proof}

We are now ready to give an estimate to the term $\frac{\sigma_t}{\sqrt{\operatorname{Var}(\theta_t^\eps)}}|\mu_t - \BE (\theta_t^\eps)|$ in Proposition \ref{P:Prelimit-1} below.
\begin{proposition}\label{P:Prelimit-1}
Let $\theta_t^\eps$ be the solution of equation \eqref{E:theta-R-Q}. Then, for any $t \ge 0,$ there exists a time-independent positive constant $K$ such that
$$\sup_{t \ge 0}|\mu_t - \BE (\theta_t^\eps)| \le K \frac{\eta}{\sqrt{\eps}} + K \sqrt{\eps + \frac{\eta}{\eps} (\sqrt{\eps}+ \sqrt{\eta})}.$$
\end{proposition}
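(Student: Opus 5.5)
Here $\mu_t$ denotes the mean of the limiting Gaussian, so $\mu_t=0$, and it suffices to bound $\sup_{t\ge0}|\BE(\theta_t^\eps)|$. The approach is to take expectations in the decomposition \eqref{E:theta-R-Q} term by term.

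The first two terms on the right of \eqref{E:theta-R-Q} are It\^o integrals against $W^1$ and $W^2$:
\begin{equation*}
\Psi(t)\int_0^t\Psi(s)^{-1}\sigma(X_s^\eps)\,dW_s^1
\quad\text{and}\quad
\sqrt{\tfrac{\eta}{\eps}}\,\Psi(t)\int_0^t\Psi(s)^{-1}\Upsilon_y(X_s^\eps,Y_s^\eta)\tau(Y_s^\eta)\,dW_s^2 .
\end{equation*}
They have zero expectation provided they are true martingales. For this I check square integrability of the integrands. The $W^1$ integrand is bounded by the boundedness of $\sigma$. The $W^2$ integrand is controlled as follows: $\tau$ is bounded, $\Upsilon_y$ has at most polynomial growth in $y$, and $\sup_{t\ge0}\BE|Y_t^\eta|^{2q}<\infty$ by Lemma \ref{L:Moment-bound-Y}. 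Combined with $\sup_t\int_0^t\Psi(t)^2\Psi(s)^{-2}ds\le K$ from Assumption \ref{A:Semigroup-boundedness}, this gives $L^2$-bounded martingales, uniformly in $t$. Hence $\BE(\theta_t^\eps)=\BE\mathcal{R}^\eps(t;\Psi)+\BE\mathcal{Q}^\eps(t;\Psi)$.

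Next I apply Jensen (or Cauchy--Schwarz), $|\BE\mathcal{R}^\eps|\le(\BE|\mathcal{R}^\eps|^2)^{1/2}$. Lemma \ref{L:Remainder-R} then gives $\sup_t|\BE\mathcal{R}^\eps(t;\Psi)|\le K\eta/\sqrt{\eps}$. Similarly, Lemma \ref{L:Q-term} gives $\sup_t|\BE\mathcal{Q}^\eps(t;\Psi)|\le K\sqrt{\eps+\frac{\eta}{\eps}(\sqrt\eps+\sqrt\eta)}$. Adding the two bounds yields the claim.

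The only delicate point is justifying the vanishing of the stochastic-integral expectations uniformly in $t$. This requires the polynomial growth of $\Upsilon_y$ from the Poisson-equation theory of Section \ref{S:Poisson-Equation}, together with uniform moment bounds on $Y^\eta$ (and on $X^\eps$ via Lemma \ref{L:Moment-bound-X}, if the growth also involves $x$). Everything else is a direct consequence of the two preceding lemmas.
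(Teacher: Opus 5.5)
Your proposal is correct and follows the paper's proof, which obtains the bound by combining the decomposition \eqref{E:theta-R-Q} with Lemmas \ref{L:Remainder-R} and \ref{L:Q-term}. You also make explicit two steps the paper leaves implicit: that the two stochastic integrals have zero mean, and that $|\BE\mathcal{R}^\eps|\le(\BE|\mathcal{R}^\eps|^2)^{1/2}$ and $|\BE\mathcal{Q}^\eps|\le(\BE|\mathcal{Q}^\eps|^2)^{1/2}$; for the zero-mean step, square integrability on $[0,t]$ for each fixed $t$ suffices, so uniformity in $t$ is not needed.
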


\begin{proof}
The proof of the result follows by combining equation \eqref{E:theta-R-Q}, Lemmas \ref{L:Remainder-R} and \ref{L:Q-term} above.
\end{proof}

\subsection{Bound for the term $\BE(|\theta_t^\eps|)\left| 1- \frac{\sigma_t}{\sqrt{\operatorname{Var}(\theta_t^\eps)}}\right|$}
In this section, we obtain a {\sc uit} bound for the term $\BE(|\theta_t^\eps|)\left| 1- \frac{\sigma_t}{\sqrt{\operatorname{Var}(\theta_t^\eps)}}\right|$. This bound is employed in the proof of Theorem \ref{T:Main-Result}. Recalling the definition of the infinitesimal generator $\mathscr{L}_y$ from equation \eqref{E:Infinitesimal-Gen}, we start constructing Poisson equation:
\begin{equation}\label{E:Pre-limit-Poisson-2}
\mathscr{L}_y \Theta (x,y) = q(x,y)- \bar{q}(x), \qquad \int_\BR \Theta (x,y) \mu_x(dy) = 0,
\end{equation}
where, $q(x,y) \triangleq  \sigma^2(y) + \frac{\tau^2(y)}{\gamma^2} (\Upsilon_y(x,y))^2$ is defined in equation \eqref{E:q-function} and $\Upsilon(x,y)$ is the solution of Poisson equation \eqref{E:Poisson-equation}.

\begin{proposition}\label{P:Prelimit-2}
Let the process $\theta_t^\eps$ be the solution of \eqref{E:theta-R-Q} and the limiting variance $\sigma_t^2$ is defined in equation \eqref{E:Limiting-Variance}. Then, for any $t \ge 0,$ there exists a time-independent positive constant $K$ such that
\begin{equation*}
\begin{aligned}
\sup_{t \ge 0}\left|\sqrt{\operatorname{Var}(\theta_t^\eps)} - \sqrt{\sigma_t^2} \right|  & \le K \sqrt{\sqrt{\eta} +  \sqrt{\eps} + \left( \frac{\eta}{\eps} - \frac{1}{\gamma^2} \right)  +  \frac{\eta^2}{{\eps}}+  {\eps + \frac{\eta}{\eps} (\sqrt{\eps}+ \sqrt{\eta})}}.
\end{aligned}
\end{equation*}
\end{proposition}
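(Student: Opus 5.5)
We will bound $|\operatorname{Var}(\theta_t^\eps)-\sigma_t^2|$ uniformly in $t$ and then convert this to the stated bound via the elementary inequality $|\sqrt{a}-\sqrt{b}|\le\sqrt{|a-b|}$ for $a,b\ge0$. This explains the outer square root in the statement. Since $\operatorname{Var}(\theta_t^\eps)=\BE[(\theta_t^\eps)^2]-(\BE\theta_t^\eps)^2$, Proposition \ref{P:Prelimit-1} (with $\mu_t=0$) already controls the squared mean by $K[\eta^2/\eps+\eps+\frac{\eta}{\eps}(\sqrt\eps+\sqrt\eta)]$. It therefore suffices to show
\begin{equation*}
\sup_{t\ge0}\bigl|\BE[(\theta_t^\eps)^2]-\sigma_t^2\bigr|\le K\Bigl[\sqrt{\eta}+\sqrt{\eps}+\Bigl(\frac{\eta}{\eps}-\frac{1}{\gamma^2}\Bigr)+\frac{\eta^2}{\eps}+\eps+\frac{\eta}{\eps}(\sqrt\eps+\sqrt\eta)\Bigr].
\end{equation*}

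For the second moment we use the decomposition \eqref{E:theta-R-Q}, writing $\theta_t^\eps=M_t^1+M_t^2+\mathcal{Q}^\eps(t;\Psi)+\mathcal{R}^\eps(t;\Psi)$, where $M^1,M^2$ are the two stochastic integrals. By Lemmas \ref{L:Remainder-R} and \ref{L:Q-term} the remainders have second moments at most $K[\eta^2/\eps+\eps+\frac{\eta}{\eps}(\sqrt\eps+\sqrt\eta)]$. The cross terms between remainders and $M^1+M^2$ are handled by Cauchy--Schwarz together with a uniform bound $\sup_t\BE|M^1_t+M^2_t|^2\le K$. That bound follows from Itô's isometry, polynomial growth of $\Upsilon_y$, Lemma \ref{L:Moment-bound-Y}, and Assumption \ref{A:Semigroup-boundedness} with $m=2$. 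These cross terms contribute the square roots of the remainder rates, which is where contributions like $\sqrt{\eps}$ and $\sqrt{\eta}$ enter. Since $W^1$ and $W^2$ are independent, $\BE[M^1_tM^2_t]=0$. Itô's isometry then gives
\begin{equation*}
\BE[(M^1_t+M^2_t)^2]=\int_0^t\Psi(t)^2\Psi(s)^{-2}\,\BE\Bigl[\sigma^2(Y_s^\eta)+\frac{\eta}{\eps}\tau^2(Y_s^\eta)\Upsilon_y^2(X_s^\eps,Y_s^\eta)\Bigr]ds.
\end{equation*}
Replacing $\eta/\eps$ by $1/\gamma^2$ costs $K|\frac{\eta}{\eps}-\frac{1}{\gamma^2}|$, again by Assumption \ref{A:Semigroup-boundedness} and the moment bounds. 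The integrand then becomes $q(X_s^\eps,Y_s^\eta)$, and we still have to compare it with $\bar q(\bar X_s)$. Note that $\sigma_t^2=\int_0^t\Psi(t)^2\Psi(s)^{-2}\bar q(\bar X_s)ds$.

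The main obstacle is this averaging step, which must be done uniformly in time. We split $q(X_s^\eps,Y_s^\eta)-\bar q(\bar X_s)=[q(X_s^\eps,Y_s^\eta)-\bar q(X_s^\eps)]+[\bar q(X_s^\eps)-\bar q(\bar X_s)]$. For the second bracket we use Lipschitz continuity of $\bar q$, inherited from the regularity of $\Upsilon$ in $x$ (Section \ref{S:Poisson-Equation}), together with $\sup_t\BE|X_t^\eps-\bar X_t|^2\le K(\eps+\eta)$ from Lemma \ref{L:2nd-power}. This gives a contribution of order $\sqrt{\eps+\eta}$. For the first bracket we apply Itô's formula to $\Psi(t)^2\Psi(s)^{-2}\Theta(X_s^\eps,Y_s^\eta)$ in $s$, where $\Theta$ solves \eqref{E:Pre-limit-Poisson-2}. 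The $\frac1\eta\mathscr{L}_y\Theta$ term reproduces the fluctuation, so the time integral equals $\eta$ times the following quantities:
\begin{itemize}
\item boundary terms;
\item a $ds$-integral containing $\partial_s(\Psi(t)^2\Psi(s)^{-2})=-2\bar c'(\bar X_s)\Psi(t)^2\Psi(s)^{-2}$;
\item $x$-derivative terms of $\Theta$ against $c$ and $\eps\sigma^2$;
\item martingales with zero mean.
\end{itemize}
We bound each of these uniformly using the polynomial growth of $\Theta,\Theta_x,\Theta_{xx}$ in $y$, Lemmas \ref{L:Moment-bound-Y} and \ref{L:Moment-bound-X}, and Assumption \ref{A:Semigroup-boundedness} (boundedness of $\Psi$ and of $\int_0^t\Psi(t)^2\Psi(s)^{-2}ds$). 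The outcome is $O(\eta)$, which is dominated by $\sqrt\eta$. Collecting all the terms gives the displayed bound, and the square-root inequality finishes the proof.
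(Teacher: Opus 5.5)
Your architecture is the paper's: the decomposition \eqref{E:theta-R-Q}, It\^o isometry, replacing $\eta/\eps$ by $1/\gamma^2$, the Poisson equation \eqref{E:Pre-limit-Poisson-2} for the $y$-averaging of $q$, and Lemma \ref{L:2nd-power} for the $x$-dependence. However, the step ``collecting all the terms gives the displayed bound'' fails because of the cross terms. Write $M_t=M_t^1+M_t^2$. Then $\BE[(\theta_t^\eps)^2]-\BE[M_t^2]=2\BE[M_t(\mathcal{Q}^\eps+\mathcal{R}^\eps)]+\BE[(\mathcal{Q}^\eps+\mathcal{R}^\eps)^2]$, and Cauchy--Schwarz bounds the cross term only by $K(\|\mathcal{Q}^\eps(t;\Psi)\|_{L^2}+\|\mathcal{R}^\eps(t;\Psi)\|_{L^2})$. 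The $\mathcal{R}^\eps$ part, $K\eta/\sqrt\eps\le K\sqrt\eta$, is harmless. The problem is the $\mathcal{Q}^\eps$ part. By Lemma \ref{L:Q-term} you only know $\|\mathcal{Q}^\eps(t;\Psi)\|_{L^2}\le K\sqrt{\eps+\frac{\eta}{\eps}(\sqrt\eps+\sqrt\eta)}$, and for $\gamma^2<\infty$ this bound is of order $\eps^{1/4}$. For instance, if $\eta=\eps/\gamma^2$, every term in your target bracket is $O(\sqrt\eps)$, while this cross term is of order $\eps^{1/4}$. So your intermediate bound on $|\BE[(\theta_t^\eps)^2]-\sigma_t^2|$ is not established, and after the final square root your argument yields only a rate of order $\eps^{1/8}$. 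The paper does not close this either: it writes $\BE|\theta_t^\eps|^2\le K\,\BE|M_t|^2+K\,\BE|\mathcal{Q}^\eps|^2+K\,\BE|\mathcal{R}^\eps|^2$ and then drops the constant in front of the leading term, which is not justified.

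The repair is to take square roots before the remainders enter, instead of expanding the square.
\begin{itemize}
\item By Minkowski, $\bigl|\|\theta_t^\eps\|_{L^2}-\|M_t\|_{L^2}\bigr|\le\|\mathcal{Q}^\eps(t;\Psi)\|_{L^2}+\|\mathcal{R}^\eps(t;\Psi)\|_{L^2}$.
\item Since $\sqrt{\BE[(\theta_t^\eps)^2]}\ge|\BE\theta_t^\eps|$, you have $0\le\sqrt{\BE[(\theta_t^\eps)^2]}-\sqrt{\operatorname{Var}(\theta_t^\eps)}\le|\BE\theta_t^\eps|\le\|\mathcal{Q}^\eps\|_{L^2}+\|\mathcal{R}^\eps\|_{L^2}$.
\item Apply $|\sqrt a-\sqrt b|\le\sqrt{|a-b|}$ only to $\BE[M_t^2]$ versus $\sigma_t^2$. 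There your averaging argument gives $K[\sqrt\eps+\sqrt\eta+|\frac{\eta}{\eps}-\frac{1}{\gamma^2}|]$.
\end{itemize}
The remainders then appear only through $\|\mathcal{Q}^\eps\|_{L^2}^2$ and $\|\mathcal{R}^\eps\|_{L^2}^2\le K\eta^2/\eps$ under the outer root, which is exactly the form of the statement. With this change the rest of your proof goes through. In particular, you keep the exact weight $\Psi(t)^2\Psi(s)^{-2}$ in the It\^o step and bound $|\BE[\cdot]|$, so the martingales drop out and the fluctuation term is $O(\eta)$. That is cleaner than the paper's $L_1^\eps(t)$ estimate, which replaces the weight by $e^{-2\alpha(t-s)}$ inside the absolute value of a signed integral.
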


\begin{proof}
We note that it is sufficient to obtain a bound for the quantity $|\text{Var}(\theta_t^\eps) - \sigma_t^2|.$ Using the definition of variance, we have
$|\text{Var}(\theta_t^\eps) - \sigma_t^2| \le |\BE (|\theta_t^\eps|^2) - \sigma_t^2|$. Hence, we first obtain a bound for the quantity $\BE (|\theta_t^\eps|^2)$. For this, from equation \eqref{E:theta-R-Q} and a triangle type inequality, we have
\begin{equation*}
\begin{aligned}
\BE (|\theta_t^\eps|^2)   \le K \BE \left[ \left| \Psi(t) \int_0^t [\Psi(s)]^{-1} \sigma(X_s^\eps) \thinspace dW_s^1 - \sqrt{\frac{\eta}{\eps}} \Psi(t) \int_0^t [\Psi(s)]^{-1} \Upsilon_y(X_s^\eps, Y_s^\eta) \tau(Y_s^\eta) \thinspace dW_s^2 \right|^2 \right] \\
 + K  \BE \left[ \left| \mathcal{Q}^\eps(t;\Psi) \right|^2 \right]  + K \BE \left[ \left| \mathcal{R}^\eps(t;\Psi) \right|^2 \right].
 \end{aligned}
\end{equation*}
In the above equation, writing $\sqrt{\frac{\eta}{\eps}}$ as $\sqrt{\frac{\eta}{\eps}-\frac{1}{\gamma^2}+ \frac{1}{\gamma^2}}$, then applying martingale moment inequality, Assumption \ref{A:Semigroup-boundedness} and Lemmas \ref{L:Remainder-R}, \ref{L:Q-term}, we have
\begin{equation*}
\begin{aligned}
\BE (|\theta_t^\eps|^2)  & \le \BE \int_0^t e^{\int_s^t 2 \bar{c}'(\bar{X}_u)du} {q}(X_s^\eps, Y_s^\eta) \thinspace ds  + K \left[\eps + \frac{\eta}{\eps} (\sqrt{\eps}+ \sqrt{\eta})\right] + K \frac{\eta^2}{{\eps}} \\
 & \qquad \qquad \qquad \qquad \qquad \qquad \qquad \quad + \left( \frac{\eta}{\eps}-\frac{1}{\gamma^2} \right) \BE \int_0^t e^{\int_s^t 2 \bar{c}'(\bar{X}_u)du} \tau^2(Y_s^\eta) ( \Upsilon_y)(X_s^\eps, Y_s^\eta)^2 \thinspace ds.   
\end{aligned}
\end{equation*}
The definition of $\sigma_t^2$ yields
\begin{equation*}
\begin{aligned}
\text{Var}(\theta_t^\eps) - \sigma_t^2 & = \BE (|\theta_t^\eps|^2)  - \sigma_t^2 - \left[ \BE (\theta_t^\eps) \right]^2  \le \BE \int_0^t e^{\int_s^t 2 \bar{c}'(\bar{X}_u)du} \left[{q}(X_s^\eps, Y_s^\eta) - \bar{q}(\bar{X}_s) \right]  ds  + K \frac{\eta^2}{{\eps}} - \left[ \BE (\theta_t^\eps) \right]^2  \\
& \qquad \quad \quad + K \left[\eps + \frac{\eta}{\eps} (\sqrt{\eps}+ \sqrt{\eta})\right]  + \left( \frac{\eta}{\eps}-\frac{1}{\gamma^2} \right) \BE \int_0^t e^{\int_s^t 2 \bar{c}'(\bar{X}_u)du} \tau^2(Y_s^\eta) ( \Upsilon_y)(X_s^\eps, Y_s^\eta)^2 \thinspace ds.    
\end{aligned}
\end{equation*}
Using Assumption \ref{A:Derivatives-Assumption-f} for the boundedness of $\tau$, the growth of $\Upsilon_y(x,y)$, Lemma \ref{L:Moment-bound-Y} and Lemma \ref{L:Q-difference}, we obtain
$\text{Var}(\theta_t^\eps) - \sigma_t^2   \le K \left[ \sqrt{\eta} +  \sqrt{\eps} + \left( \frac{\eta}{\eps} - \frac{1}{\gamma^2} \right)  +  \frac{\eta^2}{{\eps}}+  {\eps + \frac{\eta}{\eps} (\sqrt{\eps}+ \sqrt{\eta})}\right],$ 
which completes the proof of the lemma.
\end{proof}

\begin{lemma}\label{L:Q-difference}
Let $q(x,y)$ be the function defined in equation \eqref{E:q-function}. Then, for any $t \ge 0,$ there exists a time-independent positive constant $K$ such that
$$\sup_{t \ge 0} \BE \left| \int_0^t e^{\int_s^t 2 \bar{c}'(\bar{X}_u) \thinspace du} \left[ q(X_s^\eps, Y_s^\eta)- \bar{q}(\bar{X}_s) \right] ds \right|\le K (\sqrt{\eps}+ \sqrt{\eta}). $$
\end{lemma}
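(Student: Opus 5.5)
I would split the integrand into a slow-variable error and a fast-variable fluctuation:
\[
q(X_s^\eps,Y_s^\eta)-\bar q(\bar X_s)=\big[q(X_s^\eps,Y_s^\eta)-\bar q(X_s^\eps)\big]+\big[\bar q(X_s^\eps)-\bar q(\bar X_s)\big].
\]
Throughout, write $E(s,t)\triangleq e^{\int_s^t 2\bar c'(\bar X_u)\,du}=\Psi(t)^2\Psi(s)^{-2}$. By Assumption \ref{A:Semigroup-boundedness} (case $m=2$) we have $\sup_t\int_0^t E(s,t)\,ds\le K$, and I will also use that $E(s,t)\le K$.

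For the second bracket, I would use that $\bar q$ is Lipschitz. This follows from regularity of the Poisson solution $\Upsilon$ in $x$, i.e.\ from bounds on $\Upsilon_{xy}$ with polynomial growth in $y$ and the $x$-regularity of $\mu_x$; these are standard Pardoux--Veretennikov facts, and I would cite them from Section \ref{S:Poisson-Equation}. Then
\[
\int_0^t E(s,t)\,\BE|X_s^\eps-\bar X_s|\,ds\le K\sup_s \big(\BE|X_s^\eps-\bar X_s|^2\big)^{1/2}.
\]
Lemma \ref{L:2nd-power}, the uniform second-moment LLN bound, should give this as $K(\eps+\eta)^{1/2}\le K(\sqrt\eps+\sqrt\eta)$.

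For the first bracket, I would use the Poisson equation \eqref{E:Pre-limit-Poisson-2}. Apply It\^o's formula to $E(s,t)\,\Theta(X_s^\eps,Y_s^\eta)$ as a function of $s$ on $[0,t]$, with $t$ fixed. The generator of $Y^\eta$ is $\eta^{-1}\mathscr L_y$, so
\[
\int_0^t E(s,t)\,[q-\bar q](X_s^\eps,Y_s^\eta)\,ds=\eta\Big[\Theta(X_t^\eps,Y_t^\eta)-E(0,t)\Theta(x_0,y_0)\Big]-\eta\int_0^t \partial_sE(s,t)\,\Theta\,ds-\eta\int_0^t E\,(c\Theta_x+\tfrac{\eps}{2}\sigma^2\Theta_{xx})\,ds+\text{martingales}.
\]
The martingale terms are $-\eta\sqrt\eps\int E\,\Theta_x\sigma\,dW^1$ and $-\sqrt\eta\int E\,\Theta_y\tau\,dW^2$. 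I would treat the pieces as follows.
\begin{itemize}
\item Boundary terms: polynomial growth of $\Theta$ together with Lemma \ref{L:Moment-bound-Y} bounds them by $K\eta$.
\item The $\partial_s E$ term: here $\partial_sE(s,t)=-2\bar c'(\bar X_s)E(s,t)$ with $\bar c'$ bounded, so it is $O(\eta)$.
\item The drift term: linear growth of $c$, Lemmas \ref{L:Moment-bound-X} and \ref{L:Moment-bound-Y}, and $\int_0^tE\,ds\le K$ bound it by $K\eta$.
\item The $dW^2$ martingale: after taking absolute values, Cauchy--Schwarz and the It\^o isometry give $\sqrt\eta\,\big(\int_0^t E(s,t)^2\,\BE[\Theta_y^2\tau^2]\,ds\big)^{1/2}$. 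Since $E^2\le K E$, this is at most $K\sqrt\eta$.
\item The $dW^1$ martingale: the same argument gives $K\eta\sqrt\eps$.
\end{itemize}
Summing the contributions gives $K(\sqrt\eps+\sqrt\eta)$, uniformly in $t$.

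The main obstacle is regularity of $\Theta$. Its source term $q$ contains $\Upsilon_y^2$, so I need polynomial-growth bounds on $\Theta,\Theta_x,\Theta_{xx},\Theta_y$ uniformly in $x$. This requires $q(\cdot,y)\in\mathscr C^{2,\alpha}$ with H\"older continuity in $y$, which in turn needs $\Upsilon_y$ to be $C^2$ in $x$. I would justify this by invoking the Pardoux--Veretennikov theorem twice, using Assumptions \ref{A:Derivatives-Assumption-c} and \ref{A:Derivatives-Assumption-f}. A second point to check is that the deterministic weight $E(s,t)$ does not destroy uniformity in time. This is exactly what $\int_0^tE\,ds\le K$ and the bound $E\le K$ (from $\sup|\Psi|\le\mathsf K$ together with a lower bound on $\Psi(s)$) are meant to guarantee.
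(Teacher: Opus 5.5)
Your proposal is correct and is essentially the paper's argument. Like the paper, you isolate the fast fluctuation $q(X_s^\eps,Y_s^\eta)-\bar q(X_s^\eps)$ and remove it via It\^o's formula for a time-weighted $\Theta$ from \eqref{E:Pre-limit-Poisson-2}, and you bound the slow remainder (the paper's $L_2+L_3$) by Lipschitz continuity in $x$ and Lemma \ref{L:2nd-power}. Here $L_2$ is in fact identically zero, because $Y^\eta$ does not depend on $x$, so $\mu_x=\mu$. Keeping the exact weight $E(s,t)$ inside It\^o's formula, as you do, is cleaner than the paper's step of replacing it by $e^{-2\alpha(t-s)}$ inside the absolute value of an integrand that has no fixed sign.

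The only slip is your justification of $E(s,t)\le K$. There is no uniform lower bound on $\Psi(s)$; indeed $\Psi(s)\to 0$. Instead, $\bar c'(x)=\int_{\BR} c_x(x,y)\,\mu(dy)\le-\alpha$ gives $E(s,t)\le e^{-2\alpha(t-s)}\le 1$ directly. This also yields $\int_0^t E(s,t)\,ds\le 1/(2\alpha)$ without needing Assumption \ref{A:Semigroup-boundedness}.
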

\begin{proof}
We start using the definition of $\bar{q}(\bar{X}_s)$ and a simple algebra to obtain
\begin{equation*}
\begin{aligned}
 \BE \left| \int_0^t e^{\int_s^t 2 \bar{c}'(\bar{X}_u) \thinspace du} \left[ q(X_s^\eps, Y_s^\eta)- \bar{q}(\bar{X}_s) \right] ds \right| & \le \BE \left| \int_0^t e^{\int_s^t 2 \bar{c}'(\bar{X}_u) \thinspace du} \left[ q(X_s^\eps, Y_s^\eta)- \int_{\BR} {q}(X_s^\eps, y) \mu_{X_s^\eps}(dy) \right] ds \right| \\
 & \quad + \BE \left| \int_0^t e^{\int_s^t 2 \bar{c}'(\bar{X}_u) \thinspace du}  \int_{\BR} {q}(X_s^\eps, y) \left[ \mu_{X_s^\eps} - \mu_{\bar{X}}\right](dy) \thinspace ds \right|  \\
 & \quad  +  \BE \left| \int_0^t e^{\int_s^t 2 \bar{c}'(\bar{X}_u) \thinspace du}  \int_{\BR} \left[ {q}(X_s^\eps, y) - {q}(\bar{X}_s, y) \right] \mu_{\bar{X}}(dy) \thinspace ds \right|\\
 & \le \BE \left| \int_0^t e^{-2 \alpha (t-s)} \left[ q(X_s^\eps, Y_s^\eta)- \int_{\BR} {q}(X_s^\eps, y) \mu_{X_s^\eps}(dy) \right] ds \right| \\
 &  \quad + \BE \left| \int_0^t e^{-2 \alpha (t-s)}  \int_{\BR} {q}(X_s^\eps, y) \left[ \mu_{X_s^\eps} - \mu_{\bar{X}}\right](dy) \thinspace ds \right|  \\
 &  \quad +  \BE \left| \int_0^t e^{-2 \alpha (t-s)}  \int_{\BR} \left[ {q}(X_s^\eps, y) - {q}(\bar{X}_s, y) \right] \mu_{\bar{X}}(dy) \thinspace ds \right|\\
  & \triangleq L_1^{\eps}(t) + L_2^{\eps}(t) + L_3^{\eps}(t),
\end{aligned}
\end{equation*}
where, the last inequality is obtained using $e^{\int_s^t 2 \bar{c}'(\bar{X}_u)du} \le e^{-2 \alpha (t-s)}$ (Assumption \ref{A:Derivatives-Assumption-c}). For $L_1^\eps(t)$ term, and the solution $\Theta(x,y)$ of Poisson equation \eqref{E:Pre-limit-Poisson-2}, applying It\^o's formula to the function $e^{2 \alpha s} \Theta(x,y)$ and using equations \eqref{E:E1} and \eqref{E:E2}, we obtain
\begin{equation*}
\begin{aligned}
 \int_0^t  e^{2 \alpha s} | \mathscr{L}_y \Theta (X_s^\eps, Y_s^\eta)|  ds  & \le  \eta  e^{2 \alpha t} |\Theta(X_t^\eps,Y_t^\eta)| + \eta | \Theta(X_0^\eps,Y_0^\eta)| + \eta \int_0^t 2 \alpha e^{2 \alpha s} |\Theta(X_s^\eps, Y_s^\eta)| \thinspace ds \\
 & + \eta  \int_0^t  e^{2 \alpha s} |\Theta_x(X_s^\eps, Y_s^\eta) c(X_s^\eps, Y_s^\eta) | \thinspace ds  + \eta  \sqrt{\eps} \left| \int_0^t  e^{2 \alpha s} \sigma(X_s^\eps) \Theta_x(X_s^\eps, Y_s^\eta) \thinspace dW_s^1 \right| \\
 & + \frac{\eta}{\sqrt{\eta}} \left| \int_0^t e^{2 \alpha s} \tau(Y_s^\eta) \Theta_y(X_s^\eps, Y_s^\eta) \thinspace dW_s^2 \right| + \frac{\eps \eta}{2} \int_0^t  e^{2 \alpha s} |\sigma^2(X_s^\eps) \Theta_{xx}(X_s^\eps, Y_s^\eta)| \thinspace ds.
\end{aligned}
\end{equation*}
Using martingale moment inequality, boundedness of the function $\sigma, \tau$ (Assumption \ref{A:Derivatives-Assumption-c} and \ref{A:Derivatives-Assumption-f}), the growth of $c(x,y)$ in variable $x$ from Assumption \ref{A:Derivatives-Assumption-c}, Lemma \ref{L:Moment-bound-X}, the growth of the function $\Theta$ in variable $y$ and Lemma \ref{L:Moment-bound-Y}, we get
\begin{equation*}
L_1^\eps(t) \triangleq  \BE \left| \int_0^t e^{-2 \alpha (t-s)} \left[ q(X_s^\eps, Y_s^\eta)- \int_{\BR} {q}(X_s^\eps, y) \mu_{X_s^\eps}(dy) \right] ds \right| \le K(\sqrt{\eps}+ \sqrt{\eta}).
\end{equation*}
Next, for the term $L_2^\eps(t)$, using H\"older's inequality, Lemma \ref{L:2nd-power} and \cite[Lemma 9]{gailus2017statistical}, we have
\begin{equation*}
\begin{aligned}
L_2^\eps(t) & \le \int_0^t e^{-2 \alpha (t-s)} \BE \left( \int_{\mathcal{Y}} (1+ |y|^q) |X_s^\eps - \bar{X}_s| \mu(dy) \right) ds  \le \int_0^t e^{-2 \alpha (t-s)} \BE |X_s^\eps - \bar{X}_s|\thinspace ds\\
& \qquad \qquad \qquad \qquad \qquad \qquad \qquad \qquad \quad \qquad \quad \le \int_0^t e^{-2 \alpha (t-s)} \sqrt{\BE |X_s^\eps - \bar{X}_s|^2}\thinspace ds \le K (\sqrt{\eps}+ \sqrt{\eta}).
\end{aligned}
\end{equation*}
Finally, for $L_3^\eps(t),$ using mean value theorem, Assumption \ref{A:Boundedness of sigma-tau}, and similar arguments to \cite[Lemma 10]{gailus2017statistical}, we have
$L_3^\eps(t) \le K (\sqrt{\eps}+ \sqrt{\eta}).$ Putting all these bounds together, we obtain the required bound.
\end{proof}

\subsection{Uniform-in-time bound for the process $|X_t^\eps - \bar{X}_{t}|^4$}
In this section, we establish a {\sc uit} bound for the term $\BE \left[|X_t^\eps - \bar{X}_{t}|^4 \right]$ in Lemma \ref{L:4th-power-LLN}. In the proof of this lemma, we require an estimate for the term $\BE \left[|X_t^\eps - \bar{X}_{t}|^2 \right]$ which is obtained in Lemma \ref{L:2nd-power}. To establish these bounds, we construct the following Poisson equations, where, the operator $\mathscr{L}_y$ is the infinitesimal generator of the process $Y^\eta$ defined in equation \eqref{E:Infinitesimal-Gen}:
\begin{equation}\label{E:2-power-Poisson-equation}
\mathscr{L}_y \Gamma (x,y) = G(x,y) \triangleq (x- \bar{x}, c(x,y)- \bar{c}(x)), \qquad \int_\BR \Gamma (x,y) \mu_x(dy) = 0,
\end{equation}
\begin{equation}\label{E:I2-Poisson-Eq}
\begin{aligned}
\mathscr{L}_y \Phi(x,y) & = F(x,y) \triangleq ((x- \bar{x})^2, c(x,y)-\bar{c}(x)), \quad \int_\BR \Phi (x,y) \mu_x(dy) = 0,\\
\mathscr{L}_y \Xi(x,y) & = H(x,y) \triangleq \Phi(x,y) \{c(x,y) - \bar{c}(x)\}, \qquad \int_\BR \Xi (x,y) \mu_x(dy) = 0.
\end{aligned}
\end{equation}
We note in the above equations the functions $G,\thinspace F$ and $H$ satisfy the centring condition, i.e., \\ $\int_{\BR} G(x,y) \mu_x(dy) = 0 =\int_{\BR} F(x,y) \mu_x(dy) = \int_{\BR} H(x,y) \mu_x(dy)$. This can be easily verified using integration by parts.
\begin{lemma}\label{L:2nd-power}
Let $X_t^\eps$ and $\bar{X}_t$ be the solutions of equations \eqref{E:Multiscale-Diffusion-Main-Eq} and \eqref{E:Semigroup-Equatn}. For any $t \ge 0,$ there exists a time-independent positive constant $K$ such that
$$\sup_{t \ge 0} \BE \left[|X_t^\eps - \bar{X}_{t}|^2 \right] \le K(\eta + \eps + \eps \eta) .$$
\end{lemma}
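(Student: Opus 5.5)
Set $e_t \triangleq X_t^\eps - \bar{X}_t$. The plan is to apply It\^o's formula to $e^{2\alpha t}|e_t|^2$ (or a slightly smaller rate $\alpha'<2\alpha$), which gives
\begin{equation*}
d|e_t|^2 = 2e_t\big[c(X_t^\eps,Y_t^\eta)-\bar{c}(\bar{X}_t)\big]dt + \eps\,\sigma^2(Y_t^\eta)\,dt + 2\sqrt{\eps}\,e_t\sigma(Y_t^\eta)\,dW_t^1 .
\end{equation*}
We split the drift as $c(X_t^\eps,Y_t^\eta)-\bar{c}(\bar{X}_t) = [c(X_t^\eps,Y_t^\eta)-c(\bar{X}_t,Y_t^\eta)] + [c(\bar{X}_t,Y_t^\eta)-\bar{c}(\bar{X}_t)]$. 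The first bracket, paired with $e_t$, is bounded above by $-\alpha|e_t|^2$ by the dissipativity \eqref{E:Ass1}. The $\eps\sigma^2$ term gives, after integration against $e^{-\alpha'(t-s)}$, a contribution $K\eps$, uniformly in $t$, since $\sigma$ is bounded. The stochastic integral vanishes in expectation.

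The genuine fluctuation term is $2\int_0^t e^{-\alpha'(t-s)}\,G(X_s^\eps,Y_s^\eta)\,ds$, with $G$ the centered function in \eqref{E:2-power-Poisson-equation}. To be precise, one should use $(x-\bar{x})(c(\bar{x},y)-\bar{c}(\bar{x}))$ with $\bar{x}$ frozen at $\bar{X}_s$, or the version stated with $c(x,y)-\bar c(x)$ plus a Lipschitz correction absorbed into the dissipation. We handle it with the Poisson solution $\Gamma$. Applying It\^o's formula to $e^{\alpha' s}\Gamma(X_s^\eps,\bar{X}_s,Y_s^\eta)$ and using $\frac{1}{\eta}\mathscr{L}_y\Gamma = \frac{1}{\eta}G$ gives
\begin{equation*}
\int_0^t e^{\alpha's}G\,ds = \eta\big[e^{\alpha't}\Gamma_t-\Gamma_0\big] - \eta\int_0^t e^{\alpha's}\big(\alpha'\Gamma + \Gamma_x c + \Gamma_{\bar x}\bar c + \tfrac{\eps}{2}\sigma^2\Gamma_{xx}\big)ds - \eta\sqrt{\eps}\!\int_0^t e^{\alpha's}\Gamma_x\sigma\,dW^1 - \sqrt{\eta}\!\int_0^t e^{\alpha's}\Gamma_y\tau\,dW^2 .
\end{equation*}
We then invoke the regularity theory of \cite{pardoux2003poisson} (Section \ref{S:Poisson-Equation}). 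Because $G$ is linear in $x-\bar{x}$, $\Gamma$ and its derivatives satisfy $|\Gamma|+|\Gamma_y|\le K|x-\bar{x}|(1+|y|^q)$, while $\Gamma_x$ and $\Gamma_{\bar x}$ grow only polynomially in $y$. Multiplying by $e^{-\alpha' t}$ and taking expectations, the $\eta$-terms contribute at most $K\eta\,\BE[|e_s|(1+|Y_s^\eta|^q)(1+|X_s^\eps|)]$, integrated against $e^{-\alpha'(t-s)}$. The $dW^2$ term has zero mean. Its square is of order $\eta\,\BE|e_s|^2(1+|Y|^{2q})$ if needed, but only its mean is required here.

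Next we use Young's inequality, $K\eta|e_s|(\cdots)\le \delta|e_s|^2 + K_\delta\eta^2(\cdots)^2$, together with the uniform moment bounds of Lemmas \ref{L:Moment-bound-Y} and \ref{L:Moment-bound-X}. Choosing $\delta$ small enough to be absorbed by the $-\alpha|e_s|^2$ dissipation, we arrive at a Gronwall-type inequality of the form
\begin{equation*}
m(t) \triangleq \sup_{s\le t}e^{-\alpha'(t-s)}\text{-weighted }\BE|e_s|^2,\qquad \BE|e_t|^2 \le K\eps + K\eta + K\eta\eps + \tfrac12 \sup_{s\le t}\BE|e_s|^2 .
\end{equation*}
This closes because all constants are time-independent. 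The boundary term $\eta e^{-\alpha' t}\Gamma_t$ is handled by the same Young step. That gives the bound $K(\eta+\eps+\eps\eta)$. The $\eps\eta$ term comes from the $\eta\sqrt\eps$ martingale and from the $\tfrac{\eps\eta}{2}\sigma^2\Gamma_{xx}$ term.

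The main obstacle is the Poisson step. We must justify the polynomial growth bounds for $\Gamma$ and its mixed derivatives in $(x,\bar x)$, uniformly in $x$, using Assumption \ref{A:Derivatives-Assumption-c}(1). We must also make sure that every term generated by It\^o's formula carries either a factor of $\eta$ or a factor of $|e_s|$ that can be absorbed with time-independent constants, rather than through a Gronwall argument that would grow exponentially in $t$. If absorbing via $\sup_s$ is delicate, the fallback is to work directly with the differential inequality for $y(t)=\BE|e_t|^2$, namely $y'\le-\alpha y + K(\eps+\eta)$ in integrated form, which yields the uniform bound.
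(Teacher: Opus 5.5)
Your proposal is correct and follows essentially the same route as the paper. Both use It\^o's formula for an exponentially weighted $|X_t^\eps-\bar X_t|^2$, dissipativity for the drift difference, the Poisson equation \eqref{E:2-power-Poisson-equation} to turn the fluctuation term into $O(\eta)$ boundary and correction terms, and the uniform moment bounds of Lemmas \ref{L:Moment-bound-Y}--\ref{L:Moment-bound-X}. Freezing $\bar x$ at $\bar X_s$ is a minor refinement: it uses the dissipativity of $c$ directly and keeps the $\Gamma_{\bar x}\bar c$ term, which the paper's It\^o step omits. Your Young/sup absorption step is unnecessary, since every $\eta$-term is already $O(\eta)$ by the uniform moment bounds.
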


\begin{proof}
From the integral representations of the process $X_t^\eps$ and $\bar{X}_t$, we have
\begin{equation}\label{E:Difference-Eqn}
X_t^\eps - \bar{X}_{t} = \int_0^t [c(X_s^\eps , Y_s^\eta)-\bar{c}(X_s^\eps)] \thinspace ds + \int_0^t [\bar{c}(X_s^\eps)- \bar{c}(\bar{X}_{s})] \thinspace ds + \sqrt{\eps} \int_0^t \sigma(X_s^\eps) \thinspace dW_s^1.
\end{equation}
An application of It\^o's formula to the function $\varphi(t,x)= x^2 e^{\alpha t}$ yields
\begin{equation}\label{E:LLn-2nd-power}
\begin{aligned}
\left(X_t^\eps - \bar{X}_{t}\right)^2 e^{ \alpha t} & = \int_0^t  \alpha \left(X_s^\eps - \bar{X}_{s}\right)^2 e^{ \alpha s} \thinspace ds  + 2 \int_0^t \left(X_s^\eps - \bar{X}_{s}\right) e^{ \alpha s} \left\{ c(X_s^\eps, Y_s^\eta)- \overline{c}(X_s^\eps) \right\}ds \\
&  + 2 \int_0^t \left(X_s^\eps - \bar{X}_{s}\right) e^{ \alpha s} \left\{\bar{c}(X_s^\eps) - \bar{c}(\bar{X}_{s}) \right\}ds + 2 \int_0^t \left(X_s^\eps - \bar{X}_{s}\right) e^{ \alpha s} \sqrt{\eps} \sigma(X_s^\eps) dW_s^1 \\
& \qquad \qquad \qquad \qquad \qquad \qquad \qquad \quad \qquad \quad + \frac{1}{2} \eps \int_0^t 2  e^{ \alpha s} \sigma(X_s^\eps)^2 \thinspace ds \triangleq \sum_{i=1}^5 \mathcal{K}_i .  
\end{aligned}
\end{equation}
We now deal with each term separately. First, for $\mathcal{K}_3$, recalling the definition of $\bar{c}(x) \triangleq \int_{\BR} c(x,y) \thinspace \mu_x(dy)$, the assumption $\sup_{y \in \BR}[(x-z) \{c(x,y)- c(z,y)\}] \le -\alpha |x-z|^2$ (Assumption \ref{A:Derivatives-Assumption-c}) and the integral $\int_{\BR} \mu_x(dy)=1$, we have
\begin{equation*}
\begin{aligned}
\mathcal{K}_3  = 2\int_0^t e^{\alpha s} \left(X_s^\eps - \bar{X}_{s}\right) \int_\BR  \left\{ c(X_s^\eps, y) - c(\bar{X}_s, y) \right\} \mu_x (dy) \thinspace ds & \le  - 2 \alpha \int_0^t e^{\alpha s} |X_s^\eps - \bar{X}_{s}|^2 \int_\BR   \mu_x(dy) \thinspace ds\\
&= - 2 \alpha \int_0^t e^{\alpha s}   |X_s^\eps - \bar{X}_{s}|^2 \thinspace  ds.
\end{aligned}
\end{equation*}
Next, to estimate the challenging term $\mathcal{K}_2$ in equation \eqref{E:LLn-2nd-power}, we apply It\^o's formula to the function $e^{ \alpha t} \Gamma(X_t^\eps, Y_t^\eta)$ to obtain (here, $\Gamma(x,y)$ is the solution of Poisson equation \eqref{E:2-power-Poisson-equation})
\begin{equation*}
\begin{aligned}
&  e^{ \alpha t} \Gamma(X_t^\eps, Y_t^\eta) = \Gamma(X_0, Y_0) + \int_0^t \alpha e^{\alpha s} \Gamma(X_s^\eps, Y_s^\eta) \thinspace ds + \int_0^t e^{\alpha s}\Gamma_x(X_s^\eps, Y_s^\eta) c(X_s^\eps, Y_s^\eta) \thinspace ds\\
& + \sqrt{\eps} \int_0^t e^{\alpha s}\Gamma_x(X_s^\eps, Y_s^\eta) \sigma(X_s^\eps)\thinspace dW_s^1 + \frac{1}{\eta}\int_0^t e^{\alpha s}\Gamma_y (X_s^\eps, Y_s^\eta) f(Y_s^\eta)\thinspace ds\\
&  + \frac{1}{\sqrt{\eta}} \int_0^t e^{\alpha s}\Gamma_y (X_s^\eps, Y_s^\eta) \tau(Y_s^\eta)\thinspace dW_s^2 + \frac{\eps }{2} \int_0^t e^{\alpha s} \Gamma_{xx}(X_s^\eps, Y_s^\eta) \sigma(X_s^\eps)^2 \thinspace ds + \frac{1}{2 \eta } \int_0^t e^{\alpha s} \Gamma_{yy}(X_s^\eps, Y_s^\eta) \tau(Y_s^\eta)^2 \thinspace ds. 
\end{aligned}
\end{equation*}
The definition of the operator $\mathscr{L}_y$ from equation \eqref{E:Infinitesimal-Gen} and a rearrangement of the terms in above equation gives
\begin{equation*}
\begin{aligned}
& \mathcal{K}_2   \triangleq 2 \int_0^t \left(X_s^\eps - \bar{X}_{s}\right) e^{ \alpha s} \left\{ c(X_s^\eps, Y_s^\eta)- \overline{c}(X_s^\eps) \right\}ds =  2 \int_0^t e^{\alpha s} \mathscr{L}_y \Gamma(X_s^\eps, Y_s^\eta) \thinspace ds \\
&  = 2 \eta e^{t \alpha} \Gamma(X_t^\eps, Y_t^\eta) - 2 \eta \Gamma(X_0, Y_0) - 2 \eta  \int_0^t \alpha e^{\alpha s} \Gamma(X_s^\eps, Y_s^\eta) \thinspace ds   - 2 \eta \sqrt{\eps} \int_0^t e^{\alpha s}\Gamma_x(X_s^\eps, Y_s^\eta) \sigma(X_s^\eps) \thinspace dW_s^1 \\
& - 2 \eta \int_0^t e^{\alpha s} \Gamma_x(X_s^\eps, Y_s^\eta) c(X_s^\eps, Y_s^\eta) \thinspace ds  - \frac{2 \eta}{\sqrt{\eta}} \int_0^t e^{\alpha s}\Gamma_y (X_s^\eps, Y_s^\eta) \tau(Y_s^\eta) \thinspace dW_s^2 - \eta \eps \int_0^t e^{\alpha s} \Gamma_{xx}(X_s^\eps, Y_s^\eta) \sigma(X_s^\eps)^2 \thinspace ds.
\end{aligned}
\end{equation*}
Putting all these terms together in equation \eqref{E:LLn-2nd-power}, we have
\begin{equation*}
\begin{aligned}
& \left(X_t^\eps - \bar{X}_{t}\right)^2 e^{ \alpha t}  \le -  \alpha \int_0^t e^{\alpha s}  \left(X_s^\eps - \bar{X}_{s}\right)^2  ds + 2 \eta e^{\alpha t} \Gamma(X_t^\eps, Y_t^\eta) - 2 \eta \Gamma(X_0, Y_0) - 2 \eta  \int_0^t \alpha e^{\alpha s} \Gamma(X_s^\eps, Y_s^\eta) \thinspace ds \\
& - 2 \eta \int_0^t e^{\alpha s}\Gamma_x(X_s^\eps, Y_s^\eta) c(X_s^\eps, Y_s^\eta) \thinspace ds - 2 \eta \sqrt{\eps} \int_0^t e^{\alpha s}\Gamma_x(X_s^\eps, Y_s^\eta) \sigma(X_s^\eps) \thinspace dW_s^1 + \frac{\eps}{2}  \int_0^t 2  e^{ \alpha s} \sigma(X_s^\eps)^2 \thinspace ds \\
& - \frac{2 \eta}{\sqrt{\eta}} \int_0^t e^{\alpha s}\Gamma_y (X_s^\eps, Y_s^\eta) \tau(Y_s^\eta) \thinspace dW_s^2 - \eta \eps \int_0^t e^{\alpha s} \Gamma_{xx}(X_s^\eps, Y_s^\eta) \sigma(X_s^\eps)^2 \thinspace ds + 2 \int_0^t \left(X_s^\eps - \bar{X}_{s}\right) e^{ \alpha s} \sqrt{\eps} \sigma(X_s^\eps) \thinspace dW_s^1.
\end{aligned}
\end{equation*}
Finally, taking expectation on both sides, using the growth of $c(x,y)$ in variable $x$ from Assumption \ref{A:Derivatives-Assumption-c}, Lemma \ref{L:Moment-bound-X}, Assumption \ref{A:Derivatives-Assumption-c}, growth of the function $\Gamma(x,y)$ and Lemma \ref{L:Moment-bound-Y}, we complete the proof of lemma. 
\end{proof} 

In the next lemma, we give a uniform-in-time bound for the term $\BE \left[|X_t^\eps - \bar{X}_{t}|^4 \right].$
\begin{lemma}\label{L:4th-power-LLN}
Let $X_t^\eps$ and $\bar{X}_t$ be the solutions of equations \eqref{E:Multiscale-Diffusion-Main-Eq} and \eqref{E:Semigroup-Equatn}. Then, for any $t \ge 0,$ there exists a time-independent positive constant $K$ such that
$$\sup_{t \ge 0} \BE \left[|X_t^\eps - \bar{X}_{t}|^4 \right] \le K [\eps^2 + \eta^{\frac{3}{2}}+ \eta \sqrt{\eps}].$$
\end{lemma}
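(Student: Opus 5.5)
We will mimic the proof of Lemma \ref{L:2nd-power}, now with the test function $\varphi(t,x)=x^4e^{2\alpha t}$ applied to $U_t\triangleq X_t^\eps-\bar X_t$, starting from the integral representation \eqref{E:Difference-Eqn}. It\^o's formula yields a linear term $2\alpha\int_0^t e^{2\alpha s}U_s^4\,ds$ together with the following pieces:
\begin{itemize}
\item a drift term $4\int_0^t e^{2\alpha s}U_s^3\{\bar c(X_s^\eps)-\bar c(\bar X_s)\}ds$;
\item a fluctuation term $\mathcal{I}\triangleq 4\int_0^t e^{2\alpha s}U_s^3\{c(X_s^\eps,Y_s^\eta)-\bar c(X_s^\eps)\}ds$;
\item a martingale term $4\sqrt{\eps}\int_0^t e^{2\alpha s}U_s^3\sigma\,dW_s^1$;
\item an It\^o correction $6\eps\int_0^t e^{2\alpha s}U_s^2\sigma^2\,ds$.
\end{itemize}

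The drift term is handled by the dissipativity \eqref{E:Ass1}, exactly as $\mathcal{K}_3$ was, and it gives $-4\alpha\int_0^t e^{2\alpha s}U_s^4\,ds$. This absorbs the linear term and leaves a spare $-2\alpha\int_0^t e^{2\alpha s}U_s^4\,ds$ to be used later. The martingale term has zero mean. For the It\^o correction we use Young's inequality, $6\eps\sigma^2U^2\le \alpha U^4+K\eps^2$. This produces the $\eps^2$ contribution after multiplying through by $e^{-2\alpha t}$. Alternatively, Lemma \ref{L:2nd-power} gives the cruder bound $\eps(\eta+\eps)$, which is also acceptable.

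The main obstacle is the fluctuation term $\mathcal{I}$. Since $U^3=U\cdot U^2$, we use the Poisson equations \eqref{E:I2-Poisson-Eq}. The function $\Phi$ solves $\mathscr{L}_y\Phi=(x-\bar x)^2(c-\bar c)$, interpreting $F$ as the product with the $(x-\bar x)$ factor built in. We apply It\^o's formula to $e^{2\alpha t}U_t\,\Phi(X_t^\eps,Y_t^\eta)$ (or directly to $e^{2\alpha t}\Phi$ with $\Phi$ carrying the factor $U^3$). Then $\mathcal{I}$ equals $\eta$ times the following:
\begin{itemize}
\item boundary terms;
\item $ds$-integrals involving $\Phi$, $\Phi_x c$, $\eps\Phi_{xx}$ and $U\Phi$;
\item a $dW^1$ martingale;
\item a $\eta^{-1/2}dW^2$ martingale;
\item a cross term coming from the drift of $U$ itself, $\int e^{2\alpha s}\Phi\,\{c(X_s^\eps,Y_s^\eta)-\bar c(X_s^\eps)\}ds$.
\end{itemize}
This last cross term is still a fluctuation, of size $O(1)$ times $\eta^{-1}\cdot\eta$, so it is not small by itself. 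It is precisely what the second Poisson equation $\mathscr{L}_y\Xi=H=\Phi(c-\bar c)$ is for. A second It\^o expansion, applied to $e^{2\alpha t}\Xi$, converts it into $\eta$ times further terms of the same type.

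All resulting terms are polynomially growing in $y$ and carry powers of $U$ of degree at most $3$. Their expectations are bounded using Lemmas \ref{L:Moment-bound-X} and \ref{L:Moment-bound-Y}, the bound $|U|^3\le \delta U^4+K_\delta|U|^2$, absorption into the spare $-2\alpha\int e^{2\alpha s}U^4$, and Lemma \ref{L:2nd-power} for the $U^2$ moments. The counting of powers is as follows.
\begin{itemize}
\item Terms of the form $\eta\,\BE|U|^k$ with $k\le 2$ contribute $\eta(\eta+\eps)$.
\item Terms of the form $\eta\,\BE|U|$ are bounded by $\eta\sqrt{\eta+\eps}\lesssim \eta^{3/2}+\eta\sqrt\eps$.
\item The $\eta^{-1/2}\cdot\eta$ stochastic integrals have mean zero.
\item The boundary term $\eta e^{2\alpha t}U_t^3\Phi$ is handled with Young's inequality, $\eta|U|^3|\Phi|\le \frac12 U^4+K\eta^4|\Phi|^4$.
\end{itemize}

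Finally, taking expectations and multiplying by $e^{-2\alpha t}$ should give $\BE U_t^4\le K[\eps^2+\eta^{3/2}+\eta\sqrt\eps]$ uniformly in $t$. The uniformity comes from bounds of the form $\int_0^te^{-2\alpha(t-s)}ds\le (2\alpha)^{-1}$. The delicate bookkeeping is to make sure every term either gains a factor $\eta$ times a moment of $U$ of degree at most $1$ or $2$, or is absorbed into the spare $U^4$ integral.
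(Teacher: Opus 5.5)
Your outline reproduces the paper's proof essentially step for step: It\^o's formula for $e^{2\alpha t}U_t^4$, dissipativity for the $\bar c$-difference, Young's inequality for the It\^o correction, and the pair $\Phi,\Xi$ of \eqref{E:I2-Poisson-Eq} for the fluctuation term. It therefore inherits a flaw that the paper's proof also has. The second Poisson equation $\mathscr{L}_y\Xi=H=\Phi(c-\bar c)$ has no admissible solution, because $H$ is not centered; the paper asserts this centering, but it fails for $H$. To see why, note that $\mathscr{L}_y$ acts only in $y$, so $\Phi=(x-\bar x)^2\Upsilon$ with $\Upsilon$ from \eqref{E:Poisson-equation}. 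Integrating the identity $\mathscr{L}_y(v^2)=2v\,\mathscr{L}_yv+\tau^2v_y^2$ against the invariant measure then gives
\[
\int_{\BR}\Phi(x,y)\{c(x,y)-\bar c(x)\}\,\mu_x(dy)=(x-\bar x)^2\int_{\BR}\Upsilon\,\mathscr{L}_y\Upsilon\,d\mu_x=-\frac{(x-\bar x)^2}{2}\int_{\BR}\tau^2(y)\,\Upsilon_y(x,y)^2\,\mu_x(dy).
\]
This is strictly negative for $x\neq\bar x$ whenever $c(x,\cdot)$ is not constant, so the cross term is \emph{not} a pure fluctuation. If you center instead, $\mathscr{L}_y\Xi=H-\bar H$, then $\mathcal I$ keeps the nonnegative drift $2\eta\int_0^te^{2\alpha s}U_s^2\,\overline{\tau^2\Upsilon_y^2}(X_s^\eps)\,ds$, since the cross term enters with coefficient $-4\eta$. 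Your bookkeeping uses only generic polynomial-growth bounds on $\Phi$, and with those this drift is $O(\eta)$ after multiplying by $e^{-2\alpha t}$. Your rule that $\eta\,\BE|U|^k$ with $k\le 2$ gives $\eta(\eta+\eps)$ is valid only for $k=2$. A term with no power of $U$ is exactly what appears here, and it is too large for the target $\eta^{3/2}$ when $\eps\asymp\eta$.

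The missing idea is to use the factor $U^2$ that $\Phi$ carries, rather than generic growth bounds. With it, the leftover mean is $\eta U^2$ times a function with uniformly bounded moments. Young's inequality bounds it by $\delta U^4+K_\delta\eta^2(\cdots)^2$, which your spare $-2\alpha\int_0^te^{2\alpha s}U_s^4\,ds$ absorbs. The same observation makes $\Xi$ unnecessary altogether: the whole cross term $\eta U^2\Upsilon(c-\bar c)$ can be handled this way. The cleanest implementation is to apply It\^o's formula directly to $e^{2\alpha t}U_t^3\,\Upsilon(X_t^\eps,Y_t^\eta)$. This also captures the time dependence of $\Phi$ through $\bar x=\bar X_t$, which your expansion of $e^{2\alpha t}U_t\Phi(X_t^\eps,Y_t^\eta)$ omits. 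That dependence produces an extra drift involving $\partial_{\bar x}\Phi\,\bar c(\bar X_s)$ with $\partial_{\bar x}\Phi=-2(x-\bar x)\Upsilon$, again of size $\eta U^2$. Every resulting term is then one of: $\eta U^3P$, $\eta U^2P$, $\eps\eta|U|^kP$ with $k\le 3$, a mean-zero martingale, or the boundary term $4\eta e^{2\alpha t}U_t^3\Upsilon$. Here $P$ has uniformly bounded moments by Lemmas \ref{L:Moment-bound-X} and \ref{L:Moment-bound-Y}. Young's inequality against $U^4$ handles each term and gives $\BE U_t^4\le K(\eps^2+\eta^2)$, which lies within the stated bound.
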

\begin{proof}
We start applying It\^o's formula to the function $\varphi(t,x)= x^4 e^{2 \alpha t}$ to get
\begin{equation}\label{E:LLn-4th-power}
\begin{aligned}
& \left(X_t^\eps - \bar{X}_{t}\right)^4 e^{2 \alpha t}  = 2 \alpha \int_0^t  \left(X_s^\eps - \bar{X}_{s}\right)^4 e^{2 \alpha s} \thinspace ds  + 4 \int_0^t \left(X_s^\eps - \bar{X}_{s}\right)^3 e^{2 \alpha s} \left\{ c(X_s^\eps, Y_s^\eta)- \bar{c}(X_s^\eps) \right\}ds \\
& \qquad \qquad \quad  + 4 \int_0^t \left(X_s^\eps - \bar{X}_{s}\right)^3 e^{2 \alpha s} \left\{\bar{c}(X_s^\eps) - \bar{c}(\bar{X}_{s}) \right\}ds + 4 \sqrt{\eps} \int_0^t \left(X_s^\eps - \bar{X}_{s}\right)^3 e^{2 \alpha s}  \sigma(X_s^\eps) dW_s^1 \\
& \qquad \qquad  \quad+ \frac{1}{2} \eps \int_0^t 12 (X_s^\eps - \bar{X}_s)^2 e^{2 \alpha s} \sigma(X_s^\eps)^2 ds \triangleq  \sum_{i=1}^5 \mathcal{I}_i .  
\end{aligned}
\end{equation}
For $\mathcal{I}_3$, recalling the definition of $\bar{c}(x) \triangleq \int_{\BR} c(x,y) \thinspace \mu_x(dy)$, the assumption $\sup_{y \in \BR}[(x-z) \{c(x,y)- c(z,y)\}] \le -\alpha |x-z|^2$ (Assumption \ref{A:Derivatives-Assumption-c}) and the integral $\int_{\BR} \mu_x(dy)=1$, we have
\begin{equation*}
\begin{aligned}
\mathcal{I}_3 & = 4 \int_0^t (X_s^\eps - \bar{X}_s)^2 e^{2 \alpha s} \int_{\BR} \left[ (X_s^\eps - \bar{X}_s) \{c(X_s^\eps, y)-c(\bar{X}_s, y)\} \right] \mu_x(dy) \thinspace ds  \le -4 \alpha \int_0^t e^{2 \alpha s} \left(X_s^\eps - \bar{X}_s\right)^4 ds.
\end{aligned}
\end{equation*}
Next, using the boundedness of $\sigma$ and Young's inequality, we obtain a sufficiently small $\zeta >0$ such that 
$\mathcal{I}_5 \le C \zeta \int_0^t  (X_s^\eps - \bar{X}_s)^4 e^{2 \alpha s}ds + C \eps^2 \int_0^t  e^{2 \alpha s} ds.$
Given that $\Phi(x,y)$ and $\Xi(x,y)$ are the solutions of Poisson equations in \eqref{E:I2-Poisson-Eq}, we apply Lemma \ref{L:4th-power-I2} to handle the term $\mathcal{I}_2$:
\begin{equation*}
\begin{aligned}
& \mathcal{I}_2 =  4 \int_0^t e^{2\alpha s} (X_s^\eps- \bar{X}_s) \mathscr{L}_y \Phi(X_s^\eps, Y_s^\eta) \thinspace ds \\
&   = 4 \eta e^{2 \alpha t} (X_t^\eps- \bar{X}_t)\Phi(X_t^\eps, Y_t^\eta)  - 4 \eta \int_0^t 2 \alpha e^{2 \alpha s} (X_s^\eps- \bar{X}_s)\Phi(X_s^\eps, Y_s^\eta) \thinspace ds \\
& - 4 \eta^2  e^{2 \alpha t} \Xi(X_t^\eps,Y_t^\eta) + \eta^2  \Xi(X_0^\eps,Y_0^\eta) + 4 \eta^2 \int_0^t 2 \alpha e^{2 \alpha s} \Xi(X_s^\eps, Y_s^\eta) \thinspace ds - 4{\eta \eps}\int_0^t e^{2\alpha s} \sigma^2(X_s^\eps)  \Phi_{x}(X_s^\eps, Y_s^\eta) \thinspace ds \\
 & + 4 \eta^2  \int_0^t  e^{2 \alpha s} \Xi_x(X_s^\eps, Y_s^\eta) c(X_s^\eps, Y_s^\eta) \thinspace ds  + 4 \eta^2  \sqrt{\eps}\int_0^t  e^{2 \alpha s} \sigma(X_s^\eps) \Xi_x(X_s^\eps, Y_s^\eta) \thinspace dW_s^1\\
 & + 4 \frac{\eta^2}{\sqrt{\eta}}\int_0^t e^{2 \alpha s} \tau(X_s^\eps) \Xi_y(X_s^\eps, Y_s^\eta) \thinspace dW_s^2 + 2{\eps \eta^2} \int_0^t  e^{2 \alpha s} \sigma^2(X_s^\eps) \Xi_{xx}(X_s^\eps, Y_s^\eta) \thinspace ds\\
& - 4 \eta  \int_0^t e^{2\alpha s} \Phi(X_s^\eps, Y_s^\eta) \{\bar{c}(X_s^\eps) - \bar{c}(\bar{X}_s) \} \thinspace ds  - 4  \sqrt{\eps} \eta \int_0^t e^{2\alpha s} \sigma(X_s^\eps) \Phi(X_s^\eps, Y_s^\eta) \thinspace dW_s^1 \\
& - 4  \eta  \int_0^t e^{2\alpha s} (X_s^\eps- \bar{X}_s) \Phi_x(X_s^\eps, Y_s^\eta)c(X_s^\eps, Y_s^\eta) \thinspace ds -  4 \eta \sqrt{\eps}\int_0^t e^{2\alpha s} (X_s^\eps- \bar{X}_s) \sigma(X_s^\eps) \Phi_x(X_s^\eps, Y_s^\eta) \thinspace dW_s^1 \\
& - 4 \frac{\eta}{\sqrt{\eta}}\int_0^t e^{2\alpha s} (X_s^\eps- \bar{X}_s) \tau(X_s^\eps) \Phi_y(X_s^\eps, Y_s^\eta) \thinspace dW_s^2  - 2{\eps \eta}\int_0^t e^{2\alpha s} (X_s^\eps- \bar{X}_s) \sigma^2(X_s^\eps) \Phi_{xx}(X_s^\eps, Y_s^\eta) \thinspace ds.
\end{aligned}
\end{equation*}
In equation \eqref{E:LLn-4th-power}, putting all the terms together followed by taking expectation, we have
\begin{equation*}
\begin{aligned}
& e^{2 \alpha t} \BE\left[ \left(X_t^\eps - \bar{X}_{t}\right)^4\right]    = - 2 \alpha  \int_0^t e^{2 \alpha s}  \BE\left[ \left(X_s^\eps - \bar{X}_{s}\right)^4\right]  ds  +  K \zeta \int_0^t   e^{2 \alpha s} \BE\left[ \left(X_s^\eps - \bar{X}_{s}\right)^4\right]  ds + K \eps^2 \int_0^t  e^{2 \alpha s} ds \\
& \quad + 4 \eta e^{2 \alpha t} \BE \left[ (X_t^\eps- \bar{X}_t)\Phi(X_t^\eps, Y_t^\eta)\right]  - 4 \eta \BE \int_0^t 2 \alpha e^{2 \alpha s} (X_s^\eps- \bar{X}_s)\Phi(X_s^\eps, Y_s^\eta) \thinspace ds \\
& \quad - 4 \eta^2  e^{2 \alpha t} \BE \left[ \Xi(X_t^\eps,Y_t^\eta) \right] + \eta^2  \BE \left[ \Xi(X_0^\eps,Y_0^\eta) \right] + 8 \alpha \eta^2 \BE \int_0^t e^{2 \alpha s} \Xi(X_s^\eps, Y_s^\eta) \thinspace ds \\
\end{aligned}
\end{equation*}
\begin{equation*}
\begin{aligned}
 & \quad + 4 \eta^2 \BE \int_0^t  e^{2 \alpha s} \Xi_x(X_s^\eps, Y_s^\eta) c(X_s^\eps, Y_s^\eta) \thinspace ds  + 2{\eps \eta^2} \BE \int_0^t  e^{2 \alpha s} \sigma^2(X_s^\eps) \Xi_{xx}(X_s^\eps, Y_s^\eta) \thinspace ds\\
& \quad - 4 \eta \BE  \int_0^t e^{2\alpha s} \Phi(X_s^\eps, Y_s^\eta) \{\bar{c}(X_s^\eps) - \bar{c}(\bar{X}_s) \} \thinspace ds  - 4  \eta \BE  \int_0^t e^{2\alpha s} (X_s^\eps- \bar{X}_s) \Phi_x(X_s^\eps, Y_s^\eta)c(X_s^\eps, Y_s^\eta) \thinspace ds  \\
&  \quad - 2 {\eps \eta} \BE \int_0^t e^{2\alpha s} (X_s^\eps- \bar{X}_s) \sigma^2(X_s^\eps) \Phi_{xx}(X_s^\eps, Y_s^\eta) \thinspace ds  - 4{\eta \eps} \BE \int_0^t e^{2\alpha s} \sigma^2(X_s^\eps)  \Phi_{x}(X_s^\eps, Y_s^\eta) \thinspace ds.
\end{aligned}
\end{equation*}
Applying H\"older's inequality and Assumption \ref{A:Derivatives-Assumption-c}, we get
\begin{equation*}
\begin{aligned}
e^{2 \alpha t} & \BE\left[ \left(X_t^\eps - \bar{X}_{t}\right)^4\right]  \le ( - 2 \alpha + K \zeta)  \int_0^t e^{2 \alpha s} \BE \left[ \left(X_s^\eps - \bar{X}_{s}\right)^4 \right] ds + K \eps^2 \int_0^t  e^{2 \alpha s} ds \\
& + 4 \eta e^{2 \alpha t} \left[ \BE |X_t^\eps- \bar{X}_t|^2 \right]^{\frac{1}{2}} \left[ \BE |\Phi(X_t^\eps, Y_t^\eta)|^2 \right]^{\frac{1}{2}}  + 8 \eta \int_0^t  \alpha e^{2 \alpha s}\left[ \BE |X_s^\eps- \bar{X}_s|^2 \right]^{\frac{1}{2}} \left[ \BE |\Phi(X_s^\eps, Y_s^\eta)|^2 \right]^{\frac{1}{2}} \thinspace ds \\
& + 4 \eta^2  e^{2 \alpha t} \BE |\Xi(X_t^\eps,Y_t^\eta)| + \eta^2  \BE |\Xi(X_0^\eps,Y_0^\eta)| + 8 \eta^2 \BE \int_0^t \alpha e^{2 \alpha s} |\Xi(X_s^\eps, Y_s^\eta)| \thinspace ds \\
 & + 4 \eta^2 \BE \int_0^t  e^{2 \alpha s} \Xi_x(X_s^\eps, Y_s^\eta) c(X_s^\eps, Y_s^\eta)ds  + 2 {\eps \eta^2} \BE \int_0^t  e^{2 \alpha s} \sigma^2(X_s^\eps) \Xi_{xx}(X_s^\eps, Y_s^\eta) \thinspace ds\\
& - 4 \eta \BE  \int_0^t e^{2\alpha s} \Phi(X_s^\eps, Y_s^\eta) \{\bar{c}(X_s^\eps) - \bar{c}(\bar{X}_s) \} \thinspace ds   + K{\eta \eps} \int_0^t e^{2\alpha s} \sigma^2(X_s^\eps) \BE| \Phi_{x}(X_s^\eps, Y_s^\eta)| \thinspace ds\\
&  + 4  \eta \int_0^t e^{2\alpha s} \left[ \BE |X_s^\eps- \bar{X}_s|^2 \right]^{\frac{1}{2}} \left[ \BE |\Phi_x(X_s^\eps, Y_s^\eta)c(X_s^\eps, Y_s^\eta)|^2 \right]^{\frac{1}{2}} \thinspace ds  \\
&  + K {\eps \eta} \int_0^t e^{2\alpha s} \left[ \BE |X_s^\eps- \bar{X}_s|^2 \right]^{\frac{1}{2}} \left[ \BE |\Phi_{xx}(X_s^\eps, Y_s^\eta)|^2 \right]^{\frac{1}{2}} \thinspace ds.
\end{aligned}
\end{equation*}
Finally, using the growth of the functions $\Phi(x,y)$, $\Xi(x,y)$, $c(x,y)$, and Lemmas \ref{L:2nd-power}, \ref{L:Moment-bound-Y}, \ref{L:Moment-bound-X}, we get the required bound.
\end{proof}

\section{Appendix}\label{S:Appendix}
\subsection{Proof of Auxiliary Results}
\begin{lemma}\label{L:Moment-bound-Y}
Let $Y_t^\eta$ be the solution of the second equation in system \eqref{E:Multiscale-Diffusion-Main-Eq}. For any $p \in \BN,$ there exists a time-independent positive constant $K$ such that
$$\sup_{t \ge 0} \BE [|Y_t^\eta|^p]  \le K.$$
\end{lemma}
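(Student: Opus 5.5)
We will prove Lemma \ref{L:Moment-bound-Y} by a Lyapunov-type argument: apply It\^o's formula to $|Y_t^\eta|^{p}$ weighted by an exponential in time, and use the dissipativity of $f$ from Assumption \ref{A:Boundedness of sigma-tau} together with the boundedness of $\tau$. It suffices to treat even powers $p=2m$, $m\in\BN$; the odd and intermediate moments then follow from Jensen's (or H\"older's) inequality, $\BE|Y_t^\eta|^p \le (\BE|Y_t^\eta|^{2m})^{p/(2m)}$ with $2m\ge p$.

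\textbf{Step 1: It\^o's formula.} For $m\in\BN$, It\^o's formula applied to $|Y_t^\eta|^{2m}$ gives
\begin{equation*}
d|Y_t^\eta|^{2m} = \frac{2m}{\eta}|Y_t^\eta|^{2m-2}\,Y_t^\eta f(Y_t^\eta)\,dt + \frac{m(2m-1)}{\eta}|Y_t^\eta|^{2m-2}\tau^2(Y_t^\eta)\,dt + \frac{2m}{\sqrt{\eta}}|Y_t^\eta|^{2m-2}Y_t^\eta\tau(Y_t^\eta)\,dW_t^2 .
\end{equation*}
Use $y f(y)\le -\lambda^*|y|^2$ and $\tau^2\le\lambda_2$ to bound the drift by
\begin{equation*}
\frac{1}{\eta}\Big(-2m\lambda^*|Y_t^\eta|^{2m} + m(2m-1)\lambda_2|Y_t^\eta|^{2m-2}\Big).
\end{equation*}
Then apply Young's inequality, $|y|^{2m-2}\le \zeta|y|^{2m}+C_\zeta$, choosing $\zeta$ small enough that the drift is at most $\frac{1}{\eta}\big(-m\lambda^*|Y_t^\eta|^{2m}+C\big)$ with $C$ depending only on $m,\lambda^*,\lambda_2$.

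\textbf{Step 2: Taking expectations.} To make the stochastic integral a true martingale, localize with stopping times $\tau_R=\inf\{t:|Y_t^\eta|\ge R\}$ and let $R\to\infty$ via Fatou's lemma; no a priori moment bound is needed for this. Setting $u(t)=\BE|Y_t^\eta|^{2m}$, we get the differential inequality
\begin{equation*}
u'(t)\le -\frac{m\lambda^*}{\eta}u(t)+\frac{C}{\eta}.
\end{equation*}
Working in integral form, or equivalently applying It\^o to $e^{m\lambda^* t/\eta}|Y_t^\eta|^{2m}$, Gr\"onwall's inequality yields
\begin{equation*}
u(t)\le e^{-m\lambda^* t/\eta}|y_0|^{2m}+\frac{C}{m\lambda^*}\big(1-e^{-m\lambda^* t/\eta}\big)\le |y_0|^{2m}+\frac{C}{m\lambda^*}.
\end{equation*}
This bound is independent of both $t$ and $\eta$. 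The factor $1/\eta$ multiplies the dissipative term and the forcing term alike, so it cancels in the stationary level.

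\textbf{Main obstacle.} The only delicate point is justifying the expectation of the stochastic integral without circularity, which the localization argument handles, since the stopped processes are bounded. Everything else is routine. The key structural input is that the contraction rate and the noise forcing scale identically in $\eta$, and this is exactly what makes the constant $K$ uniform in time and in $\eta$.
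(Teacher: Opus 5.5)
Your proof is correct and follows essentially the same route as the paper: reduction to even powers, It\^o's formula for the exponentially weighted power $e^{p\lambda^* t/(2\eta)}|Y_t^\eta|^{p}$, the dissipativity $y f(y)\le -\lambda^*|y|^2$, boundedness of $\tau$, and Young's inequality to absorb the $|y|^{p-2}$ term. (Your weight $e^{m\lambda^* t/\eta}$ with $p=2m$ is the same weight.) Your localization step, which removes the stochastic integral, supplies rigor the paper leaves implicit, and your remark that the bound is also uniform in $\eta$ is correct.
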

\begin{proof}
We start noting that it is sufficient to prove the lemma only for even integer powers; for odd powers, one can apply H\"older's inequality. Applying It\^o's formula to the function $\varphi(t,x)= e^{\frac{tp \lambda^*}{2 \eta}}x^p$, we have
\begin{equation*}
\begin{aligned}
e^{\frac{tp \lambda^*}{2 \eta}}\left(Y_t^\eta \right)^p &= (y_0)^p + \int_0^t \frac{p \lambda^*}{2 \eta} e^{\frac{s p \lambda^*}{2 \eta}}\left(Y_s^\eta \right)^p \thinspace ds +  \int_0^t p e^{\frac{s p \lambda^*}{2 \eta}} \left(Y_s^\eta\right)^{p-1} \frac{1}{\eta} f(Y_s^\eta) \thinspace ds \\
& \qquad \qquad \quad + \int_0^t p e^{\frac{s p \lambda^*}{2 \eta}} \left(Y_s^\eta \right)^{p-1} \frac{1}{\sqrt{\eta}} \tau(Y_s^\eta)  \thinspace dW_s^2 + \frac{p(p-1)}{2} \int_0^t e^{\frac{s p \lambda^*}{2 \eta}} \left(Y_s^\eta \right)^{p-2} \frac{1}{{\eta}} \tau(Y_s^\eta)^2  \thinspace ds\\
& = \left(y_0 \right)^p + \int_0^t p e^{\frac{s p \lambda^*}{2 \eta}} \left[ \frac{\lambda^*}{2 \eta} \left(Y_s^\eta \right)^2 + Y_s^\eta \cdot \frac{1}{\eta} f(Y_s^\eta) \right] \left(Y_s^\eta \right)^{p-2} ds \\
 & \qquad \qquad \quad + \int_0^t p e^{\frac{s p \lambda^*}{2 \eta}} \left(Y_s^\eta\right)^{p-1} \frac{1}{\sqrt{\eta}} \tau(Y_s^\eta)  \thinspace dW_s^2 + \frac{p(p-1)}{2} \int_0^t e^{\frac{s p \lambda^*}{2 \eta}} \left(Y_s^\eta\right)^{p-2} \frac{1}{{\eta}} \tau(Y_s^\eta)^2  \thinspace ds.
\end{aligned}
\end{equation*}
Using the condition $y \cdot f(y) \le -\lambda^* |y|^2$ (Assumption \ref{A:Derivatives-Assumption-f}) and the uniform boundedness of the function $\tau$ (Assumption \ref{A:Derivatives-Assumption-f}), we get
\begin{equation*}
e^{\frac{tp \lambda^*}{2 \eta}}\left(Y_t^\eta \right)^p \le \left(y_0 \right)^p - \int_0^t \frac{p \lambda^*}{2 \eta} e^{\frac{s p \lambda^*}{2 \eta}} \left(Y_s^\eta\right)^p ds + \frac{K}{\eta} \int_0^t e^{\frac{s p \lambda^*}{2 \eta}} \left(Y_s^\eta\right)^{p-2} ds + \frac{p}{\sqrt{\eta}}\int_0^t e^{\frac{s p \lambda^*}{2 \eta}} \tau(Y_s^\eta) \left(Y_s^\eta\right)^{p-1} dW_s^2.
\end{equation*}
Next, using Young's inequality $K |Y_s^\eta|^{p-2}\le C_1 \zeta |Y_s^\eta|^p + C_2$ for some sufficiently small $\zeta>0$ followed by taking expectation, we have
\begin{equation*}
\begin{aligned}
\BE \left[ \left(Y_t^\eta \right)^p \right] & \le (y_0)^p e^{\frac{-tp \lambda^*}{2 \eta}}+ \left(- \frac{p \lambda^*}{2 \eta}+ \frac{\zeta}{\eta} K\right) e^{\frac{-tp \lambda^*}{2 \eta}}\int_0^t e^{\frac{sp \lambda^*}{2 \eta}}\left( Y_s^\eta\right)^p ds + \frac{K}{\eta} e^{\frac{-tp \lambda^*}{2 \eta}} \int_0^t e^{\frac{sp \lambda^*}{2 \eta}} ds \\
& \le (y_0)^p e^{\frac{-tp \lambda^*}{2 \eta}} + \frac{K}{\eta}\int_0^t e^{- \frac{p \lambda^*}{2 \eta}(t-s)} ds \le K.
\end{aligned}
\end{equation*}
Hence, the lemma is proved.
\end{proof}

\begin{lemma}\label{L:Moment-bound-X}
Let $X_t^\eps$ be the solution of the first equation in system \eqref{E:Multiscale-Diffusion-Main-Eq}. For any $t \ge 0,$ and $p \in \BN,$ there exists a time-independent positive constant $K$ such that
$$\sup_{t \ge 0} \BE\left[ |X_t^\eps|^p \right] \le K.$$
\end{lemma}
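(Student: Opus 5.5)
We mirror the proof of Lemma \ref{L:Moment-bound-Y}, using the dissipativity of $c$ in place of that of $f$. As there, it suffices to treat even powers $p=2m$, since odd powers then follow from H\"older's inequality. We apply It\^o's formula to $\varphi(t,x)=e^{\kappa t}x^{p}$, with $\kappa>0$ to be chosen (we will take $\kappa=\frac{p\alpha}{2}$), along the first equation of \eqref{E:Multiscale-Diffusion-Main-Eq}. This gives
\begin{equation*}
e^{\kappa t}(X_t^\eps)^p = x_0^p+\int_0^t e^{\kappa s}\Big[\kappa (X_s^\eps)^p + p (X_s^\eps)^{p-2} X_s^\eps c(X_s^\eps,Y_s^\eta) + \tfrac{p(p-1)}{2}\eps\,\sigma(Y_s^\eta)^2 (X_s^\eps)^{p-2}\Big]ds + M_t,
\end{equation*}
where $M_t=p\sqrt{\eps}\int_0^t e^{\kappa s}(X_s^\eps)^{p-1}\sigma(Y_s^\eta)\,dW_s^1$.

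The key input is the consequence of Assumption \ref{A:Derivatives-Assumption-c}(2), namely $\sup_y x\,c(x,y)\le -\alpha|x|^2+\beta$, which holds uniformly in the fast variable. With it the drift term is bounded by $p(X_s^\eps)^{p-2}(-\alpha (X_s^\eps)^2+\beta)$. The term $\eps\sigma^2$ is bounded because $\sigma$ is bounded and $\eps<1$.

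So the drift is at most $(\kappa-p\alpha)(X_s^\eps)^p+K(X_s^\eps)^{p-2}$. We absorb the lower power by Young's inequality, $K|x|^{p-2}\le \zeta|x|^p+C_\zeta$, with $\zeta=\frac{p\alpha}{4}$, say. Taking $\kappa=\frac{p\alpha}{2}$ then makes the coefficient of $(X_s^\eps)^p$ nonpositive, so that term can be dropped.

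To take expectations we first localize with stopping times $\tau_N=\inf\{t:|X_t^\eps|\ge N\}$, so that $\BE M_{t\wedge\tau_N}=0$. Strong well-posedness from Assumption \ref{A:Derivatives-Assumption-c}(4) and linear growth give finiteness of moments on compacts. Fatou's lemma then removes the localization and yields
\begin{equation*}
\BE[(X_t^\eps)^p]\le x_0^p e^{-\kappa t}+C\int_0^t e^{-\kappa(t-s)}ds\le x_0^p+\frac{C}{\kappa},
\end{equation*}
which is independent of $t$, $\eps$ and $\eta$.

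The main care point is this localization and martingale step, since we lack a priori integrability of $M$. An alternative is to bound $\BE\sup_{s\le T}|X_s^\eps|^{2p}$ via linear growth and Gronwall on $[0,T]$ first. Everything else is routine, and no fast-variable estimates are needed, because the dissipativity bound is uniform in $y$ and $\sigma$ is bounded.
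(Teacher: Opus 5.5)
Your proposal is correct and follows the paper's own route: It\^o's formula applied to $e^{p\alpha t/2}x^p$, the dissipativity bound $\sup_y x\,c(x,y)\le -\alpha|x|^2+\beta$ (uniform in $y$), boundedness of $\sigma$, and Young's inequality to absorb the $|x|^{p-2}$ term. Your localization with $\tau_N$ followed by Fatou's lemma is a rigor step that the paper's one-line proof leaves implicit.
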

\begin{proof}
The proof of the lemma is a simple application of It\^o's formula to the function $\varphi(t,x) = x^p e^{\frac{p \alpha t}{2}}$, Young's inequality and Assumption \ref{A:Derivatives-Assumption-c}.
\end{proof}

The following result is used in the proof of Lemma \ref{L:4th-power-LLN}. 
\begin{lemma}\label{L:4th-power-I2}
Let the term $\mathcal{I}_2$ be defined in equation \eqref{E:LLn-4th-power}. Then, for the solutions $\Phi(x,y)$ and $\Xi(x,y)$ of Poisson equations \eqref{E:I2-Poisson-Eq}, we have the following representation of $\mathcal{I}_2:$
\begin{equation*}
\begin{aligned}
 \mathcal{I}_2 & =   \int_0^t e^{2\alpha s} (X_s^\eps- \bar{X}_s) \mathscr{L}_y \Phi(X_s^\eps, Y_s^\eta) \thinspace ds \\
&  = \eta e^{2 \alpha t} (X_t^\eps- \bar{X}_t)\Phi(X_t^\eps, Y_t^\eta)  - \eta \int_0^t 2 \alpha e^{2 \alpha s} (X_s^\eps- \bar{X}_s)\Phi(X_s^\eps, Y_s^\eta) \thinspace ds \\
&  - \eta^2  e^{2 \alpha t} \Xi(X_t^\eps,Y_t^\eta) + \eta^2  \Xi(X_0^\eps,Y_0^\eta) + \eta^2 \int_0^t 2 \alpha e^{2 \alpha s} \Xi(X_s^\eps, Y_s^\eta) \thinspace ds - {\eta \eps}\int_0^t e^{2\alpha s} \sigma^2(X_s^\eps)  \Phi_{x}(X_s^\eps, Y_s^\eta) \thinspace ds \\
 &  + \eta^2  \int_0^t  e^{2 \alpha s} \Xi_x(X_s^\eps, Y_s^\eta) c(X_s^\eps, Y_s^\eta) \thinspace ds  + \eta^2  \sqrt{\eps}\int_0^t  e^{2 \alpha s} \sigma(X_s^\eps) \Xi_x(X_s^\eps, Y_s^\eta) \thinspace dW_s^1\\
 &  + \frac{\eta^2}{\sqrt{\eta}}\int_0^t e^{2 \alpha s} \tau(X_s^\eps) \Xi_y(X_s^\eps, Y_s^\eta) \thinspace dW_s^2 + \frac{\eps \eta^2}{2} \int_0^t  e^{2 \alpha s} \sigma^2(X_s^\eps) \Xi_{xx}(X_s^\eps, Y_s^\eta) \thinspace ds\\
\end{aligned}
\end{equation*}
\begin{equation*}
\begin{aligned}
&  - \eta  \int_0^t e^{2\alpha s} \Phi(X_s^\eps, Y_s^\eta) \{\bar{c}(X_s^\eps) - \bar{c}(\bar{X}_s) \} \thinspace ds  -  \sqrt{\eps} \eta \int_0^t e^{2\alpha s} \sigma(X_s^\eps) \Phi(X_s^\eps, Y_s^\eta) \thinspace dW_s^1 \\
&  -  \eta  \int_0^t e^{2\alpha s} (X_s^\eps- \bar{X}_s) \Phi_x(X_s^\eps, Y_s^\eta)c(X_s^\eps, Y_s^\eta) \thinspace ds - \eta \sqrt{\eps}\int_0^t e^{2\alpha s} (X_s^\eps- \bar{X}_s) \sigma(X_s^\eps) \Phi_x(X_s^\eps, Y_s^\eta) \thinspace dW_s^1 \\
&  - \frac{\eta}{\sqrt{\eta}}\int_0^t e^{2\alpha s} (X_s^\eps- \bar{X}_s) \tau(X_s^\eps) \Phi_y(X_s^\eps, Y_s^\eta) \thinspace dW_s^2  - \frac{\eps \eta}{2}\int_0^t e^{2\alpha s} (X_s^\eps- \bar{X}_s) \sigma^2(X_s^\eps) \Phi_{xx}(X_s^\eps, Y_s^\eta) \thinspace ds.
\end{aligned}
\end{equation*}
\end{lemma}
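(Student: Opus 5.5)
The plan is to derive the representation of $\mathcal{I}_2$ by two successive applications of It\^o's formula, each turning a $\mathscr{L}_y$-term into an $O(\eta)$ boundary term plus correction terms. Note first that, by the Poisson equation \eqref{E:I2-Poisson-Eq}, $(X_s^\eps-\bar X_s)^3\{c(X_s^\eps,Y_s^\eta)-\bar c(X_s^\eps)\}$ has been rewritten (in the paper's shorthand) as $(X_s^\eps-\bar X_s)\mathscr{L}_y\Phi(X_s^\eps,Y_s^\eta)$. Hence $\mathcal{I}_2$ is an integral of the form $\int_0^t e^{2\alpha s}(X_s^\eps-\bar X_s)\mathscr{L}_y\Phi\,ds$, and the task is to express it without the $1/\eta$ singularity.

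\emph{Step 1.} Apply It\^o's formula to $g(s,X,\bar X,Y)\triangleq e^{2\alpha s}(X-\bar X)\Phi(X,Y)$ along $(X_s^\eps,\bar X_s,Y_s^\eta)$. The $\frac1\eta\mathscr{L}_y$ part of the generator acting on $Y$ produces $\frac1\eta e^{2\alpha s}(X-\bar X)\mathscr{L}_y\Phi$. Multiplying through by $\eta$ isolates $\mathcal{I}_2$ and leaves the following pieces:
\begin{itemize}
\item the boundary term $\eta e^{2\alpha t}(X_t^\eps-\bar X_t)\Phi_t$, where the initial value vanishes because $X_0^\eps=\bar X_0$;
\item the time derivative $2\alpha\eta\int e^{2\alpha s}(X-\bar X)\Phi$;
\item the $dX-d\bar X$ contribution $\eta\int e^{2\alpha s}\Phi\,[c(X,Y)-\bar c(\bar X)]\,ds$ together with the martingale $\eta\sqrt\eps\int e^{2\alpha s}\sigma\Phi\,dW^1$;
\item the $x$-derivative terms $\eta\int e^{2\alpha s}(X-\bar X)\Phi_x c$, $\eta\sqrt\eps\int(X-\bar X)\sigma\Phi_x\,dW^1$, and $\frac{\eps\eta}{2}\int(X-\bar X)\sigma^2\Phi_{xx}$;
\item the cross-variation term $\eta\eps\int e^{2\alpha s}\sigma^2\Phi_x$, arising from $\partial_X\partial_X$ of the product;
\item the $dW^2$ martingale $\sqrt\eta\int e^{2\alpha s}(X-\bar X)\tau\Phi_y\,dW^2$.
\end{itemize}

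\emph{Step 2.} Split $c(X,Y)-\bar c(\bar X)=\{c(X,Y)-\bar c(X)\}+\{\bar c(X)-\bar c(\bar X)\}$. The second piece gives the term $\eta\int\Phi\{\bar c(X)-\bar c(\bar X)\}$. The first piece gives $\eta\int e^{2\alpha s}\Phi\{c-\bar c\}=\eta\int e^{2\alpha s}H=\eta\int e^{2\alpha s}\mathscr{L}_y\Xi$, which is again singular-looking. Apply It\^o's formula to $e^{2\alpha s}\Xi(X_s^\eps,Y_s^\eta)$ and multiply by $\eta$. This yields $\eta\int e^{2\alpha s}\mathscr{L}_y\Xi=\eta^2[e^{2\alpha t}\Xi_t-\Xi_0]-\eta^2\int 2\alpha e^{2\alpha s}\Xi-\eta^2\int e^{2\alpha s}\Xi_x c-\eta^2\sqrt\eps\int\sigma\Xi_x\,dW^1-\eta^{3/2}\int\tau\Xi_y\,dW^2-\frac{\eps\eta^2}{2}\int\sigma^2\Xi_{xx}$. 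Substituting this into Step 1 and collecting signs produces exactly the listed terms.

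\emph{Step 3.} Justify the computation. This requires that $\Phi,\Xi$ be $C^2$ in $x$ and $y$ with polynomial growth. That regularity comes from the Poisson-equation theory of Section~\ref{S:Poisson-Equation} (Pardoux--Veretennikov), using the centering of $F$ and $H$ and Assumptions~\ref{A:Derivatives-Assumption-c}--\ref{A:Derivatives-Assumption-f}. In addition, the stochastic integrals must be true martingales, which follows from Lemmas~\ref{L:Moment-bound-Y} and \ref{L:Moment-bound-X}.

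The main obstacle is bookkeeping rather than analysis: tracking the cross-variation terms and the sign conventions (including the paper's normalisation constants, e.g. the prefactor $4$ and the appearance of $\tau(X)$ versus $\tau(Y)$) so that the final expression matches term by term. The only genuinely delicate point is confirming that $H=\Phi\{c-\bar c\}$ is centered with respect to $\mu_x$. Without that, the second Poisson equation is not solvable, and a nonzero average $\eta\int e^{2\alpha s}\bar H\,ds$ would remain. I would check this centering condition first.
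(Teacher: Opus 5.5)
Your two-step scheme is the paper's proof: It\^o's formula for $e^{2\alpha s}(X_s^\eps-\bar X_s)\Phi$, then Lemma~\ref{L:4th-Power-I22} (It\^o for $e^{2\alpha s}\Xi$). Your bookkeeping reproduces the stated right-hand side, up to the factor $4$ and the $\sigma(X)$, $\tau(X)$ typos you noticed.

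\textbf{The gap is the centering check you postponed, and the paper does not close it either.} The paper asserts that $H$ is centred ``by integration by parts'', but integration by parts gives the opposite. Invariance of $\mu_x$ gives $\int\mathscr{L}_y(g^2)\,d\mu_x=0$, and $\mathscr{L}_y(g^2)=2g\,\mathscr{L}_y g+\tau^2 g_y^2$, so $\int g\,\mathscr{L}_y g\,d\mu_x=-\frac12\int\tau^2 g_y^2\,d\mu_x$. Because $F=(x-\bar x)^2\{c-\bar c\}$, the centred solution is $\Phi=(x-\bar x)^2\Upsilon$, with $\Upsilon$ from \eqref{E:Poisson-equation}. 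Hence $H=(x-\bar x)^2\,\Upsilon\,\mathscr{L}_y\Upsilon$ and
\[
\bar H(x)\triangleq\int_{\BR}H(x,y)\,\mu_x(dy)=-\frac{(x-\bar x)^2}{2}\int_{\BR}\tau^2(y)\,\Upsilon_y(x,y)^2\,\mu_x(dy).
\]
This is strictly negative for $x\neq\bar x$ as soon as $c(x,\cdot)$ is not constant, since $\tau^2\ge\lambda_1$.

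Consequences:
\begin{enumerate}
\item \eqref{E:Centring-condition} fails for $H$, so the second equation in \eqref{E:I2-Poisson-Eq} has no centred, polynomially growing solution, and your Step 2 cannot be carried out as written.
\item If you solve $\mathscr{L}_y\Xi=H-\bar H$ instead, your argument gives the stated formula plus the extra term $-\eta\int_0^t e^{2\alpha s}\bar H(X_s^\eps)\,ds=\frac{\eta}{2}\int_0^t e^{2\alpha s}(X_s^\eps-\bar X_s)^2\int_{\BR}\tau^2\Upsilon_y^2(X_s^\eps,y)\,\mu_{X_s^\eps}(dy)\,ds\ge0$.
\item This term does not vanish, so it must appear in the identity. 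Downstream it is harmless: by Lemma~\ref{L:2nd-power} it adds only $O(\eta(\eps+\eta))$ to the bound of Lemma~\ref{L:4th-power-LLN}.
\end{enumerate}

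\textbf{A second omission, also inherited from the paper.} $F$, and hence $\Phi$ and $\Xi$, depend on $\bar x$, which is evaluated at the moving point $\bar X_s$. It\^o's formula along $(X_s^\eps,\bar X_s,Y_s^\eta)$, as you set it up, therefore also produces two drift terms that appear neither in your list nor in the statement:
\begin{itemize}
\item $e^{2\alpha s}(X_s^\eps-\bar X_s)\,\partial_{\bar x}\Phi\,\bar c(\bar X_s)$, with $\partial_{\bar x}\Phi=-2(x-\bar x)\Upsilon$;
\item $e^{2\alpha s}\,\partial_{\bar x}\Xi\,\bar c(\bar X_s)$.
\end{itemize}
Your term $\Phi\{c-\bar c(\bar X)\}$ accounts only for the explicit factor $X-\bar X$.

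\textbf{Suggested repair.} Expand $e^{2\alpha s}(X_s^\eps-\bar X_s)^3\,\Upsilon(X_s^\eps,Y_s^\eta)$ directly; here $\Upsilon$ does not depend on $\bar x$. Treat the non-centred product $\Upsilon\{c-\bar c\}$ by subtracting its mean $-\frac12\int\tau^2\Upsilon_y^2\,d\mu_x$. For the purpose of Lemma~\ref{L:4th-power-LLN} you can even skip the second Poisson equation: $\eta\,\BE\int_0^t e^{2\alpha s}|\Phi\{c-\bar c\}|\,ds\le K\eta\int_0^t e^{2\alpha s}\bigl(\BE|X_s^\eps-\bar X_s|^4\bigr)^{1/2}ds$. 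Young's inequality then absorbs this into the $-2\alpha$ term at a cost of $O(\eta^2)$.
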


\begin{proof}
Recalling the definition of the process $\mathcal{I}_2$ from equation \eqref{E:LLn-4th-power}, we have
$\mathcal{I}_2= 4 \int_0^t (X_s^\eps - \bar{X}_{s}) e^{2 \alpha s} (X_s^\eps - \bar{X}_{s})^2\left\{ c(X_s^\eps, Y_s^\eta)- \bar{c}(X_s^\eps) \right\}ds.$
For the solution $\Phi(x,y)$ of equation \eqref{E:I2-Poisson-Eq}, we apply It\^o's formula to the function $\phi(t,x,y) = e^{2 \alpha t} (x-z) \Phi(x,y)$ for fixed $z$ (for the processes $X^\eps$ and $Y^\eta$,  $\phi(t, X_t^\eps, Y_t^\eta) = e^{2 \alpha t} (X_t^\eps- \bar{X}_t)\Phi(X_t^\eps, Y_t^\eta)$ ) to get
\begin{equation*}
\begin{aligned}
e^{2 \alpha t} & (X_t^\eps- \bar{X}_t)\Phi(X_t^\eps, Y_t^\eta)  = 2 \alpha \int_0^t  e^{2 \alpha s} (X_s^\eps- \bar{X}_s)\Phi(X_s^\eps, Y_s^\eta) \thinspace ds \\
& \qquad \qquad + \int_0^t e^{2\alpha s} \Phi(X_s^\eps, Y_s^\eta) \thinspace d[X_s^\eps- \bar{X}_s] + \int_0^t e^{2\alpha s} (X_s^\eps- \bar{X}_s) \Phi_x(X_s^\eps, Y_s^\eta)  \thinspace dX_s^\eps\\
& \qquad \qquad + \int_0^t e^{2\alpha s} (X_s^\eps- \bar{X}_s) \Phi_y(X_s^\eps, Y_s^\eta) \thinspace dY_s^\eta + \frac{\eps}{2}\int_0^t e^{2\alpha s} (X_s^\eps- \bar{X}_s) \sigma^2(X_s^\eps) \Phi_{xx}(X_s^\eps, Y_s^\eta) \thinspace ds \\
& \qquad \qquad + {\eps}\int_0^t e^{2\alpha s} \sigma^2(X_s^\eps)  \Phi_{x}(X_s^\eps, Y_s^\eta) \thinspace ds + \frac{1}{2 \eta} \int_0^t e^{2\alpha s} (X_s^\eps- \bar{X}_s) \tau^2(Y_s^\eta) \Phi_{yy}(X_s^\eps, Y_s^\eta) \thinspace ds,
\end{aligned}
\end{equation*}
where, we use the following derivative expressions in It\^o's formula:\\ $\phi_x(t,x,y)  = e^{2 \alpha t} \left[ \Phi (x,y) + (x-z)\Phi_x(x,y) \right], \thinspace \phi_y(t,x,y)= e^{2 \alpha t}  (x-z)\Phi_y(x,y), \thinspace \phi_{xx}(t,x,y)  = \\ e^{2 \alpha t} \left[ (x-z) \Phi_{xx} (x,y) + 2\Phi_x(x,y) \right], \phi_{yy}(t,x,y)= e^{2 \alpha t}  (x-z)\Phi_{yy}(x,y).$
Next, the integral representation of the processes $X_t^\eps-\bar{X}_t$, $X_t^\eps$ and $Y_t^\eta,$ and a simple algebra yield
\begin{equation*}
\begin{aligned}
& e^{2 \alpha t} (X_t^\eps- \bar{X}_t)  \Phi(X_t^\eps, Y_t^\eta) \\
& \qquad  = 2 \alpha \int_0^t  e^{2 \alpha s} (X_s^\eps- \bar{X}_s)\Phi(X_s^\eps, Y_s^\eta) \thinspace ds  +  \int_0^t e^{2\alpha s} \Phi(X_s^\eps, Y_s^\eta) \{c(X_s^\eps, Y_s^\eta) - \bar{c}(X_s^\eps) \} \thinspace ds \\
& \qquad +   \int_0^t e^{2\alpha s} \Phi(X_s^\eps, Y_s^\eta) \{\bar{c}(X_s^\eps) - \bar{c}(\bar{X}_s) \} \thinspace ds  +  \sqrt{\eps} \int_0^t e^{2\alpha s} \sigma(X_s^\eps) \Phi(X_s^\eps, Y_s^\eta) \thinspace dW_s^1 \\
& \qquad  +  \int_0^t e^{2\alpha s} (X_s^\eps- \bar{X}_s) \Phi_x(X_s^\eps, Y_s^\eta)c(X_s^\eps, Y_s^\eta) \thinspace ds + \sqrt{\eps}\int_0^t e^{2\alpha s} (X_s^\eps- \bar{X}_s) \sigma(X_s^\eps) \Phi_x(X_s^\eps, Y_s^\eta) \thinspace dW_s^1 \\
& \qquad  + \frac{1}{\eta} \int_0^t e^{2\alpha s} (X_s^\eps- \bar{X}_s) \Phi_y(X_s^\eps, Y_s^\eta)f(Y_s^\eta) \thinspace ds + \frac{1}{\sqrt{\eta}}\int_0^t e^{2\alpha s} (X_s^\eps- \bar{X}_s) \tau(Y_s^\eta) \Phi_y(X_s^\eps, Y_s^\eta) \thinspace dW_s^2 \\
& \qquad  + \frac{\eps}{2}\int_0^t e^{2\alpha s} (X_s^\eps- \bar{X}_s) \sigma^2(X_s^\eps) \Phi_{xx}(X_s^\eps, Y_s^\eta) \thinspace ds  + {\eps}\int_0^t e^{2\alpha s} \sigma^2(X_s^\eps)  \Phi_{x}(X_s^\eps, Y_s^\eta)\thinspace ds \\
& \qquad  + \frac{1}{2 \eta} \int_0^t e^{2\alpha s} (X_s^\eps- \bar{X}_s) \tau^2(Y_s^\eta) \Phi_{yy}(X_s^\eps, Y_s^\eta)\thinspace ds.
\end{aligned}
\end{equation*}
The definition of the operator $\mathscr{L}_y$ defined in \eqref{E:Infinitesimal-Gen} and a rearrangement of the above terms give
\begin{equation*}
\begin{aligned}
 \int_0^t e^{2\alpha s} & (X_s^\eps- \bar{X}_s)  \mathscr{L}_y \Phi(X_s^\eps, Y_s^\eta) \thinspace ds =   \int_0^t e^{2\alpha s} (X_s^\eps- \bar{X}_s)^3 \left\{ c(X_s^\eps, Y_s^\eta)- \overline{c}(X_s^\eps) \right\} \thinspace ds \\
 & = \eta \thinspace e^{2 \alpha t} (X_t^\eps- \bar{X}_t)\Phi(X_t^\eps, Y_t^\eta)  - \eta \int_0^t 2 \alpha e^{2 \alpha s} (X_s^\eps- \bar{X}_s)\Phi(X_s^\eps, Y_s^\eta) \thinspace ds \\
& - \eta \int_0^t e^{2\alpha s} \Phi(X_s^\eps, Y_s^\eta) \{c(X_s^\eps, Y_s^\eta) - \bar{c}(X_s^\eps) \} \thinspace ds  - \eta  \int_0^t e^{2\alpha s} \Phi(X_s^\eps, Y_s^\eta) \{\bar{c}(X_s^\eps) - \bar{c}(\bar{X}_s) \} \thinspace ds  \\
& -  \sqrt{\eps} \eta \int_0^t e^{2\alpha s} \sigma(X_s^\eps) \Phi(X_s^\eps, Y_s^\eta) \thinspace dW_s^1  -  \eta  \int_0^t e^{2\alpha s} (X_s^\eps- \bar{X}_s) \Phi_x(X_s^\eps, Y_s^\eta)c(X_s^\eps, Y_s^\eta) \thinspace ds\\
& - \eta \sqrt{\eps}\int_0^t e^{2\alpha s} (X_s^\eps- \bar{X}_s) \sigma(X_s^\eps) \Phi_x(X_s^\eps, Y_s^\eta) \thinspace dW_s^1  - \frac{\eta}{\sqrt{\eta}}\int_0^t e^{2\alpha s} (X_s^\eps- \bar{X}_s) \tau(Y_s^\eta) \Phi_y(X_s^\eps, Y_s^\eta) \thinspace dW_s^2 \\
& - \frac{\eps \eta}{2}\int_0^t e^{2\alpha s} (X_s^\eps- \bar{X}_s) \sigma^2(X_s^\eps) \Phi_{xx}(X_s^\eps, Y_s^\eta) \thinspace ds  - {\eta \eps}\int_0^t e^{2\alpha s}  \sigma^2(X_s^\eps) \Phi_{x}(X_s^\eps, Y_s^\eta) \thinspace ds.
\end{aligned}
\end{equation*}
Finally, for the term $\int_0^t e^{2\alpha s}  \Phi(X_s^\eps, Y_s^\eta) \{c(X_s^\eps, Y_s^\eta) - \bar{c}(X_s^\eps) \} ds$ in the above equation, we use Lemma \ref{L:4th-Power-I22} below to get the required expression for the term $\mathcal{I}_2$, which completes the proof of the lemma.
\end{proof}

\begin{lemma}\label{L:4th-Power-I22}
Let $\Phi(x,y)$ and $\Xi(x,y)$ be the solutions of the first and second Poisson equations in \eqref{E:I2-Poisson-Eq}. Then,
\begin{equation*}
\begin{aligned}
 \int_0^t e^{2\alpha s} & \Phi(X_s^\eps, Y_s^\eta) \{c(X_s^\eps, Y_s^\eta) - \bar{c}(X_s^\eps) \} \thinspace ds = \int_0^t  e^{2 \alpha s} \mathscr{L}_y \Xi (X_s^\eps, Y_s^\eta) \thinspace ds \\
 & \qquad \qquad = \eta \thinspace  e^{2 \alpha t} \Xi(X_t^\eps,Y_t^\eta) - \eta  \Xi(X_0^\eps,Y_0^\eta) - \eta \int_0^t 2 \alpha e^{2 \alpha s} \Xi(X_s^\eps, Y_s^\eta) \thinspace ds \\
 & \qquad \qquad  - \eta  \int_0^t  e^{2 \alpha s} \Xi_x(X_s^\eps, Y_s^\eta) c(X_s^\eps, Y_s^\eta) \thinspace ds  - \eta  \sqrt{\eps}\int_0^t  e^{2 \alpha s} \sigma(X_s^\eps) \Xi_x(X_s^\eps, Y_s^\eta) \thinspace dW_s^1\\
 & \qquad \qquad  - \frac{\eta}{\sqrt{\eta}}\int_0^t e^{2 \alpha s} \tau(Y_s^\eta) \Xi_y(X_s^\eps, Y_s^\eta) \thinspace dW_s^2 - \frac{\eps \eta}{2} \int_0^t  e^{2 \alpha s} \sigma^2(X_s^\eps) \Xi_{xx}(X_s^\eps, Y_s^\eta) \thinspace ds.
\end{aligned}
\end{equation*}
\end{lemma}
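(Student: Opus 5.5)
This identity comes from applying It\^o's formula to the function $\psi(t,x,y) \triangleq e^{2\alpha t}\,\Xi(x,y)$ along the pair $(X_t^\eps, Y_t^\eta)$, exactly as was done for $e^{\alpha t}\Gamma(X_t^\eps,Y_t^\eta)$ in the proof of Lemma \ref{L:2nd-power}. The first equality is immediate: $\Xi$ solves the second Poisson equation in \eqref{E:I2-Poisson-Eq}, so $\mathscr{L}_y \Xi(x,y) = H(x,y) = \Phi(x,y)\{c(x,y)-\bar{c}(x)\}$. We evaluate this at $(X_s^\eps,Y_s^\eta)$, multiply by $e^{2\alpha s}$ and integrate. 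Well-posedness and the regularity needed for It\^o's formula ($\Xi \in \mathscr{C}^{2}$ in both variables, with at most polynomial growth of $\Xi,\Xi_x,\Xi_{xx},\Xi_y$ in $y$) come from the Poisson-equation theory of Section \ref{S:Poisson-Equation}. This uses the centering condition $\int_{\BR} H(x,y)\mu_x(dy)=0$ noted after \eqref{E:I2-Poisson-Eq}, and the H\"older/differentiability hypotheses in Assumptions \ref{A:Derivatives-Assumption-c} and \ref{A:Derivatives-Assumption-f}.

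The main step is to write out It\^o's formula for $\psi$. Since $W^1$ and $W^2$ are independent, there is no cross term, and $dX_t^\eps = c\,dt+\sqrt{\eps}\sigma\,dW^1_t$, $dY_t^\eta = \eta^{-1} f\,dt + \eta^{-1/2}\tau\, dW^2_t$. This gives
\begin{equation*}
\begin{aligned}
e^{2\alpha t}\Xi(X_t^\eps,Y_t^\eta) &= \Xi(X_0^\eps,Y_0^\eta) + \int_0^t 2\alpha e^{2\alpha s}\Xi\,ds + \int_0^t e^{2\alpha s}\Xi_x c\,ds + \sqrt{\eps}\int_0^t e^{2\alpha s}\sigma\Xi_x\,dW_s^1 \\
&\quad + \frac{\eps}{2}\int_0^t e^{2\alpha s}\sigma^2\Xi_{xx}\,ds + \frac{1}{\sqrt{\eta}}\int_0^t e^{2\alpha s}\tau\Xi_y\,dW_s^2 + \frac{1}{\eta}\int_0^t e^{2\alpha s}\mathscr{L}_y\Xi\,ds,
\end{aligned}
\end{equation*}
where the last term collects $\eta^{-1}(f\Xi_y + \tfrac12\tau^2\Xi_{yy})$ using the definition \eqref{E:Infinitesimal-Gen}.

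We then multiply by $\eta$ and isolate $\int_0^t e^{2\alpha s}\mathscr{L}_y\Xi\,ds$. The remaining terms move to the right-hand side with a minus sign, and $\eta\cdot\eta^{-1/2}$ is written as $\eta/\sqrt{\eta}$; this produces exactly the stated expression. Before applying It\^o, we note that the stochastic integrals are genuine (local) martingales with integrable quadratic variation on $[0,t]$. This follows from the polynomial growth of $\Xi_x,\Xi_y$, boundedness of $\sigma,\tau$ (Assumptions \ref{A:Boundedness of sigma-tau}, \ref{A:Derivatives-Assumption-c}), and the moment bounds of Lemmas \ref{L:Moment-bound-Y} and \ref{L:Moment-bound-X}.

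The only real obstacle is the regularity of $\Xi$ in the $x$-variable, since $H$ involves $\Phi$, itself a Poisson solution. We need $\Phi$ to have two $x$-derivatives that are H\"older in $y$ with polynomial growth, so that $H$ falls into the class covered by \cite{pardoux2003poisson}, and then invoke the same result again for $\Xi$. We will take this two-level application of the Pardoux--Veretennikov regularity theorem as given from Section \ref{S:Poisson-Equation}; the rest is bookkeeping.
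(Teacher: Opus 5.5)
Your proposal is correct and is essentially the paper's proof: apply It\^o's formula to $e^{2\alpha t}\Xi(X_t^\eps,Y_t^\eta)$, identify $\frac{1}{\eta}\bigl(f\Xi_y+\frac12\tau^2\Xi_{yy}\bigr)=\frac{1}{\eta}\mathscr{L}_y\Xi$, multiply by $\eta$ and rearrange, with the first equality read off directly from the second Poisson equation in \eqref{E:I2-Poisson-Eq}. One small remark: the system \eqref{E:Multiscale-Diffusion-Main-Eq} has diffusion coefficient $\sqrt{\eps}\,\sigma(Y_t^\eta)$, so It\^o's formula actually produces $\sigma(Y_s^\eta)$ in the $dW^1$ and $\Xi_{xx}$ terms; the $\sigma(X_s^\eps)$ appearing in the statement is a typo in the paper, and your argument-free notation avoids it.
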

\begin{proof}
We apply It\^o's formula to the function $\phi(x,y) = e^{2 \alpha t} \thinspace \Xi(x,y)$ to obtain
\begin{equation}\label{E:E1}
\begin{aligned}
e^{2 \alpha t} \Xi(X_t^\eps,Y_t^\eta) & = \Xi(X_0^\eps,Y_0^\eta) + 2 \alpha \int_0^t  e^{2 \alpha s} \Xi(X_s^\eps, Y_s^\eta) \thinspace  ds \\
 & \qquad \qquad +  \int_0^t  e^{2 \alpha s} \Xi_x(X_s^\eps, Y_s^\eta) c(X_s^\eps, Y_s^\eta) \thinspace  ds    +  \sqrt{\eps}\int_0^t  e^{2 \alpha s} \sigma(X_s^\eps) \Xi_x(X_s^\eps, Y_s^\eta) \thinspace  dW_s^1\\
 & \qquad \qquad + \frac{1}{\eta} \int_0^t e^{2 \alpha s} \Xi_y(X_s^\eps, Y_s^\eta) f(Y_s^\eta) \thinspace  ds    + \frac{1}{\sqrt{\eta}}\int_0^t e^{2 \alpha s} \tau(Y_s^\eta) \Xi_y(X_s^\eps, Y_s^\eta) \thinspace  dW_s^2 \\
 & \qquad \qquad  + \frac{\eps}{2} \int_0^t  e^{2 \alpha s} \sigma^2(X_s^\eps) \Xi_{xx}(X_s^\eps, Y_s^\eta) \thinspace  ds + \frac{1}{2 \eta} \int_0^t  e^{2 \alpha s} \tau^2(Y_s^\eta) \Xi_{yy}(X_s^\eps, Y_s^\eta) \thinspace  ds.
\end{aligned}
\end{equation}
The definition of the operator $\mathscr{L}_y$ defined in \eqref{E:Infinitesimal-Gen} and a rearrangement of the above terms yield
\begin{equation}\label{E:E2}
\begin{aligned}
 \int_0^t  e^{2 \alpha s} & \mathscr{L}_y \Xi (X_s^\eps, Y_s^\eta) \thinspace ds   = \eta  e^{2 \alpha t} \Xi(X_t^\eps,Y_t^\eta) - \eta  \Xi(X_0^\eps,Y_0^\eta) - 2 \alpha \eta \int_0^t  e^{2 \alpha s} \Xi(X_s^\eps, Y_s^\eta) \thinspace ds \\
 & \qquad \qquad - \eta  \int_0^t  e^{2 \alpha s} \Xi_x(X_s^\eps, Y_s^\eta) c(X_s^\eps, Y_s^\eta) \thinspace ds  - \eta  \sqrt{\eps}\int_0^t  e^{2 \alpha s} \sigma(X_s^\eps) \Xi_x(X_s^\eps, Y_s^\eta) \thinspace dW_s^1\\
 & \qquad \qquad - \frac{\eta}{\sqrt{\eta}}\int_0^t e^{2 \alpha s} \tau(Y_s^\eta) \Xi_y(X_s^\eps, Y_s^\eta) \thinspace dW_s^2 - \frac{\eps \eta}{2} \int_0^t  e^{2 \alpha s} \sigma^2(X_s^\eps) \Xi_{xx}(X_s^\eps, Y_s^\eta) \thinspace ds,
\end{aligned}
\end{equation}
which completes the proof of the lemma.
\end{proof}

\subsection{Proof of Lemmas \ref{L:Int-1} through \ref{L:Int-16}}\label{S:Integral-Lemmas}
\begin{proof}[Proof of Lemma \ref{L:Int-1}]
Using simple integration techniques, we have
\begin{equation*}
\mathcal{J}_1(t;\eps,\eta) = K (\eps+ \eta + \sqrt{\eps \eta})^4  \int_{u=0}^t \int_{s=u}^t \int_{w=s}^t (t-w)  e^{-2 \alpha (t-u)} e^{-\alpha(t-w)} e^{-\alpha (t-s)} \thinspace dw \thinspace ds \thinspace du.
\end{equation*}
In the equation above, we utilize a change of variables, along with a change in the order of integration and the arrangement of the new variables: \( t-u = x \), \( t-s = y \), \( t-w = z \) and $0 \le z \le y \le x \le t$ to obtain
\begin{equation*}
\begin{aligned}
\mathcal{J}_1(t;\eps,\eta) & = K (\eps+ \eta + \sqrt{\eps \eta})^4  \int_{x=0}^t \int_{y=0}^x \int_{z=0}^y z  e^{-2 \alpha x} e^{-\alpha z} e^{-\alpha y} \thinspace dz \thinspace dy \thinspace dx \\
& = K (\eps+ \eta + \sqrt{\eps \eta})^4  \int_{x=0}^t \int_{z=0}^x \int_{y=z}^x e^{-\alpha y} (z e^{-\alpha z}) e^{-2 \alpha x} \thinspace dy \thinspace dz \thinspace dx \\
& \le K (\eps+ \eta + \sqrt{\eps \eta})^4  \int_{x=0}^t  \int_{z=0}^x z e^{-2 \alpha z} e^{-2 \alpha x} \thinspace dz \thinspace dx  \le K (\eps+ \eta + \sqrt{\eps \eta})^4.
\end{aligned}
\end{equation*}
This completes the proof of the lemma.
\end{proof}

\begin{proof}[Proof of Lemma \ref{L:Int-4}]
Using simple integration techniques and $0 < \eta \ll 1$, we have
\begin{equation*}
\begin{aligned}
\mathcal{J}_4(t;\eps,\eta) &  \le K \eta \thinspace (\eps+ \eta + \sqrt{\eps \eta})^2 \int_{u=0}^t \int_{s=u}^t \int_{w=s}^t e^{-2 \alpha (t-u)}  e^{- 2\alpha(t-w)} e^{-\frac{\lambda}{\eta} (w-s)} dw \thinspace ds \thinspace du.
\end{aligned}
\end{equation*}
In the equation above, we utilize a change of variables, along with a change in the order of integration and the arrangement of the new variables: \( t-u = x \), \( t-s = y \), \( t-w = z \) and $0 \le z \le y \le x \le t$ to obtain
\begin{equation*}
\begin{aligned}
\mathcal{J}_4(t;\eps,\eta) \le K \eta \thinspace (\eps+ \eta + \sqrt{\eps \eta})^2  \int_{x=0}^t \int_{y=0}^x \int_{z=0}^y   e^{-2\alpha x} e^{-2 \alpha z} e^{-\frac{\lambda}{\eta}(y-z)} \thinspace dz \thinspace dy \thinspace dx  = K \eta^2 \thinspace (\eps+ \eta + \sqrt{\eps \eta})^2,
\end{aligned}
\end{equation*}
which concludes the proof of the lemma.
\end{proof}
\begin{proof}[Proof of Lemma \ref{L:Int-8}]
Using simple integration techniques and $0 < \eta \ll 1$, we have
\begin{equation*}
\begin{aligned}
\mathcal{J}_8(t;\eps,\eta) &  = K (\eps+ \eta + \sqrt{\eps \eta})  \int_{u=0}^t \int_{s=u}^t \int_{w=s}^t e^{-\alpha (t-u)} e^{-\alpha (t-s)} e^{- \alpha(t-w)}  e^{-\frac{\lambda}{\eta} (w-s)} e^{-\frac{\lambda}{\eta} (s-u)} \times \\
& \qquad \qquad \qquad \qquad \qquad \qquad \qquad \qquad \qquad \qquad \qquad \qquad \qquad  \int_{r=w}^t e^{-\frac{\lambda}{\eta} (r-w)} dr \thinspace dw \thinspace ds \thinspace du \\
&  \le K \eta \thinspace (\eps+ \eta + \sqrt{\eps \eta})  \int_{u=0}^t \int_{s=u}^t \int_{w=s}^t e^{-\alpha (t-u)} e^{-\alpha (t-s)}  e^{-\alpha(t-w)} e^{-\frac{\lambda}{\eta} (w-s)} e^{-\frac{\lambda}{\eta} (s-u)} dw \thinspace ds \thinspace du.
\end{aligned}
\end{equation*}
In this inequality, we utilize a change of variables, along with a change in the order of integration and the arrangement of the new variables: \( t-u = x \), \( t-s = y \), \( t-w = z \) and $0 \le z \le y \le x \le t$ to obtain
\begin{equation*}
\begin{aligned}
\mathcal{J}_8(t;\eps,\eta) & \le K \eta \thinspace (\eps+ \eta + \sqrt{\eps \eta})  \int_{x=0}^t \int_{y=0}^x \int_{z=0}^y  e^{-\alpha x} e^{-\alpha y} e^{-\alpha z} e^{- \frac{\lambda}{\eta} (y-z)} e^{- \frac{\lambda}{\eta} (x-y)} dz \thinspace dy \thinspace dx \\
& = K \eta \thinspace (\eps+ \eta + \sqrt{\eps \eta}) \int_{x=0}^t \int_{y=0}^x \int_{z=0}^y  e^{-\alpha x} e^{-\alpha y} e^{-\alpha z} e^{- \frac{\lambda}{\eta} (x-z)}  dz \thinspace dy \thinspace dx \\
& = K \eta \thinspace  (\eps+ \eta + \sqrt{\eps \eta}) \int_{x=0}^t \int_{z=0}^x \int_{y=z}^x e^{-\alpha x} e^{-\alpha y} e^{-\alpha z} e^{- \frac{\lambda}{\eta} (x-z)}  dy \thinspace dz \thinspace dx  \\
& \le K \eta \thinspace (\eps+ \eta + \sqrt{\eps \eta}) \int_{x=0}^t \int_{z=0}^x e^{-\alpha x}  e^{-2\alpha z} e^{- \frac{\lambda}{\eta} (x-z)} dz \thinspace dx \\
& = K \eta \thinspace (\eps+ \eta + \sqrt{\eps \eta}) \int_{z=0}^t \int_{x=z}^t e^{-\left(\alpha + \frac{\lambda}{\eta} \right) x} e^{-2\alpha z} e^{\frac{\lambda}{\eta}z} \thinspace dx \thinspace dz \le K \eta^2 \thinspace (\eps+ \eta + \sqrt{\eps \eta}).
\end{aligned}
\end{equation*}
This completes the proof of the lemma.
\end{proof}

\begin{proof}[Proof of Lemma \ref{L:Int-9}]
The proof of the result is similar to the proof of Lemmas \ref{L:Int-4} and \ref{L:Int-8}, hence for the sake of brevity, we omit the complete details.
\end{proof}

\begin{proof}[Proof of Lemma \ref{L:Int-16}]
We start using a change of variables, along with a change in the order of integration and the arrangement of the new variables: \( t-u = x \), \( t-s = y \), \( t-w = z \), $t-r = v$ and $0 \le v \le  z \le y \le x \le t$ to obtain
\begin{equation*}
\begin{aligned}
\mathcal{J}_{16}(t;\eps,\eta) & = K \int_{x=0}^t \int_{y=0}^x \int_{z=0}^y \int_{v=0}^z  e^{- 2 \alpha v} e^{- \alpha y}  e^{- \alpha z} e^{-\frac{\lambda}{\eta}(x-v)}  e^{-\frac{\lambda}{\eta}(x-y)} e^{-\frac{\lambda}{\eta}(z-v)} e^{-\frac{\lambda}{\eta}(y-z)} \thinspace dv \thinspace dz \thinspace dy \thinspace dx \\
& = K \int_{x=0}^t \int_{y=0}^x \int_{z=0}^y \int_{v=0}^z  e^{- 2 \alpha v} e^{- \alpha y}  e^{- \alpha z}  e^{-\frac{2\lambda}{\eta}(x-v)}  \thinspace dv \thinspace dz \thinspace dy \thinspace dx \\
& = K  \int_{x=0}^t \int_{y=0}^x \int_{z=0}^y e^{- \alpha y}  e^{- \alpha z} e^{-\frac{2\lambda}{\eta}x} \int_{v=0}^z e^{2 \left(\frac{\lambda}{\eta}-\alpha\right)v} \thinspace dv \thinspace dz \thinspace dy \thinspace dx.
\end{aligned}
\end{equation*}
Next, using $\frac{\lambda}{\eta}-\alpha>0$ for $\eta \ll 1$ and $\lambda > \lambda - \alpha \eta > \lambda - \alpha \eta_0 $ for some $\eta_0$, we have
\begin{equation*}
\begin{aligned}
\mathcal{J}_{16}(t;\eps,\eta) & \le K \frac{\eta}{\lambda - \alpha \eta_0} \int_{x=0}^t \int_{y=0}^x \int_{z=0}^y e^{- \alpha y}  e^{- \alpha z} e^{-\frac{2\lambda}{\eta}x} e^{2 \left(\frac{\lambda}{\eta}-\alpha\right)z} \thinspace dz \thinspace dy \thinspace dx \\
& = K \eta \int_{x=0}^t \int_{y=0}^x e^{- \alpha y} e^{-\frac{2\lambda}{\eta}x}  \int_{z=0}^y e^{- \alpha z} e^{2 \left(\frac{\lambda}{\eta}-\alpha\right)z} \thinspace dz \thinspace dy \thinspace dx.
\end{aligned}
\end{equation*}
In the equation above, for the integral $\int_{z=0}^y e^{- \alpha z} e^{2 \left(\frac{\lambda}{\eta}-\alpha\right)z} \thinspace dz$, using integration by parts, we have
\begin{equation*}
\begin{aligned}
\int_{z=0}^y e^{- \alpha z} e^{2 \left(\frac{\lambda}{\eta}-\alpha\right)z} \thinspace dz & \le  \eta \thinspace e^{- \alpha y} e^{2 \left(\frac{\lambda}{\eta}-\alpha\right)y} + K \eta \int_{z=0}^y e^{- \alpha z } e^{2 \left(\frac{\lambda}{\eta}-\alpha\right)z} \thinspace dz\\
& \le \eta \thinspace e^{- \alpha y} e^{2 \left(\frac{\lambda}{\eta}-\alpha\right)y} + K \eta \int_{z=0}^y e^{2 \left(\frac{\lambda}{\eta}-\alpha\right)z} \thinspace dz \le \eta \thinspace e^{- \alpha y} e^{2 \left(\frac{\lambda}{\eta}-\alpha\right)y} + K \eta^2  e^{2 \left(\frac{\lambda}{\eta}-\alpha\right)y}.
\end{aligned}
\end{equation*}
Putting the above expressions together and changing the order of integration again, we have
\begin{equation*}
\begin{aligned}
\mathcal{J}_{16}(t;\eps,\eta) & \le K \eta^2 \int_{x=0}^t \int_{y=0}^x e^{- 2 \alpha y} e^{-\frac{2\lambda}{\eta}x} e^{2 \left(\frac{\lambda}{\eta}-\alpha\right)y} \thinspace dy \thinspace dx + K \eta^3 \int_{x=0}^t \int_{y=0}^x e^{- \alpha y} e^{-\frac{2\lambda}{\eta}x} e^{2 \left(\frac{\lambda}{\eta}-\alpha\right)y} \thinspace dy \thinspace dx \\
& = K \eta^2 \left[ \int_{y=0}^t \int_{x=y}^t e^{- 2 \alpha y} e^{-\frac{2\lambda}{\eta}x} e^{2 \left(\frac{\lambda}{\eta}-\alpha\right)y} \thinspace dy \thinspace dx + \int_{y=0}^t \int_{x=y}^t e^{- \alpha y} e^{-\frac{2\lambda}{\eta}x} e^{2 \left(\frac{\lambda}{\eta}-\alpha\right)y} \thinspace dy \thinspace dx \right] \le K \eta^3,
\end{aligned}
\end{equation*}
which concludes the proof of the lemma.
\end{proof}

\subsection{Poisson Equation}\label{S:Poisson-Equation}
In this section, we present some preliminaries related to the Poisson equation in the whole space from \cite{pardoux2003poisson} that we frequently use in this manuscript. 

Assume that Conditions \ref{A:Boundedness of sigma-tau} and \ref{A:Derivatives-Assumption-c} are satisfied and let for any $x\in \BR$, $\mu_x$ represent the invariant measure of the process $Y^\eta$. We also assume that for the function $\mathsf{H} \in \mathscr{C}^{2, \alpha}(\BR,\mathscr{Y}),$ where $\mathscr{Y} \subseteq \BR,$
\begin{equation}\label{E:Centring-condition}
\int_{\mathscr{Y}} \mathsf{H} (x, y) \mu_x(dy)=0,
\end{equation}
and that, for some positive constants $K$, $\mathsf{p}_1$, $\mathsf{p}_2$, $\mathsf{p}_3$ and $q$,
\begin{equation}\label{E:Poisson-eq-H-growth}
\begin{aligned}
|\mathsf{H}(x,y)|  \le K (1+ |x|^{\mathsf{p}_1})(1+ |y|^q), \quad
|\mathsf{H}_{x}(x,y)| & \le K (1+ |x|^{\mathsf{p}_2})(1+ |y|^q), \\
|\mathsf{H}_{xx}(x,y)| & \le K (1+ |x|^{\mathsf{p}_3})(1+ |y|^q).
\end{aligned}
\end{equation}
Let $\mathscr{L}_y$ be the infinitesimal generator for the process $Y^\eta$. Then, the Poisson equation
$\mathscr{L}_y \mathsf{v}(x,y) =\mathsf{H} (x, y), \thinspace \int_{\mathscr{Y}} \mathsf{v}(x,y) \mu_x(dy)=0 $
has a unique solution that satisfies $\mathsf{v}(\cdot, y) \in \mathscr{C}^2$ for every $y\in \mathscr{Y},$ $\mathsf{v}_{xx} \in \mathscr{C}( \BR \times \mathscr{Y}),$ and there exist positive constants $\mathsf{K}$ and $m$ such that
\begin{equation}\label{E:Poisson-eq-Sol-growth}
\begin{aligned}
|\mathsf{v}(x,y)| + |\mathsf{v}_y(x,y)|  \le \mathsf{K} (1+ |x|^{\mathsf{p}_1})(1+ |y|^m), \thinspace
|\mathsf{v}_{x}(x,y)| + |\mathsf{v}_{xy}(x,y)|  & \le \mathsf{K} (1+ |x|^{\mathsf{p}_2})(1+ |y|^m), \\
|\mathsf{v}_{xx}(x,y)| + |\mathsf{v}_{xxy}(x,y)| & \le \mathsf{K} (1+ |x|^{\mathsf{p}_3})(1+ |y|^m).
\end{aligned}
\end{equation}
We note that in the growth of the functions $\mathsf{H}$ and $\mathsf{v}$ (as well as in their derivatives), the exponents of the variable $x$ are the same, while the powers of the $y$ variables may differ. In our analysis, the function \(\mathsf{H}(x, y)\) always satisfies the centering condition \eqref{E:Centring-condition} and can take one of the following forms: \(c(x, y)- \bar{c}(x)\), or \(q(x, y) - \bar{q}(x)\), where the function \(q(x, y)\) is defined in equation \eqref{E:q-function}. Since these functions meet the required conditions \eqref{E:Centring-condition}, and \eqref{E:Poisson-eq-H-growth}, their associated Poisson equation will also satisfy the growth conditions specified in equation \eqref{E:Poisson-eq-Sol-growth}.

\subsection{Malliavin derivatives}\label{S:MD}
\noindent
In this section, we introduce some preliminaries from Malliavin calculus. For the complete details of this topic, we refer to
\cite{nualart2006malliavin}.

Broadly speaking, the Malliavin derivative of a random variable is the Fr{\'e}chet derivative taken along directions in the Wiener space. It measures the sensitivity of the random variable to infinitesimal perturbations of the underlying Brownian path at time $t$. To make it precise,
let $\mathscr{H} \triangleq L^2
 \left(\mathbb{R}_+\right)$ and consider the isonormal Gaussian
 process $\left\{ W_h
   \colon h \in \mathscr{H} \right\}$
and the set of smooth cylindrical random variables
of the form $F = f
\left( W_{\varphi_1}, \cdots , W_{\varphi_n} \right)$, where $n \geq 1$, $\{\varphi_i\}^n_{i=1} \subset
\mathscr{H}$, and $f \in C_b^{\infty} \left(
  \mathbb{R}^n \right)$. The Malliavin derivative of a smooth cylindrical random variable
$F$ is defined as the $\mathscr{H}$-valued random variable given by
\begin{equation*}
DF = \sum_{i=1}^n  \frac{\partial f}{\partial x_i} \left( W_{\varphi_1},
  \cdots, W_{\varphi_n} \right)\varphi_i.
\end{equation*}
The derivative operator $D$ is closable from $L^2(\Omega)$
into $L^2(\Omega ; \mathscr{H})$, and we continue to denote by $D$
its closure, the domain of which we denote by $\mathbb{D}^{1,2}$, and which is a Hilbert space in the Sobolev-type norm
$\left\lVert F \right\rVert_{1,2}^2 = E (F^2) + E \left( \left\lVert DF \right\rVert_{\mathscr{H}}^2 \right).$
Similarly, one can obtain a derivative operator $D
\colon\mathbb{D}^{1, 2}(\mathscr{H}) \to L^2(\Omega; \mathscr{H}
\otimes \mathscr{H})$ as the closure of $D \colon L^2(\Omega;
\mathscr{H}) \to L^2(\Omega; \mathscr{H} \otimes \mathscr{H})$. We
then set $D^2F = D(DF)$. Note that more generally with $p > 1$, one can
analogously obtain $\mathbb{D}^{1, p}$ as Banach spaces of Sobolev
type by working with $L^p(\Omega)$.

\subsection*{Acknowledgments}
The author is grateful to Prof. Konstantinos Spiliopoulos for introducing the problem and for several valuable discussions. This work is supported by {\sc nsf} {\sc dms}-2311500.



\bibliographystyle{alpha}
\bibliography{References}

@article{bourguin2021typical,
  title={Typical dynamics and fluctuation analysis of slow--fast systems driven by fractional {B}rownian motion},
  author={Bourguin, Solesne and Gailus, Siragan and Spiliopoulos, Konstantinos},
  journal={Stochastics and Dynamics},
  volume={21},
  number={07},
  pages={2150030},
  year={2021},
  publisher={World Scientific}
}

@article{goddard2023study,
  title={On the study of slow--fast dynamics, when the fast process has multiple invariant measures},
  author={Goddard, Benjamin D. and Ottobre, Michela and Painter, Kevin John and Souttar, Iain},
  journal={Proceedings of the Royal Society A},
  volume={479},
  number={2278},
  pages={20230322},
  year={2023},
  publisher={The Royal Society}
}

@article{freidlin1999comparison,
  title={A comparison of homogenization and large deviations, with applications to wavefront propagation},
  author={Freidlin, Mark I. and Sowers, Richard B.},
  journal={Stochastic processes and their applications},
  volume={82},
  number={1},
  pages={23--52},
  year={1999},
  publisher={Elsevier}
}

@article{freidlin2021averaging,
  title={Averaging in the case of multiple invariant measures for the fast system},
  author={Freidlin, Mark I. and Koralov, Leonid},
  journal={Electronic Journal of Probability},
  volume={26},
  pages={1--17},
  year={2021},
  publisher={The Institute of Mathematical Statistics and the Bernoulli Society}
}

@article{budhiraja2024large,
  title={Large deviations for small noise diffusions over long time},
  author={Budhiraja, Amarjit and Zoubouloglou, Pavlos},
  journal={Transactions of the American Mathematical Society, Series B},
  volume={11},
  number={01},
  pages={1--63},
  year={2024}
}

@article{rockner2021averaging,
  title={Averaging Principle and Normal Deviations for Multiscale Stochastic Systems},
  author={R{\"o}ckner, Michael and Xie, Longjie},
  journal={Communications in Mathematical Physics},
  volume={383},
  number={3},
  pages={1889--1937},
  year={2021},
  publisher={Springer}
}

@article{rockner2021diffusion,
  title={Diffusion approximation for fully coupled stochastic differential equations},
  author={R{\"o}ckner, Michael and Xie, Longjie},
  journal={The Annals of Probability},
  volume={49},
  pages={1205-1236},
  year={2021}
}

@article{cerrai2009averaging,
  title={Averaging principle for a class of stochastic reaction--diffusion equations},
  author={Cerrai, Sandra and Freidlin, Mark I.},
  journal={Probability theory and related fields},
  volume={144},
  number={1},
  pages={137--177},
  year={2009},
  publisher={Springer}
}

@article{Khasminskii68,
  title={On the principle of averaging the {I}t\^o stochastic differential equations},
  author={Khasminskii, R.Z.},
  journal={Kybernetika},
  number={4},
  pages={260--279},
  year={1968}
}

@article{bour_spilio_2025,
  title={Quantitative fluctuation analysis of multiscale diffusion systems via {M}alliavin calculus},
  author={Bourguin, Solesne and Spiliopoulos, Konstantinos},
  journal={Stochastic Processes and their Applications},
  volume={180},
  pages={104524},
  year={2025},
  publisher={Elsevier}
}

@article{spiliopoulos2014fluctuation,
  title={Fluctuation analysis and short time asymptotics for multiple scales diffusion processes},
  author={Spiliopoulos, Konstantinos},
  journal={Stochastics and Dynamics},
  volume={14},
  number = {3},
  pages={1350026},
  year={2014}
}

@article{Dobson-Crisan,
  title={{P}oisson equations with locally {L}ipschitz coefficients and uniform-in-time averaging for stochastic differential equations via strong exponential
stability},
  author={Crisan, Dan and Dobson, Paul and Goddard, Ben and Ottobre, Michela and Souttar, Iain},
  journal={Annales de l'Institut Henri Poincaré (B) Probabilités et Statistiques},
  year={2024}
}

@article{Shivam-Kostas,
  title={Uniform-in-time bounds for a stochastic hybrid system with fast periodic sampling and small white-noise},
  author={Shivam Singh Dhama and Konstantinos Spiliopoulos},
  journal={Stochastics and Dynamics},
  volume={25},
  number = {1},
  pages={2550006},
  year={2025}
}

@article{Dobson-Ceisan-21,
  title={Uniform-in-time estimates for the weak error of the {E}uler
method for {SDE}s and a pathwise approach to derivative estimates for diffusion semigroups},
  author={Crisan,  Dan and  Dobson, Paul and  Ottobre, Michela},
  journal={Transactions of the American Mathematical Society},
  volume={374},
  number = {5},
  pages={3289-3330},
  year={2021}
}

@article{gailus2017statistical,
  title={Statistical inference for perturbed multiscale dynamical systems},
  author={Gailus, Siragan and Spiliopoulos, Konstantinos},
  journal={Stochastic Processes and their Applications},
  volume={127},
  number={2},
  pages={419--448},
  year={2017},
  publisher={Elsevier}
}

@book{nualart2006malliavin,
  title={The Malliavin calculus and related topics},
  author={Nualart, David},
  volume={1995},
  year={2006},
  publisher={Springer}
}

@article{vido_2020,
  title={An improved second-order {P}oincar{\'e} inequality for functionals of {G}aussian fields},
  author={Vidotto, Anna},
  journal={Journal of Theoretical Probability},
  volume={33},
  number={1},
  pages={396--427},
  year={2020},
  publisher={Springer}
}

@article{pardoux2003poisson,
  title={On {P}oisson equation and diffusion approximation 2},
  author={Pardoux, E. and Veretennikov, A. Yu},
  journal={The Annals of Probability},
  volume={31},
  number={3},
  pages={1166--1192},
  year={2003},
  publisher={Institute of Mathematical Statistics}
}

@article{hairer2020averaging,
  title={AVERAGING DYNAMICS DRIVEN BY FRACTIONAL {B}ROWNIAN MOTION},
  author={Hairer, Martin and Li, Xue-Mei},
  journal={The Annals of Probability},
  volume={48},
  number={4},
  pages={1826--1860},
  year={2020}
}

@article{gerolla2025fluctuations,
  title={Fluctuations of stochastic {PDE}s with long-range correlations},
  author={Gerolla, Luca and Hairer, Martin and Li, Xue-Mei},
  journal={The Annals of Applied Probability},
  volume={35},
  number={2},
  pages={1198--1232},
  year={2025},
  publisher={Institute of Mathematical Statistics}
}

@article{huang2020central,
  title={A central limit theorem for the stochastic heat equation},
  author={Huang, Jingyu and Nualart, David and Viitasaari, Lauri},
  journal={Stochastic Processes and Their Applications},
  volume={130},
  number={12},
  pages={7170--7184},
  year={2020},
  publisher={Elsevier}
}

@article{nualart2022central,
  title={Central limit theorems for stochastic wave equations in dimensions one and two},
  author={Nualart, David and Zheng, Guangqu},
  journal={Stochastics and Partial Differential Equations: Analysis and Computations},
  volume={10},
  number={2},
  pages={392--418},
  year={2022},
  publisher={Springer}
}

@article{budhiraja2016moderate,
  title={Moderate deviation principles for stochastic differential equations with jumps},
  author={Budhiraja, Amarjit and Dupuis, Paul and Ganguly, Arnab},
  journal={The Annals of Probability},
  volume={44},
  number={3},
  year={2016}
}

@article{bourguin2026uniform,
  title={Uniform-in-time quantitative fluctuations of large scale interacting particle systems},
  author={Bourguin, Solesne and Spiliopoulos, Konstantinos},
  journal={arXiv:2605.03057},
  year={2026}
}

@article{sharrock2026efficient,
  title={Efficient Online Learning in Interacting Particle Systems},
  author={Sharrock, Louis and Kantas, Nikolas and Pavliotis, Grigorios A.},
  journal={arXiv:2602.20875},
  year={2026}
}

@incollection{kifer2001averaging,
  title={Averaging and climate models},
  author={Kifer, Yuri},
  booktitle={Stochastic climate models},
  pages={171--188},
  year={2001},
  publisher={Springer}
}

@book{pavliotis2008multiscale,
  title={Multiscale methods: averaging and homogenization},
  author={Pavliotis, Grigorios A. and Stuart, Andrew M.},
  year={2008},
  publisher={Springer Science \& Business Media}
}

@book{KS91,
	author = {Ioannis Karatzas and Steven Shreve},
	edition = {second},
	publisher = {Springer-Verlag New York},
	series = {Graduate Texts in Mathematics},
	title = {Brownian Motion and Stochastic Calculus},
	volume = {113},
	year = {1991}
	}
\end{document}